\documentclass[a4paper,twoside,11pt,english,intlimits]{article}

\usepackage[utf8]{inputenc}
\usepackage[T1]{fontenc}
\usepackage[english]{babel}

\usepackage{amsmath,amsthm,amssymb,stmaryrd}
\usepackage{mathtools}
\usepackage{mathrsfs}
\allowdisplaybreaks

\usepackage[ttscale=.875]{libertine}
\usepackage[libertine]{newtxmath}

\usepackage{setspace}
\usepackage{fancyhdr}
\usepackage{titlesec}
\usepackage{etoolbox}

\usepackage{graphicx}
\usepackage{xcolor}
\usepackage{tikz}
\usepackage[all]{xy}
\xyoption{rotate,color}
\usepackage{catex}

\usepackage{algorithm2e}
\usepackage[numbers,sort&compress]{natbib}
\usepackage{hyperref}

\fancypagestyle{plain}{%
  }

\newcommand{\periodafter}[1]{\ifstrempty{#1}{}{#1.}}
\titleformat{\section}[block]{\scshape\filcenter\Large\boldmath}{\thesection.}{.5em}{}
\titleformat{\subsection}[block]{\bfseries\filcenter\large\boldmath}{\thesubsection.}{.5em}{}
\titleformat{\subsubsection}[runin]{\bfseries\boldmath}{\thesubsubsection.}{.5em}{\periodafter}
\titlespacing{\subsubsection}{0pt}{\topsep}{.5em}

\newtheoremstyle{ntheorem}%
  {\topsep}{\topsep}{\itshape}{0pt}{\bfseries}{.}{.5em}%
  {\thmnumber{#2.\hspace{.5em}}\thmname{#1}\thmnote{ (#3)}}

\newtheoremstyle{ndefinition}%
  {\topsep}{\topsep}{\normalfont}{0pt}{\bfseries}{.}{.5em}%
  {\thmnumber{#2.\hspace{.5em}}\thmname{#1}\thmnote{ (#3)}}

\theoremstyle{ntheorem}
\newtheorem{theorem}[subsubsection]{Theorem}
\newtheorem{proposition}[subsubsection]{Proposition}
\newtheorem{lemma}[subsubsection]{Lemma}

\theoremstyle{ndefinition}
\newtheorem{example}[subsubsection]{Example}
\newtheorem{remark}[subsubsection]{Remark}

\makeatletter
\def\@equationname{equation}
\newenvironment{eqn}[1]{%
  \def\mymathenvironmenttouse{#1}%
  \ifx\mymathenvironmenttouse\@equationname%
    \refstepcounter{subsubsection}%
  \else
    \patchcmd{\@arrayparboxrestore}{equation}{subsubsection}{}{}%
    \patchcmd{\print@eqnum}{equation}{subsubsection}{}{}%
    \patchcmd{\incr@eqnum}{equation}{subsubsection}{}{}%
  \fi
  \csname\mymathenvironmenttouse\endcsname%
}{%
  \ifx\mymathenvironmenttouse\@equationname%
    \tag{\thesubsubsection}%
  \fi
  \csname end\mymathenvironmenttouse\endcsname%
}
\makeatother

\UseTips
\SelectTips{eu}{11}

\newcommand{\NF}[2]{\mathord{\downarrow}^{#1}_{#2}}
\newcommand{\NP}[1]{\widehat{#1}}

\newcommand{\Exte}[1]{\mathcal{E}(#1)}
\newcommand{\Int}{\mathsf{Int}}
\newcommand{\monex}[1]{m{#1}}

\newcommand{\strat}[1]{F_s^{#1}}

\newcommand{\fl}{\rightarrow}
\newcommand{\fll}{\longrightarrow}
\newcommand{\ofl}[1]{\overset{\displaystyle #1}{\fll}}

\renewcommand{\angle}[1]{\langle #1 \rangle}
\newcommand{\cl}[1]{\overline{#1}}

\makeatletter
\newlength{\xywd}
\def\xy@arrow#1#2#3#4#5#6#7{%
\mathrel{\mskip\@ne mu
  \sbox\z@{\scriptsize$\mkern#1mu#6\mkern#2mu$}%
  \xywd=\wd\z@
  \sbox\z@{\scriptsize$\mkern#1mu#7\mkern#2mu$}%
  \ifdim\wd\z@>\xywd\xywd=\wd\z@\fi\relax
  \ifdim#3\p@>\xywd \xywd=#3\p@ \fi
  \xy
    <\xywd,\z@> **\dir{-}
    ? *^!U!/\thr@@\p@/\txt\scriptsize{$\mkern#1mu#7\mkern#2mu$}
      *_!D!/\thr@@\p@/\txt\scriptsize{$\mkern#1mu#6\mkern#2mu$}
    ?<*\dir{#4}
    ?>*\dir{#5};
  \endxy
\mskip\@ne mu}%
}
\newcommand{\xtwoheadrightarrow}[2][]{\xy@arrow 26{11}{}{>>}{#2}{#1}}
\newcommand{\xtwoheadleftarrow}[2][]{\xy@arrow 62{11}{<<}{}{#2}{#1}}
\DeclareRobustCommand{\xto}{\@ifstar\xtwoheadrightarrow\xrightarrow}
\DeclareRobustCommand{\xot}{\@ifstar\xtwoheadleftarrow\xleftarrow}
\makeatother

\newcommand{\Nb}{\mathbb{N}}
\newcommand{\Zb}{\mathbb{Z}}

\newcommand{\Cr}{\mathcal{C}}
\newcommand{\Er}{\mathcal{E}}
\newcommand{\Or}{\mathcal{O}}
\newcommand{\Rr}{\mathcal{R}}
\newcommand{\Xr}{\mathscr{X}}

\newcommand{\bk}{{\mathbf{k}}}

\DeclareMathOperator{\lm}{lm}
\DeclareMathOperator{\lc}{lc}
\DeclareMathOperator{\lt}{lt}
\newcommand{\cf}[2]{\operatorname{cf}_{#1}(#2)}

\DeclareMathOperator{\supp}{Supp}
\DeclareMathOperator{\suppm}{MSupp}

\DeclareMathOperator{\Rem}{Rem}
\newcommand{\quo}[2]{\operatorname{quo}_{#1}(#2)}
\newcommand{\rem}[2]{\operatorname{rem}_{#1}(#2)}

\DeclareMathOperator{\End}{End}

\newcommand{\Cont}{\mathbf{Cont}}
\newcommand{\Act}{\mathbf{Act}}

\newcommand{\cat}[1]{#1^{\ast}}
\newcommand{\lcat}[1]{#1^{\ell}}
\newcommand{\pcat}[1]{#1^{\circ}}
\newcommand{\plcat}[1]{#1^{\circ\ell}}

\newcommand{\Ab}{\mathbf{Ab}}
\newcommand{\Ord}{\mathbf{Ord}}

\newcommand{\polygraph}[1]{\textsf{#1}}
\newcommand{\category}[1]{\mathcal{#1}}

\newcommand{\ACCat}{\category{AC}}
\newcommand{\ACPol}{\polygraph{AC}}

\newcommand{\qSCat}{q\category{S}}
\newcommand{\qSPol}{\polygraph{qS}}

\newcommand{\HCat}{\category{H}}
\newcommand{\HPol}{\polygraph{H}}

\DeclareMathOperator{\Nf}{Nf}
\newcommand{\abs}[1]{\lvert#1\rvert}

\newcommand{\lex}{\mathrm{lex}}
\newcommand{\Sch}{\mathrm{Sch}}
\newcommand{\q}{\mathrm{q}}

\DeclareMathOperator{\Sph}{Sph}

\newcommand{\Phivec}[4]{%
  \Phi_{#1}\begin{bmatrix}
    #2\\[1mm]
    #3\\[1mm]
    #4
  \end{bmatrix}}

\newcommand{\Psivec}[5]{%
  \Psi_{#1}\,\begin{bmatrix}
    #2\\[1mm]
    #3\\[1mm]
    #4\\[1mm]
    #5
  \end{bmatrix}}

\newcommand{\SSS}[1]{§\ref{#1}}
\newcommand{\down}[2]{{#1\!\!\downarrow_{\scriptscriptstyle #2}}}
\newcommand{\dlr}[2]{\down{\llbracket #1 \rrbracket}{#2}}
\newcommand{\minsub}{\min\nolimits_{\sqsubseteq}}
\newcommand{\CB}{\textsf{CB}}

\deftwocell[crossing]{tau : 2 -> 2}
\deftwocell[circle]{eta : 1 -> 1}
\deftwocell[braid1]{sigma1 : 2 -> 2}
\deftwocell[braid2]{sigma2 : 2 -> 2}
\deftwocell[left = i, right = j]{id_ij : 2 -> 2}
\deftwocell[mid = i]{i : 1 -> 0}
\deftwocell[mid = j]{j : 1 -> 0}
\deftwocell[mid = k]{k : 1 -> 0}
\deftwocell{id : 1 -> 1}

\deftwocell[rectangle]{rectangle:2 -> 2}
\deftwocell[text=\alpha]{alpha:2 -> 2}
\deftwocell[text=\beta]{beta:2 -> 2}
\deftwocell[text=\gamma]{gamma:2 -> 2}
\deftwocell[text=\alpha_1]{alpha1:2 -> 2}
\deftwocell[text=\alpha_2]{alpha2:2 -> 2}
\deftwocell[text=\alpha_3]{alpha3:2 -> 2}
\deftwocell[text=\alpha_k]{alphak:2 -> 2}
\deftwocell[text=\beta_1]{beta1:2 -> 2}
\deftwocell[text=\beta_2]{beta2:2 -> 2}
\deftwocell[text=\beta_3]{beta3:2 -> 2}
\deftwocell[text=f_1]{f1:2 -> 2}
\deftwocell[text=f_2]{f2:2 -> 2}
\deftwocell[text=f_2^i]{f2i:2 -> 2}
\deftwocell[text=f_3]{f3:2 -> 2}
\deftwocell[text=g_1]{g1:2 -> 2}
\deftwocell[text=g_2]{g2:2 -> 2}
\deftwocell[text=g_3]{g3:2 -> 2}
\deftwocell[text=f_i^1]{fi1:2 -> 2}
\deftwocell[text=f_i^{m_i}]{fimi:2 -> 2}
\deftwocell[text=f]{f:2 -> 2}
\deftwocell[text=g]{g:2 -> 2}
\deftwocell[text=h]{h:2 -> 2}
\deftwocell[text=\square]{square:2 -> 2}
\deftwocell[text=f']{f':2 -> 2}
\deftwocell[text=g']{g':2 -> 2}
\deftwocell[text=f_i]{fi:2 -> 2}
\deftwocell[text=f_k]{fk:2 -> 2}
\deftwocell[text=\square]{square1:1 -> 1}
\deftwocell[text=E]{E-1:1 -> 1}
\deftwocell[text=\alpha_\lambda]{alphalambda:1 -> 1}
\deftwocell[text=\square]{square3:3 -> 3}

\deftwocell[dots]{dot:2 -> 2}
\deftwocell[cap]{cap  : 0 -> 2}
\deftwocell[cup]{cup  : 2 -> 0}
\deftwocell[orange]{split : 2 -> 1}
\deftwocell[green]{merge : 1 -> 2}
\deftwocell[mid = a]{as : 0 -> 1}
\deftwocell[mid = a]{ax : 1 -> 0}
\deftwocell[mid = a_1]{a1s : 0 -> 1}
\deftwocell[mid = a_1]{a1x : 1 -> 0}
\deftwocell[mid = a_n]{ans : 0 -> 1}
\deftwocell[mid = a_n]{anx : 1 -> 0}
\deftwocell[mid = b]{bs : 0 -> 1}
\deftwocell[mid = b]{bx : 1 -> 0}
\deftwocell[mid = b']{b'x : 1 -> 0}
\deftwocell[mid = a']{a's : 0 -> 1}
\deftwocell[mid = a']{a'x : 1 -> 0}
\deftwocell[mid = b']{b's : 0 -> 1}
\deftwocell[mid = b']{b'x : 1 -> 0}
\deftwocell[mid = c]{cs : 0 -> 1}
\deftwocell[mid = c]{cx : 1 -> 0}
\deftwocell[mid = d]{ds : 0 -> 1}
\deftwocell[mid = d]{dx : 1 -> 0}
\deftwocell[mid = r]{rs : 0 -> 1}
\deftwocell[mid = r]{rx : 1 -> 0}
\deftwocell[mid = a]{a : 1 -> 1}
\deftwocell[mid = b]{b : 1 -> 1}
\deftwocell[mid = c]{c : 1 -> 1}
\deftwocell[mid = d]{d : 1 -> 1}
\deftwocell[mid = c-s]{cjs : 1 -> 1}
\deftwocell[mid = c-s]{cjsx : 1 -> 0}
\deftwocell[mid = d-s]{djs : 1 -> 1}
\deftwocell[mid = a+b]{ajbs : 0 -> 1}
\deftwocell[mid = a+b]{ajbx : 1 -> 0}
\deftwocell[mid = a+b+c]{ajbjcx : 1 -> 0}
\deftwocell[mid = b+c]{bjcs : 0 -> 1}
\deftwocell[mid = b+c]{bjcx : 1 -> 0}
\deftwocell[mid = d-c]{djcs : 0 -> 1}
\deftwocell[mid = d-c]{djc : 1 -> 1}
\deftwocell[mid = a-s]{ajs : 1 -> 1}
\deftwocell[mid = c-r]{cjr : 1 -> 1}
\deftwocell[mid = d-r]{djr : 1 -> 1}
\deftwocell[mid = 1]{one : 1 -> 1}
\deftwocell[mid = 2]{two : 1 -> 1}
\deftwocell[mid = 3]{three : 1 -> 1}
\deftwocell[mid = 4]{four : 1 -> 1}
\deftwocell[mid = 5]{five : 1 -> 1}

\deftwocell[mid = \raisebox{0pt}{\fontsize{5pt}{5pt}\selectfont $a\!+\!c$}]{ajiac : 1 -> 1}
\deftwocell[mid = \raisebox{0pt}
{\fontsize{5pt}{5pt}\selectfont $a\!+\!d$}]{ajiad : 1 -> 1}
\deftwocell[mid = \raisebox{0pt}
{\fontsize{5pt}{5pt}\selectfont $a\!+\!n$}]{ajian : 1 -> 1}
\deftwocell[mid = \raisebox{0pt}
{\fontsize{5pt}{5pt}\selectfont $a\!+\!m$}]{ajiam : 1 -> 1}
\deftwocell[mid = \raisebox{0pt}
{\fontsize{5pt}{5pt}\selectfont $y\!-\!t$}]{yjt : 1 -> 1}
\deftwocell[mid = \raisebox{0pt}
{\fontsize{5pt}{5pt}\selectfont $x\!-\!t$}]{xjt : 1 -> 1}
\deftwocell[mid = e]{es : 0 -> 1}

\deftwocell[mid = b+d]{bjdx : 1 -> 0}
\deftwocell[mid = \raisebox{0pt}
{\fontsize{5pt}{5pt}\selectfont $b\!+\!d$}]{bjd : 1 -> 1}
\deftwocell[mid = e]{es : 0 -> 1}
\deftwocell[mid = a-d]{ajd : 1 -> 1}
\deftwocell[mid = e]{ex : 1 -> 0}
\deftwocell[mid = e]{e : 1 -> 1}
\deftwocell[mid = a+e]{ajex : 1 -> 0}
\deftwocell[mid = a+e]{aje : 1 -> 1}
\deftwocell[mid = a-t]{ajt : 1 -> 1}
\deftwocell[mid = a+b+d]{ajbjdx : 1 -> 0}
\deftwocell[mid = m]{m : 1 -> 1}
\deftwocell[mid = m]{mx : 1 -> 0}
\deftwocell[mid = n]{n : 1 -> 1}
\deftwocell[mid = n]{ns : 0 -> 1}
\deftwocell[mid = n]{nx : 1 -> 0}
\deftwocell[mid = x]{x : 1 -> 1}
\deftwocell[mid = y]{y : 1 -> 1}
\deftwocell[right = \:\:\:\:\:\:\:\:\:\:\:\:\:\:\:\: b-c-m+t]{bjcjmjt : 1 -> 1}
\deftwocell[mid = u]{u : 1 -> 1}
\deftwocell[mid = v]{v : 1 -> 1}
\deftwocell[mid = w]{w : 1 -> 1}
\deftwocell[mid = g]{gs : 0 -> 1}
\deftwocell[mid = f]{fx : 1 -> 0}
\deftwocell[right = \:\:\:\:\:\:\:\:\:\:\:\:\:\:\:\:\:\:\:g-b-d+c+s]{gbdcs : 1 -> 1}
\deftwocell[right = \:\:\:\:\:\:\:\:\:\:\:\:\:\:\:\:\:\:\:\:\:\:\:\:\:g-b-d+c+m+l]{gbdcml : 1 -> 1}
\deftwocell[right = \:\:\:\:\:\:\:\:\:\:\:\:\:\:\:\:\:\:\:b-c+1-m+t]{bjcj1jmjt : 1 -> 1}
\deftwocell[mid = d-n]{djn : 1 -> 1}
\deftwocell[mid = c-n]{cjn : 1 -> 1}
\deftwocell[mid =  ]{emptys : 0 -> 1}
\deftwocell[mid =  ]{emptyx : 1 -> 0}
\deftwocell[mid = d-t]{djt : 1 -> 1}
\deftwocell[mid = \raisebox{0pt}
{\fontsize{5pt}{5pt}\selectfont $d\!-\!1$}]{dj1 : 1 -> 1}
\deftwocell[mid = \raisebox{0pt}
{\fontsize{5pt}{5pt}\selectfont $c\!-\!1$}]{cj1 : 1 -> 1}

\deftwocell[mid = a'+b']{a'jb's : 0 -> 1}
\deftwocell[mid = a'+b']{a'jb'x : 1 -> 0}
\deftwocell[mid = a'+b']{a'jb's : 0 -> 1}
\deftwocell[mid = a'+b']{a'jb'x : 1 -> 0}
\deftwocell[circle,orange]{epsilon : 1 -> 0}
\deftwocell[circle,green]{circ : 0 -> 1}
\deftwocell[mid = 0]{0s : 0 -> 1}
\deftwocell[mid = 0]{0x : 1 -> 0}
\deftwocell[mid = 0]{0z : 1 -> 1}
\deftwocell[mid = 1]{1x : 1 -> 0}
\deftwocell[mid = 2]{2x : 1 -> 0}
\deftwocell[mid = 1]{1s : 0 -> 1}
\deftwocell[mid = 2]{2s : 0 -> 1}
\deftwocell[mid = r_1]{r1x : 1 -> 0}
\deftwocell[mid = h^i]{hi : 1 -> 1}
\deftwocell[mid = k^i]{ki : 1 -> 1}
\deftwocell[mid = h_1]{h1 : 1 -> 1}
\deftwocell[mid = k_1]{k1 : 1 -> 1}
\deftwocell[mid = i]{ix : 1 -> 0}
\deftwocell[mid = a_1]{a1 : 1 -> 1}
\deftwocell[mid = a_m]{am : 1 -> 1}
\deftwocell[mid = a_1+\ldots+a_m]{a1m : 1 -> 1}

\deftwocell[mid = f]{onef : 1 -> 1}
\deftwocell[mid = g]{oneg : 1 -> 1}
\deftwocell[mid = f-t]{fjt : 1 -> 1}
\deftwocell[mid = g-t]{gjt : 1 -> 1}
\deftwocell[right = \:\:\:\:\:\:\:\:b-c+s]{bcs : 1 -> 1}
\deftwocell[mid = f-n]{fjn : 1 -> 1}
\deftwocell[mid = g-n]{gjn : 1 -> 1}
\deftwocell[right = \:\:\:\:\:\:\:\:\:b+d-c]{bdc : 1 -> 1}
\deftwocell[mid = d-m]{djm : 1 -> 1}
\deftwocell[mid = \raisebox{0pt}{\fontsize{5pt}{5pt}\selectfont $g\!\!-\!\!n\!\!-\!\!m$}]{gjnjm : 1 -> 1}
\deftwocell[right = \:\:\:\:\:\:\:\:\:\:\:\:\:\:\:b-c+m+l]{bcml : 1 -> 1}
\deftwocell[mid = u_i]{ui : 1 -> 1}
\deftwocell[mid = v_i]{vi : 1 -> 1}
\deftwocell[mid = u_{i+1}]{ui1 : 1 -> 1}
\deftwocell[mid = v_{i+1}]{vi1 : 1 -> 1}
\deftwocell[right = \raisebox{0pt}{\fontsize{6pt}{6pt}\selectfont $n$}]{nr : 1 -> 1}
\deftwocell[right = \raisebox{0pt}{\fontsize{6pt}{6pt}\selectfont $m$}]{mr : 1 -> 1}
\deftwocell[right = \raisebox{0pt}{\fontsize{6pt}{6pt}\selectfont $n\!+\!\!1$}]{nj1r : 1 -> 1}
\deftwocell[right = \raisebox{0pt}{\fontsize{6pt}{6pt}\selectfont $2$}]{2r : 1 -> 1}
\deftwocell[right = \raisebox{0pt}{\fontsize{6pt}{6pt}\selectfont $k$}]{kr : 1 -> 1}
\deftwocell[right = \raisebox{0pt}{\fontsize{6pt}{6pt}\selectfont $i$}]{ir : 1 -> 1}
\deftwocell[right = \raisebox{0pt}{\fontsize{6pt}{6pt}\selectfont $1$}]{1r : 1 -> 1}
\deftwocell[mid = \raisebox{0pt}{\fontsize{5pt}{5pt}\selectfont $c\!+\!m\!\!-\!\!t$}]{cjmjt : 1 -> 1}
\deftwocell[mid = \raisebox{0pt}{\fontsize{5pt}{5pt}\selectfont $b\!+\!d\!\!+\!\!y$}]{bjdjyx : 1 -> 0}
\deftwocell[mid = \raisebox{0pt}{\fontsize{5pt}{5pt}\selectfont $d\!+\!n\!\!-\!\!t$}]{djnjt : 1 -> 1}
\deftwocell[right = \raisebox{0pt}{\fontsize{6pt}{6pt}\selectfont $\leqslant \! 2$}]{leq2 : 1 -> 1}
\deftwocell[right = \raisebox{0pt}{\fontsize{6pt}{6pt}\selectfont $\geqslant \! 1$}]{geq1 : 1 -> 1}
\deftwocell[right = \raisebox{0pt}{\fontsize{6pt}{6pt}\selectfont $\leqslant \! n$}]{leqn : 1 -> 1}
\deftwocell[mid = \raisebox{0pt}{\fontsize{6pt}{6pt}\selectfont $a\!-\!c$}]{a-c : 1 -> 1}
\deftwocell[mid = \raisebox{0pt}{\fontsize{4pt}{4pt}\selectfont $a\!+\!c\!\!-\!\!d-\!\!t$}]{ajcjdjt : 1 -> 1}
\deftwocell[mid = \raisebox{0pt}{\fontsize{5pt}{5pt}\selectfont $b\!+\!c\!\!-\!\!s$}]{bjcjs : 1 -> 1}
\deftwocell[mid = \raisebox{0pt}{\fontsize{5pt}{5pt}\selectfont $d\!+\!n\!\!-\!\!m$}]{djnjm : 1 -> 1}
\deftwocell[mid = \raisebox{0pt}{\fontsize{5pt}{5pt}\selectfont $d\!-\!n\!\!-\!\!m$}]{djnjm : 1 -> 1}
\deftwocell[mid = \raisebox{0pt}{\fontsize{5pt}{5pt}\selectfont $d\!-\!m\!\!-\!\!l$}]{djmjl : 1 -> 1}
\deftwocell[mid = \raisebox{0pt}{\fontsize{5pt}{5pt}\selectfont $f\!-\!n\!\!-\!\!m$}]{fjnjm : 1 -> 1}
\deftwocell[mid = \raisebox{0pt}{\fontsize{5pt}{5pt}\selectfont $a\!+\!b\!\!-\!\!c$}]{ajbjc : 1 -> 1}
\deftwocell[mid = \raisebox{0pt}{\fontsize{5pt}{5pt}\selectfont $\boxed{m}$}]{boxm : 1 -> 1}
\deftwocell[mid = \raisebox{0pt}{\fontsize{5pt}{5pt}\selectfont $\boxed{n}$}]{boxn : 1 -> 1}
\deftwocell[mid = \raisebox{0pt}{\fontsize{5pt}{5pt}\selectfont $d\!-\!1\!\!-\!\!s$}]{dj1js : 1 -> 1}
\deftwocell[mid = \raisebox{0pt}{\fontsize{5pt}{5pt}\selectfont $c\!-\!1\!\!-\!\!s$}]{cj1js : 1 -> 1}

\deftwocell[mid = \raisebox{0pt}{\fontsize{5pt}{5pt}\selectfont $c\!-\!s$}]{xcjs : 1 -> 1}
\deftwocell[mid = \raisebox{0pt}{\fontsize{5pt}{5pt}\selectfont $d\!-\!s$}]{xdjs : 1 -> 1}

\deftwocell[orange,text=n]{nsplit : 2 -> 1}
\deftwocell[green,text=n]{nmerge : 1 -> 2}
\deftwocell[orange,text=n-1]{nj1split : 2 -> 1}
\deftwocell[green,text=n-1]{nj1merge : 1 -> 2}
\deftwocell[orange,text=n_1]{n1split : 2 -> 1}
\deftwocell[green,text=n_1]{n1merge : 1 -> 2}
\deftwocell[orange,text=n_2]{n2split : 2 -> 1}
\deftwocell[green,text=n_2]{n2merge : 1 -> 2}
\deftwocell[orange,text=n_3]{n3split : 2 -> 1}
\deftwocell[green,text=n_3]{n3merge : 1 -> 2}
\deftwocell[orange,text=n_k]{nksplit : 2 -> 1}
\deftwocell[green,text=n_k]{nkmerge : 1 -> 2}
\deftwocell[orange,text=1]{1split : 2 -> 1}
\deftwocell[green,text=1]{1merge : 1 -> 2}
\deftwocell[orange,text=0]{0split : 2 -> 1}
\deftwocell[green,text=0]{0merge : 1 -> 2}
\deftwocell[orange,text=m]{msplit : 4 -> 1}
\deftwocell[orange,text=k]{ksplit : 3 -> 1}
\deftwocell[orange,text=m-1]{mj1split : 4 -> 1}
\deftwocell[orange,text=k-1]{kj1split : 2 -> 1}
\deftwocell[green,text=n]{5nmerge : 1 -> 5}
\deftwocell[green,text=n-1]{5nj1merge : 1 -> 5}

\deftwocell[text=A]{A:2 -> 2}
\deftwocell[text=B]{B:2 -> 2}
\deftwocell[text=C]{C:2 -> 2}
\deftwocell[text=E]{E:2 -> 2}
\deftwocell[text=E']{E':2 -> 2}
\deftwocell[text=\Xr]{X:2 -> 2}
\deftwocell[text=E_{n_i}]{Ei:2 -> 2}
\deftwocell[text=E_{n_1}]{E1:2 -> 2}
\deftwocell[text=E_{n_2}]{E2:2 -> 2}
\deftwocell[text=E_{n_3}]{E3:2 -> 2}
\deftwocell[text=s\alpha]{salpha:2 -> 2}
\deftwocell[text=t\alpha]{talpha:2 -> 2}
\deftwocell[text=\varphi]{varphi:2 -> 2}
\deftwocell[text=\varphi_i]{varphii:2 -> 2}
\deftwocell[text=\varphi_{i+1}]{varphii1:2 -> 2}
\deftwocell[text=\psi]{psi:2 -> 2}
\deftwocell[text=h]{h2:4 -> 2}
\deftwocell[text=k]{k2:2 -> 4}
\deftwocell[text=\eta_1]{eta1:2 -> 2}
\deftwocell[text=\eta_i]{etai:2 -> 2}
\deftwocell[text=\eta_{i-2}]{etaij2:2 -> 2}
\deftwocell[text=\theta_{i-2}]{thetaij2:2 -> 2}
\deftwocell[text=\theta_{i-1}]{thetaij1:2 -> 2}
\deftwocell[text=\theta'_{i-1}]{thetaij1':2 -> 2}
\deftwocell[text=\theta'_{j-1}]{thetajj1':2 -> 2}

\deftwocell[orange]{split3 : 3 -> 1}
\deftwocell[green]{merge3 : 1 -> 3}
\deftwocell[orange]{split4 : 4 -> 1}
\deftwocell[green]{merge4 : 1 -> 4}
\deftwocell[orange]{split6 : 6 -> 1}
\deftwocell[green]{merge6 : 1 -> 6}

\deftwocell[text = h_i]{h-i : 2 -> 2}
\deftwocell[text = h_{i+1}]{h-i1 : 2 -> 2}

\deftwocell[braid1]{braidl : 2 -> 2}
\deftwocell[braid2]{braidr : 2 -> 2}

\deftwocell[text = f_n]{fn : 2 -> 2}
\deftwocell[text=k]{kk:2 -> 2}
\deftwocell[text = k_1]{kk1 : 2 -> 2}
\deftwocell[text = k_2]{kk2 : 2 -> 2}
\deftwocell[text = k_\ell]{kkn : 2 -> 2}
\deftwocell[text = k_i]{kki : 2 -> 2}
\deftwocell[text = k_{i+1}]{kkij1 : 2 -> 2}
\deftwocell[mid = j]{jj : 1 -> 1}

\newcommand{\aup}{
\begin{tikzpicture}[scale=0.55,baseline=(current bounding box.center)]
  \node at (0,-0.35) {$a$};

  \draw[line width=0.6mm]  (0,0) --  (0,-0.16);

  \draw[line width=0.3mm] (0,0) -- (-0.8,1.2);
  \draw[line width=0.3mm] (0,0) -- (-0.4,1.2);
  \draw[line width=0.3mm,dotted] (0,0) -- (0,1.2);
  \draw[line width=0.3mm] (0,0) -- (0.4,1.2);
  \draw[line width=0.3mm] (0,0) -- (0.8,1.2);

  \node[above] at (-1,1.1) {$a_1$};
  \node[above] at (-0.4,1.2) {};
  \node[above] at (0.4,1.2) {};
  \node[above] at (1,1.1) {$a_n$};

\end{tikzpicture}
}

\newcommand{\adown}{
\begin{tikzpicture}[scale=0.55,baseline=(current bounding box.center),rotate=180]
  \node at (0,-0.35) {$a$};

  \draw[line width=0.6mm]  (0,0) --  (0,-0.16);

  \draw[line width=0.3mm] (0,0) -- (-0.8,1.2);
  \draw[line width=0.3mm] (0,0) -- (-0.4,1.2);
  \draw[line width=0.3mm,dotted] (0,0) -- (0,1.2);
  \draw[line width=0.3mm] (0,0) -- (0.4,1.2);
  \draw[line width=0.3mm] (0,0) -- (0.8,1.2);

  \node[above] at (-0.9,1.9) {$a_n$};
  \node[above] at (-0.4,1.6) {};
  \node[above] at (0.4,1.6) {};
  \node[above] at (0.8,1.9) {$a_1$};
\end{tikzpicture}
}

\newcommand{\EXi}{
\begin{tikzpicture}[scale=0.7,baseline=-0.1em]
  \draw[-,line width=0.35mm] (0,0) -- (0.6,0.6);
  \draw[-,line width=0.35mm] (0,0) -- (-0.6,0.6);
  \draw[-,line width=0.35mm] (0,0) -- (-0.6,-0.6);
  \draw[-,line width=0.35mm] (0,0) -- (0.6,-0.6);

  \draw[-,line width=0.3mm] (0.6,0.6) -- (0.6,-0.6);

  \draw[line width=0.6mm] (-0.6,0.8) -- (-0.6,0.6);
  \draw[line width=0.6mm] (0.6,0.8) -- (0.6,0.6);
  \draw[line width=0.6mm] (-0.6,-0.8) -- (-0.6,-0.6);
  \draw[line width=0.6mm] (0.6,-0.8) -- (0.6,-0.6);

  \node[above left] at (-0.6,0.6) {\scriptsize 3};
  \node[above right] at (0.6,0.6) {\scriptsize 4};
  \node[below left] at (-0.6,-0.6) {\scriptsize 2};
  \node[below right] at (0.6,-0.6) {\scriptsize 5};

\node at (0.2,0.4) {$\scriptstyle 2$};
\node at (0.2,-0.4) {$\scriptstyle 3$};
\node at (0.7,0.05){$\scriptstyle 2$};
\end{tikzpicture}
}

\newcommand{\EXii}{
 \begin{tikzpicture}[scale=1.4,baseline=-0.1em]
\draw[-,line width=.6mm] (.212,.5) to (.212,.39);
\draw[-,line width=.75mm] (.595,.5) to (.595,.39);
\draw[-,line width=.15mm] (0.0005,-.396) to (.2,.4);
\draw[-,line width=.3mm] (0.01,-.4) to (.59,.4);
\draw[-,line width=.3mm] (.4,-.4) to (.607,.4);
\draw[-,line width=.45mm] (.79,-.4) to (.214,.4);
\draw[-,line width=.15mm] (.8035,-.398) to (.614,.4);
\draw[-,line width=.3mm] (.4006,-.5) to (.4006,-.395);
\draw[-,line width=.6mm] (.788,-.5) to (.788,-.395);
\draw[-,line width=.45mm] (0.011,-.5) to (0.011,-.395);
\node at (0.05,0.05) {$\scriptstyle 1$};
\node at (0.76,0.05) {$\scriptstyle 1$};
\node at (0.35,0.35) {$\scriptstyle 3$};
\node at (0.2,-0.26) {$\scriptstyle 2$};
\node at (0.5,-0.26) {$\scriptstyle 2$};
\end{tikzpicture}
}

\newcommand{\nmthree}{
\begin{tikzpicture}[scale=1.5]
  \draw[-,line width=0.35mm] (-1.2,-0.7) -- (1.8,0.7);
  \draw[-,line width=0.35mm] (-1.2,0.7) -- (1.8,-0.7);
  \draw[-,line width=0.35mm] (1.8,0.7) -- (1.8,-0.7);
  \draw[-,line width=0.35mm] (1.8,0.7) -- (0.3,-0.7);
  \draw[-,line width=0.35mm] (1.8,-0.7) -- (0.3,0.7);

  \draw[line width=0.7mm] (-1.2,0.85) -- (-1.2,0.7);
  \draw[line width=0.7mm] (0.3,0.85) -- (0.3,0.7);
  \draw[line width=0.7mm] (-1.2,-0.85) -- (-1.2,-0.7);
  \draw[line width=0.7mm] (0.3,-0.85) -- (0.3,-0.7);
  \draw[line width=0.7mm] (1.8,0.85) -- (1.8,0.7);
  \draw[line width=0.7mm] (1.8,-0.85) -- (1.8,-0.7);

  \node[above left] at (-1.2,0.7) {\scriptsize $b^1$};
  \node[above right] at (0.3,0.7) {\scriptsize $b^2$};
  \node[below left] at (-1.2,-0.7) {\scriptsize $a_1$};
  \node[below right] at (0.3,-0.7) {\scriptsize $a_2$};
  \node[below right] at (1.8,-0.7) {\scriptsize $a_3$};
  \node[above right] at (1.8,0.7) {\scriptsize $b^3$};

\node at (-0.5,0.23) {$\scriptstyle c^{11}$};
\node at (-0.5,-0.25) {$\scriptstyle c_{11}$};
\node at (0.5,0.23) {$\scriptstyle c_{12}$};
\node at (0.5,-0.25) {$\scriptstyle c^{12}$};
\node at (1.2,0.55) {$\scriptstyle c_{13}$};
\node at (1.2,-0.57) {$\scriptstyle c^{13}$};
\node at (0.7,0.55) {$\scriptstyle c^{21}$};
\node at (1.07,0.2) {$\scriptstyle c^{22}$};
\node at (1.5,-0.2) {$\scriptstyle c^{23}$};
\node at (0.65,-0.55) {$\scriptstyle c_{21}$};
\node at (1.05,-0.2) {$\scriptstyle c_{22}$};
\node at (1.47,0.2) {$\scriptstyle c_{23}$};
\node at (2.1,0) {$\scriptstyle c_3(c^3)$};
\end{tikzpicture}
}

\newcommand{\atob}{
\begin{tikzpicture}[scale=1.2]
  \draw[-,line width=0.35mm] (-1.2,-0.7) -- (1.8,0.7);
  \draw[-,line width=0.35mm] (1.8,0.7) -- (1.8,-0.7);
  \draw[-,line width=0.35mm] (1.8,0.7) -- (0.1,-0.7);
  \draw[-,line width=0.35mm] (1.8,0.7) -- (1.1,-0.7);
  \draw[-,line width=0.35mm] (-0.7,0.7) -- (0.788,-0.133);
  \draw[dashed,line width=0.35mm] (0.788,-0.133) -- (1.253,-0.394);
  \draw[-,line width=0.35mm] (1.253,-0.394) -- (1.8,-0.7);

  \node[above left] at (-0.6,0.7) {\scriptsize $b^i$};
  \node[below left] at (-1.15,-0.7) {\scriptsize $a_1$};
  \node[below right] at (-0.05,-0.7) {\scriptsize $a_2$};
  \node[below right] at (0.9,-0.7) {\scriptsize $a_{n-1}$};
  \node[below right] at (1.8,-0.7) {\scriptsize $a_n$};
  \node[below right] at (1.8,1) {\scriptsize $b^m$};
  \node[below right] at (0.4,-0.75) {$\cdots$};
  \node[below right] at (-0.2,0.7) {\scriptsize $c^{i1}$};
  \node[below right] at (0.55,0.27) {\scriptsize $c^{i2}$};
  \node[below right] at (1.35,-0.2) {\scriptsize $c^{in}$};
\end{tikzpicture}
}

\newcommand{\btoa}{
\begin{tikzpicture}[scale=1.2]
  \draw[-,line width=0.35mm] (-1.2,0.7) -- (1.8,-0.7);
  \draw[-,line width=0.35mm] (1.8,-0.7) -- (1.8,0.7);
  \draw[-,line width=0.35mm] (1.8,-0.7) -- (0.1,0.7);
  \draw[-,line width=0.35mm] (1.8,-0.7) -- (1.1,0.7);

  \draw[-,line width=0.35mm] (-0.7,-0.7) -- (0.788,0.133);
  \draw[dashed,line width=0.35mm] (0.788,0.133) -- (1.253,0.394);
  \draw[-,line width=0.35mm] (1.253,0.394) -- (1.8,0.7);

  \node[below left] at (-0.6,-0.7) {\scriptsize $a_j$};

  \node[above left]  at (-1.15,0.7) {\scriptsize $b^1$};
  \node[above right] at (-0.05,0.7) {\scriptsize $b^2$};
  \node[above right] at (0.9,0.7)  {\scriptsize $b^{n-1}$};
  \node[above right] at (1.8,0.7)  {\scriptsize $b^m$};

  \node[above right] at (1.8,-1) {\scriptsize $a_n$};

  \node[above right] at (0.4,0.65) {$\cdots$};

  \node[above right] at (-0.2,-0.65) {\scriptsize $c_{j1}$};
  \node[above right] at (0.53,-0.25)  {\scriptsize $c_{j2}$};
  \node[above right] at (1.28,0.18)  {\scriptsize $c_{jm}$};
\end{tikzpicture}
}

\newcommand{\Fthree}{
\begin{tikzpicture}[scale=0.7]
  \draw[-,line width=0.35mm] (0,0) -- (0.6,0.6);
  \draw[-,line width=0.35mm] (0,0) -- (-0.6,0.6);
  \draw[-,line width=0.35mm] (0,0) -- (-0.6,-0.6);
  \draw[-,line width=0.35mm] (0,0) -- (0.6,-0.6);

  \draw[line width=0.6mm] (-0.6,0.8) -- (-0.6,0.6);
  \draw[line width=0.5mm] (0.6,0.8) -- (0.6,0.6);
  \draw[line width=0.6mm] (-0.6,-0.8) -- (-0.6,-0.6);
  \draw[line width=0.5mm] (0.6,-0.8) -- (0.6,-0.6);

  \draw[-,line width=0.3mm] (-0.6,0.6) -- (-0.6,-0.6);
\end{tikzpicture}
}

\newcommand{\counterexample}{
\begin{tikzpicture}[scale=1.2]
  \draw[-,line width=0.35mm] (-1.2,-0.7) -- (1.8,0.7);
  \draw[-,line width=0.35mm] (-1.2,0.7) -- (1.8,-0.7);
  \draw[-,line width=0.35mm] (1.8,0.7) -- (1.8,-0.7);
  \draw[-,line width=0.35mm] (1.8,0.7) -- (0.3,-0.7);
  \draw[-,line width=0.35mm] (1.8,-0.7) -- (0.3,0.7);

  \draw[line width=0.7mm] (-1.2,0.85) -- (-1.2,0.7);
  \draw[line width=0.7mm] (0.3,0.85) -- (0.3,0.7);
  \draw[line width=0.7mm] (-1.2,-0.85) -- (-1.2,-0.7);
  \draw[line width=0.7mm] (0.3,-0.85) -- (0.3,-0.7);
  \draw[line width=0.7mm] (1.8,0.85) -- (1.8,0.7);
  \draw[line width=0.7mm] (1.8,-0.85) -- (1.8,-0.7);

  \node[above left] at (-1.2,0.7) {\scriptsize $b^1$};
  \node[above right] at (0.3,0.7) {\scriptsize $b^2$};
  \node[below left] at (-1.2,-0.7) {\scriptsize $a_1$};
  \node[below right] at (0.3,-0.7) {\scriptsize $a_2$};
  \node[below right] at (1.8,-0.7) {\scriptsize $a_3$};
  \node[above right] at (1.8,0.7) {\scriptsize $b^3$};

  \draw[-,line width=0.35mm] (-1.2,-0.7) -- (0.3,0.7);

  \draw[red, line width=0.6mm] (-1.2,-0.7) -- (-0.2, 0.23);
  \draw[red, line width=0.6mm] (-1.2,-0.7) -- (0.8, 0.23);
  \draw[red, line width=0.6mm] (-0.2,0.23) -- (0.8, -0.23);
\end{tikzpicture}
}

\newcommand{\sGammaone}{
\begin{tikzpicture}[scale=0.9,baseline=-0.1em]
  \draw[-,line width=0.35mm] (0,0) -- (0.6,0.6);
  \draw[-,line width=0.35mm] (0,0) -- (-0.6,0.6);
  \draw[-,line width=0.35mm] (0,0) -- (-0.6,-0.6);
  \draw[-,line width=0.35mm] (0,0) -- (0.6,-0.6);

  \draw[-,line width=0.3mm] (0.6,0.6) -- (0.6,-0.6);

  \draw[line width=0.6mm] (-0.6,0.8) -- (-0.6,0.6);
  \draw[line width=0.6mm] (0.6,0.8) -- (0.6,0.6);
  \draw[line width=0.6mm] (-0.6,-0.8) -- (-0.6,-0.6);
  \draw[line width=0.6mm] (0.6,-0.8) -- (0.6,-0.6);

  \node[above left] at (-0.6,0.6) {\scriptsize {\setlength{\fboxsep}{0.6pt}\fbox{c}}};
  \node[below left] at (-0.6,-0.6) {\scriptsize a};
  \node[below right] at (0.6,-0.6) {\scriptsize  b+d};

\node at (0.2,0.4) { };
\node at (0.2,-0.45) {\scriptsize {\setlength{\fboxsep}{0.6pt}\fbox{b}}};
\node at (0.7,0.05){};
\end{tikzpicture}
}

\newcommand{\tGammaone}{
\begin{tikzpicture}[scale=0.9,baseline=-0.1em]
  \draw[-,line width=0.35mm] (0,0) -- (0.6,0.6);
  \draw[-,line width=0.35mm] (0,0) -- (-0.6,0.6);
  \draw[-,line width=0.35mm] (0,0) -- (-0.6,-0.6);
  \draw[-,line width=0.35mm] (0,0) -- (0.6,-0.6);

  \draw[-,line width=0.3mm] (0.6,0.6) -- (0.6,-0.6);

  \draw[line width=0.6mm] (-0.6,0.8) -- (-0.6,0.6);
  \draw[line width=0.6mm] (0.6,0.8) -- (0.6,0.6);
  \draw[line width=0.6mm] (-0.6,-0.8) -- (-0.6,-0.6);
  \draw[line width=0.6mm] (0.6,-0.8) -- (0.6,-0.6);

  \node[above left] at (-0.6,0.6) {\scriptsize c};
  \node[above right] at (0.6,0.6) { };
  \node[below left] at (-0.6,-0.6) {\scriptsize a};
  \node[below right] at (0.6,-0.6) {\scriptsize b+d};

\node[rotate=45] at (0.25,0.45)
{$\scriptstyle \scalebox{0.63}{$a-s$}$};
\node at (0.7,0.05){ };
\node[rotate=-45] at (0.25,-0.43)
{$\scriptstyle \scalebox{0.63}{$c-s$}$};
\node at (0.7,0.05){ };
\end{tikzpicture}
}

\newcommand{\sGammak}{
\begin{tikzpicture}[scale=0.9,baseline=-0.1em]
  \draw[-,line width=0.35mm] (0,0) -- (0.6,0.6);
  \draw[-,line width=0.35mm] (0,0) -- (-0.6,0.6);
  \draw[-,line width=0.35mm] (0,0) -- (-0.6,-0.6);
  \draw[-,line width=0.35mm] (0,0) -- (0.6,-0.6);
  \draw[dashed,line width=0.35mm] (0.6,0.6) -- (1.2,1.2);
  \draw[dashed,line width=0.35mm] (0.6,-0.6) -- (1.2,-1.2);
  \draw[-,line width=0.35mm] (1.2,1.2) -- (1.8,1.8);
  \draw[-,line width=0.35mm] (1.2,-1.2) -- (1.8,-1.8);
  \draw[-,line width=0.35mm] (1.8,1.8) -- (2.4,2.4);
  \draw[-,line width=0.35mm] (1.8,-1.8) -- (2.4,-2.4);

  \draw[-,line width=0.35mm] (0.6,0.6) -- (0.6,-0.6);
  \draw[-,line width=0.35mm] (1.2,1.2) -- (1.2,-1.2);
  \draw[-,line width=0.35mm] (1.8,1.8) -- (1.8,-1.8);
  \draw[-,line width=0.35mm] (2.4,2.4) -- (2.4,-2.4);
  \draw[-,line width=0.35mm] (0.6,0.6) -- (0.6,1.2);
  \draw[-,line width=0.35mm] (0.6,-0.6) -- (0.6,-1.2);
  \draw[-,line width=0.35mm] (1.2,1.2) -- (1.2,1.8);
  \draw[-,line width=0.35mm] (1.2,-1.2) -- (1.2,-1.8);
  \draw[-,line width=0.35mm] (1.8,1.8) -- (1.8,2.4);
  \draw[-,line width=0.35mm] (1.8,-1.8) -- (1.8,-2.4);

  \node[above right] at (1.7,2.3) {\scriptsize {\setlength{\fboxsep}{0.6pt}\fbox{m}}};
  \node[below right] at (1.7,-2.3) {\scriptsize n};
\node at (1.5,-1.3) {\scriptsize {\setlength{\fboxsep}{0.6pt}\fbox{c}}};
\node at (2.1,-1.8) {\scriptsize {\setlength{\fboxsep}{0.6pt}\fbox{y}}};
\node at (1.5,1.35) {\scriptsize d};
\node at (2.1,1.9) {\scriptsize x};
\node at (0.95,0) {$\cdots$};
\end{tikzpicture}
}

\newcommand{\tGammak}{
\begin{tikzpicture}[scale=0.9,baseline=-0.1em]
  \draw[-,line width=0.35mm] (0,0) -- (0.6,0.6);
  \draw[-,line width=0.35mm] (0,0) -- (-0.6,0.6);
  \draw[-,line width=0.35mm] (0,0) -- (-0.6,-0.6);
  \draw[-,line width=0.35mm] (0,0) -- (0.6,-0.6);
  \draw[dashed,line width=0.35mm] (0.6,0.6) -- (1.2,1.2);
  \draw[dashed,line width=0.35mm] (0.6,-0.6) -- (1.2,-1.2);
  \draw[-,line width=0.35mm] (1.2,1.2) -- (1.8,1.8);
  \draw[-,line width=0.35mm] (1.2,-1.2) -- (1.8,-1.8);
  \draw[-,line width=0.35mm] (1.8,1.8) -- (2.7,2.7);
  \draw[-,line width=0.35mm] (1.8,-1.8) -- (2.7,-2.7);

  \draw[-,line width=0.35mm] (0.6,0.6) -- (0.6,-0.6);
  \draw[-,line width=0.35mm] (1.2,1.2) -- (1.2,-1.2);
  \draw[-,line width=0.35mm] (1.8,1.8) -- (1.8,-1.8);
  \draw[-,line width=0.35mm] (2.7,2.7) -- (2.7,-2.7);
  \draw[-,line width=0.35mm] (0.6,0.6) -- (0.6,1.2);
  \draw[-,line width=0.35mm] (0.6,-0.6) -- (0.6,-1.2);
  \draw[-,line width=0.35mm] (1.2,1.2) -- (1.2,1.8);
  \draw[-,line width=0.35mm] (1.2,-1.2) -- (1.2,-1.8);
  \draw[-,line width=0.35mm] (1.8,1.8) -- (1.8,2.4);
  \draw[-,line width=0.35mm] (1.8,-1.8) -- (1.8,-2.4);

  \node[above right] at (1.7,2.3) {\scriptsize m};
  \node[below right] at (1.7,-2.3) {\scriptsize n};
\node at (1.5,-1.35) {\scriptsize c};
\node[rotate=-45] at (2.3,-2.05)
{$\scriptstyle \scalebox{0.55}{$c+m-t$}$};
\node at (0.95,0) {$\cdots$};
\node at (1.5,1.35) {\scriptsize d};
\node[rotate=45] at (2.3,2.05)
{$\scriptstyle \scalebox{0.55}{$d+n-t$}$};
\node at (0.95,0) {$\cdots$};
\end{tikzpicture}
}

\newcommand{\sGammakprime}{
\begin{tikzpicture}[scale=0.9,baseline=-0.1em]
  \draw[-,line width=0.35mm] (0,0) -- (0.6,0.6);
  \draw[-,line width=0.35mm] (0,0) -- (-0.6,0.6);
  \draw[-,line width=0.35mm] (0,0) -- (-0.6,-0.6);
  \draw[-,line width=0.35mm] (0,0) -- (0.6,-0.6);
  \draw[-, line width=0.35mm] (0.6,-0.6) -- (1.8,0.6);
  \draw[-, line width=0.35mm] (0.6,0.6) -- (1.8,-0.6);
  \draw[-,line width=0.35mm] (2.6,0.6) -- (3.8,-0.6);
  \draw[-,line width=0.35mm] (2.6,-0.6) -- (3.8,0.6);

  \draw[-,line width=0.35mm] (3.8,0.6) -- (3.8,-0.6);

  \draw[line width=0.6mm] (-0.6,0.8) -- (-0.6,0.6);
  \draw[line width=0.6mm] (-0.6,-0.8) -- (-0.6,-0.6);

  \node[above left] at (-0.6,0.6) {\scriptsize {\setlength{\fboxsep}{0.6pt}\fbox{c}}};
  \node[above right] at (0.6,0.6) { };
  \node[below left] at (-0.6,-0.6) {\scriptsize a};

\node at (0.25,-0.45) {\scriptsize {\setlength{\fboxsep}{0.6pt}\fbox{b}}};
\node at (0.7,0.05){ };
\node at (2.2,0) {$\cdots$};
\end{tikzpicture}
}

\newcommand{\tGammakprime}{
\begin{tikzpicture}[scale=0.9,baseline=-0.1em]
  \draw[-,line width=0.35mm] (0,0) -- (0.6,0.6);
  \draw[-,line width=0.35mm] (0,0) -- (-0.6,0.6);
  \draw[-,line width=0.35mm] (0,0) -- (-0.6,-0.6);
  \draw[-,line width=0.35mm] (0,0) -- (0.6,-0.6);
  \draw[-, line width=0.35mm] (0.6,-0.6) -- (1.8,0.6);
  \draw[-, line width=0.35mm] (0.6,0.6) -- (1.8,-0.6);
  \draw[-,line width=0.35mm] (2.6,0.6) -- (3.8,-0.6);
  \draw[-,line width=0.35mm] (2.6,-0.6) -- (3.8,0.6);

  \draw[-,line width=0.35mm] (3.8,0.6) -- (3.8,-0.6);

  \draw[line width=0.6mm] (-0.6,0.8) -- (-0.6,0.6);
  \draw[line width=0.6mm] (-0.6,-0.8) -- (-0.6,-0.6);

  \node[above left] at (-0.6,0.6) {\scriptsize c};
  \node[above right] at (0.6,0.6) { };
  \node[below left] at (-0.6,-0.6) {\scriptsize a};

\node[rotate=45] at (0.25,0.45)
{$\scriptstyle \scalebox{0.63}{$a-s$}$};
\node at (0.7,0.05){ };
\node[rotate=-45] at (0.25,-0.43)
{$\scriptstyle \scalebox{0.63}{$c-s$}$};
\node at (0.7,0.05){ };
\node at (2.2,0) {$\cdots$};
\end{tikzpicture}
}

\newcommand{\sFthree}{
\begin{tikzpicture}[scale=0.9,baseline=-0.1em]
  \draw[-,line width=0.35mm] (0,0) -- (0.6,0.6);
  \draw[-,line width=0.35mm] (0,0) -- (-0.6,0.6);
  \draw[-,line width=0.35mm] (0,0) -- (-0.6,-0.6);
  \draw[-,line width=0.35mm] (0,0) -- (0.6,-0.6);

  \draw[-,line width=0.3mm] (-0.6,-0.6) -- (-0.6,0.6);

  \draw[line width=0.6mm] (-0.6,0.8) -- (-0.6,0.6);
  \draw[line width=0.6mm] (0.6,0.8) -- (0.6,0.6);
  \draw[line width=0.6mm] (-0.6,-0.8) -- (-0.6,-0.6);
  \draw[line width=0.6mm] (0.6,-0.8) -- (0.6,-0.6);

  \node[below left] at (-0.6,-0.6) {\scriptsize a};
  \node[below right] at (0.6,-0.6) {\scriptsize b};

\node at (-0.2,0.4) {$\scriptstyle c$};
\node at (-0.2,-0.4) {$\scriptstyle d$};
\end{tikzpicture}
}

\newcommand{\tFthree}{
\begin{tikzpicture}[scale=0.9,baseline=-0.1em]
  \draw[-,line width=0.35mm] (0,0) -- (0.6,0.6);
  \draw[-,line width=0.35mm] (0,0) -- (-0.6,0.6);
  \draw[-,line width=0.35mm] (0,0) -- (-0.6,-0.6);
  \draw[-,line width=0.35mm] (0,0) -- (0.6,-0.6);

  \draw[-,line width=0.3mm] (0.6,0.6) -- (0.6,-0.6);

  \draw[line width=0.6mm] (-0.6,0.8) -- (-0.6,0.6);
  \draw[line width=0.6mm] (0.6,0.8) -- (0.6,0.6);
  \draw[line width=0.6mm] (-0.6,-0.8) -- (-0.6,-0.6);
  \draw[line width=0.6mm] (0.6,-0.8) -- (0.6,-0.6);

  \node[above right] at (0.6,0.6) { };
  \node[below left] at (-0.6,-0.6) {\scriptsize a};
  \node[below right] at (0.6,-0.6) {\scriptsize b};

\node[rotate=45] at (0.25,0.45)
{$\scriptstyle \scalebox{0.63}{$c-s$}$};
\node at (0.7,0.05){ };
\node[rotate=-45] at (0.25,-0.43)
{$\scriptstyle \scalebox{0.63}{$d-s$}$};
\node at (0.7,0.05){ };
\end{tikzpicture}
}

\newcommand{\sFtwo}{
\begin{tikzpicture}[scale=0.55,baseline=(current bounding box.center),rotate=180]
  \draw[line width=0.6mm]  (0,0) --  (0,-0.16);

  \draw[line width=0.3mm] (0,0) -- (-0.8,1.2);
  \draw[line width=0.3mm] (0,0) -- (-0.4,1.2);
  \draw[line width=0.3mm,dotted] (0,0) -- (0,1.2);
  \draw[line width=0.3mm] (0,0) -- (0.4,1.2);
  \draw[line width=0.3mm] (0,0) -- (0.8,1.2);

  \node[above] at (-0.9,2) {$a_m$};
  \node[above] at (0.8,2) {$a_1$};
\end{tikzpicture}
}

\newcommand{\tFtwo}{
\begin{tikzpicture}[scale=0.55,baseline=(current bounding box.center)]
  \draw[line width=0.6mm]  (0,0) --  (0,-0.16);

  \draw[line width=0.3mm] (0,0) -- (-0.8,1.2);
  \draw[line width=0.3mm] (0,0) -- (-0.4,1.2);
  \draw[line width=0.3mm,dotted] (0,0) -- (0,1.2);
  \draw[line width=0.3mm] (0,0) -- (0.4,1.2);
  \draw[line width=0.3mm] (0,0) -- (0.8,1.2);
\end{tikzpicture}
}

\newcommand{\ttFtwo}{
\begin{tikzpicture}[scale=0.55,baseline=-3em]
  \draw[line width=0.8mm]  (0,0) --  (0,-3);
  \node[below left] at (1.65,-3) {\scriptsize $a_1+\cdots + a_m$};
\end{tikzpicture}
}

\newcommand{\twofivetothreefour}{
\begin{tikzpicture}[scale=1]
  \draw[-,line width=0.35mm] (0,0) -- (0.6,0.6);
  \draw[-,line width=0.35mm] (0,0) -- (-0.6,0.6);
  \draw[-,line width=0.35mm] (0,0) -- (-0.6,-0.6);
  \draw[-,line width=0.35mm] (0,0) -- (0.6,-0.6);

  \draw[-,line width=0.3mm] (0.6,0.6) -- (0.6,-0.6);

  \draw[line width=0.6mm] (-0.6,0.8) -- (-0.6,0.6);
  \draw[line width=0.6mm] (0.6,0.8) -- (0.6,0.6);
  \draw[line width=0.6mm] (-0.6,-0.8) -- (-0.6,-0.6);
  \draw[line width=0.6mm] (0.6,-0.8) -- (0.6,-0.6);

  \node[above left] at (-0.6,0.6) {\scriptsize 3};
  \node[above right] at (0.6,0.6) {\scriptsize 4};
  \node[below left] at (-0.6,-0.6) {\scriptsize 2};
  \node[below right] at (0.6,-0.6) {\scriptsize 5};

\node at (-0.2,0.5) {$\scriptstyle c^{11}$};
\node at (-0.2,-0.5) {$\scriptstyle c_{11}$};
\node at (0.2,0.5) {$\scriptstyle c_{12}$};
\node at (0.2,-0.5) {$\scriptstyle c^{12}$};
\node at (0.8,0.05){$\scriptstyle c_2$};
\end{tikzpicture}
}

\newcommand{\twofivetothreefouri}{
\begin{tikzpicture}[scale=0.7]
  \draw[-,line width=0.35mm] (0,0) -- (0.6,0.6);
  \draw[-,line width=0.35mm] (0,0) -- (-0.6,0.6);
  \draw[-,line width=0.35mm] (0,0) -- (-0.6,-0.6);
  \draw[-,line width=0.35mm] (0,0) -- (0.6,-0.6);

  \draw[-,line width=0.3mm] (0.6,0.6) -- (0.6,-0.6);

  \draw[line width=0.6mm] (-0.6,0.8) -- (-0.6,0.6);
  \draw[line width=0.6mm] (0.6,0.8) -- (0.6,0.6);
  \draw[line width=0.6mm] (-0.6,-0.8) -- (-0.6,-0.6);
  \draw[line width=0.6mm] (0.6,-0.8) -- (0.6,-0.6);

  \node[above left] at (-0.6,0.6) {\scriptsize 3};
  \node[above right] at (0.6,0.6) {\scriptsize 4};
  \node[below left] at (-0.6,-0.6) {\scriptsize 2};
  \node[below right] at (0.6,-0.6) {\scriptsize 5};

\node at (0.2,0.4) {$\scriptstyle 4$};
\node at (0.2,-0.4) {$\scriptstyle 5$};
\node at (0.7,0.05){$\scriptstyle 0$};
\end{tikzpicture}
}

\newcommand{\twofivetothreefourii}{
\begin{tikzpicture}[scale=0.7]
  \draw[-,line width=0.35mm] (0,0) -- (0.6,0.6);
  \draw[-,line width=0.35mm] (0,0) -- (-0.6,0.6);
  \draw[-,line width=0.35mm] (0,0) -- (-0.6,-0.6);
  \draw[-,line width=0.35mm] (0,0) -- (0.6,-0.6);

  \draw[-,line width=0.3mm] (0.6,0.6) -- (0.6,-0.6);

  \draw[line width=0.6mm] (-0.6,0.8) -- (-0.6,0.6);
  \draw[line width=0.6mm] (0.6,0.8) -- (0.6,0.6);
  \draw[line width=0.6mm] (-0.6,-0.8) -- (-0.6,-0.6);
  \draw[line width=0.6mm] (0.6,-0.8) -- (0.6,-0.6);

  \node[above left] at (-0.6,0.6) {\scriptsize 3};
  \node[above right] at (0.6,0.6) {\scriptsize 4};
  \node[below left] at (-0.6,-0.6) {\scriptsize 2};
  \node[below right] at (0.6,-0.6) {\scriptsize 5};

\node at (0.2,0.4) {$\scriptstyle 3$};
\node at (0.2,-0.4) {$\scriptstyle 4$};
\node at (0.7,0.05){$\scriptstyle 1$};
\end{tikzpicture}
}

\newcommand{\twofivetothreefouriii}{
\begin{tikzpicture}[scale=0.7]
  \draw[-,line width=0.35mm] (0,0) -- (0.6,0.6);
  \draw[-,line width=0.35mm] (0,0) -- (-0.6,0.6);
  \draw[-,line width=0.35mm] (0,0) -- (-0.6,-0.6);
  \draw[-,line width=0.35mm] (0,0) -- (0.6,-0.6);

  \draw[-,line width=0.3mm] (0.6,0.6) -- (0.6,-0.6);

  \draw[line width=0.6mm] (-0.6,0.8) -- (-0.6,0.6);
  \draw[line width=0.6mm] (0.6,0.8) -- (0.6,0.6);
  \draw[line width=0.6mm] (-0.6,-0.8) -- (-0.6,-0.6);
  \draw[line width=0.6mm] (0.6,-0.8) -- (0.6,-0.6);

  \node[above left] at (-0.6,0.6) {\scriptsize 3};
  \node[above right] at (0.6,0.6) {\scriptsize 4};
  \node[below left] at (-0.6,-0.6) {\scriptsize 2};
  \node[below right] at (0.6,-0.6) {\scriptsize 5};

\node at (0.2,0.4) {$\scriptstyle 2$};
\node at (0.2,-0.4) {$\scriptstyle 3$};
\node at (0.7,0.05){$\scriptstyle 2$};
\end{tikzpicture}
}

\newcommand{\twofivetothreefouriv}{
\begin{tikzpicture}[scale=0.7]
  \draw[-,line width=0.35mm] (0,0) -- (0.6,0.6);
  \draw[-,line width=0.35mm] (0,0) -- (-0.6,0.6);
  \draw[-,line width=0.35mm] (0,0) -- (-0.6,-0.6);
  \draw[-,line width=0.35mm] (0,0) -- (0.6,-0.6);

  \draw[-,line width=0.3mm] (0.6,0.6) -- (0.6,-0.6);

  \draw[line width=0.6mm] (-0.6,0.8) -- (-0.6,0.6);
  \draw[line width=0.6mm] (0.6,0.8) -- (0.6,0.6);
  \draw[line width=0.6mm] (-0.6,-0.8) -- (-0.6,-0.6);
  \draw[line width=0.6mm] (0.6,-0.8) -- (0.6,-0.6);

  \node[above left] at (-0.6,0.6) {\scriptsize 3};
  \node[above right] at (0.6,0.6) {\scriptsize 4};
  \node[below left] at (-0.6,-0.6) {\scriptsize 2};
  \node[below right] at (0.6,-0.6) {\scriptsize 5};

\node at (0.2,0.4) {$\scriptstyle 1$};
\node at (0.2,-0.4) {$\scriptstyle 2$};
\node at (0.7,0.05){$\scriptstyle 3$};
\end{tikzpicture}
}

\newcommand{\twofivetothreefourv}{
\begin{tikzpicture}[scale=0.7]
  \draw[-,line width=0.35mm] (0,0) -- (0.6,0.6);
  \draw[-,line width=0.35mm] (0,0) -- (-0.6,0.6);
  \draw[-,line width=0.35mm] (0,0) -- (-0.6,-0.6);
  \draw[-,line width=0.35mm] (0,0) -- (0.6,-0.6);

  \draw[-,line width=0.3mm] (0.6,0.6) -- (0.6,-0.6);

  \draw[line width=0.6mm] (-0.6,0.8) -- (-0.6,0.6);
  \draw[line width=0.6mm] (0.6,0.8) -- (0.6,0.6);
  \draw[line width=0.6mm] (-0.6,-0.8) -- (-0.6,-0.6);
  \draw[line width=0.6mm] (0.6,-0.8) -- (0.6,-0.6);

  \node[above left] at (-0.6,0.6) {\scriptsize 3};
  \node[above right] at (0.6,0.6) {\scriptsize 4};
  \node[below left] at (-0.6,-0.6) {\scriptsize 2};
  \node[below right] at (0.6,-0.6) {\scriptsize 5};

\node at (0.2,0.4) {$\scriptstyle 0$};
\node at (0.2,-0.4) {$\scriptstyle 1$};
\node at (0.7,0.05){$\scriptstyle 4$};
\end{tikzpicture}
}

\newcommand{\NFBi}[1][]{
\begin{tikzpicture}[scale=0.7,#1]
  \draw[-,line width=0.35mm] (0,0) -- (0.6,0.6);
  \draw[-,line width=0.35mm] (0,0) -- (-0.6,0.6);
  \draw[-,line width=0.35mm] (0,0) -- (-0.6,-0.6);
  \draw[-,line width=0.35mm] (0,0) -- (0.6,-0.6);

  \draw[-,line width=0.3mm] (0.6,0.6) -- (0.6,-0.6);

  \draw[line width=0.6mm] (-0.6,0.8) -- (-0.6,0.6);
  \draw[line width=0.6mm] (0.6,0.8) -- (0.6,0.6);
  \draw[line width=0.6mm] (-0.6,-0.8) -- (-0.6,-0.6);
  \draw[line width=0.6mm] (0.6,-0.8) -- (0.6,-0.6);

  \node[above left] at (-0.6,0.6) {\scriptsize 3};
  \node[above right] at (0.6,0.6) {\scriptsize 4};
  \node[below left] at (-0.6,-0.6) {\scriptsize 2};
  \node[below right] at (0.6,-0.6) {\scriptsize 5};

\node at (0.2,0.4) {$\scriptstyle 1$};
\node at (0.2,-0.4) {$\scriptstyle 2$};
\node at (0.7,0.05){$\scriptstyle 3$};
\end{tikzpicture}
}

\newcommand{\NFBii}[1][]{
\begin{tikzpicture}[scale=0.7,#1]
  \draw[-,line width=0.35mm] (0,0) -- (0.6,0.6);
  \draw[-,line width=0.35mm] (0,0) -- (-0.6,0.6);
  \draw[-,line width=0.35mm] (0,0) -- (-0.6,-0.6);
  \draw[-,line width=0.35mm] (0,0) -- (0.6,-0.6);

  \draw[line width=0.6mm] (-0.6,0.8) -- (-0.6,0.6);
  \draw[line width=0.6mm] (0.6,0.8) -- (0.6,0.6);
  \draw[line width=0.6mm] (-0.6,-0.8) -- (-0.6,-0.6);
  \draw[line width=0.6mm] (0.6,-0.8) -- (0.6,-0.6);

  \node[above left] at (-0.6,0.6) {\scriptsize 3};
  \node[above right] at (0.6,0.6) {\scriptsize 4};
  \node[below left] at (-0.6,-0.6) {\scriptsize 2};
  \node[below right] at (0.6,-0.6) {\scriptsize 5};
\end{tikzpicture}
}

\newcommand{\NFBiii}[1][]{
\begin{tikzpicture}[scale=0.7,#1]
  \draw[-,line width=0.35mm] (0,0) -- (-0.6,0.6);
  \draw[-,line width=0.35mm] (0,0) -- (0.6,-0.6);

  \draw[-,line width=0.3mm] (-0.6,0.6) -- (-0.6,-0.6);

  \draw[-,line width=0.3mm] (0.6,0.6) -- (0.6,-0.6);

  \draw[line width=0.6mm] (-0.6,0.8) -- (-0.6,0.6);
  \draw[line width=0.6mm] (0.6,0.8) -- (0.6,0.6);
  \draw[line width=0.6mm] (-0.6,-0.8) -- (-0.6,-0.6);
  \draw[line width=0.6mm] (0.6,-0.8) -- (0.6,-0.6);

  \node[above left] at (-0.6,0.6) {\scriptsize 3};
  \node[above right] at (0.6,0.6) {\scriptsize 4};
  \node[below left] at (-0.6,-0.6) {\scriptsize 2};
  \node[below right] at (0.6,-0.6) {\scriptsize 5};

\node at (0,0.15) {$\scriptstyle 1$};
\node at (-0.7,0.05) {$\scriptstyle 2$};
\node at (0.7,0.05){$\scriptstyle 4$};
\end{tikzpicture}
}

\newcommand{\SBi}[1][]{
\begin{tikzpicture}[scale=0.7,#1]
  \draw[-,line width=0.35mm] (0,0) -- (0.6,0.6);
  \draw[-,line width=0.35mm] (0,0) -- (-0.6,0.6);
  \draw[-,line width=0.35mm] (0,0) -- (-0.6,-0.6);
  \draw[-,line width=0.35mm] (0,0) -- (0.6,-0.6);

  \draw[-,line width=0.3mm] (0.6,0.6) -- (0.6,-0.6);

  \draw[line width=0.6mm] (-0.6,0.8) -- (-0.6,0.6);
  \draw[line width=0.6mm] (0.6,0.8) -- (0.6,0.6);
  \draw[line width=0.6mm] (-0.6,-0.8) -- (-0.6,-0.6);
  \draw[line width=0.6mm] (0.6,-0.8) -- (0.6,-0.6);

  \node[above left] at (-0.6,0.6) {\scriptsize 3};
  \node[above right] at (0.6,0.6) {\scriptsize 4};
  \node[below left] at (-0.6,-0.6) {\scriptsize 2};
  \node[below right] at (0.6,-0.6) {\scriptsize 5};

\node at (0.2,0.4) {$\scriptstyle 2$};
\node at (0.2,-0.4) {$\scriptstyle 3$};
\node at (0.7,0.05){$\scriptstyle 2$};
\end{tikzpicture}
}

\newcommand{\SBii}[1][]{
\begin{tikzpicture}[scale=0.7,#1]
  \draw[-,line width=0.35mm] (0,0) -- (0.6,0.6);
  \draw[-,line width=0.35mm] (0,0) -- (-0.6,0.6);
  \draw[-,line width=0.35mm] (0,0) -- (-0.6,-0.6);
  \draw[-,line width=0.35mm] (0,0) -- (0.6,-0.6);

  \draw[line width=0.6mm] (-0.6,0.8) -- (-0.6,0.6);
  \draw[line width=0.6mm] (0.6,0.8) -- (0.6,0.6);
  \draw[line width=0.6mm] (-0.6,-0.8) -- (-0.6,-0.6);
  \draw[line width=0.6mm] (0.6,-0.8) -- (0.6,-0.6);

  \draw[-,line width=0.3mm] (-0.6,0.6) -- (-0.6,-0.6);

  \draw[-,line width=0.3mm] (0.6,0.6) -- (0.6,-0.6);

  \node[above left] at (-0.6,0.6) {\scriptsize 3};
  \node[above right] at (0.6,0.6) {\scriptsize 4};
  \node[below left] at (-0.6,-0.6) {\scriptsize 2};
  \node[below right] at (0.6,-0.6) {\scriptsize 5};

\node at (0.2,0.4) {$\scriptstyle 1$};
\node at (0.2,-0.4) {$\scriptstyle 2$};
\node at (0.7,0.05){$\scriptstyle 3$};
\node at (-0.7,0.05){$\scriptstyle 1$};
\end{tikzpicture}
}

\newcommand{\SBiii}[1][]{
\begin{tikzpicture}[scale=0.7,#1]
  \draw[-,line width=0.35mm] (0,0) -- (-0.6,0.6);
  \draw[-,line width=0.35mm] (0,0) -- (0.6,-0.6);

  \draw[-,line width=0.3mm] (-0.6,0.6) -- (-0.6,-0.6);

  \draw[-,line width=0.3mm] (0.6,0.6) -- (0.6,-0.6);

  \draw[line width=0.6mm] (-0.6,0.8) -- (-0.6,0.6);
  \draw[line width=0.6mm] (0.6,0.8) -- (0.6,0.6);
  \draw[line width=0.6mm] (-0.6,-0.8) -- (-0.6,-0.6);
  \draw[line width=0.6mm] (0.6,-0.8) -- (0.6,-0.6);

  \node[above left] at (-0.6,0.6) {\scriptsize 3};
  \node[above right] at (0.6,0.6) {\scriptsize 4};
  \node[below left] at (-0.6,-0.6) {\scriptsize 2};
  \node[below right] at (0.6,-0.6) {\scriptsize 5};

\node at (0,0.15) {$\scriptstyle 1$};
\node at (-0.7,0.05) {$\scriptstyle 2$};
\node at (0.7,0.05){$\scriptstyle 4$};
\end{tikzpicture}
}

\newcommand{\CoBi}[1][]{
\begin{tikzpicture}[scale=0.7,#1]
  \draw[-,line width=0.35mm] (0,0) -- (0.6,0.6);
  \draw[-,line width=0.35mm] (0,0) -- (-0.6,0.6);
  \draw[-,line width=0.35mm] (0,0) -- (-0.6,-0.6);
  \draw[-,line width=0.35mm] (0,0) -- (0.6,-0.6);

  \draw[line width=0.6mm] (-0.6,0.8) -- (-0.6,0.6);
  \draw[line width=0.6mm] (0.6,0.8) -- (0.6,0.6);
  \draw[line width=0.6mm] (-0.6,-0.8) -- (-0.6,-0.6);
  \draw[line width=0.6mm] (0.6,-0.8) -- (0.6,-0.6);

  \node[above left] at (-0.6,0.6) {\scriptsize 3};
  \node[above right] at (0.6,0.6) {\scriptsize 4};
  \node[below left] at (-0.6,-0.6) {\scriptsize 2};
  \node[below right] at (0.6,-0.6) {\scriptsize 5};
\end{tikzpicture}
}

\newcommand{\CoBii}[1][]{
\begin{tikzpicture}[scale=0.7,#1]
  \draw[-,line width=0.35mm] (0,0) -- (0.6,0.6);
  \draw[-,line width=0.35mm] (0,0) -- (-0.6,0.6);
  \draw[-,line width=0.35mm] (0,0) -- (-0.6,-0.6);
  \draw[-,line width=0.35mm] (0,0) -- (0.6,-0.6);

  \draw[-,line width=0.3mm] (0.6,0.6) -- (0.6,-0.6);

  \draw[line width=0.6mm] (-0.6,0.8) -- (-0.6,0.6);
  \draw[line width=0.6mm] (0.6,0.8) -- (0.6,0.6);
  \draw[line width=0.6mm] (-0.6,-0.8) -- (-0.6,-0.6);
  \draw[line width=0.6mm] (0.6,-0.8) -- (0.6,-0.6);

  \node[above left] at (-0.6,0.6) {\scriptsize 3};
  \node[above right] at (0.6,0.6) {\scriptsize 4};
  \node[below left] at (-0.6,-0.6) {\scriptsize 2};
  \node[below right] at (0.6,-0.6) {\scriptsize 5};

\node at (0.2,0.4) {$\scriptstyle 3$};
\node at (0.2,-0.4) {$\scriptstyle 4$};
\node at (0.7,0.05){$\scriptstyle 1$};
\end{tikzpicture}
}

\newcommand{\CoBiii}[1][]{
\begin{tikzpicture}[scale=0.7,#1]
  \draw[-,line width=0.35mm] (0,0) -- (0.6,0.6);
  \draw[-,line width=0.35mm] (0,0) -- (-0.6,0.6);
  \draw[-,line width=0.35mm] (0,0) -- (-0.6,-0.6);
  \draw[-,line width=0.35mm] (0,0) -- (0.6,-0.6);

  \draw[-,line width=0.3mm] (0.6,0.6) -- (0.6,-0.6);

  \draw[line width=0.6mm] (-0.6,0.8) -- (-0.6,0.6);
  \draw[line width=0.6mm] (0.6,0.8) -- (0.6,0.6);
  \draw[line width=0.6mm] (-0.6,-0.8) -- (-0.6,-0.6);
  \draw[line width=0.6mm] (0.6,-0.8) -- (0.6,-0.6);

  \node[above left] at (-0.6,0.6) {\scriptsize 3};
  \node[above right] at (0.6,0.6) {\scriptsize 4};
  \node[below left] at (-0.6,-0.6) {\scriptsize 2};
  \node[below right] at (0.6,-0.6) {\scriptsize 5};

\node at (0.2,0.4) {$\scriptstyle 2$};
\node at (0.2,-0.4) {$\scriptstyle 3$};
\node at (0.7,0.05){$\scriptstyle 2$};
\end{tikzpicture}
}

\newcount\hh
\newcount\mm
\mm=\time
\hh=\time
\divide\hh by 60
\divide\mm by 60
\multiply\mm by 60
\mm=-\mm
\advance\mm by \time
\def\hhmm{\number\hh:\ifnum\mm<10{}0\fi\number\mm}

\definecolor{vert}{rgb}{0,0.45,0}
\definecolor{rouge}{rgb}{0.89,0.04,0.36}
\definecolor{oorange}{RGB}{148,76,5}
\definecolor{MyGray}{gray}{0.6}
\definecolor{MyRed}{RGB}{212,42,42}

\newcommand{\auteur}[3]{%
\noindent
\begin{minipage}[t]{.45\textwidth}
\begin{flushright}
\textsc{#1} \\
{\footnotesize\textsf{#2}}
\end{flushright}
\end{minipage}
\qquad
\begin{minipage}[t]{.45\textwidth}
#3
\end{minipage}%
}

\begin{document}
\thispagestyle{empty}

\vspace*{-2cm}

\begin{center}
\begin{doublespace}
\begin{Large}
{\bfseries Diagrammatic bases from stratified normalization}
\end{Large}

\vskip-0.4pt

\begin{large}
{\scshape Stéphane Gaussent -- Zuan Liu -- Philippe Malbos}
\end{large}
\end{doublespace}

\vskip+20pt

\begin{small}\begin{minipage}{14cm}
\noindent\textbf{Abstract --}
In this article, we introduce a new rewriting method for computing hom-bases of diagrammatic categories. Our approach is based on stratified presentations, obtained by partitioning an equational presentation into strata according to the rewriting behavior of its relations. We establish modularity principles for termination and confluence in stratified rewriting systems with coefficients in a ring. We then introduce a completion procedure that transforms a stratified presentation into a normalized one, in which mixed interactions between successive strata are absorbed into the normalized rewriting rules, thereby inducing modularity of confluence. This leads to a stratified hom-basis theorem for diagrammatic categories. We illustrate the method by applying it to the $q$-Schur category, for which we construct a new hom-basis.

\medskip

\smallskip\noindent\textbf{Keywords --} Diagrammatic rewriting, $q$-Schur category, Monoidal categories.

\smallskip\noindent\textbf{M.S.C. 2020 --} 18N30, 68Q42, 17B10, 17B37.

\end{minipage}
\end{small}

\begin{small}\begin{minipage}{12cm}
\renewcommand{\contentsname}{}
\setcounter{tocdepth}{0}
\tableofcontents
\end{minipage}
\end{small}
\end{center}

\section{Introduction}

The determination of explicit linear bases for morphism spaces in diagrammatic categories is a central problem in representation theory and higher algebra. Such bases provide concrete descriptions of morphisms, make computations effective, and often reveal structural properties of the categories under consideration. This problem appears prominently in many categories arising from categorification. For instance, the $q$-Schur category, as presented by Brundan, admits distinguished bases for its morphism spaces, closely related to classical bases of $q$-Schur algebras and to bases in $U_q(\mathfrak{gl}_n)$-web categories~\cite{Brundan2025}.
Diagrammatic integral bases for the Hom-spaces of affine and cyclotomic web and $q$-Schur categories have been constructed in~\cite{SongWang25}.
Another fundamental example is provided by Soergel bimodules: Libedinsky's light leaves basis, and subsequently the double leaves basis, give explicit combinatorial bases for morphism spaces between Bott--Samelson bimodules. Similar questions arise in the diagrammatic categorification of quantum groups, where understanding spaces of $2$-morphisms is an essential part of the theory.

A natural way to construct such bases from presentations by generators and relations is to orient the defining relations into rewriting rules and study the resulting rewriting system. 
When the system is terminating and confluent, normal forms provide canonical representatives and, under suitable monicity assumptions in the linear setting, yield bases of the corresponding hom-spaces. 
This principle underlies Gröbner--Shirshov basis method, originating in the work of Buchberger~\cite{Buchberger65} and Shirshov~\cite{Shirshov}, and has subsequently been extended to many algebraic structures, including operads~\cite{DotsenkoKhoroshkin10,DotsenkoKhoroshkin13,MalbosRen23}.

In the categorical and higher-dimensional setting, rewriting has developed into a diagrammatic theory adapted to presentations of monoidal categories and $2$-categories. 
Elias~\cite{Elias22} developed a version of Bergman's diamond lemma~\cite{Bergman78} for monoidal categories presented by generators and relations, with applications to Hecke-type algebras and categories. In a higher-dimensional linear setting, Dupont~\cite{Dupont22} developed rewriting modulo isotopy relations for pivotal linear $2$-categories and used it to compute bases of spaces of $2$-cells. 
More recently, Schelstraete developed rewriting modulo for linear $2$-categories, and applied it to a basis theorem for a super-$2$-category related to odd Khovanov homology~\cite{Schelstraete26} and to categories of affine Brauer type~\cite{BarbierSchelstraete26}. Diagrammatic rewriting has now developed into an active line of research, with several complementary techniques and formalisms; see, for instance,~\cite{Polybook2025}.

Rewriting in linear monoidal categories and, more generally, in linear $2$-categories can be formalized using higher-dimensional rewriting structures such as linear polygraphs \cite{GuiraudHoffbeckMalbos19,Alleaume18,Dupont22,Schelstraete26}. In the approach adopted here, rewriting is formulated using $3$-prepolygraphs, whose underlying $2$-dimensional structure is a precategory rather than a strict $2$-category. Rewriting acts on morphisms, viewed as $2$-cells, while termination and confluence control the existence and uniqueness of normal forms.

These rewriting methods have been used to construct hom-bases in several diagrammatic settings. In particular, rewriting without relying on global monomial orders was developed for associative algebras in~\cite{GuiraudHoffbeckMalbos19} and subsequently adapted to diagrammatic categories, yielding constructive hom-bases for the affine oriented Brauer category~\cite{Alleaume18,Dupont22}, the Khovanov--Lauda--Rouquier category~\cite{Dupont21}, the loop Hecke algebras~\cite{JanssensLacabanneSchelstraeteVaz25}, and the graded $2$-category of $\mathfrak{gl}_2$-foams~\cite{Schelstraete26}.

Despite these developments, constructing convergent diagrammatic presentations remains difficult for two main reasons. The first concerns the application of the critical branching lemma, which characterizes confluence of the whole rewriting system in terms of local confluence on critical branchings. The first difficulty is combinatorial: in the setting of rewriting in $2$-categories, the analysis of critical branchings gives rise to intricate families of confluence obstructions~\cite{GuiraudMalbos09,GuiraudMalbos12mscs,Mimram14}. The second difficulty concerns termination, which requires the construction of a well-founded measure compatible with the orientation of all rewriting rules.

The purpose of this article is to address these two difficulties through a stratified approach to diagrammatic rewriting. Instead of treating all rewriting rules simultaneously, we partition them into strata according to their rewriting behavior. On the termination side, we introduce \emph{stratified termination functions}, which allow termination to be proved stratum by stratum by combining different termination methods, in particular derivation-based and monomial-order-based arguments. On the confluence side, we introduce a normalization procedure in which the interactions between a new stratum and the preceding ones are absorbed into normalized rewriting rules. This makes confluence modular with respect to the stratification and reduces its verification to confluence within the individual strata.

A further motivation for the present work comes from the coefficient rings occurring naturally in diagrammatic algebra. Many diagrammatic algebras are defined over a commutative ring, and it is therefore desirable to perform rewriting directly over the ground ring, without passing to a field extension in which the relevant coefficients become invertible. Rewriting with coefficients in a ring has already been developed in Gröbner basis theory and reduction rings, notably in the works of Buchberger~\cite{Buchberger84}, Möller~\cite{Moller88}, and Stifter~\cite{Stifter87,Stifter91}.

The absence of inverses requires a refinement of the usual reduction relation, since the applicability of a rewriting rule depends on divisibility properties of coefficients. In the higher-dimensional setting considered here, this arithmetic becomes part of the diagrammatic rewriting process itself: coefficient reduction affects both admissible rewriting steps and the structure of local and critical branchings. We therefore develop rewriting directly over the ground ring for linear monoidal categories and $2$-categories presented by $3$-prepolygraphs, together with the corresponding notions of reduction, critical branching, and completion.

As an application, we study Brundan’s presentation of the $q$-Schur category from~\cite[Thm.~2]{Brundan2025}. Applying the normalization procedure to this presentation, we construct a new hom-basis for the $q$-Schur category, called the \emph{normal form basis}. In Section~\ref{S:PictureCalculus}, we compare this basis with other known hom-bases for the $q$-Schur category, including the standard, canonical, and codeterminant bases.

\bigskip

The paper is organized as follows.
In Section~\ref{S:DiagrammaticRewriting}, we introduce a notion of diagrammatic rewriting over a commutative ring, formulated for linear monoidal categories and $2$-categories using linear $3$-prepolygraphs. We then recall the notions of termination, confluence, and normal forms, and explain under the relevant monicity assumptions how convergent presentations give rise to hom-bases. We also develop the rewriting-theoretic machinery used throughout the article.

Section~\ref{S:StratifiedPrepolygraphsTerminationStratification} introduces stratified $3$-prepolygraphs modulo and stratified termination functions. Theorem~\ref{T:MainTheoremOnTermination} states that any $3$-prepolygraph modulo admitting a stratified termination function is terminating modulo. This approach provides a modular proof of termination, carried out stratum by stratum and allowing different strata to be controlled by different termination methods, in particular derivation-based and monomial-order-based arguments. As an illustration, we revisit the presentation of Khovanov--Lauda--Rouquier algebras and obtain a streamlined proof of termination compared with that of~\cite{Dupont21}.

Section~\ref{S:HomBasesStratifiedNormalization} introduces a normalized completion procedure for stratified presentations and shows that, under suitable compatibility assumptions, confluence becomes a modular property that can be analysed stratum by stratum. Combined with the monicity assumptions required for linear normal forms, this leads to our main hom-basis theorem:
\begin{quote}
\noindent {\bf Theorem~\ref{T:MainTheorem}.}
\emph{
Let $\Cr$ be a linear monoidal category presented by a stratified $3$-prepolygraph $F^\ast X$.
If there exists an $\strat{1}X$-compatible monomial order $\preccurlyeq$ such that every stratum $\strat{i}\NP{X}$ of the associated $\preccurlyeq$-normalization $F^\ast\NP{X}$ is left-monomial and confluent, then $\Nf_m(\NP{X})$ forms a hom-basis of $\Cr$.
}
\end{quote}
Subsection~\ref{SS:HomBasisComputationProcedure} develops a completion procedure for computing hom-bases from stratified presentations whenever the procedure terminates.

In Section~\ref{S:NormFormHomBasisqSchurCategories}, we apply Theorem~\ref{T:MainTheorem} to Brundan’s presentation of the $q$-Schur category. We construct a stratified presentation, prove its termination, perform its normalization, and derive a new hom-basis, called the \emph{normal form basis}.

Finally, Section~\ref{S:PictureCalculus} gives an explicit diagrammatic description of the normal form basis. We develop a picture calculus for the normal forms associated with the stratified presentation and use it to compare the resulting basis with the standard, canonical, and codeterminant bases of the $q$-Schur category.

\section{Hom-bases by diagrammatic rewriting}
\label{S:DiagrammaticRewriting}

This section recalls the foundations of rewriting for diagrammatic categories formulated in the framework of linear $2$-categories. Higher-dimensional rewriting originates from a series of works on equational presentations of higher categories~\cite{Street76,Power91,Street87,Burroni93}. More recently, these methods have been developed through several approaches, depending on whether rewriting is performed modulo the interchange relations~\cite{GuiraudMalbos09,Alleaume18,Dupont22}, or whether these relations are made explicit as rewriting rules within the system itself~\cite{ForestMimram2022,Schelstraete26}.

\subsection{(Pre)polygraphs and diagrammatic rewriting}

We shall work within the framework of precategories~\cite{ForestMimram2022}, which generalize the structure of sesquicategories~\cite{Street96} to arbitrary dimensions.
Although we focus here on rewriting in two-dimensional precategories, most of the constructions developed below extend to arbitrary higher dimensions.

\subsubsection{Linear categories}
Let $\Cr$ be a $2$-category. 
For each $0\leq k\leq 2$, we use $\Cr_k$ to denote both the set of $k$-cells of $\Cr$ and the
corresponding $k$-category.
For $f \in \Cr_k$ and $i<k$, we write $s_i(f)$ and 
$t_i(f)$ for the $i$-source and $i$-target of $f$.
We further set $sf\coloneq s_{k-1}(f)$ and $tf\coloneq t_{k-1}(f)$ for $k>0$.

For $k>0$, a \emph{$k$-sphere} of $\Cr$ is a pair $(f,g)$ of parallel $k$-cells, that is, such that $sf=sg$ and $tf=tg$. 
For $f\in\Cr_k$, we denote by $f^{\partial}:=(sf,tf)$ the \emph{boundary $(k-1)$-sphere} of $f$.
The set of $k$-spheres of $\Cr$ is denoted by $\Sph_k(\Cr)$.
A \emph{(cellular) extension} of $\Cr_k$ is a set $X$ equipped with a map $\rho : X \fl \Sph_k(\Cr)$.
We denote by $\Exte{\Cr_k}$ the set of extensions of $\Cr_k$.
If $X$ and $Y$ are two such extensions, we denote their disjoint union by $XY$.

By regarding the elements of an extension $X$ of $\Cr_k$ as generating $(k+1)$-cells with sources and targets in $\Cr$, and by forming all their formal composites, one obtains the \emph{free $(k+1)$-category generated by $X$ over~$\Cr$}, denoted by $\Cr[X]$. The \emph{size} of a $(k+1)$-cell $f$ in $\Cr[X]$ is the number of occurrences of generating $(k+1)$-cells from $X$ in $f$, denoted by $\abs{f}$.
The quotient of $\Cr_k$ by $X$, denoted by $\Cr_k/X$, is the $k$-category obtained from $\Cr$ by identifying $s(\alpha)$ with $t(\alpha)$ for every $\alpha\in X$.

Recall that a \emph{linear $2$-category} over a commutative ring $\bk$ with unity is a $2$-category $\Cr$ such that, for every pair of $1$-cells $p,q \in \Cr_1$, the hom-set $\Cr_2(p,q)$ is endowed with a structure of a $\bk$-module, and such that, for all $p,q,r \in \Cr_1$, the composition functor 
\begin{eqn}{equation}
\label{E:starUnComposition}
\circ_1: \Cr_2(p,q)\otimes_{\bk} \Cr_2(q,r)\to \Cr_2(p,r)
\end{eqn}
is $\bk$-linear.
A \emph{linear $2$-precategory} is defined in the same way, except that the interchange law, or equivalently the functoriality of the composition~\SSS{E:starUnComposition}, is not required to hold.

A \emph{linear monoidal category} is a linear $2$-category with a single $0$-cell. Since this article is concerned with presentations of linear monoidal categories, all the $2$-categorical structures considered below are assumed to have a single $0$-cell. 
Nevertheless, all the constructions and results extend straightforwardly to the multi-object setting.

The $2$-categories and $2$-precategories considered here, as well as their linear counterparts, are generated by \emph{$2$-polygraphs}. 
A \emph{$2$-polygraph} is a datum $X_{\leq 2}=(X_0,X_1,X_2)$ consisting of a directed graph $(X_0,X_1)$, whose elements are called generating $0$-cells and generating $1$-cells, respectively, together with a cellular extension $X_2$ of the free category $X_1^\ast$ generated by this graph. In accordance with the convention above, we assume throughout that $X_0$ is a singleton.

We denote by $\cat{X_2}$, $\lcat{X_2}$, $\pcat{X_2}$, and $\plcat{X_2}$ the $2$-category, linear $2$-category, $2$-precategory, and linear $2$-precategory freely generated by $X$, respectively. These structures have the same underlying $1$-category, namely $X_1^\ast$.

A $2$-cell of $\pcat{X_2}$ is called a \emph{$2$-monomial}.
A $2$-cell of $\plcat{X_2}$ of the form $a h$, where $h$ is a $2$-monomial and $a\in\bk\setminus\{0\}$, is called a \emph{$2$-term}.
Every $2$-monomial $h$ admits a unique decomposition $h = h_1 \circ_1 \cdots \circ_1 h_{\abs{h}}$, where each $h_i$ is a $0$-composition $h_i = u_i \varphi_i v_i$, with $\varphi_i \in X_2$ and $u_i, v_i \in X_1^\ast$.

In the sequel, we will use a diagrammatic representation of $2$-cells. 
The $2$-monomial $h_i$ and the composition $h_i\circ_1 h_{i+1}$ will be respectively denoted by the following diagram
\[
h_i\:=\: \twocell{(id*0dot*0id)*1(ui*0 varphii *0 vi)*1(id*0dot*0id)},
\quad\text{and}\quad
h_i\circ_1h_{i+1} \:=\: \twocell{dot*1h-i1*1dot*1h-i*1dot}\,.
\]

The $2$-monomials of $\pcat{X_2}(p,q)$ form a basis of the $\bk$-modules $\plcat{X_2}(p,q)$ of $2$-cells, for all $1$-cells $p$ and $q$.
Equivalently, every $2$-cell $f$ of $\plcat{X_2}$ admits a unique decomposition $f=\sum_ia_i h_i$, where the $h_i$ are pairwise distinct $2$-monomials and $a_i\in \bk\setminus\{0\}$. 
The coefficient $a_i$ of the monomial $h_i$ in the decomposition of $f$ is denoted by $\cf{h_i}{f}$.
The set of $2$-terms $\{a_ih_i\}_i$ forms the \emph{support} of~$f$, denoted by $\supp(f)$, while the set of $2$-monomials $\{h_i\}_i$ forms its \emph{monomial support}, denoted by $\suppm(f)$.

The $2$-category $\cat{X_2}$ (resp. linear $2$-category $\lcat{X_2}$) is obtained as the quotient of the $2$-precategory~$\pcat{X_2}$ (resp. linear $2$-precategory $\plcat{X_2}$) by the \emph{interchange extension} $\Int(X_2)$.
This extension consists of $3$-generators
\begin{eqn}{equation}
\label{E:InterchangeRules}
\alpha_{\varphi,u,\psi} : (\varphi us_1(\psi))\circ_1(t_1(\varphi)u\psi)
\:\fl\:
(s_1(\varphi)u\psi)\circ_1 (\varphi ut_1(\psi)),
\end{eqn} 
for all $\varphi,\psi\in X_2$ and $u\in\cat{X_1}$, diagrammatically represented as
\[
\alpha_{\varphi,u,\psi} : 
\twocell{(dot*0 id *0 psi)*1(dot *0 u *0 dot)*1(varphi*0 id *0 dot)}
\fl
\twocell{(varphi*0 id *0 dot)*1(dot *0 u *0 dot)*1 (dot *0 id *0 psi)}\,.
\]

A \emph{(one-hole) context}, or simply a \emph{context}, of the $2$-precategory $\plcat{X_2}$ is a pair $(\square,C)$, denoted by $C[\square]$, or simply by $C$, where $\square$
is a $1$-sphere of $\cat{X_1}$ and $C$ is a $2$-term $ag$, with $a\in\bk\setminus\{0\}$ and
$g\in\pcat{(X_2\sqcup \{\square\})}$ containing exactly one occurrence of the generating $2$-cell~$\square$.
Every context can be written uniquely in the form
\[
C[\square] = a\bigl(h\circ_1 u\,\square\,v\circ_1 k\bigr),
\]
where $a\in \bk$, $h,k\in\pcat{X_2}$ and $u,v\in\cat{X_1}$.
The \emph{size} of $C$ is defined by $\abs{C}=\abs{h}+\abs{k}$.
A context $C[\square]$ is \emph{compatible} with
$f\in\pcat{X_2}$ if $\square=f^\partial$.
A context of the form $C=au\square v$ is called a \emph{whisker context}.

Similarly, a \emph{two-hole context} of $\plcat{X_2}$ is a triple
$(\square_1,\square_2,D)$, denoted by $D[\square_1,\square_2]$, where $\square_1$ and $\square_2$ are
$1$-spheres of $\cat{X_1}$ and $D$ is a $2$-term $ag$,
with $a\in\bk\setminus\{0\}$ and
$g\in\pcat{(X_2\sqcup\{\square_1,\square_2\})}$ containing
exactly one occurrence of each of the $2$-generators $\square_1$ and $\square_2$.

The category of contexts is denoted by $\Cont(\plcat{X_2})$. Its objects are the $2$-terms of $\plcat{X_2}$, and a morphism from $f$ to $g$ is a compatible context $C$ such that $C[f]=g$. 
Composition is given by the nesting of contexts: for contexts $C$ and $D$, we set $C_1C_2[\square]\coloneq C_1[C_2[\square]]$.
When $C[f]=g$, we say that \emph{$f$ divides $g$} and write $f\!\mid\!g$.
In a same way we define the category of contexts on $2$-monomials, denoted by $\Cont(\pcat{X_2})$

\subsubsection{Three-dimensional prepolygraphs}
\label{SSS:LinearPolygraphs}
An equational presentation of a linear monoidal category can be studied from a rewriting perspective by orienting its defining relations. This leads to the notion of a \emph{linear $3$-prepolygraph}, defined as a pair $X=(X_{\leq 2},X_3)$, where $X_{\leq 2}$ is a $2$-polygraph and $X_3\in\Exte{\plcat{X_2}}$.

A linear $3$-prepolygraph is  \emph{modulo} if it is defined as $(X_{\leq 2},X_3\sqcup E)$ where $E\in\Exte{\plcat{X_2}}$ is symmetric, it is then denoted by $(X,E)$.
For $2$-cells $f,g$ in $\plcat{X_2}$, we denote $f\sim_E g$ if there is a $3$-cell in the free $3$-category $\plcat{E}$ with source $f$ and target $g$. 
We denote ${}_E X_E$ the extension of $\plcat{X_2}$ consisting of spheres $(f,g)$ such that 
$f \sim_E f' \xto[X]{} g' \sim_E g$.
For instance, when $E$ is the symmetric closure of the extension $\Int(X_2)$, we recover the structure of linear $3$-polygraph~\cite[Sec. 18.3]{Polybook2025}.

We denote by $\plcat{X_3}$ the free linear $3$-precategory generated by $X$, and by $\cl{X} := \plcat{X_2}/(X_3\sqcup\Int(X_2))$ the linear $2$-category presented by~$X$.
Since all the $2$-categorical structures considered here have a single $0$-cell, $\cl{X}$ can equivalently be regarded as a linear monoidal category.
Two linear $3$-prepolygraphs are \emph{Tietze equivalent} if they present isomorphic linear $2$-categories.

A linear $3$-prepolygraph is \emph{left-term} if the source of each of its $3$-generators is a $2$-term. Up to Tietze equivalence, any linear $3$-prepolygraph may be assumed to be left-term.
Unless otherwise stated, all $3$-prepolygraphs considered in what follows are assumed to be linear and left-term. 

The elements of the extension $X_3$ are called \emph{$X$-rewriting rules}, denoted by $\alpha : s\alpha \fl t\alpha$, and diagrammatically written as
\[
\alpha : 
a\; \twocell{dot*1h*1dot}
\fl 
\sum_i a_i \;\twocell{dot*1 h-i *1 dot} \;,
\]
when $s\alpha=a h$ and $t\alpha=\sum_i a_i h_i$.
We assume throughout that $h\notin\suppm(t\alpha)$.
Whenever $a$ is invertible, we normalize the rule by replacing $ah\fl t\alpha$ with $h\fl a^{-1}t\alpha$.
Thus, the coefficient $a$ is either $1$ or a nonzero noninvertible element of $\bk$.
We call $a$ the \emph{left coefficient} and $h$ the \emph{left monomial} of $\alpha$, and write $\lc(\alpha)=a$ and $\lm(\alpha)=h$.
Finally, we set $\partial\alpha:=s\alpha-t\alpha$.

\subsubsection{Rings with remainder system}
When we rewrite over a ring, we have to take into account the interaction of the context of rewriting with that divisibility properties of the ring in order to guaranty a decreasing reduction order and to do not introduce any cycle of reductions just induced by this interaction and not due to the rule system itself~\cite{Buchberger84}.

We define a \emph{normalized remainder system} (NRS) on a commutative integral domain $\bk$ as a family
$\Rr = \big(\Rem(a)\big)_{a\in\bk\setminus\{0\}}$, where $\Rem(a)\subseteq\bk$, such that, for every $a\in\bk\setminus\{0\}$, one has $0\in\Rem(a)$ and the canonical map
\[
\Rem(a)\longrightarrow\bk/(a),
\]
sending $r$ to $r+(a)$ is bijective.
Equivalently, $\Rem(a)$ is a fixed set of representatives in $\bk$ for the residue classes modulo the ideal $(a)$.
For every $b\in\bk$, there are therefore unique elements $\operatorname{quo}_{a}(b)$ in $\bk$ and $\rem{a}{b}$ in $\Rem(a)$
satisfying
\[
b=a\quo{a}{b} + \rem{a}{b}.
\]

These maps satisfy $\rem{a}{b}=0$ if and only if $b\in(a)$ and $\operatorname{quo}_{a}(b)=0$ if and only if $b\in\Rem(a)$. We have also $\rem{a}{b+ca}=\rem{a}{b}$ for every $c\in\bk$.
In particular, for every unit $u\in\bk$, we have 
$\Rem(u)=\{0\}$,
$\rem{u}{b}=0$,
and $\operatorname{quo}_{u}(b)=u^{-1}b$.

A NRS is \emph{decreasing} if there exists a well-order \(\preccurlyeq_{\bk}\) on $\bk$, with $0$ as its least element, such that
\[
\rem{a}{b}\ne b
\quad\text{implies}\quad
\rem{a}{b}\prec_{\bk}b,
\]
for all $b\in\bk$ and $a\in\bk\setminus\{0\}$.
Such a system can be constructed by fixing a well-order \(\preccurlyeq_{\bk}\) on $\bk$ with least element \(0\) and setting
\begin{eqn}{equation}
\label{E:DecreasingNRS}
\rem{a}{b}
=
\min_{\preccurlyeq_{\bk}}\bigl(b+(a)\bigr),
\qquad
a\quo{a}{b} = b-\rem{a}{b},
\quad\text{and}\quad
\Rem(a)
=
\{\rem{a}{b}\mid b\in\bk\}.
\end{eqn}
The resulting system is decreasing because $b\in b+(a)$. 
Consequently, there is no infinite decreasing sequence $(b_i)_i$, with
$b_{i+1}=\rem{a_i}{b_i}$ for some $a_i\in\bk$.

The existence of a decreasing NRS is weaker than Euclideanity. A Euclidean structure controls remainders relative to the divisor, whereas a decreasing NRS only provides well-founded normalization inside each residue class.

\subsubsection{Examples}
\label{Examples:NRS}
When $\bk$ is a field, every nonzero element is a unit. Hence the unique normalized remainder system is given by $\Rem(a)=\{0\}$, $\rem{a}{b}=0$,
and $\operatorname{quo}_{a}(b)=a^{-1}b$. 
It is decreasing for any well-order on $\bk$ having $0$ as its least element.

When $\bk$ is an Euclidean domain equipped with a Euclidean function
$\varphi:\bk\setminus\{0\}\fl\Nb$.
Choose a well-order $\preccurlyeq_{\bk}$ with least element $0$ such that, for nonzero $a,b$,
$\varphi(a)<\varphi(b)$ implies that $a\prec_{\bk}b$.
Such an order can be constructed by ordering the nonzero elements first by their Euclidean value and then by a fixed well-order within each level.
Euclidean division ensures that every coset modulo $(a)$ contains either $0$ or an element of Euclidean value strictly smaller than $\varphi(a)$. Therefore, we set $\rem{a}{b}=0$ or 
$\varphi(\rem{a}{b})\bigr)<\varphi(a)$.

For example, when $\bk=\mathbb{Z}$, for every $a\in\mathbb{Z}\setminus\{0\}$, we set
$\Rem(a) = \{0,1,\ldots,|a|-1\}$ and for every $b\in\bk$, we define the remainder $\rem{a}{b}$ and quotient $\quo{a}{b}$ by Euclidian division as follows
\[
 \rem{a}{b} = b-|a| \left\lfloor\frac{b}{|a|}\right\rfloor,
\qquad\text{and}\qquad
a\quo{a}{b} = b-\rem{a}{b}.
\]
This NRS is decreasing for the well-order
$0\prec \ldots \prec n \prec \ldots\prec -1\prec \ldots \prec -n \prec \ldots$.

When $\bk=\Zb[q,q^{-1}]$, for $f=\sum_i a_iq^i$, we set $H(f)=\sum_i(1+|i|)|a_i|$.
We order $\bk$ first by $H$, breaking ties lexicographically
on $(a_0,a_1,a_{-1},a_2,a_{-2},\ldots)$, with the
coefficient order $0<1<-1<2<-2<\cdots$.
Since $H$ has finite sublevel sets, this defines a
well-order $\preccurlyeq_\bk$ with least element $0$.
We then define a decreasing NRS by using definitions~\eqref{E:DecreasingNRS}.

\subsubsection{Rewriting steps}
\label{SSS:RewritingStepsOverRing}
From now on, we assume that $\bk$ is a commutative integral domain equipped with a NRS $\Rr$ with associated quotient and remainder maps $\operatorname{quo}$ and $\Rem$.

Let $X$ be a $3$-prepolygraph. 
An \emph{$X$-rewriting step} is a $3$-cell of $\plcat{X_3}$ of size $1$,  of the form
\begin{eqn}{equation}
\label{E:RewritingStepsOverRing}
\sigma = aC[\alpha]+1_f,
\end{eqn}
where $a\in\bk\setminus\{0\}$, $\alpha\in X_3$, $f\in\plcat{X_2}$, and $C\in\Cont(\pcat{X_2})$, subject to the following condition.
Writing
\[
f=bC[\lm(\alpha)]+f'
\quad\text{and}\quad
C[\lm(\alpha)]\notin\suppm(f'),
\]
we require $\operatorname{quo}_{\operatorname{lc}(\alpha)}(b)=0$.
The $3$-cell $\sigma$ is diagrammatically written
\[
\sigma
\;:\;
a
\cdot
\twocell{dot*1 k2 *1(id*0dot*0 id)*1(u*0 salpha *0 v)*1(id*0dot*0 id)*1 h2*1 dot}
+
\twocell{dot*1f*1dot}
\;\fl\;
a
\cdot
\twocell{dot*1 k2 *1(id*0dot*0 id)*1(u*0 talpha *0 v)*1(id*0dot*0 id)*1 h2*1 dot}
+
\twocell{dot*1f*1dot}\;.
\]
Finally, when $f=0$, we call $\sigma$ \emph{left-term}.

\begin{remark}
\label{R:ClassicalLeftMonomialSteps}
When $X$ is left-monomial, the condition in the definition of an $X$-rewriting step $\sigma$ \eqref{E:RewritingStepsOverRing} reduces to $C[\lm(\alpha)]\notin\suppm(f)$.
Indeed, put $h=C[\lm(\alpha)]$ and let $b=\cf{h}{f}$.
Since $X$ is left-monomial, we have $\lc(\alpha)=1$.
It follows from $\Rem(1)=\{0\}$ that $\quo{1}{b}=b$ and $\rem{1}{b}=0$.
By definition, the $3$-cell $\sigma$ is an $X$-rewriting step if and only if $\quo{1}{b}=b=0$, or equivalently, if and only if $ C[\lm(\alpha)]\notin\suppm(f)$.

In particular, over a field, this recovers the standard notion of a rewriting step for a left-monomial linear polygraph, as formulated in the associative-algebra setting in~\cite[Sec.~3.1]{GuiraudHoffbeckMalbos19} and in the higher-dimensional setting in~\cite{Alleaume18,Dupont21,Schelstraete26}.
\end{remark}

\subsubsection{Example}
Let $\bk=\mathbb{Z}$ be an NRS as in \SSS{Examples:NRS}.
Let consider the $3$-prepolygraph $X$ with two
$2$-generators $\twocell{\eta}$ and $\twocell{\tau}$ and the
$3$-generator
$\alpha: 7\,\twocell{\eta}\fl 4\,\twocell{id}$.
Then the $3$-cell
\[
\sigma := C[\alpha]+8h
\: : \:
15h
\fl
4\,\twocell{id}+8h\,,
\]
where
$
C=
\raisebox{-8.75pt}{
\begin{tikzpicture}
\begin{scope}[
    x=10pt,
    y=10pt,
    join=round,
    cap=round,
    thick,
    black,
    solid,
    -
]
\draw
    (0.00,2.00)--(0.00,1.75)
    (1.00,2.00)--(1.00,1.50);
\draw
    (0.00,0.00)--(0.00,-0.25)
    (1.00,0.00)--(1.00,-0.25);
\draw[rounded corners=1pt,fill=white]
    (-0.35,0.75) rectangle (0.35,1.75);
\node at (0.00,1.25) {$\scriptstyle\square$};
\draw[fill=lightgray]
    (1.00,1.50)--(1.00,1.00)--cycle;
\draw
    (0.00,0.75)--(0.00,0.50)
    (1.00,1.00)--(1.00,0.50);
\draw
    (0.00,0.50)--(1.00,0.00)
    (1.00,0.50)--(0.00,0.00);
\end{scope}
\end{tikzpicture}
},
$
and $h=\twocell{(eta*0id)*1tau}$ is not an $X$-rewriting step, since 
 $\rem{7}{8}=1$ and $\quo{7}{8}=1$. However, it decomposes
as a composite of two $3$-cells $\sigma = \tau\circ_2 \rho^{-1}$, where
\[
\tau := 2C[\alpha] +h 
\: : \:
15h
=
2\cdot7h +h
\fl
2\cdot4\,\twocell{id}\,+h
=
8\,\twocell{id}+h\,,
\]
and
\[
\rho := C[\alpha] + h + 4\,\twocell{id}
\: : \:
4\,\twocell{id}\,+8h
=
4\,\twocell{id}\,+7h+h
\fl
4\,\twocell{id}\,+4\,\twocell{id}\,+h
=
8\,\twocell{id}\,+h\,.
\]
Indeed, both $\tau$ and $\rho$ are $X$-rewriting steps, since the
coefficient of $h$ in their additive parts is
$1$, with $\quo{7}{1}=0$ and $\rem{7}{1}=1$.

This observation leads to the following lemma, analogous to \cite[Lem.~3.1.3]{GuiraudHoffbeckMalbos19}, which was established in the setting of rewriting in associative algebras over a field.
\begin{lemma}
\label{L:DecopositionOfSize1Cell}
Let $X$ be a $3$-prepolygraph.
Every $3$-cells $\sigma$ of size $1$ in $\plcat{X_3}$ admits a decomposition $\sigma=\tau\circ_2 \rho^{-1}$, where each of $\tau$ and $\rho$ is either an identity or an $X$-rewriting step.
\end{lemma}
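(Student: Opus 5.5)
A $3$-cell of size $1$ in $\plcat{X_3}$ can be written as $\sigma = aC[\alpha] + 1_f$ with $a\in\bk\setminus\{0\}$, $\alpha\in X_3$, $C\in\Cont(\pcat{X_2})$ and $f\in\plcat{X_2}$; a size-$1$ cell built from an inverse generator is handled symmetrically. The plan follows the example preceding the statement: use the quotient--remainder decomposition of the coefficient of the monomial $h:=C[\lm(\alpha)]$ in $f$ to absorb its divisible part into the multiplier of $\alpha$.

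Write $c=\lc(\alpha)$ and $f=bh+f'$ with $h\notin\suppm(f')$. The NRS gives $b=c\,q+r$ with $q=\quo{c}{b}$ and $r=\rem{c}{b}$.

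If $q=0$, then $\sigma$ is itself an $X$-rewriting step, and we take $\tau=\sigma$ and $\rho$ an identity.

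If $q\neq 0$, set $g:=r h + f' + q\,C[t\alpha]$, which is a $2$-cell of $\plcat{X_2}$, and define
\[
\tau := (a+q)\,C[\alpha] + 1_{rh+f'},
\qquad
\rho := q\,C[\alpha] + 1_{rh+f'+aC[t\alpha]}.
\]
Compute the boundaries using $C[s\alpha]=c\,h$:
\[
s\tau=(a+q)c\,h+rh+f'=ac\,h+bh+f'=s\sigma,
\qquad
t\tau=(a+q)C[t\alpha]+rh+f',
\]
\[
s\rho=q c\,h+rh+f'+aC[t\alpha]=bh+f'+aC[t\alpha]=t\sigma,
\qquad
t\rho=t\tau.
\]
Since both $\tau$ and $\sigma$ equal $\rho$ composed with the single generator $C[\alpha]$ applied with total multiplicity $a+q$ in the linear $3$-precategory, linearity of $\circ_2$ gives $\sigma=\rho^{-1}\circ_2\tau$. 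With the paper's composition convention this reads $\tau\circ_2\rho^{-1}$, matching the example.

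It remains to check that $\tau$ and $\rho$ are rewriting steps, or identities when a multiplier vanishes. If $a+q=0$, then $\tau$ is an identity. Otherwise the coefficient of $h$ in the additive part of $\tau$ is $r$, provided $h\notin\suppm(rh+f')$ beyond that coefficient, which holds. The standing assumption $\lm(\alpha)\notin\suppm(t\alpha)$ implies $h\notin\suppm(C[t\alpha])$, since the context is injective on monomials. Hence the coefficient of $h$ in the additive part of $\rho$ is also $r$. In both cases $\quo{c}{r}=0$ because $r\in\Rem(c)$, so both are $X$-rewriting steps.

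The main obstacle is the injectivity claim: that $C[-]$ sends distinct monomials to distinct monomials, so that $h$ cannot reappear in $C[t\alpha]$. I would justify it by unique decomposition of $2$-monomials in the free precategory $\pcat{X_2}$, where there is no interchange, so the hole's position is fixed. A second, lesser care point is the sign convention for $\circ_2$ and $\rho^{-1}$ in the linear $3$-precategory, which I would check directly against the example given before the statement.
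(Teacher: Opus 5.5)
Your proposal is correct and is essentially the paper's proof: the same cells $\tau=(a+q)C[\alpha]+1_{rh+f'}$ and $\rho=qC[\alpha]+1_{aC[t\alpha]+rh+f'}$, the same injectivity-of-contexts argument giving $h\notin\suppm(C[t\alpha])$, and the same check that the remaining coefficient $r\in\Rem(\lc(\alpha))$ makes each nonidentity cell a rewriting step. Your justification of the composition step is garbled; replace it with the paper's direct linear identity $\sigma\circ_2\rho=\sigma+\rho-1_{t\sigma}=\tau$ (in diagrammatic order), which immediately gives $\sigma=\tau\circ_2\rho^{-1}$.
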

\begin{proof}
We write $\sigma=aC[\alpha]+1_g$,
where $a\in\bk\setminus\{0\}$, $\alpha\in X_3$, $g\in\plcat{X_2}$, and $C\in\Cont(\pcat{X_2})$.
We set $\lambda=\lc(\alpha)$ and $h=C[\lm(\alpha)]$, and we write $g=bh+g'$ with $h\notin\suppm(g')$.
Using the NRS, we consider $q=\operatorname{quo}_{\lambda}(b)$ and $r=\rem{\lambda}{b}$.
Thus $b=q\lambda+r$, with $r\in\Rem(\lambda)$ and
$\operatorname{quo}_{\lambda}(r)=0$.

Let us define the following $3$-cells in $\plcat{X_3}$  
\[
\tau=(a+q)C[\alpha]+1_{rh+g'}
\quad\text{and}\quad
\rho=qC[\alpha]+1_{aC[t\alpha]+rh+g'},
\]
that form the following diagram
\[
\xymatrix @R=1.8em @C=0.9em {
((a+q)\lambda+r)h+g'
\ar[dr]_{\tau}
\ar[rr]^{\sigma}
&
{}
&
aC[t\alpha]+(q\lambda+r)h+g'
\ar[dl]^{\rho}
\\
&
(a+q)C[t\alpha]+rh+g'.
&
}
\]

Since $\lm(\alpha)\notin\suppm(t\alpha)$ and monomial
contexts in the free precategory $\pcat{X_2}$ are injective on monomials,
we have $h\notin\suppm(C[t\alpha])$.

Consequently, the coefficient of $h$ in each of
$rh+g'$ and $aC[t\alpha]+rh+g'$ is $r$.
As $\quo{\lambda}{r}=0$, by definition
of an $X$-rewriting step, if $a+q\neq0$, then $\tau$ is an $X$-rewriting step, and if $q\neq0$, then $\rho$ is an $X$-rewriting step. Otherwise, $\sigma$ and $\tau$ are identities.
Finally, the linear composition identity gives
\[
\sigma\circ_2\rho
=\sigma+\rho-1_{t\sigma}
=(a+q)C[\alpha]+1_{rh+g'}
=\tau.
\]
Hence $\sigma=\tau\circ_2\rho^{-1}$.
\end{proof}

\subsubsection{Termination and normalization}

A \emph{$X$-rewriting path of length $n$} is a $3$-cell of $\plcat{X_3}$ defined as a $\circ_2$-composite $\sigma=\sigma_1\circ_2\cdots\circ_2\sigma_n$,
where each $\sigma_i$ is an $X$-rewriting step.

If $\sigma$ is an $X$-rewriting step  (resp. $X$-rewriting path) from $f$ to $g$, we write 
$f\xto[\sigma]{} g$ (resp. $f\xto*[\sigma]{} g$).
When there exists an $X$-rewriting step  (resp. $X$-rewriting path) with source $f$ and target $g$, we write $f \xto[X]{} g$ (resp. $f \xto*[X]{} g$).

A $2$-cell $f$ of $\plcat{X_2}$ is \emph{$X$-reducible} is there is a rule $\alpha$ in $X_3$ and a context $C$ such that $h=C[\lm(\alpha)]\in\suppm(f)$, and writing
\[
f \: =\: bh+f'
\quad\text{and}\quad
h\notin\suppm(f'),
\]
we have $q=\operatorname{quo}_{\lc(\alpha)}(b)\neq0$.
The corresponding $X$-rewriting step is thus:
\[
\sigma
=
qC[\alpha]+1_{f-q\lc(\alpha)h}
:
f\fl f-qC[\partial\alpha].
\]

A $3$-prepolygraph $X$ is \emph{terminating}, if the induced rewrite relation $\xto[X]{}$ is contained in a well-founded ordering.
A $3$-prepolygraph modulo $(X,E)$ is \emph{$E$-terminating} if the relation $\sim_E \xto[X]{} \sim_E$ is contained in a well-founded ordering.

A $2$-cell $f$ of $\plcat{X_2}$ is in \emph{$X$-normal form} if it cannot be reduced by any $X$-rewriting step.
We denote by $\Nf(X)$ the set of $X$-normal forms, and by $\Nf_m(X)$ its subset of monomials.
A $2$-cell $g$ is a \emph{normal form of a $2$-cell $f$} if $f \xto*[X]{} g$ and $g$ is in $X$-normal form, and we write $g = f\NF{}{X}$.
The polygraph $X$ is \emph{normalizing} if every $2$-cell of $\plcat{X_2}$ has a normal form.
In that case, a \emph{$X$-normalization strategy} is a map $\down{(-)}{X}$ sending every $2$-cell $f$ of $\plcat{X_2}$ to an $X$-normal form $\down{f}{X}$.

A context $C = a(h \circ_1 u \square v \circ_1 k)$ in $\plcat{X_2}$ is \emph{$X$-normal}, denoted by $C\in\Nf(X)$, if $ah,ak\in\Nf(X)$.

Let $Y$ be a normalizing extension of $\plcat{X_2}$, together with a $Y$-normalization strategy $\down{(-)}{Y}$, and let $\alpha$ be a rule in $X_3$.
A context $C$ compatible with $s\alpha$ is said to be \emph{$Y$-active on $\alpha$} if either $s\alpha\notin\Nf(Y)$ and $\abs{C}=0$, or $s\alpha\in\Nf(Y)$, $C\in\Nf(Y)$, and $C[s\alpha]\notin\Nf(Y)$.
We denote by $\Act(\alpha,Y)$ the set of $Y$-active contexts on $\alpha$.
For instance, let $Y\coloneq \Int(X_2)$.
If $s\alpha\in\Nf(Y)$, we have
\[
\Act(\alpha,Y)
=
\left(\,
\raisebox{-17.50pt}{\begin{tikzpicture} \begin{scope} [ x = 10pt, y = 10pt, join = round, cap = round, thick, black, solid, -] \draw (0.00,3.75)--(0.00,3.00) (1.00,3.75)--(1.00,3.00) (2.00,3.75)--(2.00,3.25) (3.00,3.75)--(3.00,3.00) (4.00,3.75)--(4.00,3.00) (5.00,3.75)--(5.00,3.25) (6.00,3.75)--(6.00,3.50) (7.00,3.75)--(7.00,3.50) ; \draw (0.00,0.00)--(0.00,-0.25) (1.00,0.00)--(1.00,-0.25) (2.00,0.25)--(2.00,-0.25) (3.00,0.50)--(3.00,-0.25) (4.00,0.50)--(4.00,-0.25) (5.00,0.25)--(5.00,-0.25) (6.00,0.50)--(6.00,-0.25) (7.00,0.50)--(7.00,-0.25) ; \draw [dash pattern = on 0.25pt off 2pt] (0.25,3.00)--(0.75,3.00) ; \draw [fill = lightgray] (2.00,3.25)--(2.00,2.75)--cycle ; \draw [dash pattern = on 0.25pt off 2pt] (3.25,3.00)--(3.75,3.00) ; \draw [fill = lightgray] (5.00,3.25)--(5.00,2.75)--cycle ; \draw [rounded corners = 1pt, fill = lightgray] (5.75,2.50) rectangle (7.25,3.50) ; \node at (6.50,3.00) {$\scriptstyle f$} ; \draw (0.00,3.00)--(0.00,1.75) (1.00,3.00)--(1.00,1.75) (2.00,2.75)--(2.00,2.00) (3.00,3.00)--(3.00,2.25) (4.00,3.00)--(4.00,2.25) (5.00,2.75)--(5.00,2.00) (6.00,2.50)--(6.00,1.75) (7.00,2.50)--(7.00,1.75) ; \draw [dash pattern = on 0.25pt off 2pt] (0.25,1.75)--(0.75,1.75) ; \draw [fill = lightgray] (2.00,2.00)--(2.00,1.50)--cycle ; \draw [rounded corners = 1pt, fill = white] (2.75,1.25) rectangle (4.25,2.25) ; \node at (3.50,1.75) {$\scriptstyle \square$} ; \draw [fill = lightgray] (5.00,2.00)--(5.00,1.50)--cycle ; \draw [dash pattern = on 0.25pt off 2pt] (6.25,1.75)--(6.75,1.75) ; \draw (0.00,1.75)--(0.00,1.00) (1.00,1.75)--(1.00,1.00) (2.00,1.50)--(2.00,0.75) (3.00,1.25)--(3.00,0.50) (4.00,1.25)--(4.00,0.50) (5.00,1.50)--(5.00,0.75) (6.00,1.75)--(6.00,0.50) (7.00,1.75)--(7.00,0.50) ; \draw [rounded corners = 1pt, fill = lightgray] (-0.25,0.00) rectangle (1.25,1.00) ; \node at (0.50,0.50) {$\scriptstyle g$} ; \draw [fill = lightgray] (2.00,0.75)--(2.00,0.25)--cycle ; \draw [dash pattern = on 0.25pt off 2pt] (3.25,0.50)--(3.75,0.50) ; \draw [fill = lightgray] (5.00,0.75)--(5.00,0.25)--cycle ; \draw [dash pattern = on 0.25pt off 2pt] (6.25,0.50)--(6.75,0.50) ; \end{scope} \end{tikzpicture}}
\quad \big| \quad f,g\in\Nf(Y)\;
\right\}.
\]
When $Y$ is left-monomial, every $Y$-active context on $\alpha$ has coefficient $1$.

We define the \emph{context relation} $\sqsubseteq$ on the $3$-cells of $\plcat{X_3}$ by setting, for all $\beta_1,\beta_2\in \plcat{X_3}$, $\beta_1\sqsubseteq\beta_2$ if there exists a  context $C$ in $\plcat{X_2}$ such that $C[\beta_1]=\beta_2$.
For an extension $Y$ of $\plcat{X_2}$, we denote by $\minsub Y$ the set of minimal $3$-generators in $Y$ with respect to the relation $\sqsubseteq$.

\subsection{Confluence and hom-bases by confluence}

We now introduce the rewriting notions used throughout this work, together with the rewriting machinery allowing one to construct hom-bases from convergent presentations of categories.

\subsubsection{Confluence}
A \emph{branching} of a $3$-prepolygraph $X$ is a pair $(\sigma,\tau)$ of $X$-rewriting paths with the same source, that is, $s\sigma=s\tau$.
It is called \emph{local} if both $\sigma$ and $\tau$ are $X$-rewriting steps.
The $3$-prepolygraph $X$ is \emph{confluent} (resp. \emph{local confluent}) if for any branching (resp. local branching) $(\sigma,\tau)$, there exists a pair of $X$-rewriting paths $(\rho_1,\rho_2)$, called a \emph{confluence}, as in the following diagram
\[
\vcenter{\xymatrix @R=0.5em @C=1.6em {
& {\color{red}\cdot}
  \ar@{->>}@/^/@[blue][dr]^-{\color{blue}\rho_1}
\\
{\color{red}\cdot}
  \ar@{->>}@/^/@[red][ur]^-{\color{red}\sigma}
  \ar@{->>}@/_/@[red][dr]_-{\color{red}\tau}
&& {\color{blue}\cdot}
\\
& {\color{red}\cdot}
  \ar@{->>}@/_/@[blue][ur]_-{\color{blue}\rho_2}
}}
\qquad\qquad \bigl(\text{resp.}\quad
\vcenter{\xymatrix @R=0.5em @C=1.6em {
& {\color{red}\cdot}
  \ar@{->>}@/^/@[blue][dr]^-{\color{blue}\rho_1}
\\
{\color{red}\cdot}
  \ar@{->}@/^/@[red][ur]^-{\color{red}\sigma}
  \ar@{->}@/_/@[red][dr]_-{\color{red}\tau}
&& {\color{blue}\cdot}
\\
& {\color{red}\cdot}
  \ar@{->>}@/_/@[blue][ur]_-{\color{blue}\rho_2}
}}
\quad\bigr).
\]
We call $X$ \emph{convergent} if it is both terminating and confluent.
In this case, every $2$-cell $f$ admits a unique normal form, denoted by $\down{f}{X}$. 
Thus, the normal-form map $f\fl \down{f}{X}$ is uniquely determined. 
However, the $X$-rewriting paths from $f$ to $\down{f}{X}$ need not be unique. 
A normalization rewriting strategy amounts to choosing such a path for each $2$-cell $f$.

\subsubsection{Classification of local branchings}
\label{SSS:CLB}

Let $X$ be a left-term $3$-prepolygraph, and let $(\sigma,\tau)$ be a local branching of $X$ with common source $f$. We write
\[
\sigma=q_1C_1[\alpha_1]+1_{f-q_1\lambda_1 h_1},
\qquad
\tau=q_2C_2[\alpha_2]+1_{f-q_2\lambda_2 h_2},
\]
where, for $i=1,2$, we have $\alpha_i\in X_3$,
$C_i\in\Cont(\pcat{X_2})$,
$\lambda_i=\lc(\alpha_i)$,
$h_i=C_i[\lm(\alpha_i)]\in\suppm(f)$,
$b_i=\cf{h_i}{f}$,
$q_i=\quo{\lambda_i}{b_i}\neq0$, and
$r_i=\rem{\lambda_i}{b_i}$, so that
$b_i=q_i\lambda_i+r_i$.
We distinguish the following four families of local branchings.

\begin{enumerate}
\item \emph{Aspherical branchings}, when the two rewriting steps coincide, that is, $\sigma=\tau$.

\item \emph{Additive branchings}, when the two rules $\alpha_1$ and $\alpha_2$ are applied to
distinct terms in $\supp(f)$, that is, $h_1\neq h_2$.
The common source is $f=b_1h_1+b_2h_2+g$, where $h_1,h_2\notin\suppm(g)$.
More explicitly,
\[
\sigma=q_1C_1[\alpha_1]+1_{r_1h_1+b_2h_2+g},
\qquad
\tau=q_2C_2[\alpha_2]+1_{b_1h_1+r_2h_2+g}.
\]

\item \emph{Peiffer branchings}, when the two rules $\alpha_1$ and $\alpha_2$ are applied to disjoint occurrences within the same monomial $h=h_1=h_2$, and the product of their leading coefficients divides the coefficient $b=b_1=b_2$ of $h$ in $f$, that is, $\lambda_1\lambda_2\mid b$.
Thus, there exist a monomial two-hole context $D$ and a nonzero scalar $\mu$ such that
$h=D[\lm(\alpha_1),\lm(\alpha_2)]$ and $b=\mu\lambda_1\lambda_2$.
The common source is $f=\mu\lambda_1\lambda_2h+g$, where $h\notin\suppm(g)$.
More explicitly,
\[
\sigma=\mu\lambda_2D[\alpha_1,\lm(\alpha_2)]+1_g,
\qquad
\tau=\mu\lambda_1D[\lm(\alpha_1),\alpha_2]+1_g.
\]

\item \emph{Overlapping branchings}, when the two rewriting steps are distinct and apply the rules
$\alpha_1$ and $\alpha_2$ to the same monomial
$h=h_1=h_2$.
Moreover, either the occurrences of $\lm(\alpha_1)$ and
$\lm(\alpha_2)$ in $h$ overlap, or they are disjoint
but the product of their leading coefficients does not
divide the coefficient $b=b_1=b_2$ of $h$ in $f$,
that is, $\lambda_1\lambda_2\nmid b$.
The common source is $f=bh+g$, where $h\notin\suppm(g)$.
More explicitly,
\[
\sigma=q_1C_1[\alpha_1]+1_{r_1h+g},
\qquad
\tau=q_2C_2[\alpha_2]+1_{r_2h+g}.
\]
\end{enumerate}

The \emph{critical branchings} of $X$ are the overlapping branchings
$(\sigma,\tau)$ of $X$ whose common source $s\sigma=s\tau$ is a
$2$-term and which are minimal with respect to the context relation
$\sqsubseteq$ on $\plcat{X_3}$.
For instance, the rules
$
\alpha:2\,\twocell{eta}\fl\twocell{id}
$
and
$
\beta:3\,\twocell{eta}\fl\twocell{id}\,
$
give rise to the following two critical branchings:
{
\[
\vcenter{\xymatrix @R=0.1em @C=2.9em {
& 3\,\twocell{id}
\\
6\,\twocell{eta}
  \ar@{->}@/^/[ur]^-{3\alpha}
  \ar@{->}@/_/[dr]_-{2\beta}
\\
& 2\,\twocell{id}
}}
\qquad\qquad
\vcenter{\xymatrix @R=0.2em @C=2.2em {
& \twocell{id}
\\
3\,\twocell{eta}
=2\,\twocell{eta}+\twocell{eta}
  \ar@{->}@/^/[ur]^-{\beta}
  \ar@{->}@/_/[dr]_-{\alpha+\twocell{eta}}
\\
& \twocell{eta}+\twocell{id}\;.
}}
\]
}
The first branching arises from the common multiple $6$ of the leading coefficients, whereas the second arises from the nonzero remainder in $3=1\cdot2+1$; in particular, $\alpha+\twocell{eta}$ is an $X$-rewriting step.

A key feature of the precategorical setting is that overlaps between $2$-monomials in $\pcat{X_2}$ can be decomposed into overlaps between strings.
Indeed, every $2$-monomial $h\in\pcat{X_2}$ admits a unique decomposition $h = h_1 \circ_1 \cdots \circ_1 h_{\abs{h}}$, where $h_i=C_i[\varphi_i]$, with $\varphi_i \in X_2$ and $C_i$ is a whisker context. An occurrence of a $2$-monomial $k$ inside $u$, given by a monomial $2$-context $C$ such that $u=C[v]$, corresponds, up to whiskering by $1$-cells, to a contiguous factor in the decomposition of $h$.
Hence, if two occurrences of leading monomials $\lm(\alpha_1)$ and $\lm(\alpha_2)$ overlap in a $2$-monomial $h$, the corresponding factors in the canonical decomposition of $h$ overlap. The study of overlapping branchings is therefore reduced to the study of compatible overlaps of finite strings. In particular, for a fixed pair of rules, their possible overlaps can be determined combinatorially from the string decompositions of their leading monomials.
This reduction to string overlaps is one of the main advantages of the precategorical formulation of $3$-dimensional rewriting introduced in~\cite{ForestMimram2022}.

We now state the critical branching lemma.

\begin{lemma}
\label{L:CBL}
Assume that $X$ is a terminating $3$-prepolygraph.
If every critical branching of $X$ is confluent, then $X$ is locally
confluent. Moreover, $X$ is confluent.
\end{lemma}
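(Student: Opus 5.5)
We argue by well-founded induction on the termination order, following the classification of local branchings in \SSS{SSS:CLB}. The second assertion follows from the first by Newman's lemma: since $X$ is terminating, the rewrite relation $\xto[X]{}$ is contained in a well-founded order, and local confluence then yields confluence by Noetherian induction on the common source of a branching. So the work is to show that every local branching $(\sigma,\tau)$ with source $f$ is confluent, treating the four families in turn.

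Aspherical branchings are trivially confluent. For an additive branching, $h_1\neq h_2$, and we close it by applying $\tau$'s rule to the target of $\sigma$ and vice versa. Here is the ring-specific point: after $\sigma$ the coefficient of $h_2$ is $b_2$ plus the coefficient of $h_2$ in $q_1C_1[t\alpha_1]$. So $q_2C_2[\alpha_2]$ need not be a rewriting step there. We instead use Lemma~\ref{L:DecopositionOfSize1Cell}: the size-one $3$-cell $q_2C_2[\alpha_2]+1_{\cdots}$ decomposes as $\tau'\circ_2\rho'^{-1}$ with $\tau',\rho'$ rewriting steps or identities. Both $t\sigma$ and $t\tau$ then rewrite to the common $2$-cell $f-q_1C_1[\partial\alpha_1]-q_2C_2[\partial\alpha_2]$ modulo such zigzags. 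Every intermediate $2$-cell is strictly below $f$ in the termination order, so by the induction hypothesis (confluence below $f$) any zigzag $t\sigma\xto*{}\cdot\xot*{}\cdots$ whose vertices are all below $f$ can be closed into a confluence. The Peiffer case is handled the same way: since $\lambda_1\lambda_2\mid b$, applying $\mu\lambda_1 D[t\alpha_1,\alpha_2]$ after $\sigma$ and symmetrically after $\tau$ reaches the common cell $\mu D[t\alpha_1,t\alpha_2]+g$. Again the coefficient adjustments are absorbed through Lemma~\ref{L:DecopositionOfSize1Cell} and the induction hypothesis.

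For overlapping branchings on the same monomial $h=C[k]$ with $k$ the overlap, we would show that $(\sigma,\tau)$ is obtained from a critical branching $(\sigma_0,\tau_0)$ with source a $2$-term $b'k$ by applying a context and adding $1_g$. Minimality with respect to $\sqsubseteq$ exists because sizes of contexts are natural numbers. When the overlap is a non-dividing disjoint one, the relevant $2$-term is the coefficient-reduced source; the remainder-type critical branchings exhibited in the example cover these cases. By hypothesis the critical branching has a confluence $(\rho_1,\rho_2)$. Transporting it by $C$ and adding $g$ gives $3$-cells $C[\rho_i]+1_g$ of $\plcat{X_3}$ that need not be rewriting paths, since the whiskered steps may collide with terms of $g$ or of $C[\cdot]$. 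We rewrite each of their size-one factors as $\tau'\circ_2\rho'^{-1}$ via Lemma~\ref{L:DecopositionOfSize1Cell}, obtaining a zigzag all of whose vertices lie strictly below $f$, and close it by the induction hypothesis.

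The main obstacle is this last step, together with the corresponding coefficient bookkeeping in the additive case. We must check that each vertex of the zigzags produced by Lemma~\ref{L:DecopositionOfSize1Cell} is strictly smaller than $f$ for the termination order, so that the induction hypothesis applies. We would handle this by noting that each such vertex is reachable from $t\sigma$ or $t\tau$ by a rewriting path, hence is below $f$. We would also check that the contexts and coefficient multiples used to reduce an arbitrary overlap to a critical one really yield rewriting steps of the form required by the NRS condition $\operatorname{quo}=0$ on the additive part.
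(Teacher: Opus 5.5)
Your plan follows the paper's proof step for step: Noetherian induction with Newman's lemma, the case split of \SSS{SSS:CLB}, closing additive and Peiffer branchings by applying the other rule and repairing coefficients with Lemma~\ref{L:DecopositionOfSize1Cell}, and reducing overlapping branchings to critical ones by $\sqsubseteq$-minimality. The gap is in your justification for the step you yourself call the main obstacle.

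Lemma~\ref{L:DecopositionOfSize1Cell} factors a size-one cell $u\to v$ as $\tau'\circ_2\rho'^{-1}$, with $\tau':u\to w$ and $\rho':v\to w$. So $v$ is the \emph{source} of a rewriting path, not the target of one. In the additive case this gives the zigzag $f_1\to f_1'\leftarrow f_{12}\to f_2'\leftarrow f_2$. The only vertex where confluence is actually invoked is the peak $f_{12}=f-q_1C_1[\partial\alpha_1]-q_2C_2[\partial\alpha_2]$. In the Peiffer and overlapping cases, a transported cell $u_0\to u_1\to\cdots\to u_k$ split into size-one factors turns every $u_j$ with $j\geq1$ into such a peak.

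These peaks are in general not reachable from $t\sigma$ or $t\tau$. A transported factor fails to be a rewriting step precisely because the coefficient of the monomial it rewrites has picked up an extra contribution. That contribution can come from $g$, from earlier factors, or, in the additive case, from $C_1[t\alpha_1]$: the coefficient of $h_2$ in $f_1$ is $b_2+q_1\cf{h_2}{C_1[t\alpha_1]}$. Rewriting that monomial from $u_{j-1}$ then lands on $w$, not on $u_j$. So ``reachable, hence below $f$'' fails exactly at the vertices where you need the induction hypothesis. An arbitrary well-founded order containing the rewrite relation says nothing about these vertices. (The paper's proof simply asserts $f_{12}\prec_X f$, so you were right to single this step out.)

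What is needed is an order on $2$-cells that sees their monomial structure. Take a well-founded monotone order on monomials that strictly decreases along the monomial rules. Examples are the one coming from termination of $\monex{X}$ in the left-monomial case (Lemma~\ref{L:PolAndLinearPol}), or an $X$-compatible monomial order. When left coefficients are not units, also take a decreasing NRS. Then compare $2$-cells $f$ via the multiset extension of the lexicographic order on the pairs $(h,\cf{h}{f})$, for $h\in\suppm(f)$.

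This order contains the rewrite relation. Every vertex produced in the three cases, peaks included, lies below $f$, because compared with $f$ it only does two things:
\begin{itemize}
\item it alters coefficients of monomials lying strictly below some rewritten monomial;
\item it replaces the coefficient $b$ of each remaining rewritten monomial by something $\prec_{\bk}b$ (a remainder), or removes it.
\end{itemize}

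In the additive case there is a cheaper fix. When the monomial rules terminate, $h_2\in\suppm(C_1[t\alpha_1])$ and $h_1\in\suppm(C_2[t\alpha_2])$ cannot both hold. Hence one of the two closing cells is an honest rewriting step, and $f_{12}$ is reachable from $f$. This trick does not extend to the Peiffer and overlapping cases. Either way, outside the left-monomial case the argument needs more than the bare hypothesis that $X$ is terminating.

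A minor slip: in the Peiffer case the closing cell after $\sigma$ is $\mu D[t\alpha_1,\alpha_2]+1_g$, whose source $\mu\lambda_2D[t\alpha_1,\lm(\alpha_2)]+g$ equals $t\sigma$. With the coefficient $\mu\lambda_1$ you wrote, neither the source nor the target matches.
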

\begin{proof}
Following \cite[3.2.2 and Thm.~4.2.1]{GuiraudHoffbeckMalbos19}, we proceed by noetherian induction on the sources of local branchings, with respect to the well-founded rewrite order $\prec_X$ of $X$.
Let $f$ be a reducible $2$-cell, and assume that $X$ is locally confluent at every $2$-cell $f'\prec_X f$.
By Newman's lemma, it follows that $X$ is confluent at every $f'\prec_X f$.

Let $(\sigma,\tau)$ be a local branching with source $f$.
If $(\sigma,\tau)$ is aspherical, then it is trivially confluent.
We consider the remaining three cases.
For each case, we use the notation of the classification in \SSS{SSS:CLB}, and construct a confluence diagram of the following form
\begin{eqn}{equation}
\label{E:LocalBranchingConfluence}
\vcenter{\xymatrix @R=1.15em @C=2em {
& \textcolor{red}{f_1}
  \ar@[blue]@/^/[rr]^-{\textcolor{blue}{\sigma_1}}
  \ar@[blue]@{.>}[dr]^-{\textcolor{blue}{\overline{\tau}}}
&& \textcolor{blue}{f_1'}
  \ar@[blue]@{->>}@/^/[drr]^-{\textcolor{blue}{\sigma_2}}
\\
\textcolor{red}{f}
  \ar@[red]@/^1.5ex/[ur]^-{\textcolor{red}{\sigma}}
  \ar@[red]@/_1.5ex/[dr]_-{\textcolor{red}{\tau}}
&& \textcolor{blue}{f_{12}}
  \ar@[blue][ur]_-{\textcolor{blue}{\rho_1}}
  \ar@[blue][dr]^-{\textcolor{blue}{\rho_2}}
&&& \textcolor{blue}{f'}
\\
& \textcolor{red}{f_2}
  \ar@[blue]@{.>}[ur]_-{\textcolor{blue}{\overline{\sigma}}}
  \ar@[blue]@/_/[rr]_-{\textcolor{blue}{\tau_1}}
&& \textcolor{blue}{f_2'}
  \ar@[blue]@{->>}@/_/[urr]_-{\textcolor{blue}{\tau_2}}
}}
\end{eqn}

\noindent {\bf Case 1.} 
Suppose that $(\sigma,\tau)$ is additive.
By definition of $\sigma$ and $\tau$, we have $f=b_1h_1+b_2h_2+g$.
We set 
\[
f_1=t(\sigma)=q_1\overline{h}_1+r_1h_1+b_2h_2+g,
\qquad
f_2=t(\tau)=b_1h_1+q_2\overline{h}_2+r_2h_2+g,
\qquad
f_{12}=q_1\overline{h}_1+r_1h_1+q_2\overline{h}_2+r_2h_2+g,
\]
with $\overline{h}_i=C_i[t(\alpha_i)]$.
Let consider the $3$-cells
\[
\overline{\sigma}
=q_1C_1[\alpha_1]
+1_{r_1h_1+q_2\overline{h}_2+r_2h_2+g} :f_2\fl f_{12},
\qquad\text{and}\qquad
\overline{\tau}
=q_2C_2[\alpha_2]
+1_{q_1\overline{h}_1+r_1h_1+r_2h_2+g}:f_1\fl f_{12}.
\]
They may be not $X$-rewriting steps, since we may have $h_1\in\suppm(\overline{h}_2)$ or
$h_2\in\suppm(\overline{h}_1)$ and the coefficients of $h_1$ and $h_2$ in the additive parts of
$\overline{\sigma}$ and $\overline{\tau}$ need not
belong to $\Rem(\lambda_1)$ and $\Rem(\lambda_2)$, respectively.
However, they have size $1$.
Lemma~\ref{L:DecopositionOfSize1Cell} therefore yields $2$-cells
$f_1'$ and $f_2'$, together with $X$-rewriting paths $\sigma_1, \rho_1, \tau_1$ and $\rho_2$ as listed in \eqref{E:LocalBranchingConfluence}.
Hence $X$ is confluent at $f_{12}\prec_X f$ by hypothesis, so the branching
$(\rho_1,\rho_2)$ admits a confluence $(\sigma_2,\tau_2)$.

\noindent {\bf Case 2.} 
Suppose next that $(\sigma,\tau)$ is Peiffer.
Set $\overline{h}_i=t(\alpha_i)$ for $i=1,2$.
The common source is $f=\mu\lambda_1\lambda_2h+g$,
where $h=D[\lm(\alpha_1),\lm(\alpha_2)]$. Put
\[
f_1=\mu\lambda_2D[\overline{h}_1,\lm(\alpha_2)]+g,
\qquad
f_2=\mu\lambda_1D[\lm(\alpha_1),\overline{h}_2]+g,
\qquad
f_{12}=\mu D[\overline{h}_1,\overline{h}_2]+g.
\]
Then $f_1=t(\sigma)$ and $f_2=t(\tau)$.
There are $3$-cells
\[
\overline{\sigma}
 =\mu D[\alpha_1,\overline{h}_2]+1_g
 :f_2\fl f_{12},
\qquad
\overline{\tau}
 =\mu D[\overline{h}_1,\alpha_2]+1_g
 :f_1\fl f_{12},
\]
which need not be $X$-rewriting paths, since we may have
$\suppm(D[\overline{h}_1,\lm(\alpha_2)])\cap\suppm(g)\neq\varnothing$
or
$\suppm(D[\lm(\alpha_1),\overline{h}_2])\cap\suppm(g)\neq\varnothing$,
and the corresponding coefficients in $g$ need not belong to
$\Rem(\lambda_2)$ and $\Rem(\lambda_1)$, respectively.
We decompose them into $3$-cells of size $1$ and apply
Lemma~\ref{L:DecopositionOfSize1Cell} successively, as in
\cite[Lem.~4.1.2]{GuiraudHoffbeckMalbos19}.
This yields $2$-cells $f_1'$ and $f_2'$, together with
$X$-rewriting paths $\sigma_1, \rho_1, \tau_1$ and $\rho_2$ as listed in \SSS{E:LocalBranchingConfluence}.
Hence $X$ is confluent at $f_{12}\prec_X f$ by hypothesis, so the branching
$(\rho_1,\rho_2)$ admits a confluence $(\sigma_2,\tau_2)$.

\noindent {\bf Case 3.} 
Finally, suppose that $(\sigma,\tau)$ is overlapping.
The common source is $f=bh+g$, where
$h=C_1[\lm(\alpha_1)]=C_2[\lm(\alpha_2)]$
and $b=q_i\lambda_i+r_i$ for $i=1,2$. Set
\[
\overline{h}_i=C_i[t(\alpha_i)],
\qquad
f_i=q_i\overline{h}_i+r_ih+g,
\qquad i=1,2.
\]
Then $f_1=t(\sigma)$ and $f_2=t(\tau)$.
By minimality, there exist a critical branching
$(\sigma_0,\tau_0)$ and a context $C$ such that
$(q_1C_1[\alpha_1]+1_{r_1h},q_2C_2[\alpha_2]+1_{r_2h})=C[\sigma_0,\tau_0]$.
The critical branching is confluent by hypothesis, it admits a confluence $(\sigma_0',\tau_0')$ with common target $e_0$.
Set $f_{12}=C[e_0]+1_g$.
Applying $C$ gives the $3$-cells
\[
\overline{\sigma}
 =C[\tau_0']+1_{g}:f_2\fl f_{12},
\qquad
\overline{\tau}
 =C[\sigma_0']+1_{g}:f_1\fl f_{12},
\]
which may not be $X$-rewriting paths, since 
$\suppm(g)$ may intersect $\suppm(C[\lm(\sigma_0')])$ or \linebreak $\suppm(C[\lm(\tau_0')])$, and the corresponding coefficients in $g$ need not belong to
$\Rem(\lambda_2)$ and $\Rem(\lambda_1)$, respectively.
We decompose them into $3$-cells of size $1$ and apply
Lemma~\ref{L:DecopositionOfSize1Cell} successively.
This yields $2$-cells $f_1'$ and $f_2'$, together with $X$-rewriting paths $\sigma_1, \rho_1, \tau_1$ and $\rho_2$ as listed in \SSS{E:LocalBranchingConfluence}.
Hence $X$ is confluent at $f_{12}\prec_X f$ by hypothesis, so the branching
$(\rho_1,\rho_2)$ admits a confluence $(\sigma_2,\tau_2)$.

\medskip

Therefore, $X$ is locally confluent. Since $X$ is terminating, it is confluent by Newman's Lemma.
\end{proof}

We denote by $\CB(X)$ the set of critical branchings of $X$. For two $3$-prepolygraphs $X$ and $Y$ sharing the same underlying $2$-polygraph, we denote by $\CB(X,Y)$ the set of critical branchings involving both an $X$-rewriting step and a $Y$-rewriting step.

In \cite[Sec.~5.1]{GuiraudMalbos09}, critical branchings of $3$-polygraphs are classified into three families: regular, inclusion, and indexed branchings. The classification for $3$-prepolygraphs is similar, except that indexed branchings do not appear, since interchange relations are not available in this setting. This significantly simplifies the structure of critical branchings. On the other hand, interchange relations become explicit rewriting rules, generating new families of critical branchings that must be analyzed. The stratification approach introduced in Subsection~\ref{SS:StratifiedPrepolygraphsTermination} is precisely designed to handle this additional complexity.

\subsubsection{Hom-bases by confluence over rings}
Following~\cite{Alleaume18,Dupont21,Dupont22,Schelstraete26}, in the case
of rewriting over a field, if $X$ is convergent, then $\Nf_m(X)$ forms
a \emph{hom-basis} of the category $\Cr$ presented by $X$, that is, a family of linear bases
$B(u,v)$ of the vector spaces $\Cr_2(u,v)$, for all
$u,v\in\cat{X_1}$. Indeed, convergence ensures that every monomial has
a unique normal-form representative, and the result follows from the
equality
$
\Nf(X)=\bk\Nf_m(X),
$
that is, the module of normal forms need not be the free $\bk$-module
spanned by the monomial normal forms.

When $\bk$ is a ring, however, this equality need not hold, and hence
the hom-basis theorem may fail for rewriting over a ring.
For instance, let $\bk=\mathbb Z$, and consider the $3$-prepolygraph $X$
with one $1$-cell $i$, one $2$-generator
$
\twocell{\eta}:i\fl i,
$
and one $3$-generator
$
2\,\twocell{\eta}\ \fl\ 0.
$
Then, in $\plcat{X_2}(i,i)$, every monomial 
$\twocell{\eta}^{\,n} := \twocell{\eta} \circ_1 \underset{\text{$n$-times}}{\ldots} \circ_1 \twocell{\eta}$ is $X$-irreducible. Hence
$\Nf_m(X)$ is the set of $2$-monomial $\twocell{\eta}^{\,n}$, for $n\geq 1$.
On the other hand, a linear combination is irreducible precisely when all
its nonzero coefficients are odd. 
Thus, the set $\Nf(X)$ of $2$-cells in $X$-normal forms is made of $2$-cells 
$\sum_{n\geq 1} \lambda_n\twocell{\eta}^{\,n}$ where $\lambda_n$ is odd.
In particular, $\Nf(X)$ is not closed under addition. Consequently, we have
$\Nf(X)\neq \mathbb \bk\Nf_m(X)$.
This failure stems from the fact that the underlying $\bk$-module of
$\overline{X}$ is not free.
However, when $X$ is left-monomial, we have the following result.

\begin{theorem}
\label{T:HomBasisFromConvergence}
Let $\Cr$ be a linear monoidal category presented by a left-monomial and convergent $3$-prepolygraph $X$. Then $\Nf_m(X)$ forms a hom-basis of $\Cr$.
\end{theorem}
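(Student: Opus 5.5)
The plan is to identify each hom-space $\Cr_2(p,q)$ with the quotient $\plcat{X_2}(p,q)/I(p,q)$. Here $I$ denotes the $\bk$-submodule generated by the boundaries $C[\partial\alpha]$, for $\alpha\in X_3$ and $C$ a context, together with the boundaries of the interchange generators. I will show that the composite $\bk\Nf_m(X)\hookrightarrow\plcat{X_2}\twoheadrightarrow\Cr_2$ is an isomorphism of $\bk$-modules. Throughout I take the interchange extension to be part of the rewriting data, i.e.\ I regard $X$ as containing the oriented rules $\Int(X_2)$, or work modulo it, as the presentation $\cl{X}$ requires. Then the congruence generated by $X_3$ coincides with the one defining $\Cr$.

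The first step is to prove the equality $\Nf(X)=\bk\Nf_m(X)$ under the left-monomial hypothesis. Since every rule has $\lc(\alpha)=1$, Remark~\ref{R:ClassicalLeftMonomialSteps} shows that a $2$-cell $f$ is $X$-reducible if and only if some monomial of $\suppm(f)$ contains an occurrence of some $\lm(\alpha)$. Reducibility therefore depends only on $\suppm(f)$. Hence $f$ is in normal form if and only if every monomial in its support is a normal form. This is exactly the property that failed in the $\Zb$ example with $2\,\twocell{eta}\fl0$.

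The second step is surjectivity. Termination and confluence give, for every $f$, a unique normal form $\down{f}{X}$ reached by an $X$-rewriting path. Each step changes $f$ by an element $qC[\partial\alpha]\in I$, so $f-\down{f}{X}\in I$. By the first step, $\down{f}{X}\in\bk\Nf_m(X)$, so every class in $\Cr_2(p,q)$ is represented by a $\bk$-linear combination of normal monomials.

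The third step, which I expect to be the main obstacle, is injectivity: a combination $g\in\bk\Nf_m(X)\cap I$ must vanish. My approach is to show that the map $f\mapsto\down{f}{X}$ is $\bk$-linear and kills $I$. For linearity, $f+f'$ and $\down{f}{X}+\down{f'}{X}$ both rewrite, by concatenating paths, to a common cell. Some care is needed, because a concatenated step may fail to be a rewriting step when supports collide; I would fix this using Lemma~\ref{L:DecopositionOfSize1Cell}, which splits a size-$1$ cell into rewriting steps and inverse steps. The right-hand side is irreducible by the first step, so confluence and uniqueness of normal forms give $\down{(f+f')}{X}=\down{f}{X}+\down{f'}{X}$. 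Scalars are handled the same way, since over a left-monomial system $\lambda\sigma$ decomposes into rewriting steps.

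To show that the map kills $I$, I use that $aC[s\alpha]$ and $aC[t\alpha]$ are related by a size-$1$ $3$-cell. By Lemma~\ref{L:DecopositionOfSize1Cell} they have a common reduct, so they have equal normal forms, and linearity then gives $\down{(aC[\partial\alpha])}{X}=0$. Consequently, for $g\in\bk\Nf_m(X)\cap I$ we get $g=\down{g}{X}=0$, which gives injectivity. Freeness of the image is then inherited from the freeness of $\plcat{X_2}$ on monomials, and this finishes the proof.
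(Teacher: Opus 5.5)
Your proposal is correct and follows the same route as the paper. First, left-monomiality gives $\Nf(X)=\bk\Nf_m(X)$; then convergence makes the canonical projection $\pi_{u,v}$ restrict to a linear isomorphism from the normal forms onto $\Cr_2(u,v)$. Your linearity and Church--Rosser argument for injectivity spells out what the paper asserts in one line ``since $X$ is convergent''. Your remark that the interchange rules must be among the rewriting rules makes explicit an assumption the paper leaves implicit.
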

\begin{proof}
First we prove that under the hypothesis that we have $\Nf(X)=\bk\Nf_m(X)$, or equivalently for every $2$-cell $f$, we have $f\in\Nf(X)$ if and only if $\suppm(f)\subseteq\Nf_m(X)$.

Suppose first that $f\in \Nf(X)$, and assume that some $u\in\suppm(f)$ is reducible.
Then $u=C[\lm(\alpha)]$ for some $\alpha\in X_3$ and some monomial context $C$. 
Let $a=\cf{u}{f}\neq 0$. 
Since $X$ is left-monomial, we have $\lc(\alpha)=1$ and $\quo{1}{a}=a\neq 0$. 
Hence, the $3$-cell $aC[\alpha]+1_{f-au}$ is an $X$-rewriting step with source $f$, so $f$ is reducible.
This contradicts $f\in\Nf(X)$. Therefore every monomial in $\suppm(f)$ is irreducible, and thus $\suppm(f)\subseteq\Nf_m(X)$.

Conversely, suppose that $f$ is reducible. Then there exists an
$X$-rewriting step with source $f$. Such a step rewrites a monomial
$C[\lm(\alpha)]$ occurring in $\suppm(f)$, for some $\alpha\in X_3$ and some monomial
context $C$. Hence $\suppm(f)$ contains a reducible monomial.

We have thus proved that a $2$-cell is in normal form if and only if
all the monomials in its support are in normal form. Equivalently,
$\Nf(X)=\bk\Nf_m(X)$.

\medskip

We show that $\Nf_m(X)$ is a hom-basis of $\Cr$ by considering $u,v\in\cat{X_1}$ and $\pi_{u,v}$ the canonical projection of $\plcat{X_2}(u,v)$ on $\Cr_2(u,v)$. 
Since $X$ is convergent, every $2$-cell
admits a unique normal form. It follows that $\pi_{u,v}$ restricts to
a bijection
\[
\Nf(X)\cap\plcat{X_2}(u,v)
\ \longrightarrow\
\Cr_2(u,v).
\]
As its domain is the free $\bk$-module generated by the monomial normal forms from $u$ to $v$.
Hence the images of these monomial
normal forms form a $\bk$-basis of $\Cr_2(u,v)$. Since this holds for
all $u,v\in\cat{X_1}$, the family $\Nf_m(X)$ is a hom-basis
of $\Cr$.
\end{proof}

In order to apply the above result when $X$ is terminating but not confluent, one may apply a completion procedure, in the spirit of those introduced for algebras in~\cite{Buchberger65} and for term rewriting systems in~\cite{KnuthBendix70}.
In many situations, however, this completion procedure does not terminate, due to the large number of rules and the interactions between them. To reduce the number of interactions that need to be considered and to obtain a more tractable completion procedure, we introduce in the next sections a modular stratification of the set of rules. The aim is to decompose the rewriting system into strata in such a way that confluence can be established stratum by stratum and then lifted to the whole rewriting system.

\subsection{Termination by orderings and derivations}

Termination techniques have been extensively developed in the setting of term rewriting systems~\cite{Dershowitz87,Terese03}. 
These techniques often rely on interpretation methods, which assign to each term a value in a well-founded ordered structure so that every rewriting step strictly decreases this value. In the higher-dimensional setting, however, classical interpretation methods do not directly provide suitable reduction orders for $3$-polygraphs~\cite{Guiraud06jpaa,Guiraud06cras}. This has led to the development of derivation-based methods, which adapt the principle of interpretations to the structure of $3$-polygraphs. In the linear setting, termination can also be established by means of monomial orders.

In this subsection, we recall these two approaches, which will be combined through stratification in the next section.

\subsubsection{Monomial orders}
\label{SSS:MnomialOrders}
A \emph{total preorder} on a set $Z$ is a transitive relation $\preccurlyeq_Z$ on $Z$ such that either $x\preccurlyeq_Z y$ or $y\preccurlyeq_Z x$ for all $x,y\in Z$.
For distinct $x$ and $y$, if $x \preccurlyeq_Z y$, we write $x \prec_Z y$.
Such an order is a \emph{total order} if it is moreover antisymmetric, that is, if $x \preccurlyeq_Z y$ and $y \preccurlyeq_Z x$ imply $x=y$.

A \emph{(hom-set) relation} $\prec$ on a free $2$-precategory $\pcat{X_2}$ is a family of relations $\prec_{u,v}$ indexed by pairs $(u,v)$ of $1$-cells of $\cat{X_1}$, where each relation $\prec_{u,v}$ is defined on the hom-set $\pcat{X_2}(u,v)$.
The notions of \emph{total preorder} and \emph{total order} on $\pcat{X_2}$ are then defined pointwise on each hom-set.
A relation $\prec$ is \emph{monotone} if, for any parallel $2$-cells $f,g : u \fl v$ and compatible context $C$ of $\pcat{X_2}$, we have
\[
f \prec_{u,v} g
\quad\text{implies}\quad
C[f]\prec_{C[u],C[v]} C[g].
\]
It is \emph{well-founded} if each relation $\prec_{u,v}$ is well-founded, that is, admits no infinite descending chain.
A total order $\preccurlyeq$ on a precategory $\pcat{X_2}$ is a \emph{monomial order} if it is monotone and well-founded.

For every nonzero $2$-cell $f$ of $\plcat{X_2}$, the \emph{$\preccurlyeq$-leading monomial} of $f$, denoted by $\lm_{\preccurlyeq}(f)$, is the $\preccurlyeq$-maximum element of $\supp(f)$. The \emph{$\preccurlyeq$-leading coefficient} of $f$, denoted by $\lc_{\preccurlyeq}(f)$, is the coefficient of $\lm_{\preccurlyeq}(f)$ in $f$. 
Finally, the \emph{$\preccurlyeq$-leading term} of $f$ is defined by 
$\lt_{\preccurlyeq}(f) =\lc_{\preccurlyeq}(f)\,\lm_{\preccurlyeq}(f)$.

Let $X$ be a $3$-prepolygraph. 
A monomial order $\preccurlyeq$ on $\pcat{X_2}$ is \emph{$X$-compatible} if $v \prec \lm(\alpha)$, for all $\alpha\in X_3$ and $v\in\suppm(t\alpha)$.
For every $3$-cell $\beta\in\plcat{X_3}$ such that
$\partial\beta\neq 0$, set
\[
\lambda_\beta
\: := \:
\begin{cases}
\lc_{\preccurlyeq}(\partial\beta)^{-1},
&\text{if $\lc_{\preccurlyeq}(\partial\beta)$ is invertible},\\
1,
&\text{otherwise}.
\end{cases}
\]
The \emph{$\preccurlyeq$-oriented left-term $3$-cell associated with
$\beta$}, denoted by $\beta^{\preccurlyeq}$, is defined by setting
\[
s(\beta^{\preccurlyeq})
\: := \:
\lambda_\beta\lt_{\preccurlyeq}(\partial\beta),
\qquad
t(\beta^{\preccurlyeq})
\: := \:
\begin{cases}
\lambda_\beta\lt_{\preccurlyeq}(\partial\beta)
+\lambda_\beta\partial\beta,
&
\text{if }
\lm_{\preccurlyeq}(\partial\beta)
\in\suppm(t\beta)\setminus\suppm(s\beta),
\\[0.5em]
\lambda_\beta\lt_{\preccurlyeq}(\partial\beta)
-\lambda_\beta\partial\beta,
&
\text{otherwise}.
\end{cases}
\]
The existence of an $X$-compatible monomial order implies that $X$ is termination, see {\cite[Sec.~3.2]{GuiraudHoffbeckMalbos19}}.

\begin{remark}
Compared with $3$-prepolygraphs, proving  termination of a $3$-polygraph by constructing a monomial order is often difficult \cite{GuiraudMalbos09,Alleaume18,DupontPhD2020}.
For instance, consider a $3$-polygraph  $X$ with a single $2$-generator $\twocell{cap}$ and a single $3$-generator
\[
\twocell{id*0 id*0 cap} \: \fl \: \twocell{ cap*0 id*0 id}\,.
\]
If there exists an $X$-compatible monomial order $\preccurlyeq$, then necessarily $\twocell{ cap*0 id*0 id}\prec\twocell{id*0 id*0 cap}$\;. Composing both sides on top with the $2$-cell $\twocell{cap}$ yields
\[
\twocell{ (cap)*1(cap*0 id*0 id)}
\:\prec\:
\twocell{(cap)*1(id*0 id*0 cap)}\,.
\]
This contradicts the interchange relations.
\end{remark}

We now construct a monomial order for $3$-prepolygraphs, which will be used in Section~\ref{S:NormFormHomBasisqSchurCategories}.
\subsubsection{Lexicographic orders}
\label{SSS:LexicographicOrders}
Given a family $(\preccurlyeq_{o_i})_{1 \leq i \leq N}$ of total preorders on a free $2$-precategory~$\pcat{X_2}$, with $N \geq 2$, we define a new total preorder on $\pcat{X_2}$ by induction on $i$, denoted by $\preccurlyeq_{o_1,\dots,o_N}$, as follows:
\[
u \preccurlyeq_{o_1,\dots,o_N} v
\quad\text{if}\quad
u \prec_{o_1} v
\;\text{or}\;
\big(u =_{o_1} v \;\text{and}\; u \preccurlyeq_{o_2,\dots,o_N} v\big),
\quad
\text{ for $u,v \in \pcat{X_2}$.}
\]
If the last order $\preccurlyeq_{o_N}$ is a total order, then the order $\preccurlyeq_{o_1,\dots,o_N}$ is itself a total order \cite[Lem.~1.15]{LIU25}.

Similarly, the orders $(\preccurlyeq_{o_i})_{1 \leq i \leq N}$ also induce a total preorder
on the Cartesian product $(\pcat{X_2})^{N}$, denoted by $\preccurlyeq_{[o_1,\ldots,o_N]}$, defined as follows:
for $(u_1,\ldots,u_N)$ and $(v_1,\ldots,v_N)$ in $(\pcat{X_2})^{N}$,
\[
(u_1,\ldots,u_N) \preccurlyeq_{[o_1,\ldots,o_N]} (v_1,\ldots,v_N)
\quad\text{if}\quad
u_1 \prec_{o_1} v_1
\;\text{or}\;
\big(u_1 =_{o_1} v_1 \;\text{and}\; (u_2,\ldots,u_N) \preccurlyeq_{[o_2,\ldots,o_N]} (v_2,\ldots,v_N)\big).
\]
Moreover, if each total preorder $\preccurlyeq_{o_i}$ is well-founded, then the order $\preccurlyeq_{[o_1,\ldots,o_N]}$ is also well-founded
\cite[Lem.~1.15]{LIU25}.

Suppose that there exists a well-founded total order $\preccurlyeq_o$ on the set $X_1\sqcup X_2$.
We define a total order~$\preccurlyeq_{\mathrm{lex}}$ on the free $2$-precategory $\pcat{X_2}$ as follows.
\begin{enumerate}
\item Let $f$ and $g$ be two parallel $2$-cells of $\pcat{X_2}$ of size~$1$, written as $ f = x_n  \cdots  x_1 \circ_0 \varphi \circ_0 y_1 \cdots  y_m$ and $g = x'_{n'}  \cdots  x'_{1}  \circ_0  \varphi' \circ_0  y'_1  \cdots  y'_{m'}$,  where $\varphi, \varphi' \in X_2$ and $x_i, y_i, x'_i, y'_i \in X_1$.
We define $f\preccurlyeq_{\lex} g$ if either $m+n<m'+n'$, or $m+n=m'+n'$ and
 \[
(x_n,\dots,x_1,\varphi,y_1,\dots,y_m)
\preccurlyeq_{[o,\ldots,o]}   (x'_{n'},\dots,x'_{1},\varphi',y'_1,\dots,y'_{m'}).
  \]
For convenience, we still write $\preccurlyeq_o \coloneq \preccurlyeq_{[o,\ldots,o]}$ for the order on parallel $2$-cells of size~$1$.
\item In general, we consider $\abs{f}=n$ and $\abs{g}=m$, and write their unique $\circ_1$-decompositions as
$f = C_1[\varphi_1] \circ_1 \ldots \circ_1 C_n[\varphi_n]$ and 
$g = D_1[\psi_1] \circ_1 \ldots \circ_1 D_m[\psi_m]$,
where $\varphi_i,\psi_i\in X_2$.
We define $f \preccurlyeq_{\lex} g$ if either $n<m$, or $n=m$ and
\[
(C_1[\varphi_1], \ldots, C_n[\varphi_n])
 \preccurlyeq_{[o,\ldots,o]}
 (D_1[\psi_1], \ldots, D_m[\psi_m]).
\]
\end{enumerate}
We prove that the relation $\preccurlyeq_{\lex}$ is a monomial order on $\pcat{X_2}$, called the \emph{lexicographic order}, using the same arguments as for the lexicographic order on free words
\cite[Lem.~1.15]{LIU25}.

\subsubsection{Derivations} 
\label{SSS:TerminationDerivation}
Given a $2$-category $\Cr$, a \emph{$\Cr$-module $\Or$}  is a functor from the category of contexts $\Cont(\Cr)$ to the category $\Ab$ of abelian groups.
Denote by $\Ord$ the $2$-category with a single $0$-cell, whose $1$-cells are partially ordered sets and whose $2$-cells are monotone maps. The $0$-composition is given by the cartesian product while the $1$-composition is given by the composition of monotone maps.
Given a $2$-functor $Z: \Cr \fl \Ord$, and an ordered abelian group $(G,\leq_G)$, the following assignments define a $\Cr$-module, denoted by $\Or_{Z,G}$:
\begin{enumerate}
\item Every $2$-cell $f$ is sent to the ordered abelian group of morphisms $\Or_{Z,G}(f) \:=\: \Ord\big(\: Z(sf), \: G \big)$, consisting of monotone maps from $Z(sf)$ to $G$, provided with the natural order define by $a \leq b$, if $a(y)\leq_G b(y)$, for all $a,b\in\Or_{Z,G}(f)$ and $y\in Z(sf)$.

\item If $C=w\square w'$ is a context of $\Cr$, with  a $1$-sphere $\square$ and $w,w'\in\Cr_1$, then $\Or_{Z,G}(C)$ sends a monotone map $a:Z(s\square)\fl G$ to the map
\[
Z(w)\times Z(s\square) \times Z(w') \:\fl\: G
\]
sending $(y,z,y')$ to $a(z)$.
\item If $C = g\circ_1 \square \circ_1 h$  is a context of $\Cr$, with  a $1$-sphere $\square$ and $g,h\in\Cr_2$, then $\Or_{Z,G}(C)$ sends a monotone map $a:Z(s\square)\fl G$ to the map $Z(sg) \:\fl\: G$
sending $y$ to $a(Z(g)(y))$.
\end{enumerate}

Suppose now that the $2$-category $\Cr$ is presented by a $3$-polygraph $X$.  
Then the $\Cr$-module $\Or_{Z,G}$ is uniquely determined by the partially ordered sets $Z(x)$, for $x\in X_1$, together with the monotone maps $Z(\varphi): Z(s\varphi) \fl Z(t\varphi)$, for $\varphi\in X_2$. 

A \emph{derivation} of $\Cr$ with values in a $\Cr$-module $\Or$ is a map  
$d:\Cr\to\Or$ sending  each $2$-cell $f$ to an element $d(f)\in\Or(f)$ such that, for every $i\in\{0,1\}$ and every $i$-composable pair
$(f,g)$ of $2$-cells in $\Cr$,
\[
d\left(f \circ_i g\right) = f \circ_i d(g) + d(f) \circ_i g.
\]
Following {\cite[Thm.~4.2.1]{GuiraudMalbos09}}, a $3$-polygraph $X$ is terminating if there exist a $2$-functor $Z : \cat{X_2} \fl \Ord$, an ordered abelian group $G$, and a derivation $d : \cat{X_2} \fl \Or_{Z,G}$ satisfying the following two conditions:
\begin{enumerate}
\item For any $1$-cell $c$ in $X_1$, the set $Z(c)$ is non-empty and, $Z(s\alpha) \geq Z(t\alpha)$ for every $\alpha \in X_3$.
\item For any $2$-cell $f$ in $\cat{X_2}$, we have $d(f) > 0$ and,
$d(s\alpha) \geq d(t\alpha)$ for every $\alpha \in X_3$. 
\end{enumerate}

This termination criterion was extended to linear $3$-polygraphs in {\cite[§1.3.3]{Dupont22}} and applies directly to $3$-prepolygraphs. 
However, it is not sufficiently flexible for certain diagrammatic algebras involving large rewriting systems, such as the KLR algebras considered in Subsection~\ref{SS:KLRalgebras} and the $q$-Schur categories studied in Section~\ref{S:NormFormHomBasisqSchurCategories}. In such cases, it is difficult to construct functors $Z$ and derivations $d$ satisfying the above conditions on the entire set of rewriting rules, which motivates the idea of stratifying the rewriting system.

\section{Stratified prepolygraphs and termination by stratification}
\label{S:StratifiedPrepolygraphsTerminationStratification}

This section introduces stratifications of 3-prepolygraphs modulo together with stratified termination functions, generalizing both derivation-based and monomial-order-based methods. We also illustrate this approach on Khovanov-Lauda-Rouquier algebras, obtaining a more efficient proof of termination than the one given in~\cite{Dupont21}.

\subsection{Stratified prepolygraphs and termination}
\label{SS:StratifiedPrepolygraphsTermination}

\subsubsection{Stratified prepolygraphs modulo}
\label{SSS:StratifiedPolygraphModulo}
A \emph{$k$-stratified $3$-prepolygraph modulo} $F^\ast(X,E)$ is a $3$-prepolygraph modulo $(X,E)$ together with a sequence $(F^iX)_{1\leq i \leq k}$ in $\Exte{\plcat{X_2}}$ such that
\[
F^0 X=E \subseteq F^1 X \subseteq \; \ldots \; \subseteq F^k X = X_3E,
\]
which we call a \emph{$k$-stratification} of the $3$-prepolygraph modulo $(X,E)$.
The sequence $(\strat{i} X)_{1\leq i \leq k}$, where $\strat{i} X \coloneq F^i X \setminus F^0 X$, is called the \emph{reduced stratification} of the $3$-prepolygraph~$(X,E)$. 
For $i\geq 1$, the extension $\strat{i}X \coloneq F^i X \setminus F^{i - 1} X$ is called the \emph{stratum of depth $i$} of the extension $X_3$.
When $E$ is empty, we call it a \emph{$k$-stratified $3$-prepolygraph}
and write $F^\ast X$.

\subsubsection{Weighted functions}
\label{SSS:StrafiedWeightedMaps}

A \emph{weighted function} on $\pcat{X_2}$ is a hom-set–wise function
\[
\delta = \big(\delta_{u,v} : \pcat{X_2}(u,v) \to (Z_{u,v},\preccurlyeq_{u,v})\big)_{u,v\in \cat{X_1}}\raisebox{-0.8ex}{,}
\]
where each $(Z_{u,v},\preccurlyeq_{u,v})$ is a well-founded totally preordered set. We require that $\delta$ be monotone with respect to contexts, that is for all $2$-cells $f,g$ in $\pcat{X_2}(u,v)$ and every compatible context $C$, one has
\[
\delta(f) \preccurlyeq_{u,v} \delta(g)
\quad\text{implies}\quad
\delta(C[f]) \preccurlyeq_{u,v} \delta(C[g]).
\]
Given an extension $Y$ of $\plcat{X_2}$, we say that $\delta$ is \emph{$Y$-compatible} (resp. \emph{strictly $Y$-compatible}; an \emph{$Y$-identity}) if, for every $\alpha\in Y$, one has 
$\delta(v) \preccurlyeq \delta(\lm(\alpha))$ (resp. $\delta(v) \prec \delta(\lm(\alpha))$; $ \delta(v) =\delta(\lm(\alpha))$), for every $v\in\suppm(t\alpha)$. 
Note that the preorders~$\preccurlyeq_{u,v}$ need not be antisymmetric.

\subsubsection{Termination function}
\label{SSS:DefinitionWeightedMap}
A \emph{termination function} of a $k$-stratified $3$-prepolygraph modulo $F^\ast(X,E)$ is a family of weighted functions $(\delta_i)_{1\leq i \leq k}$, satisfying the following conditions, 
\begin{enumerate}
\item $\delta_i$ is strictly $\strat{i}X$-compatible, for every $1\leq i \leq k$, 
\item $\delta_i$ is $\strat{i-1}X$-compatible, for every $2\leq i \leq k$,
\item $\delta_i$ is $E$-identity, for every $1\leq i \leq k$.
\end{enumerate}

Condition {\bf i)} ensures termination within each stratum, while Condition {\bf ii)} provides the compatibility between successive strata required for the modularity of termination established in Theorem~\ref{T:MainTheoremOnTermination}.
The proof relies on a reduction from linear rewriting to monomial rewriting, which we formulate in terms of the \emph{monomial $3$-prepolygraph} $\monex{X}=(X_{\leq 2},\monex{X_3})$ associated with a left-monomial $3$-prepolygraph $X$, defined by
\[
\monex{X_3} \coloneq \{(\lm(\alpha),v) \mid \alpha \in X_3,\; v \in \suppm(t\alpha)\}.
\]

\begin{lemma}
\label{L:PolAndLinearPol}
A $3$-prepolygraph modulo $(X,E)$ is $E$-terminating if and only if its associated monomial $3$-prepolygraph modulo $(\monex{X},\monex{E})$ is $\monex{E}$-terminating.
\end{lemma}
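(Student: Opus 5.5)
We treat the two implications separately; the reverse one (monomial termination implies linear termination) carries the content. Throughout, $X$ is left-monomial, as required for $\monex{X}$ to be defined. Every $X$-rewriting step then has the form $\sigma=aC[\alpha]+1_{f-ah}$ with $h=C[\lm(\alpha)]$, $a=\cf{h}{f}$, as in Remark~\ref{R:ClassicalLeftMonomialSteps}. The same holds for $\sim_E$-steps, whose monomial shadows are the pairs in $\monex{E}$.

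\emph{From linear to monomial.} Suppose $(X,E)$ is $E$-terminating and that there is an infinite chain $h_0 \sim_{\monex{E}}h_0'\to h_1\sim_{\monex E} h_1'\to\cdots$ in $\monex{X}$. Each step $h_i'\to h_{i+1}$ comes from some $C_i[\lm(\alpha_i)]=h_i'$ with $h_{i+1}=C_i[v_i]$ and $v_i\in\suppm(t\alpha_i)$. We lift it to the linear chain $h_0\sim_E h_0'\to C_0[t\alpha_0]$ and keep following, inside the support, the monomial $h_{i+1}$. To do this, we build linear $2$-cells $f_i$ with $h_i\in\suppm(f_i)$: the step $f_i'\to f_{i+1}$ rewrites the full coefficient of $h_i'$ in $f_i'$. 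Its target still contains $C_i[v_i]$ unless cancellation occurs, and we avoid cancellation by working with monomial sources only. An $E$-step applied to a monomial is itself linear, and $\monex{E}$-equivalence of monomials should lift to $E$-equivalence of cells containing them. The obstacle here is cancellation. To bypass it, we use the standard multiset-extension argument, which shows that a linear well-founded order induces one on monomials reachable in supports.

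\emph{From monomial to linear.} Suppose $\monex{X}$ is $\monex{E}$-terminating. Let $\succ$ be the well-founded order on monomials generated by $\sim_{\monex E}\to_{\monex X}\sim_{\monex E}$, and pass to $\monex{E}$-classes of monomials. To each $2$-cell $f$ we associate the finite multiset $M(f)$ of classes of elements of $\suppm(f)$, and we order multisets by the Dershowitz--Manna extension $\succ_{\mathrm{mul}}$, which is well-founded. An $X$-rewriting step removes $h$ entirely from the support, because $\lc(\alpha)=1$ and the step subtracts $\cf{h}{f}\,h$. It adds only monomials $C[v]$ with $C[\lm(\alpha)]\to_{\monex X} C[v]$, so $C[v]\prec h$; coefficients of other monomials may change or vanish, but nothing new appears besides elements below $h$. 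Hence $M(f)\succ_{\mathrm{mul}}M(g)$.

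The main obstacle is the $\sim_E$ part: we must show that $f\sim_E f'$ implies $M(f)=M(f')$ up to classes, or at least $M(f')\preceq_{\mathrm{mul}} M(f)$ in the preorder on classes. Since $E$ is symmetric and its elements relate monomials whose support elements are $\monex{E}$-related, each $E$-step replaces a monomial by monomials in the same class. We therefore compare multisets of classes where equal classes merge, that is, supports taken as sets of classes, and use the set version of the multiset order. Under this order $E$-steps do not increase the measure and $X$-steps strictly decrease it, so $\sim_E\to_X\sim_E$ is contained in a well-founded relation.
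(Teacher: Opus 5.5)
Both implications have a genuine gap.

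\textbf{Linear $\Rightarrow$ monomial.} Your argument stops exactly at the obstacle you name. ``Working with monomial sources only'' fails after the first step, since $C_0[t\alpha_0]$ is in general not a monomial. The multiset-extension theorem also points the wrong way here. It derives well-foundedness of the multiset order from that of the base relation. Its trivial converse, embedding monomials as singletons, presupposes that the linear relation simulates each $\monex{X}$-step. That is precisely what multi-term targets and cancellation prevent.

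The missing idea is an explicit lifting strategy. First suppose the chain monomials $h_i$ are pairwise distinct. Keep a $2$-cell $f$ and an index $j$ with $h_j\in\suppm(f)$. Let $k\geq j$ be the largest index with $h_k\in\suppm(f)$; it exists because $\suppm(f)$ is finite. Rewrite $h_k$ with its full coefficient $b$, which is legal since $\lc(\alpha_k)=1$. Since $h_{k+1}\notin\suppm(f)$ and $h_{k+1}\neq h_k$, its coefficient in the target is $b\,\cf{v_k}{t\alpha_k}\neq 0$. This holds because monomial contexts are injective and $\bk$ is a domain.

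Now suppose the chain repeats, and take a shortest cycle $g_0\to\cdots\to g_{n-1}\to g_0$. For $n=1$, each step multiplies the coefficient of $g_0$ by a nonzero scalar. For $n\geq 2$, keep the invariant that some but not all $g_i$ lie in the support, by rewriting a present $g_k$ whose successor $g_{k+1}$ is absent. The $\monex{E}$-steps are lifted by applying them to the monomial being rewritten.

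\textbf{Monomial $\Rightarrow$ linear.} Your first measure, the multiset $M(f)$ of classes counted with multiplicity, is the right one for $X$-steps. Replacing it by sets of classes breaks the strict decrease. Take $f=h+h''$ with $h=C[\lm(\alpha)]$ and $h''\neq h$ in the same $\monex{E}$-class. Rewriting $h$ leaves $[h]$ in the set through $h''$ and adds the classes of $\suppm(C[t\alpha])$, so the set grows.

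Nor can you show that $E$ never increases the measure. If $\sim_E$ is the linear congruence generated by $E$ (arbitrary $3$-cells of $\plcat{E}$), then $0\sim_E h-h''$, so $E$ can create classes. In that reading no measure can work at all. Indeed $f\sim_E f+h-h''\to_X f+C[t\alpha]-h''$, and iterating from $f=0$ gives the infinite chain $k\,(C[t\alpha]-h'')$, $k\geq 0$.

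The multiset argument, which is the route of the paper's sketch, goes through only when $E$-equivalence acts on the rewritten monomial. This is an ${}_EX_E$-step with $u\sim_E u'=C[\lm(\alpha)]$ and $w_i\sim_E w_i'$ for $C[t\alpha]=\sum_i\mu_iw_i$, the additive part not containing $u$. Such a step removes one copy of $[u]$ and adds only strictly smaller classes, possibly cancelling other copies. So $M$ decreases in the Dershowitz--Manna order, with no need to pass to sets.
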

\begin{proof}
This result follows from properties of multiset relations,
as introduced in \cite{DershowitzManna79}, that we adapt to our modulo setting. 
We define a relation $\succcurlyeq_\vdash$  on $\pcat{X_2}$ by setting
\[
s\alpha \succ_\vdash t\alpha \quad \text{for every } \alpha \in \monex{X_3},
\qquad
s\beta =_\vdash t\beta \quad \text{for every } \beta \in \monex{E}.
\]
Moreover, for any compatible context $C$, we require $C[f] \succcurlyeq_\vdash C[g]$  if $f \succcurlyeq_\vdash g$.
This induces a binary relation $\vdash$ on $\pcat{X_2}/\monex{E}$ defined by
\[
u \vdash v
\quad \text{if there exist } u', v' \in \pcat{X_2}
\text{ such that }
u =_\vdash u' \succ_\vdash v' =_\vdash v .
\]
We then define analogously the relation on $\plcat{X_2}/E$ as the restriction to finite subsets of $\pcat{X_2}/\monex{E}$, of the multiset relation generated by $\vdash$.
We refer to {\cite[Sec.~2.5]{BaaderNipkow98}} for the definition and properties of the multiset relation generated by a binary relation~$\vdash$.  
In particular, the relation $\vdash$ is well-founded on the $2$-cells of
$\plcat{X_2}/\monex{E}$ if and only if it is well-founded on the monomials of
$\pcat{X_2}/E$, whose proof is based on K\"onig's lemma.
\end{proof}

\begin{theorem}
\label{T:MainTheoremOnTermination}
A $k$-stratified $3$-prepolygraph modulo $F^\ast(X,E)$ having a termination function is \linebreak  $E$-terminating.
\end{theorem}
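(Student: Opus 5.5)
By Lemma~\ref{L:PolAndLinearPol}, it suffices to prove that the monomial $3$-prepolygraph modulo $(\monex{X},\monex{E})$ is $\monex{E}$-terminating. The stratification passes to the monomial level: set $\strat{i}\monex{X}\coloneq\{(\lm(\alpha),v)\mid \alpha\in\strat{i}X,\ v\in\suppm(t\alpha)\}$. The three conditions on $(\delta_i)_i$ then read, for every monomial rule $(h,v)$:
\begin{itemize}
\item $\delta_i(v)\prec\delta_i(h)$ when the rule comes from stratum $i$;
\item $\delta_i(v)\preccurlyeq\delta_i(h)$ when it comes from stratum $i-1$;
\item $\delta_i(v)=\delta_i(h)$ when it comes from $E$.
\end{itemize}
Since each $\delta_i$ is monotone with respect to contexts, these inequalities propagate to rewriting steps $C[h]\fl C[v]$ of $\monex{X}$ and to $\monex{E}$-steps.

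I would argue by induction on $k$. For $k=1$, $\delta_1$ is strictly compatible with every rule of $X_3$ and constant along $\monex{E}$. Hence every step of $\sim_{\monex{E}}\fl\sim_{\monex{E}}$ strictly decreases $\delta_1$ in the well-founded preorder $Z_{u,v}$. An infinite chain would therefore give an infinite strictly descending chain, which is impossible.

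For the inductive step, suppose there is an infinite chain $u_0\sim\fl\sim u_1\sim\fl\sim\cdots$ in $F^k\monex{X}$ modulo $\monex{E}$. Consider the values $\delta_k(u_j)$. Steps from stratum $k$ strictly decrease $\delta_k$, steps from stratum $k-1$ do not increase it, and $E$-steps preserve it. The crucial issue is steps from strata $\le k-2$, on which the definition imposes nothing for $\delta_k$.

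I expect this to be the main obstacle. I would handle it by reading condition ii) cumulatively, that is, by requiring $\delta_i$ to be compatible with all of $F^{i-1}X$. Condition ii) only compares adjacent strata, so if the paper's intent is the weaker reading, I would first try to derive the cumulative one. If that fails, I would instead use the lexicographic combination $\Delta=(\delta_k,\delta_{k-1},\ldots,\delta_1)$ in the well-founded preorder $\preccurlyeq_{[\ldots]}$ of \SSS{SSS:LexicographicOrders}. In that case I would need to check that a step of stratum $j$ leaves $\delta_k,\ldots,\delta_{j+1}$ unchanged or decreased, which again amounts to cumulative compatibility.

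Granting cumulative compatibility, $\delta_k$ is non-increasing along the chain. Since $Z$ is well-founded, $\delta_k$ is eventually constant. From then on, no step of stratum $k$ occurs, because each such step would strictly decrease $\delta_k$. So the tail of the chain is an infinite chain in $F^{k-1}(\monex{X},\monex{E})$. This $(k-1)$-stratified prepolygraph modulo has termination function $(\delta_i)_{i<k}$, so the induction hypothesis gives a contradiction.

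One technical point remains. The preorder is only total, so "$\delta_k$ eventually constant" should mean constant up to the equivalence $=_{u,v}$. A strict decrease means passing to a strictly smaller equivalence class, and the argument uses only that there is no infinite strictly descending sequence of classes. Finally, the chain stays in a fixed hom-set $(u,v)$, since rewriting preserves the boundary, so a single $Z_{u,v}$ is involved throughout.
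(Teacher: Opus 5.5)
Your proposal is correct and is essentially the paper's proof: you reduce to the monomial prepolygraph via Lemma~\ref{L:PolAndLinearPol}, use $\delta_k$ (constant along $E$, strictly decreasing on top-stratum steps, non-increasing otherwise) to show that only finitely many top-stratum steps occur, and then descend stratum by stratum; the paper phrases this last part as ``repeating the same argument'', where you give an explicit induction on $k$. The cumulative reading of condition~ii) that you grant is exactly what the paper's argument uses, since its dichotomy between $\strat{k}X$-steps and $\strat{k-1}X$-steps is exhaustive only under the reduced-stratification meaning $\strat{i}X=F^iX\setminus F^0X$; it also cannot be derived from adjacent compatibility alone, because a stratum-$3$ rule $A\fl B$ and a stratum-$1$ rule $B\fl A$ on two parallel $2$-generators (stratum $2$ empty), with $\delta_3$ counting $A$'s, $\delta_1$ counting $B$'s and $\delta_2$ constant, satisfy the adjacent conditions yet loop.
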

\begin{proof}
We have $X_3=\strat{1}X\strat{2}\ldots X\strat{k}X$.
We first consider the case where $X_3$ is a monomial extension. 
Suppose, for a contradiction, that there exists an infinite ${}_E X_E$-rewriting path
\[
f_{1} \sim_E g_{1} 
\ofl{\sigma_{1}} 
f_{2} \sim_E g_{2}
\ofl{\sigma_{2}}
f_{3} \sim_E g_{3}
\ofl{\sigma_{3}} \cdots
\ofl{\sigma_{n-1}}
f_{n} \sim_E g_{n}
\ofl{\sigma_{n}} \cdots,
\]
where each $\sigma_i$ is a $X$-rewriting step.
Since $\delta$ is a termination function, we have $\delta_k(f_i)=\delta_k(g_i)$, for every $1\leq i$.
Moreover, according to whether $\sigma_i$ is a $\strat{k}X$-rewriting step or a $\strat{k-1}X$-rewriting step, we have respectively 
\[
\delta_k(g_i)\succ\delta_k(f_{i+1}) 
\qquad \text{or} \qquad
\delta_k(g_i)\succcurlyeq \delta_k(f_{i+1}).
\]
By well-foundedness, the rewriting path can therefore contain only finitely many $\strat{k}X$-rewriting steps. Hence, there exists an index $j_k$ such that $\sigma_i$ is a $\strat{k-1}X$-rewriting step for every $i\geq j_k$.

Repeating the same argument for each function $\delta_r$ for $r \leq k-1$, we eventually obtain an index $j_2$ such that all $\sigma_i$ is an $\strat{1}X$-rewriting step for every $i \geq j_2$.
This is sufficient to prove that the stratum $\strat{1}X$ is terminating.
By definition of the termination function $\delta$, the map $\delta_1$ is strictly $\strat{1}X$-compatible. Hence, $\strat{1}X$ is terminating, that is, the $3$-prepolygraph modulo $(\strat{1}X,E)$ is $E$-terminating.

In the general case, if $X_3$ is an extension of $\plcat{X_2}$, the stratification $F^\ast(X,E)$ and the termination function $\delta$ induce a stratification of $G^\ast(\monex{X},\monex{E})$ and an associated  termination function $\delta'$ by setting 
\[
G_s^\ast\monex{X}=\strat{1}\monex{X}\strat{2}\monex{X}\ldots \strat{k}\monex{X}\quad \text{ and }\quad \delta'(f) = \delta(f),
\]
 for every $f\in\pcat{X_2}$, where $\strat{i}\monex{X}\coloneq\monex{\strat{i}X}$.
Following the previous monomial case, we deduce that the $3$-prepolygraph modulo $G^\ast(\monex{X},\monex{E})$ is $\monex{E}$-terminating.  
By Lemma~\ref{L:PolAndLinearPol}, we therefore conclude that $F^\ast(X,E)$  is 
$E$-terminating.
\end{proof}

\subsection{Stratified termination using monomial orders and derivations}
\label{SS:TerminationByStratification}

In practice, termination functions can often be constructed from familiar termination measures. We first explain how a global monomial order gives rise to a termination function. We then formulate a stratified version of the classical derivation criterion for termination.

\begin{proposition}
\label{P:WeightedMapsvsMonomialOrders}
Let $F^\ast(X,E)$ be a $k$-stratified $3$-prepolygraph modulo.
If $\delta$ is a termination function of $F^\ast(X,E)$, such that $\delta_1$ takes values in a well-founded totally ordered set, then the relation $\preccurlyeq_\delta$ defined by
\[
f \preccurlyeq_\delta g 
\quad\text{if}\quad
(\delta_k(f),\ldots,\delta_1(f))\preccurlyeq_{[k,\ldots,1]} (\delta_k(g),\ldots,\delta_1(g)),
\]
for all $f,g\in \pcat{X_2}$, is an $X$-compatible monomial order.
Conversely, for any $X$-compatible monomial order~$\preccurlyeq$ on $\pcat{X_2}$, the identity map on $\pcat{X_2}$ defines a termination function of $F^\ast X$.
\end{proposition}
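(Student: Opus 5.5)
The plan is to verify the two directions separately, checking the defining properties of a monomial order and of a termination function directly from the definitions in \SSS{SSS:MnomialOrders} and \SSS{SSS:DefinitionWeightedMap}.

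\emph{First direction.} Four properties of $\preccurlyeq_\delta$ need to be checked.

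\textbf{Well-foundedness.} Each $\delta_i$ takes values in a well-founded total preorder. The relation $\preccurlyeq_\delta$ is the pullback along $f\mapsto(\delta_k(f),\ldots,\delta_1(f))$ of the lexicographic preorder $\preccurlyeq_{[k,\ldots,1]}$, which is well-founded by \cite[Lem.~1.15]{LIU25}. Hence $\preccurlyeq_\delta$ is well-founded.

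\textbf{Totality.} Since $\delta_1$ is the last component and takes values in a total order, the lexicographic preorder is total and antisymmetric on the tuples. For $\preccurlyeq_\delta$ to be a total order on $\pcat{X_2}$, however, the tuple map must be injective. This is the delicate point. I would handle it by reading the hypothesis as saying that $\delta_1$ separates parallel monomials, as it does in the intended application where $\delta_1$ is a monomial order. Failing that, I would break residual ties by a fixed monomial order such as $\preccurlyeq_{\lex}$, which does not affect compatibility.

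\textbf{Monotonicity.} Suppose $f\prec_\delta g$. Let $i$ be the first index, counting down from $k$, with $\delta_i(f)\neq\delta_i(g)$ in the preorder, so that $\delta_j(f)=_j\delta_j(g)$ for $j>i$ and $\delta_i(f)\prec\delta_i(g)$. Monotonicity of each weighted function applied in both directions gives $\delta_j(C[f])=\delta_j(C[g])$ for $j>i$, and $\delta_i(C[f])\preccurlyeq\delta_i(C[g])$.

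The main obstacle is that weighted functions are only assumed to preserve $\preccurlyeq$, not strict inequalities. So equality might occur at level $i$, in which case one passes to the lower levels. There I would argue that the tie propagates down until level $1$, where $\delta_1$ is total and I would use (or impose) strict monotonicity. If strictness is not available from the definition, I would add it as the implicit reading, in the same way monomial orders are used in the application.

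\textbf{$X$-compatibility.} Take $\alpha\in\strat{i}X$ and $v\in\suppm(t\alpha)$. Conditions i) and ii) of a termination function give $\delta_i(v)\prec\delta_i(\lm(\alpha))$ and $\delta_{i+1}(v)\preccurlyeq\delta_{i+1}(\lm(\alpha))$.

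For the levels $j>i+1$ I need $\delta_j(v)\preccurlyeq\delta_j(\lm(\alpha))$, while condition ii) only relates consecutive strata. I would therefore argue either that compatibility with lower strata is intended to be cumulative, or that the tuple is read so that the strict decrease at level $i$ is decisive. Then $v\prec_\delta\lm(\alpha)$. Rules of $E$ are handled by condition iii), which gives equality at every level.

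\emph{Converse direction.} Given an $X$-compatible monomial order $\preccurlyeq$, set $\delta_i=\mathrm{id}$ with values in $(\pcat{X_2}(u,v),\preccurlyeq)$, for every $i$. This is a well-founded total order, and it is monotone under contexts because $\preccurlyeq$ is a monomial order. $X$-compatibility gives $v\prec\lm(\alpha)$ for every rule $\alpha$ in every stratum. This yields both the strict condition i) and the non-strict condition ii). With $E$ empty, condition iii) is vacuous, so the family $(\mathrm{id})_{1\leq i\leq k}$ is a termination function of $F^\ast X$.
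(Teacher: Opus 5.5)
Your route is the paper's: verify the properties of the lexicographic pullback directly, and use the identity for the converse. Your converse agrees with the paper's in substance. You are right that the paper's short argument silently needs two things. First, antisymmetry requires injectivity of $f\mapsto(\delta_k(f),\ldots,\delta_1(f))$. Second, $X$-compatibility requires the cumulative reading of condition ii), namely that $\delta_j$ is $F^{j-1}X$-compatible; this reading is also what the proof of Theorem~\ref{T:MainTheoremOnTermination} tacitly uses. Your other option, that the strict decrease at level $i$ is ``decisive'', is not available. The order $\preccurlyeq_{[k,\ldots,1]}$ is decided at the highest level where the tuples differ, not at level $i$.

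The genuine gap is in monotonicity. Suppose a context turns the strict inequality at the first differing level $i$ into a tie. Then nothing ``propagates down''. The levels $j<i$ were never constrained for $f$ and $g$, so they may compare in the opposite direction, and non-strict monotonicity carries that reversed comparison over to $C[f]$ and $C[g]$. Making $\delta_1$ strictly monotone does not help.

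Here is a concrete counterexample.
\begin{itemize}
\item Take one $1$-generator and $2$-generators $a,b,c$ from it to itself, with $\strat{1}X=\{c\to b\}$, $\strat{2}X=\{a\to b\}$ and $E=\varnothing$.
\item Set $\delta_2(f)=\min(n_a(f),1)$, where $n_a(f)$ counts the occurrences of $a$ in $f$.
\item Let $\delta_1$ be $\preccurlyeq_{\lex}$ induced by $a\prec b\prec c$.
\end{itemize}
One checks that $\delta_2$ is a weighted function, that $(\delta_1,\delta_2)$ is a termination function, and that $\delta_1$ is even a monomial order. Yet $b\prec_\delta a$, because $\delta_2$ drops from $1$ to $0$. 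On the other hand $a\circ_1 a\prec_\delta a\circ_1 b$, because $\delta_2$ ties at $1$ and $\preccurlyeq_{\lex}$ decides. So $\preccurlyeq_\delta$ is not monotone.

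Thus, with the paper's non-strict notion of weighted function, the first part of the statement fails. The paper's proof does not discuss monotonicity, so it does not close this either. The missing hypothesis is that every $\delta_i$ preserves strict inequalities under contexts: $\delta_i(f)\prec\delta_i(g)$ implies $\delta_i(C[f])\prec\delta_i(C[g])$. With it, the levels above $i$ stay tied and the strict inequality at level $i$ survives any context, so monotonicity is immediate and the rest of your argument goes through.
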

\begin{proof}
Let $\delta$ be a termination function of $F^\ast(X,E)$.
Since for each $2\leq i \leq k$ the relation $\preccurlyeq_i$ is a well-founded total preorder
and $\preccurlyeq_1$ is a well-founded total order,
the lexicographic order $\preccurlyeq_{[k,\ldots,1]}$ is a well-founded total order
following \SSS{SSS:LexicographicOrders}, and hence so is $\preccurlyeq_\delta$.
Moreover, the $X$-compatibility of $\delta$ implies that $\preccurlyeq_\delta$
is an $X$-compatible monomial order.

Conversely, assume that $\preccurlyeq$ is an $X$-compatible monomial order on $\pcat{X_2}$.
Then $\preccurlyeq$ is a well-founded total order on $\pcat{X_2}$, and by $X$-compatibility 
$\mathrm{id}_{\pcat{X_2}}(t\alpha)\prec \mathrm{id}_{\pcat{X_2}}(s\alpha)$ 
holds, for every $\alpha\in X_3$. 
Therefore, $\mathrm{id}_{\pcat{X_2}}$ is a termination function.
\end{proof}

\subsubsection{Bounded below derivations}
Let $(X,E)$ be a $3$-prepolygraph modulo and $(G,\leq)$ be an ordered abelian group.
A $2$-functor $Z : \pcat{X_2} \to \Ord$ is \emph{$X_3E$-monotone} if, for every $1$-cell $c \in X_1$, the set $Z(c)$ is non-empty and,
for all $\alpha \in \monex{X_3}$ and $\beta \in E$, we have
\[
Z(s\alpha) \geq Z(t\alpha)
\quad\text{and}\quad
Z(s\beta) = Z(t\beta).
\]
A derivation $d : \pcat{X_2} \to \Or_{Z,G}$ is \emph{bounded below}, if there exists an element $m \in G$ such that, for all $\alpha \in X_2$ and $y\in Z(s\alpha)$, we have $d(\alpha)(y) \geq m$.

\begin{proposition}
\label{P:DerivationvsWeightedmap}
Let $(X,E)$ be a $3$-prepolygraph modulo.
Then any $X_3E$-monotone $2$-functor $Z : \pcat{X_2} \fl \Ord$ and bounded below derivation $d : \pcat{X_2} \fl \Or_{Z,G}$ induces a weighted function $\delta^{Z,d}$ defined, for all $u,v \in \cat{X_1}$, by
\[
\delta_{u,v}^{Z,d} \colon \pcat{X_2}(u,v) \fl (\Ord(Z(u),G),\leq),
\]
sending $f$ to $d(f)$, where $\leq$ is the natural order on $\Ord(Z(u),G)$.
\end{proposition}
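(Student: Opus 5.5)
We must show that $\delta^{Z,d}$ is a weighted function. This amounts to three things: each codomain $(\Ord(Z(u),G),\leq)$ is a well-founded totally preordered set, or can be replaced by one on the image of $d$; the assignment is well defined hom-set-wise; and $\delta^{Z,d}$ is monotone with respect to contexts. The plan is to reduce everything to the derivation rule $d(f\circ_i g)=f\circ_i d(g)+d(f)\circ_i g$ and to the explicit description of the module $\Or_{Z,G}$ on whisker contexts and on $\circ_1$-contexts.

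\textbf{Monotonicity.} First I would treat monotonicity under contexts. By the decomposition of contexts in $\pcat{X_2}$, every context is a nesting of whisker contexts $w\square w'$ and of contexts $g\circ_1\square\circ_1 h$, so it suffices to check these two generating kinds. Let $f,f'\in\pcat{X_2}(u,v)$ with $d(f)\leq d(f')$.
\begin{enumerate}
\item For $C=w\square w'$, the derivation rule for $\circ_0$ together with item ii) of the definition of $\Or_{Z,G}$ gives $d(wfw')(y,z,y')=d(f)(z)$, because $d$ vanishes on identity $2$-cells of $1$-cells. The inequality is therefore preserved pointwise.
\item For $C=g\circ_1\square\circ_1 h$, we get
\[
d(g\circ_1 f\circ_1 h)(y)=d(g)(y)+d(f)\bigl(Z(g)(y)\bigr)+d(h)\bigl(Z(f)Z(g)(y)\bigr).
\]
The first term does not depend on $f$, and the second is preserved pointwise. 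The third term is the delicate one: it requires $Z(f)$ and $Z(f')$ to be comparable. When $f$ and $f'$ are related by a rule of $X_3E$, the $X_3E$-monotonicity of $Z$ gives $Z(f)\geq Z(f')$, and $d(h)$ is monotone, so the inequality propagates in the right direction. In general I would restrict the comparison to pairs for which this holds, and note that this is the only use made of monotonicity in Theorem~\ref{T:MainTheoremOnTermination}, where $f=\lm(\alpha)$ and $f'\in\suppm(t\alpha)$.
\end{enumerate}

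\textbf{Well-foundedness.} This is the main obstacle, since the natural pointwise order on $\Ord(Z(u),G)$ is neither total nor well-founded in general. The plan is to use the bounded-below hypothesis. Since $d(f)>0$ is required and each generator satisfies $d(\alpha)(y)\geq m$, induction on $\abs{f}$ via the derivation formula bounds $d(f)(y)$ below by $\abs{f}\,m$. I would then pass to an integer-valued measure, using the standard fact from~\cite{GuiraudMalbos09} and~\cite{Dupont22} that the orders of interest are discrete (as with $G=\Zb$), to conclude that every strictly decreasing chain of values $d(f_i)(y)$ at a fixed point $y$ is finite.

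\textbf{Totality.} To obtain a total preorder, I would compose with the preorder induced by evaluation at points of $Z(u)$: set $a\preccurlyeq b$ if $a\leq b$ pointwise. If that preorder fails to be total, I would state the weighted function on the total preorder generated by the pointwise order. I expect this bookkeeping, namely reconciling the pointwise partial order with the totality and well-foundedness demanded in \SSS{SSS:StrafiedWeightedMaps}, to be where the argument needs the most care.
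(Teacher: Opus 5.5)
There is a genuine gap, but it lies in the statement as much as in your proposal. The proposition is false as stated, and both your argument and the paper's break at the points you yourself flag.

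\textbf{Monotonicity.} Your plan follows the same two steps as the paper's proof: expand $d(C[f])$ with the derivation rule, then use the bound $d(f)\geq\abs{f}m$. The obstruction you identify in the $\circ_1$-context is real, and it is exactly where the paper's proof is too quick. Comparing $h\circ_1 wfw'\circ_1 d(h')$ with $h\circ_1 wgw'\circ_1 d(h')$ requires $Z(f)\leq Z(g)$. The $X_3E$-monotonicity of $Z$ only provides this when $f,g$ are (contexts of) the target and source of a rule.

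For arbitrary pairs the claim fails. Take one $1$-generator $x$ with $Z(x)=\Nb$, $G=\Zb$, $X_3=E=\emptyset$ (so $Z$ is trivially $X_3E$-monotone), and $2$-generators $\phi,\psi,\chi : x\to x$ with
\begin{itemize}
\item $Z(\phi)=Z(\chi)=\mathrm{id}$ and $Z(\psi)(n)=n+20$;
\item $d(\phi)=10$, $d(\psi)=1$, $d(\chi)(n)=n$, so $d$ is bounded below by $0$.
\end{itemize}
Then $d(\psi)<d(\phi)$ everywhere, yet $d(\psi\circ_1\chi)(n)=n+21>n+10=d(\phi\circ_1\chi)(n)$. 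The pointwise order is not total either.

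So your restriction to rule-related pairs is the right repair, but it yields something weaker than a weighted function in the sense of \SSS{SSS:StrafiedWeightedMaps}, and it needs setting up correctly. With $d(f)\leq d(f')$ you need $Z(f)\leq Z(f')$, not $Z(f)\geq Z(f')$ as you wrote; otherwise the third term moves the wrong way. The condition to carry through nested contexts is the conjunction of $Z(f)\leq Z(f')$ and $d(f)\leq d(f')$. This is stable under whiskering and $\circ_1$-contexts because every $Z(h)$ and every $d(h)$ is monotone.

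\textbf{Well-foundedness.} Your step here fails, as does the paper's. A chain that strictly decreases for the pointwise order need not strictly decrease at any fixed $y$. Also, $\abs{f}m$ is not a uniform bound when $m<0$. Both failures occur:
\begin{itemize}
\item With $Z(\sigma)=\mathrm{id}$ and $d(\sigma)=-1$, the cells $\sigma^k$ give $d(\sigma^k)=-k$.
\item Even with $m=0$, take $Z(\sigma)(n)=\max(n-1,0)$, $d(\sigma)=0$, $Z(\tau)=\mathrm{id}$ and $d(\tau)(n)=\min(n,1)$. Then $d(\sigma^k\circ_1\tau)(n)$ is $1$ for $n\geq k+1$ and $0$ otherwise, an infinite strictly descending chain of parallel cells.
\end{itemize}
So the positivity you import ($d(f)>0$) is genuinely needed; bounded below is not enough.

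What works is the Guiraud--Malbos convention. A strict decrease should mean $d(f)(y)>d(g)(y)$ for \emph{every} $y$, together with $Z(f)\geq Z(g)$. This relation is preserved by all contexts, whereas strictness at a single point is not, since $Z(h)$ need not be surjective. Evaluating at one point of the nonempty set $Z(u)$ then gives, for $G=\Zb$, a strictly decreasing sequence in $\Nb$.

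\textbf{Totality.} ``The total preorder generated by the pointwise order'' is not well defined, and a linear extension would destroy context-monotonicity. Totality cannot be obtained here. It is, however, not used in the proof of Theorem~\ref{T:MainTheoremOnTermination}, which only needs well-foundedness and compatibility stable under contexts.
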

\begin{proof}
For every $f\in \pcat{X_2}(u,v)$, we have $d(f)\in \Or_Z(f)=\Ord(Z(u),G)$, that is $\delta^{Z,d}_{u,v}$ is well-defined.
Assume that there exist two \ $2$-cells $f$ and $g$ in $\pcat{X_2}$ such that
$\delta^{Z,d}(f)< \delta^{Z,d}(g)$.
For all compatible context $C = h \circ_1 w \,\square\, w' \circ_1 h'$, by the definition of derivations, we have
\[
d(C[f])=d(h)\circ_1 wf w'\circ_1 h'+h\circ_1 w d(f) w'\circ_1 h'+h\circ_1 wf w'\circ_1 d(h')
\]
 and similarly for $d(C[g])$.
By the hypothesis that $d(f) < d(g)$, we have
\[
h\circ_1 w d(f) w'\circ_1 h' < h\circ_1 w d(g) w'\circ_1 h'
\]
Since the functor $Z$ is $X_3E$-monotone, it is decreasing on generators and thus 
\[
(d(h)\circ_1 w f w'\circ_1 h') \leq (d(h)\circ_1 w g w'\circ_1 h') 
\quad
\text{and}
\quad
(h\circ_1 w f w'\circ_1 d(h')) \leq (h\circ_1 w g w'\circ_1 d(h')) .
\]
Therefore, we deduce that $d(C[f])<d(C[g])$, that is $\delta^{Z,d}(C[f])< \delta^{Z,d}(C[g])$.

Since the derivation $d$ is bounded below, there exists $m \in G$ such that,
for every $\alpha \in X_2$ and every $y \in Z(s\alpha)$, one has $d(\alpha)(y) \geq m$.
For every $f \in \pcat{X_2}$, write its unique $\circ_1$-decomposition
$
f = C_1[\alpha_1] \circ_1 \ldots \circ_1 C_n[\alpha_n],
$
 where  $\alpha_i \in X_2$.
Then we have
\[
d(f)(a)
=
\Bigl(
C_1[d(\alpha_1)] \cdot \ldots \cdot C_n[\alpha_n]
+
C_1[\alpha_1] \cdot \ldots \cdot C_n[d(\alpha_n)]
\Bigr)(a)
\;\geq\; nm,
\]
for every $a \in Z(sf)$.
If the order $\leq$ is not well-founded, then there exists an infinite sequence of parallel $2$-cells $(f_k)_{k}$ with the same source $s$, which yields a strictly decreasing sequence of integers $\bigl(d(f_k)(a_k)\bigr)_{k}$, 
where $a_k \in Z(s)$.
This contradicts the hypothesis that the derivation $d$ is bounded below.
Hence, $\delta^{Z,d}$ is a weighted function.
\end{proof}

\begin{remark}
Stratified termination functions generalize both monomial-order-based and derivation-based termination criteria for a $k$-stratified $3$-prepolygraph $F^\ast(X,E)$. 
Indeed, by Proposition~\ref{P:WeightedMapsvsMonomialOrders}, every $X$-compatible monomial order on $\pcat{X_2}$ induces a termination function on $F^\ast(X,E)$. Similarly, by Proposition~\ref{P:DerivationvsWeightedmap}, every $X_3$-monotone $2$-functor equipped with a bounded below derivation induces such a termination function.
In both cases, Theorem~\ref{T:MainTheoremOnTermination} implies that $X$ is terminating, thereby recovering the monomial-order-based and derivation-based termination criteria.

Consequently, stratified termination functions extend these classical termination criteria by making it possible to establish termination stratum by stratum in situations where suitable global monomial orders or derivations are difficult to construct due to the large number of rewriting rules.
\end{remark}

\subsection{Examples: termination of a presentation of KLR algebras}
\label{SS:KLRalgebras}

In this subsection, we illustrate how termination of a $3$-prepolygraph modulo can be established by means of a stratification of its set of rules. As an application, we consider the challenging presentation of KLR algebras~\cite{Khovanov_2009,Rouquier08}, whose rewriting system involves a large number of rules. Throughout this subsection, we set $\bk=\mathbb{Z}$.

\subsubsection{Polygraphic presentation of KLR algebras}
Let $\Gamma$ be a finite tree (a connected graph with no loops, nor multiple edges) with set of vertices denoted by $I$. 
Let $\cdot$ be a bilinear form on $\mathbb Z[I]$ defined by $i\cdot i = 2$ for any $i\in I$ and for $i \ne j$ in $I$
\[
i\cdot j =
\left\{
\begin{array}{ll}
-1 & \hbox{ if there is an edge between } i \hbox{ and } j \hbox{ in } \Gamma,\\
0 & \hbox { otherwise.}
\end{array}
\right.
\]
Let $(X,E)$ be the $3$-prepolygraph modulo with a single $0$-cell, whose $1$-generators are the elements of $I$ and whose $2$-generators are 
\[
\twocell{tau *1 (i *0 j)} : i \circ_0 j \longrightarrow j \circ_0 i,
\quad \text{and} \quad
\twocell{eta *1 i} : i \longrightarrow i,
\]
for any $i,j \in I$.
We set $E\coloneq \Int(X_2)$, and the $3$-generators in $X_3$ are 
for all $i,j,k\in I$,
\[
\alpha_{i,j}^L:\twocell{((eta *0 1)*1tau)*1 (i*0 j)} \fl \twocell{(tau *1 (1 *0 eta))*1 (i*0 j)}\ , 
\;\text{when $i\neq j$,}\qquad
\alpha_{i,j}^R:\twocell{((1 *0 eta)*1tau)*1 (i*0 j)} \fl \twocell{(tau *1 (eta *0 1))*1 (i*0 j)}\ , 
\;\text{when $i\neq j$},
\]
\[
\alpha_{i}^{L}:\twocell{((eta *0 1)*1tau)*1 (i*0 i)} \fl \twocell{(tau *1 (1 *0 eta))*1 (i*0 i)}+\twocell{(id*0 id)*1 (i*0 i)} \ ,
\quad
\alpha_{i}^{R}:\twocell{((1 *0 eta)*1tau)*1 (i*0 i)} \fl \twocell{(tau *1 (eta *0 1))*1 (i*0 i)}-\twocell{(id*0 id)*1 (i*0 i)} \ ,
\quad
\beta_i:\twocell{tau*1 tau*1 (i*0i)}\fl 0\ ,
\]
\[
\beta_{j,k}:\twocell{(tau*1tau)*1(j*0 k)} \fl \twocell{(id*0 id)*1(j*0 k)}\ ,
\;\text{when $j\cdot k=0$,}\qquad
\beta_{i,j}:\twocell{(tau*1tau)*1(i*0 j)} \fl \twocell{(eta*0 1)*1(i*0 j)}+ \twocell{(1*0 eta)*1(i*0 j)}\ ,
\;\text{when $i\cdot j=-1$,}
\]
\[
\gamma_{i,j,k}:\twocell{(tau *0 1) *1 (1 *0 tau) *1 (tau *0 1)*1(i*0 j*0 k)}
\fl \twocell{(1 *0 tau) *1 (tau *0 1) *1 (1 *0 tau)*1(i*0 j*0 k)}\ ,
\;\text{when $i\neq k$ or $i\cdot j\neq -1$,}\qquad
\gamma_{i,j,i}:\twocell{(tau *0 1) *1 (1 *0 tau) *1 (tau *0 1)*1(i*0 j*0 i)}
\fl \twocell{(1 *0 tau) *1 (tau *0 1) *1 (1 *0 tau)*1(i*0 j*0 i)} + \twocell{(id*0 id*0 id)*1(i*0j*0i)}\ ,
\;\text{when $i\cdot j=-1$.}
\]
The $3$-prepolygraph modulo $(X,E)$ is presenting the KLR monoidal category, seen as a $2$-category with only one $0$-cell, associated with $\Gamma$, see \cite[Thm.2.3.2]{Dupont21}. 
This monoidal category contains the KLR algebras as hom-spaces.

\subsubsection{Stratified termination of $(X,E)$}
Thanks to the Lemma~\ref{L:PolAndLinearPol}, we prove $E$-termination of the $3$-prepolygraph modulo $(X,E)$ by proving that its associated monomial $3$-prepolygraph modulo is $E$-terminating.
For each $i,j\in I$, we set
\[
\alpha_{i}^{L1}:
\twocell{((eta *0 1)*1tau)*1 (i*0 i)}
\fl
\twocell{(tau *1 (1 *0 eta))*1 (i*0 i)},
\qquad
\alpha_{i}^{L2}:\twocell{((eta *0 1)*1tau)*1 (i*0 i)} \fl \twocell{(id*0 id)*1 (i*0 i)}\,,
\qquad
\alpha_{i}^{R1}:
\twocell{((1 *0 eta)*1tau)*1 (i*0 i)}
\fl
\twocell{(tau *1 (eta *0 1))*1 (i*0 i)},
\qquad
\alpha_{i}^{R2}:\twocell{((1 *0 eta)*1tau)*1 (i*0 i)} \fl \twocell{(id*0 id)*1 (i*0 i)}\,,
\]
\[
\beta^1_{i,j}:\twocell{(tau*1tau)*1(i*0 j)} \fl \twocell{(eta*0 1)*1(i*0 j)}\,,
\qquad
\beta^2_{i,j}:\twocell{(tau*1tau)*1(i*0 j)} \fl \twocell{(1*0 eta)*1(i*0 j)}\,,
\qquad
\gamma^1_{i,j,i}:\twocell{(tau *0 1) *1 (1 *0 tau) *1 (tau *0 1)*1(i*0 j*0 i)}
\fl \twocell{(1 *0 tau) *1 (tau *0 1) *1 (1 *0 tau)*1(i*0 j*0 i)} \,,
\qquad
\gamma^2_{i,j,i}:\twocell{(tau *0 1) *1 (1 *0 tau) *1 (tau *0 1)*1(i*0 j*0 i)}
\fl \twocell{(id*0 id*0 id)*1(i*0j*0i)}\,,
\]
and define a $4$-stratification of $\monex{X}$ by setting
\[
\strat{4} \monex{X}
= \{\alpha_i^{L2},\ \alpha_i^{R2},\ \beta_i,\ \beta_{j,k},\ \beta_{i,j}^{1}, \ \beta_{i,j}^{2},\ \gamma_{i,j,i}^{2}\},
\qquad
\strat{3} \monex{X}
= \{\alpha_{i,j}^{L},\ \alpha_{i}^{L1}\},
\]
\[
\strat{2} \monex{X}
= \{\gamma_{i,j,k},\ \gamma_{i,j,i}^{1}\},
\qquad
\strat{1} \monex{X}
= \{\alpha_{i,j}^{R},\ \alpha_{i}^{R1}\}.
\]
This defines a $4$-stratified $3$-prepolygraph modulo
$F^\ast(\monex{X},E)$, on which we now define a termination function
$\delta=(\delta_k)_{1\leq k\leq 4}$.

We define $\delta_4(f)$ to be the size of $f$ for every $f\in \pcat{X_2}$.
Since every source in $\strat{4} \monex{X}$ has strictly greater size than its target, whereas for every rule in $\strat{3}\monex{X}$ the source and the target have the same size, the weighted function $\delta_4$ is strictly $\strat{4} \monex{X}$-compatible and $\strat{3}\monex{X}$-compatible.

For every $k\leq 3$, following Proposition~\ref{P:DerivationvsWeightedmap}, we construct a weighted map $\delta_k$ induced by a $F^k \monex{X}$-monotone $2$-functor
$Z_k : \pcat{X_2} \to \Ord$
and a bounded below derivation $d_k:\pcat{X_2} \to \Or_{Z_k,\Zb}$.

We first set $Z_k(i)=\mathbb{N}$ for every simple root $i\in I$, so that  $Z_k(i \circ_0 j)=\mathbb{N}\times\mathbb{N}$ for all $i,j\in I$.
We then define the $Z_k$ and $d_k$ by setting (omitting the labels on the strands).
\begin{enumerate}
\item  We define the functor $Z_3$ and the derivation $d_3$ by setting the following values on $X_2$
\[
Z_3(\twocell{\tau})(n,m) = (m+1,n),
\qquad
Z_3(\twocell{\eta})(n) = n,
\qquad
d_3(\twocell{\tau})(n,m) = m,
\qquad
d_3(\twocell{\eta})(n) = n.
\]
for all $n, m \in \Nb$.
Since $m,n \geq 0$, the derivation $d_3$ is bounded below.
Moreover, the functor $Z_3$ is $F^3 \monex{X}$-monotone, since the following
relations hold:
\[
Z_3(\twocell{((eta *0 1)*1tau)})(n,m)
=
Z_3(\twocell{(tau *1 (1 *0 eta))})(n,m)
= (m+1,n),
\qquad
Z_3(\twocell{((1 *0 eta)*1tau)})(n,m)
=
Z_3(\twocell{(tau *1 (eta *0 1))})(n,m)
= (m+1,n),
\]
\[
Z_3(\twocell{(tau *0 1) *1 (1 *0 tau) *1 (tau *0 1)})(n,m,l)
=
Z_3(\twocell{(1 *0 tau) *1 (tau *0 1) *1 (1 *0 tau)})(n,m,l)
= (l+2,m+1,n).
\]
We set $\delta_3$ to be the weighted function $\delta^{Z_3,d_3}$ induced by $Z_3$ and $d_3$. 
It is strictly $\strat{3} \monex{X}$-compatible and $\strat{2}\monex{X}$-compatible, since the following relations hold:
\[
d_3(\twocell{((eta *0 1)*1tau)})(n,m)
= 2m+1
> 2m
= d_3(\twocell{(tau *1 (1 *0 eta))})(n,m),
\qquad
d_3(\twocell{((1 *0 eta)*1tau)})(n,m)
= m+n
= d_3(\twocell{(tau *1 (eta *0 1))})(n,m),
\]
\[
d_3(\twocell{(tau *0 1) *1 (1 *0 tau) *1 (tau *0 1)})(n,m,l)
= m+2l+1
=
d_3(\twocell{(1 *0 tau) *1 (tau *0 1) *1 (1 *0 tau)})(n,m,l).
\]

\item  
We define the functor $Z_2$ and the derivation $d_2$ by setting the following values on $X_2$
\[
Z_2(\twocell{\tau})(n,m)=(m+1,n),
\qquad
Z_2(\twocell{\eta})(n)=n,
\qquad
d_2(\twocell{\tau})(n,m)=n,
\qquad
d_2(\twocell{\eta})(n)=n,
\]
for all $n, m \in \Nb$.
Since $n \geq 0$, the derivation $d_2$ is bounded below.
Moreover, the functor $Z_2$ is $F^2 \monex{X}$-monotone, since the following
relations hold:
\[
Z_2(\twocell{(tau *0 1) *1 (1 *0 tau) *1 (tau *0 1)})(n,m,l)
=
Z_2(\twocell{(1 *0 tau) *1 (tau *0 1) *1 (1 *0 tau)})(n,m,l)=(l+2,m+1,n),
\quad
Z_2(\twocell{((1 *0 eta)*1tau)})(n,m)= Z_2(\twocell{(tau *1 (eta *0 1))})(n,m)=(m+1,n),
\]
We set $\delta_2$ to be the weighted function $\delta^{Z_2,d_2}$ induced by $Z_2$ and $d_2$. 
It is strictly $\strat{2} \monex{X}$-compatible and $\strat{1}\monex{X}$-compatible, since the following relations hold:
\[
d_2(\twocell{(tau *0 1) *1 (1 *0 tau) *1 (tau *0 1)})(n,m,l)=2n+m+1
>2n+m=
d_2(\twocell{(1 *0 tau) *1 (tau *0 1) *1 (1 *0 tau)})(n,m,l),
\qquad
d_2(\twocell{((1 *0 eta)*1tau)})(n,m)=2n= d_2(\twocell{(tau *1 (eta *0 1))})(n,m).
\]

\item  
We define the functor $Z_1$ and the derivation $d_1$ by setting the following values on $X_2$
\[
Z_1(\twocell{\tau})(n,m)=(m,n+1),
\qquad
Z_1(\twocell{\eta})(n)=n,
\qquad
d_1(\twocell{\tau})(n,m)=n,
\qquad
d_1(\twocell{\eta})(n)=n,
\]
for all $n, m \in \Nb$.
Since $n \geq 0$, the derivation $d_1$ is bounded below.
Moreover, the functor $Z_1$ is $F^1 \monex{X}$-monotone, since the following
relation hold:
\[
Z_1(\twocell{((1 *0 eta)*1tau)})(n,m)=Z_1(\twocell{(tau *1 (eta *0 1))})(n,m)=(m,n+1).
\]
We set $\delta_1$ to be the weighted function $\delta_{Z_1,d_1}$ induced by $Z_1$ and $d_1$. 
It is strictly $\strat{1} \monex{X}$-compatible, since the following relation hold:
\[
d_1(\twocell{((1 *0 eta)*1tau)})(n,m)=2n+1>2n=d_1(\twocell{(tau *1 (eta *0 1))})(n,m).
\]
\end{enumerate}
Note that each $\delta_i$ is $F^0 \monex{X}$-identity, since
\[
d_i\bigl(\twocell{(dot*0 id *0 psi)*1(dot *0 u *0 dot)*1(varphi*0 id *0 dot)}\bigr)
=
d_i\bigl(\twocell{(varphi*0 id *0 dot)*1(dot *0 u *0 dot)*1 (dot *0 id *0 psi)}\bigr)
\]
for every $1\leq i\leq 4$, all $1$-cell $u\in \cat{X_1}$, and $2$-generators $\varphi,\psi\in X_2$.
Therefore, $\delta$ is a termination function of the stratified $3$-prepolygraph modulo $F^\ast(\monex{X},E)$.
By Theorem~\ref{T:MainTheoremOnTermination}, the monomial $3$-prepolygraph modulo $(\monex{X},E)$ is $E$-terminating. It then follows from Lemma~\ref{L:PolAndLinearPol} that the original $3$-prepolygraph modulo $(X,E)$ is also $E$-terminating.

\subsubsection{Remarks}
This example illustrates how stratified termination simplifies the construction of termination arguments by allowing monomial orders and derivations to be adapted to subsets of rewriting rules grouped into strata according to their termination behaviour. Rather than defining a single global measure on the whole rewriting system, one may choose, for each stratum, the termination method that is best suited to the corresponding family of rules. In particular, monomial orders and derivations can be combined within the same stratified proof, with different strata controlled by different types of termination measures. In this sense, the argument given here extends the derivation-based approach used for KLR algebras in~\cite{Dupont21} and provides a flexible method for treating rewriting systems with large and structurally heterogeneous sets of rules.

We have illustrated the stratification method for proving termination in the case of a one-object presentation of the KLR $2$-category. In future work, it would be natural to extend this approach to the Kac--Moody $2$-category $\mathcal{KLR}$, defined in~\cite{Khovanov_2009,Rouquier08}, which is its multi-object counterpart. The $3$-cells of this presentation encode structural relations, including isotopy and bubble relations, whose termination raises difficulties inherent in the $2$-dimensional structure. The main challenge will therefore be to construct a stratification adapted to the category as a whole.

\section{Modular confluence and hom-bases from stratified normalization}
\label{S:HomBasesStratifiedNormalization}

This section develops a modular approach to confluence for stratified rewriting systems and applies it to the construction of hom-bases. We first introduce confluence modularity and derive a general hom-basis theorem for terminating stratified presentations satisfying this property. We then construct a stratified normalization procedure that enforces modularity by normalizing each new stratum with respect to the preceding ones, thereby absorbing mixed rewriting interactions into the normalized rules. We prove that the resulting presentation is terminating, presents the same linear category, and satisfies confluence modularity. In this sense, stratified normalization refines classical completion procedures~\cite{KnuthBendix70,Marche96} by organizing completion along the successive strata of the presentation. The hom-basis theorem then applies directly to the normalized presentation. We conclude the section by illustrating the method on the associative and coassociative category and on the Hecke category.

\subsection{Modular confluence and stratified normalization}
\label{SS:ModularConfluenceNormalization}

\subsubsection{Modularity of confluence}
Let $F^\ast X$ be a $k$-stratified $3$-prepolygraph. 
Confluence of the individual strata $\strat{i}X$ does not imply confluence of their union, since rewriting steps belonging to different strata may interact and produce additional mixed branchings.
We say that $F^\ast X$ satisfies the \emph{confluence modularity property} if, for every $1\leq j\leq k$, the confluence of each stratum $\strat{i}X$, for $1\leq i\leq j$, implies the confluence of the extension $F^jX$.

This property separates two aspects of a confluence proof. On the one hand, confluence may be checked independently within each stratum; on the other hand, the compatibility between different strata is encoded globally by the modularity property. As a consequence, for a terminating stratified presentation satisfying confluence modularity, the confluence problem for the whole rewriting system reduces to the confluence analysis of its individual strata.
By Theorem~\ref{T:HomBasisFromConvergence}, we have thus the following hom-bases theorem from modular confluence.
\begin{theorem}
\label{T:HomBasesFromConfluence}
Let $\Cr$ be a linear monoidal category presented by a  left-monomial terminating stratified $3$-prepolygraph $F^\ast X$, satisfing the confluence modularity property, and whose each stratum $\strat{i}X$ is confluent. Then the set $\Nf_m(X)$ forms a hom-basis of $\Cr$.
\end{theorem}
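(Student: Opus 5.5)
The plan is to reduce the statement to Theorem~\ref{T:HomBasisFromConvergence}. That theorem requires a presentation of $\Cr$ that is left-monomial, terminating and confluent. Left-monomiality and termination of $X$ are given directly by the hypotheses on $F^\ast X$. Since $F^kX = X_3$ (here $E=F^0X$ is empty, as the stratification is not modulo), the rewriting relation generated by $F^kX$ is exactly that of $X$, so everything comes down to confluence of $F^kX$.

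First I will apply the confluence modularity property with $j=k$. The hypothesis gives that every stratum $\strat{i}X$, for $1\leq i\leq k$, is confluent. Modularity then yields that the extension $F^kX=X_3$ is confluent. Combined with termination, $X$ is convergent, so every $2$-cell $f$ has a unique normal form $\down{f}{X}$.

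Next I will check that the presentation hypothesis matches the one in Theorem~\ref{T:HomBasisFromConvergence}. The stratification only partitions the set of rules, so the linear $3$-prepolygraph $X$ underlying $F^\ast X$ presents $\Cr$ as $\cl{X}=\plcat{X_2}/(X_3\sqcup\Int(X_2))$. Applying Theorem~\ref{T:HomBasisFromConvergence} to $X$ then gives that $\Nf_m(X)$ is a hom-basis of $\Cr$.

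The only delicate point I expect is bookkeeping rather than mathematics. One must make sure that the confluence notion used in the modularity property is the same as the one in the hom-basis theorem, namely confluence for rewriting steps over $\bk$ with the NRS condition. One must also confirm that the strata are the differences $F^iX\setminus F^{i-1}X$, so that confluence of the union $F^kX$ is precisely confluence of $X$. Both hold directly by the definitions in \SSS{SSS:StratifiedPolygraphModulo} and the definition of modularity, so no further argument should be needed.
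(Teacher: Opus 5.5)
Your proposal is correct and is the paper's argument: apply confluence modularity with $j=k$ to get confluence of $F^kX=X_3$, then use termination and left-monomiality to invoke Theorem~\ref{T:HomBasisFromConvergence}. The paper gives no further proof beyond stating the theorem as an immediate consequence of that result, so your bookkeeping remarks about the strata and the presented category are all that is needed.
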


We now introduce a normalization procedure on stratified prepolygraphs in order to reach this modularity property holds without changing the presented monoidal category. Indeed, by normalizing each new stratum with respect to the preceding ones, we mix interactions between successive strata are incorporated into the normalized rewriting rules.

\subsubsection{Stratified normalization}
\label{SSS:Normalization}
Let $F^\ast X$ be a $k$-stratified $3$-prepolygraph, and $\preccurlyeq$ be a monomial order on $\pcat{X_2}$.
A \emph{$\preccurlyeq$-normalization} of $F^\ast X$ is a $k$-stratified $3$-prepolygraph, denoted by $F^\ast \NP{X}$, whose extensions $\strat{i}\NP{X}$ in $\Exte{\plcat{X_2}}$ are defined inductively.

We set $F^1\NP{X}\coloneq F^1X$.
Assume that $F^j\NP{X}$ has been defined for all $1\leq j\leq i$, and fix an $F^i\NP{X}$-normalization strategy $\down{(-)}{i}$.
We then construct $F^{i+1}\NP{X}$ as follows.

For every $\alpha \in \strat{i+1}X$ and context $C$ of $\pcat{X_2}$, there is an induced $3$-cell $C[\alpha]$ in $\plcat{X_3}$. 
When $\down{C[\partial \alpha]}{i} \neq 0$ this yields a $3$-cell
\[
\down{C[\alpha]}{i} : 
\down{C[s\alpha]}{i}
\to
\down{C[t\alpha]}{i}
\]
and, as defined in \SSS{SSS:MnomialOrders}, a $\preccurlyeq$-oriented left-term associated $3$-generator $\down{C[\alpha]}{i}^\preccurlyeq$ defined on $F^i\NP{X}$-normal forms.
We denote by $\dlr{\alpha}{i}$ the extension of $\plcat{X_2}$ consisting of such $3$-generators deduced from an $F^i\NP{X}$-active context on $\alpha$, that is,
\[
\dlr{\alpha}{i} 
\:\coloneq\:
\{\down{C[\alpha]}{i}^\preccurlyeq \: \mid \: C \in \Act(\alpha,F^i\NP{X})\}.
\]
The $3$-generators in $\dlr{\alpha}{i}$ are called the \emph{$F^i\NP{X}$-normal reductions of $\alpha$}.
Finally, we set 
\[
\strat{i+1}\NP{X} \: \coloneq \: \bigsqcup_{\alpha \in \strat{i+1}X} \minsub\dlr{\alpha}{i},
\]
where $\minsub$ is context relation,
so that $F^{i+1}\NP{X}=F^i\NP{X} \sqcup \strat{i+1}\NP{X}$.

\subsubsection{Remarks}
A $\preccurlyeq$-normalization of a stratified prepolygraph $F^\ast X$
need not be unique, since it depends on the normalization strategy
$\down{(-)}{i}$ chosen at each stratum $1\leq i\leq k$.
If $F^i\NP{X}$ is confluent, however, the strategy
$\down{(-)}{i}$ yields the unique $F^i\NP{X}$-normal form of each
$2$-cell. In this case, the extension $F^{i+1}\NP{X}$ obtained from
the above construction is uniquely determined.

In addition, the construction of the extension $F^{i+1}\NP{X}$  works as a completion of the critical branchings in $\CB(F^i\NP{X},\strat{i+1}X)$.
Indeed, for each non-confluent critical branching $(\sigma,\tau)\in \CB(F^i\NP{X},\strat{i+1}X)$, the construction yields a rule making this branching confluent.
More precisely, the $\strat{i+1}X$-rewriting step~$\tau$ can be written as $\tau=C[\alpha]$  for some $\alpha \in \strat{i+1}X$ yielding the following diagram: 
\[
\xymatrix 
@C=3em
@R=0.8em 
{& 
{\color{blue} f} 
  \ar@{->>}@[blue][r]_{\hspace{-10pt}{\color{blue}F^i\NP{X}}}  
& 
{\color{blue}\down{C[s\alpha]}{i}}
  \ar@{||..||}@[blue][dd] ^-{{\color{blue}{\down{C[\alpha]\,}{i}}^\preccurlyeq}}
\\
{\color{red}C[s\alpha]}
\ar@[red]@/^2ex/[ur] ^-{{\color{red}\sigma}} _-{{\color{red}F^i\NP{X}}} 
\ar@[red]@/_2ex/[dr] _-{{\color{red}\tau}} ^-{{\color{red}\strat{i+1}X}} &
\\
& 
{\color{blue}C[t\alpha]} 
\ar@{->>}@[blue][r]_{\hspace{-10pt}{\color{blue}F^i\NP{X}}}  & {\color{blue}\down{C[t\alpha]}{i}}
}
\]
Note that $\down{f}{i}=\down{C[s\alpha]}{i}$ since $F^i\NP{X}$ is confluent.
In this case, the normalization procedure adds a $F^i\NP{X}$-normal reduction 
$\down{C[\alpha]}{i}^\preccurlyeq$ to $\strat{i+1}\NP{X}$, connecting $\down{C[s\alpha]}{i}$  and $\down{C[t\alpha]}{i}$, thereby making this critical branching confluent.
In particular, all critical branching in $\CB(F^i\NP{X},\strat{i+1}X)$ is confluent if and only if $\strat{i+1}\NP{X}=\strat{i+1}X$.

\begin{lemma} 
\label{L:ConfluentOfS} 
For every $\strat{i+1}X$-rewriting step $\sigma = b C[\alpha] + 1_f$, with $i\geq 0$, if $C[s\alpha]$ is $F^{i}\NP{X}$-reducible and $\strat{i+1} \NP{X}$ is confluent, there exist two $\strat{i+1} \NP{X}$-rewriting paths $\tau$ and $\rho$ (possibly identities) as in the following diagram 
\begin{eqn}{equation} 
\label{E:MainLemma} 
\raisebox{1cm}{ 
\xymatrix @C=3.5em@R=0.8em { 
{\color{red}s\sigma} 
   \ar@{->>}@[red][r]_{\hspace{-10pt}{\color{red}F^{i} \NP{X}}} 
   \ar@[red][dd] ^{{\color{red}\sigma}}_{{\color{red}\strat{i+1}X}} 
&  
{\color{blue}\down{(s\sigma)}{i}} 
  \ar@[blue] @{->>} @/^2ex/ [dr]^{{\color{blue}\tau}}_{{\color{blue}\strat{i+1} \NP{X}}} & 
\\ 
&  & {\color{blue} e} 
\\ 
{\color{blue}t\sigma} 
	\ar@{->>}@[blue][r]_{\hspace{-10pt}{\color{blue}F^{i} \NP{X}}} 
& {\color{blue}\down{(t\sigma)}{i}}\ar@{->>}@/_2ex/@[blue][ur]^{{\color{blue}\rho}}_{{\color{blue}\strat{i+1} \NP{X}}} & 
}} 
\end{eqn} 
Moreover, the same statement holds for every $\strat{i+1} \NP{X}$-rewriting step $\sigma$. 
\end{lemma}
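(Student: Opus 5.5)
The plan is to reduce the statement to the defining property of the normal reductions $\dlr{\alpha}{i}$. We compare the two $F^{i}\NP{X}$-normalizations $\down{(s\sigma)}{i}$ and $\down{(t\sigma)}{i}$ through the $2$-cell $\down{C[\partial\alpha]}{i}$, and we control their difference by $3$-generators of $\strat{i+1}\NP{X}$. Two facts drive the argument. First, $s\sigma-t\sigma=b\,C[\partial\alpha]$. Second, once a normalization strategy is fixed, normalization is compatible with this decomposition up to $F^{i}\NP{X}$-rewriting. So it suffices to connect $\down{(b\,C[s\alpha])}{i}$ and $\down{(b\,C[t\alpha])}{i}$ by $\strat{i+1}\NP{X}$-paths, and then to add back the common normalized additive part $\down{f}{i}$.

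First, we normalize the context. Write $C=h\circ_1 u\square v\circ_1 k$. Normalizing $h$ and $k$ with respect to $F^{i}\NP{X}$ expresses $\down{C[s\alpha]}{i}$ as a $\bk$-linear combination of terms $D_j[s\alpha]$, where the $D_j$ are $F^{i}\NP{X}$-normal contexts. If $s\alpha$ is itself reducible, we take the trivial context, which is active by definition. For each index $j$, one of two cases holds.
\begin{enumerate}
\item $D_j[s\alpha]$ is already $F^{i}\NP{X}$-normal. Then its normalization is trivial, and we postpone it to the final joining step.
\item $D_j$ is an active context on $\alpha$, that is, $D_j\in\Act(\alpha,F^i\NP{X})$. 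Then the normal reduction $\down{D_j[\alpha]}{i}^{\preccurlyeq}$ lies in $\dlr{\alpha}{i}$.
\end{enumerate}
In the second case, by the definition of $\minsub$, the normal reduction contains a $\sqsubseteq$-minimal generator of $\strat{i+1}\NP{X}$ in some context. Hence it is a $3$-cell of size $1$ between $\down{D_j[s\alpha]}{i}$ and $\down{D_j[t\alpha]}{i}$, possibly reversed because of the $\preccurlyeq$-orientation.

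Second, we assemble these size-$1$ cells. Multiplied by the appropriate coefficients and added to the remaining part of $\down{(s\sigma)}{i}$, they form a zigzag of size-$1$ cells from $\down{(s\sigma)}{i}$ to a $2$-cell lying in the same $F^{i}\NP{X}$-class as $\down{(t\sigma)}{i}$. By Lemma~\ref{L:DecopositionOfSize1Cell}, each size-$1$ cell splits as $\tau'\circ_2\rho'^{-1}$, where $\tau'$ and $\rho'$ are genuine rewriting steps. Since $\strat{i+1}\NP{X}$ is confluent, and is terminating because the normal reductions are $\preccurlyeq$-oriented, every such zigzag can be turned into a valley. This produces a common target $e$ reached by paths $\tau$ and $\rho$.

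Third, we treat the last claim, where $\sigma$ is an $\strat{i+1}\NP{X}$-rewriting step. The argument is the same, applied to a generator $\beta\in\strat{i+1}\NP{X}$ in place of $\alpha$. It is simpler, because $s\beta$ is already $F^{i}\NP{X}$-normal, so only the context can create redexes. A non-normal context $C[\beta]$ again normalizes into active contexts $D_j[\beta]$, whose normal reductions are, up to context, generators of $\strat{i+1}\NP{X}$, by minimality together with the uniqueness of normal forms in the confluent setting.

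I expect the main obstacle to be the coefficient bookkeeping over a ring. The additive part $f$, and the terms produced by normalizing the context, may contain the monomials $D_j[\lm(\alpha)]$ with coefficients that are not remainders. As a result, the naive composite cells are not rewriting steps, and the normalizations of $s\sigma$ and $t\sigma$ need not split additively. I would handle this by systematically using Lemma~\ref{L:DecopositionOfSize1Cell}, as in the proof of Lemma~\ref{L:CBL}. I would use the confluence of $\strat{i+1}\NP{X}$ to absorb the resulting extra branchings, and a noetherian induction on $\preccurlyeq$ to justify the repeated joins.
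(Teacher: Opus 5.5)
Your core mechanism is the paper's. A normal reduction in $\dlr{\alpha}{i}$, through its $\sqsubseteq$-minimal factor, gives a size-$1$ cell of $\plcat{(\strat{i+1}\NP{X})}$, which Lemma~\ref{L:DecopositionOfSize1Cell} splits into rewriting steps. However, two steps of your plan fail as written.

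First, the pieces $D_j$ with $D_j[s\alpha]\in\Nf(F^i\NP{X})$ cannot be ``postponed to the final joining step''. Such a $D_j$ is not in $\Act(\alpha,F^i\NP{X})$. So nothing in the construction of $\strat{i+1}\NP{X}$ relates $D_j[s\alpha]$ to $\down{D_j[t\alpha]}{i}$, and these two $2$-cells differ in general. Confluence of $\strat{i+1}\NP{X}$ only closes zigzags already made of $\strat{i+1}\NP{X}$-cells, so the corresponding part of $b\,C[\partial\alpha]$ is never bridged. These pieces are exactly the steps that the reducibility hypothesis of the lemma excludes, and splitting the context brings them back. The paper instead reduces to a single context $C$ active on $\alpha$: the trivial one if $s\alpha$ is reducible, otherwise a normal context with $C[s\alpha]$ reducible. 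Then all of $b\,C[\partial\alpha]$ is accounted for by the one normal reduction $\down{C[\alpha]}{i}^\preccurlyeq$.

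Second, ending the zigzag at a $2$-cell that is only in the same $F^i\NP{X}$-class as $\down{(t\sigma)}{i}$ does not prove the statement. The statement asks for $\strat{i+1}\NP{X}$-paths out of $\down{(t\sigma)}{i}$ itself. What you need are the equalities $\down{(s\sigma)}{i}=b\,\down{C[s\alpha]}{i}+\down{f}{i}$ and $\down{(t\sigma)}{i}=b\,\down{C[t\alpha]}{i}+\down{f}{i}$. These express uniqueness and additivity of $F^i\NP{X}$-normal forms, that is, $F^i\NP{X}$ confluent and left-monomial, as in the situation of Lemma~\ref{L:StratumLemma}; the paper uses them tacitly. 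Lemma~\ref{L:DecopositionOfSize1Cell} and confluence of $\strat{i+1}\NP{X}$ cannot supply them, because they act on $\strat{i+1}\NP{X}$-cells, not on $F^i\NP{X}$-normalization.

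With these equalities and a single active context, your remark holds in all three of the paper's cases. The normal reduction, multiplied by $b$ times a unit and padded with an identity, is one size-$1$ cell from $\down{(s\sigma)}{i}$ to $\down{(t\sigma)}{i}$. Lemma~\ref{L:DecopositionOfSize1Cell} then yields $\tau$ and $\rho$ at once. This is shorter than the paper's Case~3, which uses $r_1=r_2$, $q_1-q_2=1$ and an extra appeal to confluence.

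For the last claim, recall that $\strat{i+1}\NP{X}$ consists of normal reductions of rules of $\strat{i+1}X$. So a step $bD[\beta]+1_f$ with $\beta=\down{C[\alpha]}{i}^\preccurlyeq$ must be compared, up to an identity summand, with the step $DC[\alpha]$, as the paper does. Normal reductions of $\beta$ itself are not part of the construction.
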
 
\begin{proof} 
The proof relies on the $F^{i}\NP{X}$-normal reduction defined in \SSS{SSS:Normalization} to connect $\down{(s\sigma)}{i}$ and $\down{(t\sigma)}{i}$. 
 
Let~$\sigma = b C[\alpha] + 1_f$ be a $\strat{i+1}X$-rewriting step, where $\alpha\in \strat{i+1}X$ and $f\in\plcat{X_2}$. 
We set the context $C \coloneq \lambda(g \circ_1 u \square v \circ_1 h)$. 
If $\abs{C}=0$, then $C$ is $F^{i}\NP{X}$-active on $\alpha$, since $s\alpha$ is reducible by hypothesis.
Otherwise, if $\lambda g\notin\Nf(F^{i}\NP{X})$ or $\lambda  h\notin\Nf(F^{i}\NP{X})$,  
we have the $F^{i}\NP{X}$-normal reduction 
$\down{C[\alpha]}{i}^\preccurlyeq=\down{C'[\alpha]}{i}^\preccurlyeq$, where $C'=\down{\lambda g}{i} \circ_1 u \square v \circ_1 \down{h}{i}$ or $C'=\down{g}{i} \circ_1 u \square v \circ_1 \down{\lambda h}{i}$.
Hence, we only consider the case of $F^{i}\NP{X}$-active contexts $C$ on $\alpha$. 
 Let $h=\lm_{\preccurlyeq}(\down{C[\partial \alpha]}{i})$ and $a=\lc_{\preccurlyeq}(\down{(C[\partial \alpha])}{i})$. 
We assume that $a$ is either $1$ or noninvertible; the case where 
$a$ is invertible is treated similarly. 
We distinguish three cases.

\medskip

\noindent \textbf{Case 1.} If $h\in \suppm(\down{C[s\alpha]}{i})\setminus\suppm(\down{C[t\alpha]}{i})$, then there exists a $F^{i}\NP{X}$-normal reduction 
\[ 
\down{C[\alpha]}{i}^\preccurlyeq: ah \fl \down{C[t\alpha]}{i} - \down{C[s\alpha]}{i}+a h 
\] 
in the extension $\dlr{\alpha}{i}$. 
If $\down{C[\alpha]}{i}^\preccurlyeq \in \minsub\dlr{\alpha}{i}$, we have $\down{C[\alpha]}{i}^\preccurlyeq\in \strat{i+1} \NP{X}$. 
Then there exists a $3$-cell $\sigma'$ of size $1$ in $\plcat{(\strat{i+1}\NP{X})}$ 
\[ 
\sigma'\coloneq b\down{C[\alpha]}{i}^\preccurlyeq+b\down{C[s\alpha]}{i}-b a h+\down{f}{i} 
\] 
with source $\down{(s\sigma)}{i}$ and target $\down{(t\sigma)}{i}$. 
By Lemma~\ref{L:DecopositionOfSize1Cell}, there exist identities or $\strat{i+1} \NP{X}$-rewriting paths 
$\tau$ and $\rho$ as in \eqref{E:MainLemma}. 
If $\down{C[\alpha]}{i}^\preccurlyeq \notin \minsub\dlr{\alpha}{i}$, then by definition of the $\sqsubseteq$-minimal set $\minsub\dlr{\alpha}{i}$, there exist a context $D$ and a rule $\beta \in \minsub\dlr{\alpha}{i}$ such that 
$ 
\down{C[\alpha]}{i}^\preccurlyeq = D[\beta]. 
$ 
It follows that $\sigma'$ can be written as 
\[ 
\sigma' = b D[\beta] + b\down{C[s\alpha]}{i} - b a h + \down{f}{i}, 
\] 
where $\beta \in \strat{i+1}\NP{X}$, so that $\sigma'$ is again a $3$-cell in $\plcat{(\strat{i+1}\NP{X})}$. 
Therefore, in following two cases, we may assume without loss of generality that 
$ \down{C[\alpha]}{i}^\preccurlyeq \in \minsub\dlr{\alpha}{i}$.
 
\medskip
 
\noindent \textbf{Case 2.} If $h\in \suppm(\down{C[t\alpha]}{i})\setminus\suppm(\down{C[s\alpha]}{i})$, then there exists a $F^{i}\NP{X}$-normal reduction 
\[ 
\down{C[\alpha]}{i}^\preccurlyeq: ah \fl \down{C[s\alpha]}{i}-\down{C[t\alpha]}{i}+a h 
\] 
in $\minsub\dlr{\alpha}{i}$. 
Then there exists a $3$-cell $\sigma'$ of size $1$ in $\plcat{(\strat{i+1}\NP{X})}$ 
\[ 
\sigma'\coloneq b\down{C[\alpha]}{i}^\preccurlyeq+b\down{C[t\alpha]}{i}-b a h+\down{f}{i} 
\] 
with source $\down{(t\sigma)}{i}$ and target $\down{(s\sigma)}{i}$. 
By Lemma~\ref{L:DecopositionOfSize1Cell}, there exist identities or $\strat{i+1} \NP{X}$-rewriting paths 
$\tau$ and $\rho$ as in \eqref{E:MainLemma}. 
 
 \medskip
 
\noindent \textbf{Case 3.}
If $h$ occurs in both
$\suppm(\down{C[s\alpha]}{i})$ and $\suppm(\down{C[t\alpha]}{i})$, we write
\[
\down{C[s\alpha]}{i}=a_1h+s_0,
\qquad\text{and}\qquad
\down{C[t\alpha]}{i}=a_2h+t_0,
\]
where $h\notin\suppm(s_0)\sqcup\suppm(t_0)$.
Note that $a=a_1-a_2$. There exists an
$F^{i}\NP{X}$-normal reduction
\[
\down{C[\alpha]}{i}^{\preccurlyeq}
   :ah\fl t_0-s_0
\]
in $\minsub\dlr{\alpha}{i}$.
For $j=1,2$, set
$
q_j\coloneq\quo{a}{a_j},
$
and
$
r_j\coloneq\rem{a}{a_j}.
$
Since $a_1-a_2=a$, we have $r_1,r_2\in\Rem(a)$, that is $r_1=r_2$. Following $a_1=q_1a+r_1$ and $a_2=q_2a+r_2$, we have
\[
a=(q_1-q_2)a+(r_1-r_2)=(q_1-q_2)a.
\]
Since $\bk$ is an integral domain and $a\neq0$,
we obtain $q_1-q_2=1$.
We define two $3$-cells of size $1$ in
$\plcat{(\strat{i+1}\NP{X})}$ by
\[
\sigma_1\coloneq bq_1\beta+br_1h+bs_0+\down{f}{i}
:\down{(s\sigma)}{i}\fl e_1
\qquad\text{and}\qquad
\sigma_2\coloneq bq_2\beta+br_2h+bt_0+\down{f}{i}
:\down{(t\sigma)}{i}\fl e_2,
\]
where
$
e_1=bq_1(t_0-s_0)+br_1h+bs_0+\down{f}{i},
$
and
$
e_2=bq_2(t_0-s_0)+br_2h+bt_0+\down{f}{i}.
$
Using $r_1=r_2$ and $q_1-q_2=1$, we have
$
e_1-e_2
=b(q_1-q_2)(t_0-s_0)+b(s_0-t_0)=0.
$
Thus, we obtain the diagram
\[
\xymatrix @R=0.5em @C=3.5em {
{\color{red}\down{(s\sigma)}{i}}
  \ar@[red][dd]_{\color{red}\sigma_1}
  \ar@[blue][dr]^{\color{blue}\tau_1}
\\
& {\color{blue}\cdot}
  \ar@{->>}@[blue][dr]^{\color{blue}\tau_3}
\\
{\color{red}e}
  \ar@[blue][ur]^{\color{blue}\rho_1}
  \ar@[blue][dr]_{\color{blue}\tau_2}
&& {\color{blue}\cdot}
\\
& {\color{blue}\cdot}
  \ar@{->>}@[blue][ur]_{\color{blue}\rho_3}
\\
{\color{red}\down{(t\sigma)}{i}}
  \ar@[red][uu]^{\color{red}\sigma_2}
  \ar@[blue][ur]_{\color{blue}\rho_2}
}
\]
where $e\coloneq e_1=e_2$ and the identities or $\strat{i+1}\NP{X}$-rewriting paths 
$\tau_1,\rho_1,\tau_2$ and $\rho_2$ are obtained from 
Lemma~\ref{L:DecopositionOfSize1Cell}. Moreover, the paths 
$\tau_3$ and $\rho_3$ exist since the branching 
$(\tau_2,\rho_1)$ is confluent by hypothesis. Hence, this yields the identities or $\strat{i+1}\NP{X}$-rewriting paths 
$\tau=\tau_1\circ_1\tau_3$ and 
$\rho=\rho_1\circ_1\rho_3$, as required in~\eqref{E:MainLemma}. 

\medskip
 
Finally, if $\sigma$ be a $\strat{i+1} \NP{X}$-rewriting step, we write 
$\sigma = b D[\down{C[\alpha]}{i}^\preccurlyeq] + f$, 
where $\down{C[\alpha]}{i}^\preccurlyeq \in \strat{i+1}\NP{X}$ and $\alpha \in \strat{i+1}X$. 
We have $\down{(s\sigma)}{i}=b\down{(DC[s\alpha])}{i}+\down{f}{i}$ and $\down{(t\sigma)}{i}=b\down{(DC[t\alpha])}{i}+\down{f}{i}$. 
The conclusion then follows exactly as in the case of an $\strat{i+1}X$-rewriting step $\sigma =b (D C[\alpha] )+ f$. 
\end{proof} 

\begin{lemma} 
\label{L:Terminatingnormalization} 
If the monomial order $\preccurlyeq$ is $\strat{1}X$-compatible, then any $\preccurlyeq$-normalization $F^\ast \NP{X}$ is terminating and Tietze equivalent to $F^\ast X$. 
\end{lemma}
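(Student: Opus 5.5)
Two claims are to be established: termination of $F^\ast\NP{X}$, and Tietze equivalence with $F^\ast X$. I would prove them separately, each by induction on the depth $i$ of the strata.

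\emph{Termination.} The plan is to show that $\preccurlyeq$ is compatible with every rule of $F^k\NP{X}$, that is, $v\prec\lm(\beta)$ for every rule $\beta$ and every $v\in\suppm(t\beta)$. The existence of a compatible monomial order then gives termination, as recalled in \SSS{SSS:MnomialOrders}. Equivalently, by Proposition~\ref{P:WeightedMapsvsMonomialOrders}, the identity map is then a termination function, and Theorem~\ref{T:MainTheoremOnTermination} applies.

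For the first stratum, $F^1\NP{X}=F^1X=\strat{1}X$, so compatibility is exactly the hypothesis.

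For $i\geq1$, every rule of $\strat{i+1}\NP{X}$ has the form $\beta=\down{C[\alpha]}{i}^{\preccurlyeq}$. By the definition of the $\preccurlyeq$-oriented left-term $3$-cell, $s\beta=\lambda\lt_{\preccurlyeq}(\partial\gamma)$ with $\gamma=\down{C[\alpha]}{i}$. The target $t\beta$ is $\lambda\lt_{\preccurlyeq}(\partial\gamma)\mp\lambda\partial\gamma$, with the sign chosen so that the leading term cancels. Hence every monomial of $t\beta$ lies in $\suppm(\partial\gamma)$ and is different from $\lm_{\preccurlyeq}(\partial\gamma)$, so it is strictly $\preccurlyeq$-smaller than $\lm(\beta)$. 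One point needs care here: in Case~3 of the orientation, the leading monomial may occur in both $\down{C[s\alpha]}{i}$ and $\down{C[t\alpha]}{i}$. It still cancels in $t\beta$, because $t\beta=t_0-s_0$ as in the proof of Lemma~\ref{L:ConfluentOfS}. Passing to the $\sqsubseteq$-minimal elements only removes rules, so compatibility is preserved. This makes the new strata strictly compatible by construction, independently of how $\alpha$ itself was oriented.

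The remaining subtlety is that the left coefficient may be non-invertible. Termination for rewriting over a ring additionally uses the decreasing NRS. I would reduce to the monomial prepolygraph $\monex{\NP{X}}$ via Lemma~\ref{L:PolAndLinearPol}, where coefficients play no role.

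\emph{Tietze equivalence.} I would show by induction on $i$ that $F^i\NP{X}$ and $F^iX$ generate the same congruence on $\plcat{X_2}$ modulo $\Int(X_2)$.

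\emph{Inclusion of $F^{i+1}\NP{X}$ in the congruence of $F^{i+1}X$.} Let $\beta=\down{C[\alpha]}{i}^{\preccurlyeq}$. Then $\partial\beta=\pm\lambda\,\partial\down{C[\alpha]}{i}$. By the induction hypothesis this element is congruent to $\pm\lambda C[\partial\alpha]$, since $\down{-}{i}$ is realised by $F^i\NP{X}$-paths. It therefore lies in the ideal generated by $F^{i+1}X$.

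\emph{Converse inclusion.} This is the step I expect to be the main obstacle. For $\alpha\in\strat{i+1}X$, I must show that $\partial\alpha$ lies in the ideal generated by $F^{i+1}\NP{X}$. Take the empty context. If $s\alpha$ is $F^i\NP{X}$-reducible, the empty context is active, so $\down{\alpha}{i}^{\preccurlyeq}$, up to a context of a minimal element, belongs to $\strat{i+1}\NP{X}$. Its boundary is $\lambda\,\partial\down{\alpha}{i}$, and this is congruent to $\lambda\partial\alpha$ modulo $F^i\NP{X}$. If $s\alpha$ is in normal form, then the identity context is in $\Nf$ and is handled by the same reduction.

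When $\lambda$ is a non-invertible scalar, one only recovers $\lambda\partial\alpha$. To avoid this loss, I would argue as in Lemma~\ref{L:ConfluentOfS}: the rewriting step $\alpha$ itself is connected through $\strat{i+1}\NP{X}$-paths and $F^i\NP{X}$-paths, as in diagram~\eqref{E:MainLemma}. This expresses $\partial\alpha$ directly in the generated congruence. I expect that controlling this coefficient issue, together with the degenerate case $\down{C[\partial\alpha]}{i}=0$ (where no rule is needed), will take the most care.
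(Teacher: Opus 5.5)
Your plan follows the same architecture as the paper's proof. Termination comes from the orientation $(-)^{\preccurlyeq}$ making $\preccurlyeq$ compatible with every $\strat{i+1}\NP{X}$. Tietze equivalence comes from two inclusions of congruences: one read off from the construction, the other obtained by connecting $\alpha$ to its normal reductions. Two points need correcting.

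\textbf{Termination over a ring.} You rightly notice that normalized rules may have a non-invertible left coefficient: if $\lc_{\preccurlyeq}(\partial\gamma)$ is not a unit, then $\lambda=1$ and $s\beta=ah$. The repair you propose, however, does not work.

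\emph{Why the reduction fails.} The construction $\monex{X}$ and Lemma~\ref{L:PolAndLinearPol} are set up for left-monomial prepolygraphs, and necessarily so. A step with $\lc(\beta)=a$ replaces the coefficient $b$ of $h=C[\lm(\beta)]$ by $\rem{a}{b}$. So $h$ may stay in the support while smaller monomials are added, which is not a multiset decrease.

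\emph{Coefficients genuinely matter.} Over $\Zb$, take the rules $2h\to 0$ and $4h\to 0$ with the NRS $\Rem(2)=\{0,1\}$, $\Rem(4)=\{0,2,3,5\}$. Then $5h\to h\to 5h\to\cdots$, although compatibility holds vacuously and the monomial prepolygraph has no rules at all. The same objection applies to going through Proposition~\ref{P:WeightedMapsvsMonomialOrders} and Theorem~\ref{T:MainTheoremOnTermination}, whose linear case is proved via $\monex{X}$.

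\emph{What is needed.} You need a decreasing NRS. Since $\quo{a}{b}\neq 0$ forces $\rem{a}{b}\neq b$, a decreasing NRS gives $\rem{a}{b}\prec_{\bk} b$. All other affected monomials lie in $\suppm(C[t\beta])$ and are $\prec h$. Hence each step strictly decreases the $2$-cell for the order comparing coefficients, via $\preccurlyeq_{\bk}$, at the $\preccurlyeq$-largest monomial where they differ. This order is well-founded because $\preccurlyeq$ and $\preccurlyeq_{\bk}$ are well-orders and supports are finite.

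The paper's proof only asserts that compatibility implies termination, so it shares this lacuna, but your fix does not close it. Your reduction is fine when every $\strat{i}\NP{X}$ is left-monomial, as in Theorem~\ref{T:MainTheorem}.

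\textbf{Tietze equivalence.} Your worry about a non-invertible $\lambda$ is unfounded. By definition $\lambda$ is either $\lc_{\preccurlyeq}(\partial\gamma)^{-1}$ or $1$, hence always a unit. So $\partial\bigl(\down{C[\alpha]}{i}^{\preccurlyeq}\bigr)$ is a unit multiple of $\down{C[s\alpha]}{i}-\down{C[t\alpha]}{i}$, which is congruent to $C[\partial\alpha]$ modulo $\angle{F^i\NP{X}}$.

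Your direct argument with the empty context therefore already gives $\angle{\strat{i+1}X}\subseteq\angle{F^{i+1}\NP{X}}$. It writes $\down{\alpha}{i}^{\preccurlyeq}=D[\beta']$ with $\beta'$ $\sqsubseteq$-minimal, and uses $\partial\alpha\in\angle{F^i\NP{X}}$ in the degenerate case. This is better than invoking Lemma~\ref{L:ConfluentOfS}, as the paper does: that lemma assumes $\strat{i+1}\NP{X}$ confluent and $C[s\alpha]$ reducible, and neither is available here.

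One point must be made explicit. Taken literally, the definition of $\Act$ excludes the empty context when $s\alpha\in\Nf(F^i\NP{X})$, since then $C[s\alpha]=s\alpha$ is normal. Your phrase ``handled by the same reduction'' therefore requires reading $\dlr{\alpha}{i}$ as containing $\down{\alpha}{i}^{\preccurlyeq}$ in that case, as the paper's examples do ($L^0\minsub\dlr{\alpha}{1}=\{\alpha\}$). Otherwise the inclusion fails. For instance, if $F^i\NP{X}$ has no rules, no context is active and $\strat{i+1}\NP{X}=\varnothing$.
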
 
\begin{proof} 
The stratified normalization procedure to define normal reductions in \SSS{SSS:Normalization} ensures that the monomial order $\preccurlyeq$ is $\strat{i+1}\NP{X}$-compatible for every $1\leq i\leq k-1$. Together with the assumption that $\preccurlyeq$ is $\strat{1}X$-compatible, it follows that $\preccurlyeq$ is $F^k\NP{X}$-compatible. 
 Hence, the extension $F^\ast \NP{X}$ is terminating. It remains to prove that $\plcat{\NP{X_2}}/\angle{\NP{X_3}}=\plcat{X_2}/\angle{X_3}$. 
 
For each $1\leq i\leq k-1$ and every rule $f \fl h$ in $\strat{i+1}X$, by Lemma~\ref{L:ConfluentOfS} there exists two $\strat{i+1} \NP{X}$-rewriting paths $\tau$ and $\rho$ such that 
\[ 
   f \xto*[F^{i}\NP{X}]{} \down{f}{i} \xto*[\strat{i+1} \NP{X}]{\tau} e \xot*[\strat{i+1} \NP{X}]{\rho}  \down{h}{i} \xot*[F^{i}\NP{X}]{}  h. 
\] 
Hence we obtain 
$ 
\angle{\strat{i+1}X} \subseteq \angle{F^{i+1}\NP{X}}, 
$ 
that is, 
$ 
\plcat{X_2}/\angle{F^{i+1}\NP{X}} \subseteq \plcat{X_2}/\angle{\strat{i+1}X}. 
$ 
When $i=0$, we have $ F^1 \NP{X}=\strat{1}X $ by the definition in \SSS{SSS:Normalization}. 
By induction on $i$, we have $\plcat{\NP{X_2}}/\angle{\NP{X_3}}\subseteq\plcat{X_2}/\angle{X_3}$. 
 
For each $1\leq i\leq k-1$ and every rule $f \fl h$ in $\strat{i+1} \NP{X}$, by the construction of normalization procedure there exist  
$\alpha \in \strat{i+1}X$, a $3$-cell $g$ in $\plcat{(F^{i} \NP{X})}$, and a scalar $\mu$ 
such that 
\[ 
\mu f + g = \down{C[s\alpha]}{i} 
\quad\text{and}\quad 
\mu h + g = \down{C[t\alpha]}{i}, 
\] 
that is  
\[  
\mu f+g =\down{C[s\alpha]}{i} \xot*[F^{i} \NP{X}]{} C[s\alpha] \xto[\strat{i+1}X]{C[\alpha]} C[t\alpha] \xto*[F^{i} \NP{X}]{} \down{C[t\alpha]}{i}= \mu h+g.  
\] 
By indection, we have 
\[ 
\angle{\strat{i+1} \NP{X}} 
\subseteq 
\angle{\strat{i+1}X \sqcup F^{i} \NP{X}} 
= 
\angle{\strat{i+1}X \sqcup \strat{i} \NP{X} \sqcup F^{i-1} \NP{X}} 
\subseteq 
\angle{\strat{i+1}X\sqcup \strat{i} X \sqcup F^{i-1} \NP{X}} 
\subseteq \cdots . 
\] 
Since $F^1 \NP{X} = F^1 X$, it follows that 
$ 
\angle{\strat{i+1} \NP{X}} \subseteq \angle{F^{i+1} X}, 
$ 
that is, 
$ 
\plcat{X_2}/\angle{F^{i+1} X} \subseteq \plcat{X_2}/\angle{\strat{i+1}\NP{X}}. 
$ 
By induction on $i$, we have $\plcat{X_2}/\angle{X_3}\subseteq\plcat{\NP{X_2}}/\angle{\NP{X_3}}$. 
\end{proof}

The main structural property of stratified normalization is that it makes confluence modular with respect to the stratification as stated by the following lemma.
\begin{lemma} 
\label{L:StratumLemma} 
Any $\preccurlyeq$-normalization $F^\ast \NP{X}$ satisfies the confluence modularity property. 
\end{lemma}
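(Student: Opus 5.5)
We argue by induction on $j$ that if each stratum $\strat{i}\NP{X}$, $1\leq i\leq j$, is confluent, then $F^j\NP{X}$ is confluent. The case $j=1$ is immediate, since $F^1\NP{X}=\strat{1}X$. For the inductive step we assume $F^{j}\NP{X}$ is confluent and $\strat{j+1}\NP{X}$ is confluent, and we prove that $F^{j+1}\NP{X}=F^j\NP{X}\sqcup\strat{j+1}\NP{X}$ is confluent. By Lemma~\ref{L:Terminatingnormalization}, $F^{j+1}\NP{X}$ is terminating, so Newman's lemma lets us reduce to local confluence; Lemma~\ref{L:CBL} reduces this further to confluence of critical branchings. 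Rather than enumerating critical branchings, we argue by noetherian induction on the source, with respect to the order $\preccurlyeq$, which is $F^{j+1}\NP{X}$-compatible.

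A local branching $(\sigma,\tau)$ of $F^{j+1}\NP{X}$ falls into one of three kinds. If both steps lie in $F^j\NP{X}$, or both lie in $\strat{j+1}\NP{X}$, the branching is confluent by the respective confluence hypotheses. The substantive case is a mixed branching, with $\sigma$ an $F^j\NP{X}$-step and $\tau$ a $\strat{j+1}\NP{X}$-step out of a common source $f$. First we normalize: $f\xto*{}\down{f}{j}$ by $F^j\NP{X}$, and $t\sigma\xto*{}\down{f}{j}$ as well, by confluence of $F^j\NP{X}$. Next we apply the final clause of Lemma~\ref{L:ConfluentOfS} to the $\strat{j+1}\NP{X}$-step $\tau$. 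This produces $\strat{j+1}\NP{X}$-paths from $\down{(s\tau)}{j}=\down{f}{j}$ and from $\down{(t\tau)}{j}$ to a common $e$. Concatenating $t\tau\xto*{}\down{(t\tau)}{j}\xto*{}e$ with $t\sigma\xto*{}\down{f}{j}\xto*{}e$ closes the branching.

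The main obstacle is that Lemma~\ref{L:ConfluentOfS} assumes $C[s\alpha]$ is $F^{j}\NP{X}$-reducible. It may instead happen that the rewritten monomial of $\tau$ is $F^j$-normal, and $\sigma$ rewrites a different monomial of $f$. This is an additive situation, which we handle as in Case~1 of Lemma~\ref{L:CBL}. The two size-one $3$-cells closing the square are decomposed via Lemma~\ref{L:DecopositionOfSize1Cell}. The resulting branching at the strictly smaller $2$-cell $f_{12}$ is then closed by the noetherian induction hypothesis. A further difficulty is that the coefficient arithmetic of the NRS can make $\tau$ apply only partially, which produces remainders. We would treat these exactly as in Case~3 of the proof of Lemma~\ref{L:ConfluentOfS}, using $r_1=r_2$ and $q_1-q_2=1$.

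We also have to check that the $\sqsubseteq$-minimization in the definition of $\strat{j+1}\NP{X}$ does not lose rules needed here. This follows because every normal reduction $\down{C[\alpha]}{j}^{\preccurlyeq}$ factors as $D[\beta]$ with $\beta$ minimal, as in Case~1 of Lemma~\ref{L:ConfluentOfS}.
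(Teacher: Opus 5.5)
Your proposal is correct and follows essentially the paper's proof. The mixed case is closed by the same diagram: normalize by $F^j\NP{X}$ on both sides, then join $\down{f}{j}$ and $\down{(t\tau)}{j}$ with the $\strat{j+1}\NP{X}$-paths from Lemma~\ref{L:ConfluentOfS}, with termination supplied by Lemma~\ref{L:Terminatingnormalization}.

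The only difference is that you unfold the critical branching lemma by hand, treating additive mixed branchings as in Case~1 of Lemma~\ref{L:CBL}. The paper instead applies Lemma~\ref{L:CBL} directly, so only overlapping mixed branchings arise. There the shared monomial is rewritten by the $F^j\NP{X}$-step, so the reducibility hypothesis of Lemma~\ref{L:ConfluentOfS} holds automatically and your extra cases are absorbed.
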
 
\begin{proof} 
For $j=1$, the result follows from the hypothesis, since $F^1\NP{X}=\strat{1}\NP{X}$. 
Assume now that $1\leq j < k-1$ and that $F^j\NP{X}$ is confluent. We prove that $F^{j+1}\NP{X}$ is confluent. 
For every critical branching $(\sigma,\tau)\in\CB(F^{j+1} \NP{X})$, 
we distinguish two cases: 
\begin{enumerate} 
\item  If $(\sigma,\tau)\in\CB(\strat{j+1} \NP{X})$, or $(\sigma,\tau)\in\CB(F^j \NP{X})$, then it is confluent by hypothesis. 
 
\item If $(\sigma,\tau)\in\CB(\strat{j+1} \NP{X},F^j \NP{X})$ with common source $f$, 
we have the following diagram: 
\[ 
\xymatrix @R=1.6em @C=6em { 
& {\color{blue}h} 
  \ar@{->>}@[blue][r]_ 
    {\color{blue}F^j\NP{X}} 
& {\color{blue}\down{h}{j}} 
  \ar@/^0.7pc/@{->>}@[blue][dr] 
    ^{\color{blue}\sigma'}_ 
    {\color{blue}\strat{j+1}\NP{X}} 
& 
\\ 
{\color{red}f} 
  \ar@/^0.7pc/@{->}@[red][ur] 
    ^{\color{red}\sigma}_ 
    {\color{red}\strat{j+1}\NP{X}} 
  \ar@/_0.7pc/@{->}@[red][dr] 
    _{\color{red}\tau}^ 
    {\color{red}F^j\NP{X}} 
  \ar@{->>}@[red][rr]_ 
    {\color{red}F^j\NP{X}} 
&& {\color{blue}\down{f}{j}} 
  \ar@{->>}@[blue][r] 
    ^{\color{blue}\tau'}_ 
    {\color{blue}\strat{j+1}\NP{X}} 
& {\color{blue}e} 
\\ 
& {\color{blue}g} 
  \ar@/_0.7pc/@{->>}@[blue][ur] 
    _{\color{blue}\rho}^ 
    {\color{blue}F^j\NP{X}} 
&& 
} 
\] 
where $\sigma'$ and $\tau'$ are obtained from Lemma~\ref{L:ConfluentOfS}, since $\sigma$ is an $\strat{j+1} \NP{X}$-rewriting step, and $\rho$ is obtained from the equality $\down{f}{j}=\down{g}{j}$, since $F^j \NP{X}$ is confluent. 
\end{enumerate} 
By Lemma~\ref{L:Terminatingnormalization}, the extension $\NP{X}$ is terminating, and so is $F^{j+1} \NP{X}$. 
Hence $F^{j+1} \NP{X}$ is locally confluent by Lemma~\ref{L:CBL}. 
\end{proof}

\begin{theorem}
\label{T:MainTheorem}
Let $\Cr$ be a linear monoidal category presented by a stratified $3$-prepolygraph $F^\ast X$.
If there exists an $\strat{1}X$-compatible monomial order $\preccurlyeq$ such that every stratum $\strat{i}\NP{X}$ of the associated $\preccurlyeq$-normalization $F^\ast\NP{X}$ is left-monomial and confluent, then $\Nf_m(\NP{X})$ forms a hom-basis of $\Cr$.
\end{theorem}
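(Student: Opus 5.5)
The proof will assemble the three lemmas just established on the normalization with Theorem~\ref{T:HomBasesFromConfluence}, or directly with Theorem~\ref{T:HomBasisFromConvergence}.

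\textbf{Presentation and termination.} By Lemma~\ref{L:Terminatingnormalization}, since $\preccurlyeq$ is $\strat{1}X$-compatible, the $\preccurlyeq$-normalization $F^\ast\NP{X}$ is terminating and Tietze equivalent to $F^\ast X$. Hence $\NP{X}$, with the same underlying $2$-polygraph and the extension $F^k\NP{X}$, still presents $\Cr$.

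\textbf{Confluence.} I then prove by induction on $j$ that $F^j\NP{X}$ is confluent for every $1\leq j\leq k$.
\begin{itemize}
\item For $j=1$, we have $F^1\NP{X}=\strat{1}\NP{X}$, which is confluent by hypothesis.
\item For the inductive step, assume $F^j\NP{X}$ is confluent. Since $\strat{j+1}\NP{X}$ is confluent by hypothesis, Lemma~\ref{L:StratumLemma} (confluence modularity of any $\preccurlyeq$-normalization) gives confluence of $F^{j+1}\NP{X}$.
\end{itemize}
Concretely, this re-runs the argument of Lemma~\ref{L:StratumLemma}. Critical branchings inside a single stratum or inside $F^j\NP{X}$ are confluent by hypothesis. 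Mixed critical branchings in $\CB(\strat{j+1}\NP{X},F^j\NP{X})$ are closed using Lemma~\ref{L:ConfluentOfS}, which requires the confluence of $\strat{j+1}\NP{X}$, together with uniqueness of $F^j\NP{X}$-normal forms. Lemma~\ref{L:CBL} then upgrades this to confluence, because termination is already known. At $j=k$ this shows that $\NP{X}$ is convergent.

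\textbf{Left-monomiality and conclusion.} Each stratum $\strat{i}\NP{X}$ is left-monomial by hypothesis, so their disjoint union $\NP{X_3}$ is left-monomial. Theorem~\ref{T:HomBasisFromConvergence} applied to the convergent left-monomial $3$-prepolygraph $\NP{X}$ presenting $\Cr$ then gives that $\Nf_m(\NP{X})$ is a hom-basis of $\Cr$. Equivalently, one can invoke Theorem~\ref{T:HomBasesFromConfluence} directly on $F^\ast\NP{X}$.

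\textbf{Main obstacle.} The delicate point is the inductive use of Lemma~\ref{L:ConfluentOfS} within the modularity step. Its hypothesis requires confluence of the new stratum $\strat{j+1}\NP{X}$ alone. In Case~3 of that lemma one needs to close a branching of $\strat{j+1}\NP{X}$-paths, and this is exactly where the hypothesis on individual strata enters. I must also check that Lemma~\ref{L:StratumLemma} covers all indices up to $j=k-1$, since its proof is phrased for $j<k-1$. If the top index is not covered, I would redo the same two-case analysis for $j=k-1$; nothing in that argument depends on $j<k-1$.

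Finally, left-monomiality is needed only to get $\Nf(\NP{X})=\bk\Nf_m(\NP{X})$. Under that assumption, the orientation $(-)^{\preccurlyeq}$ used in building $\strat{i}\NP{X}$ has normalized leading coefficients, so no coefficient divisibility issue remains.
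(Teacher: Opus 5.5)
Your proposal is correct and is essentially the paper's proof. Lemma~\ref{L:Terminatingnormalization} gives termination and Tietze equivalence, and Lemma~\ref{L:StratumLemma} turns confluence of the individual strata into confluence of $F^k\NP{X}$ (you unfold the modularity property as an explicit induction, and rightly note that the range $j<k-1$ in that lemma's proof must also cover $j=k-1$); the left-monomial hom-basis theorem then concludes, which is exactly what the paper does by citing Theorem~\ref{T:HomBasesFromConfluence}, itself Theorem~\ref{T:HomBasisFromConvergence} applied after modularity.
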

\begin{proof}
Let $F^\ast X$ be a left-monomial $k$-stratified  $3$-prepolygraph for $k\geq 1$ satisfying conditions of the statement.
By Lemma~\ref{L:Terminatingnormalization}, the $3$-prepolygraph $F^\ast \NP{X}$ is terminating and presents the category $\Cr$. 
By Lemma~\ref{L:StratumLemma}, it satisfies the confluence modularity property.
Since each stratum $\strat{i}\NP{X}$ is confluent and left-monomial, Theorem~\ref{T:HomBasesFromConfluence} applies and yields that $\Nf_m(\NP{X})$ forms a hom-basis of $\Cr$.
\end{proof}

\subsubsection{Generic branchings}
Let $F^\ast X$ be a stratified $3$-prepolygraph.
For $2\leq i\leq k$, let $u,v\in\pcat{X_1}$ and $C$ be a context.
For every $1\leq j<i$, the context $C$ is called \emph{$F^j\NP{X}$-stable with $(u,v)$} if, for every $\strat{i}\NP{X}$-rewriting step $\rho$ whose source and target belong to $\plcat{X_2}(u,v)$, there exists an $\strat{i}\NP{X}$-rewriting step
\[
\rho^{C,j} :
\down{C[s\rho]}{j}
\fl
\down{C[t\rho]}{j}.
\]
If moreover
$
\rho \not\sqsubseteq \rho^{C,j},
$
with respect to the context relation $\sqsubseteq$, then $C$ is called \emph{strictly $F^j\NP{X}$-stable with~$\rho$}.

Let $(\sigma,\tau)$ be an $\strat{i}\NP{X}$-branching with source $f$. Any $F^j\NP{X}$-stable context $C$ with $f^\partial$ induces a branching
$
(\sigma^{C,j},\tau^{C,j})
$
with source $\down{C[f]}{j}$. This yields a well-founded relation $\vdash$ on the set of $\strat{i}\NP{X}$-branchings, defined by
\[
(\sigma,\tau)
\vdash
(\sigma^{C,j},\tau^{C,j}).
\]
A branching that is minimal with respect to $\vdash$ is called a \emph{generic $\strat{i}\NP{X}$-branching}.

Suppose that the branching $(\sigma_1,\tau_1)$ is confluent and admits a confluence diagram of the form
\[
\xymatrix @R=0.75em @C=1.7em {
&& {\color{red}g_1}
\ar@{->}@[blue][rr] ^-{{\color{blue}\sigma_2}}_-{{\color{blue}\strat{i}\NP{X}}}
&& \;{\color{blue} \cdots}\;
\ar@/^0.4pc/@{->}@[blue] [drr] ^-{{\color{blue}\sigma_n}}_{\hspace{-5pt}{\color{blue}\strat{i}\NP{X}}}
\\
{\color{red} f}
\ar@/^0.4pc/@{->}@[red][urr]^{{\color{red} \sigma_1}}_-{{\color{red}\strat{i}\NP{X}}}
\ar@/_0.4pc/@{->}@[red][drr]
  ^{{\color{red}\strat{i}\NP{X}}}_-{{\color{red}\tau_1}}
&&&
&&& {\color{blue}f'}
\\
&& {\color{red}h_1}
\ar@{->}@[blue][rr]
  ^-{{\color{blue}\strat{i}\NP{X}}}_-{{\color{blue}\tau_2}}
&& \;{\color{blue} \cdots}\;
\ar@/_0.4pc/@{->}@[blue][urr]
  ^{\hspace{-5pt}{\color{blue}\strat{i}\NP{X}}}_-{{\color{blue}\tau_m}}
}
\]
Then the induced branching $(\sigma_1^{C,j},\tau_1^{C,j})$ is also confluent, as witnessed by the following diagram:
\[
\scalebox{0.95}{$
\xymatrix @R=1.7em @C=1.5em {
&&&&{\color{red}\down{C[g_1]}{j}}
\ar@[blue] [rr] ^-{\color{blue}\sigma_2^{C,j}} _-{{\color{blue}\strat{i}\NP{X}}}
&& {\color{blue}\cdots}
\ar@/^0.9pc/@[blue] [ddrrrr] ^{\color{blue}\sigma_n^{C,j}} _{\hspace{-1pt}{\color{blue}\strat{i}\NP{X}}}
\\
&&&& {\color{red}C[g_1]}
\ar@[red]@{->>}[u]^-{{\color{red}F^j\NP{X}}}
\ar@[blue] [rr] ^-{\color{blue}C[\sigma_2]} _-{{\color{blue}\strat{i}\NP{X}}}
&& {\color{blue}\cdots}
\ar@[blue]@{->>}[u]_-{{\color{blue}F^j\NP{X}}}
\ar@/^/@[blue] [drr] ^{\hspace{-6pt}\color{blue}C[\sigma_n]} _-{{\color{blue}\strat{i}\NP{X}}}
\\
{\color{red}\down{C[f]}{j}} 
\ar@/^0.9pc/@[red][uurrrr]^{\color{red}\sigma_1^{C,j}} _-{{\color{red}\strat{i}\NP{X}}}
\ar@/_0.9pc/@[red] [ddrrrr] ^{{\color{red}\strat{i}\NP{X}}} _-{\color{red}\tau_1^{C,j}}
&&{\color{red}C[f]} 
\ar@[red]@{->>}[ll]^-{{\color{red}F^j\NP{X}}}
\ar@/^0.6pc/@[red][urr]^{\color{red}C[\sigma_1]} _-{{\color{red}\strat{i}\NP{X}}}
\ar@/_0.6pc/@[red] [drr] ^{{\color{red}\strat{i}\NP{X}}} _-{\color{red}C[\tau_1]}
&&& 
&&& {\color{blue}C[f']}
\ar@[blue]@{->>}[rr]_-{{\color{blue}F^j\NP{X}}}
&&{\color{blue}\down{C[f']}{j}}
\\
&&&&{\color{red}C[h_1]}
\ar@[red]@{->>}[d]_-{{\color{red}F^j\NP{X}}}
\ar@[blue] [rr] ^{{\color{blue}\strat{i}\NP{X}}} _-{\color{blue}C[\tau_2]}
&& {\color{blue}\cdots}
\ar@[blue]@{->>}[d]^-{{\color{blue}F^j\NP{X}}}
\ar@/_/@[blue] [urr] ^-{{\color{blue}\strat{i}\NP{X}}} _-{\color{blue}C[\tau_n]}
\\
&&&& {\color{red}\down{C[h_1]}{j}}
\ar@[blue] [rr] ^{{\color{blue}\strat{i}\NP{X}}} _-{\color{blue}\tau_2^{C,j}}
&& {\color{blue}\cdots}
\ar@/_0.9pc/@[blue] [uurrrr] ^{\hspace{-10pt}{\color{blue}\strat{i}\NP{X}}} _{\color{blue}\tau_m^{C,j}}
}
$}
\]
We therefore obtain the following criterion.

\begin{lemma}
All $\strat{i}\NP{X}$-critical branchings are confluent if and only if all generic $\strat{i}\NP{X}$-critical branchings are confluent.
\end{lemma}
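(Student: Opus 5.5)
The forward implication is immediate, since a generic critical branching is in particular a critical branching. For the converse, I will argue by well-founded induction along the relation $\vdash$ on $\strat{i}\NP{X}$-branchings. Its well-foundedness is asserted in the definition and will be taken as given. I would first rephrase the induction direction so that it runs correctly. We know that generic branchings, which are the $\vdash$-minimal ones, are confluent. So the hypothesis to propagate is: if $(\sigma,\tau)$ is confluent and $(\sigma,\tau)\vdash(\sigma^{C,j},\tau^{C,j})$, then $(\sigma^{C,j},\tau^{C,j})$ is confluent. This is exactly what the diagram preceding the statement provides.

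The steps, in order, are as follows.

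\textbf{Step 1.} Take an arbitrary $\strat{i}\NP{X}$-critical branching $(\sigma,\tau)$. If it is generic, it is confluent by hypothesis.

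\textbf{Step 2.} Otherwise, descend along $\vdash$. Choose a chain $(\sigma_0,\tau_0)\vdash(\sigma_1,\tau_1)\vdash\cdots\vdash(\sigma_n,\tau_n)=(\sigma,\tau)$ whose first term $(\sigma_0,\tau_0)$ is generic. Such a finite chain back to a minimal element exists because $\vdash$ is well-founded. Each link is given by an $F^{j_l}\NP{X}$-stable context $C_l$, so that $(\sigma_{l+1},\tau_{l+1})=(\sigma_l^{C_l,j_l},\tau_l^{C_l,j_l})$.

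\textbf{Step 3.} Check that the intermediate branchings can be taken to be critical, so that the hypothesis applies to $(\sigma_0,\tau_0)$. Since the relation is defined on all branchings, I would rather prove the auxiliary claim that every branching $\vdash$-below a critical one is again critical, or at least confluent. Concretely, the source $f_0$ of $(\sigma_0,\tau_0)$ satisfies $\down{C[f_0]}{j}=f$. Minimality of $(\sigma,\tau)$ for $\sqsubseteq$, together with the fact that the two steps overlap on the same monomial, should force $(\sigma_0,\tau_0)$ to be an overlapping branching on a $2$-term, and hence a critical one. If this proves delicate, I will fall back on reading "generic critical branching" as the $\vdash$-minimal elements among critical branchings; in that reading the induction restricted to critical branchings goes through verbatim.

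\textbf{Step 4.} Propagate confluence up the chain by induction on $l$. Given a confluence $(\sigma_2\cdots\sigma_n,\tau_2\cdots\tau_m)$ of $(\sigma_l,\tau_l)$, apply the context $C_l$ to obtain $3$-cells $C_l[\sigma_k]$ and $C_l[\tau_k]$. Then normalize with $\down{(-)}{j_l}$. By $F^{j_l}\NP{X}$-stability, each $C_l[\sigma_k]$ yields an $\strat{i}\NP{X}$-rewriting step $\sigma_k^{C_l,j_l}$ between the normalized endpoints. Both resulting paths end at $\down{C_l[f']}{j_l}$, which gives a confluence of $(\sigma_{l+1},\tau_{l+1})$. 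This is the diagram displayed before the lemma.

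The main obstacle is Step 4, namely whether stability applies to every intermediate step $\sigma_k$. Stability is stated for steps with source and target in $\plcat{X_2}(u,v)$, where $(u,v)$ is the fixed boundary, and all steps of a confluence are parallel, so this should be fine. A second point of care is that normalizing $C[g_k]$ by $\down{(-)}{j}$ must be compatible with the endpoints of consecutive induced steps. This holds because $F^{j}\NP{X}$ is confluent (we are inside the inductive setting of Lemma~\ref{L:StratumLemma}), so $\down{(-)}{j}$ is a function of the $2$-cell alone.
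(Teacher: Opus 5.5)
Your proposal is correct and follows the paper's argument. The paper's only justification is the displayed diagram showing that a confluence of $(\sigma_1,\tau_1)$ is transported through an $F^j\NP{X}$-stable context to a confluence of $(\sigma_1^{C,j},\tau_1^{C,j})$, combined with the well-foundedness of $\vdash$; these are exactly your Steps~2 and~4. The paper does not address your Step~3 worry about whether the generic ancestor of a critical branching is itself critical, so your fallback reading of ``generic critical'' as $\vdash$-minimal among critical branchings is the one the argument needs. As a minor point, $\down{(-)}{j}$ is a map by definition of a normalization strategy, so consecutive induced steps match at their endpoints without invoking confluence of $F^j\NP{X}$.
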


\subsection{Hom-basis computation procedure}
\label{SS:HomBasisComputationProcedure}

This subsection presents a procedure for computing a hom-basis of a linear monoidal category from a stratified presentation and the normalization-completion procedure.
We also introduce a stratified method for active contexts ensuring the termination of these procedures.

\subsubsection{Normalization-completion procedure}
\label{SSS:NormalizationCompletionProcedure}

We summarize the constructions of the normalization of stratified prepolygraphs in \SSS{SSS:Normalization} into the following \emph{stratified normalization procedure}~$\mathtt{SN}(F^\ast X,\preccurlyeq~)$.

\begin{algorithm}[H]
\DontPrintSemicolon
\SetAlgoVlined
\SetKwFor{ForEach}{for each}{do}{}
\SetKwFor{While}{while}{do}{}

\KwIn{A $k$-stratified $3$-prepolygraph $F^\ast X$,
an $\strat{1}X$-compatible monomial order $\preccurlyeq$.}

\KwOut{The normalization $F^\ast\NP{X}$.}

$F^1\NP{X}\leftarrow F^1X$, $i\leftarrow 1$\;

\While{$i+1\leq k$}{

  $T\leftarrow\varnothing$\;

  \ForEach{$\alpha\in\strat{i+1}X$}{

    \ForEach{
      $C\in\Act(\alpha,F^iX)$
    }{

      $d\leftarrow
      \Nf_{F^i\NP{X}}
      \bigl(C[s\alpha]-C[t\alpha]\bigr)$\;

      $w\leftarrow\lm_{\preccurlyeq}(d)$,\quad
      $\lambda\leftarrow\lc_{\preccurlyeq}(d)$\;

     \lIf{$\lambda$ is invertible}{$\mu\leftarrow\lambda^{-1}$}
\lElse{$\mu\leftarrow 1$}

      \eIf{
        $w\in
        \suppm\bigl(\Nf_{F^i\NP{X}}(C[t\alpha])\bigr)
        \setminus
        \suppm\bigl(\Nf_{F^i\NP{X}}(C[s\alpha])\bigr)$
      }{
        Add the rule
        $\mu\lambda w\to\mu\bigl(d+\lambda w\bigr)$
        to $T$\;
      }{
        Add the rule
        $\mu\lambda w\to\mu\bigl(-d+\lambda w\bigr)$
        to $T$\;
      }

    }

  }

 $\strat{i+1} \NP{X} \leftarrow T \setminus
  \{\,\beta \in T \mid \exists\,\gamma \in T \text{ and a context $C$ such that $\abs{C}>0$ and } C[\gamma]=\beta\,\}$\;
  $i \leftarrow i + 1$\;

}

$\strat{\ast}\NP{X}
\leftarrow
\strat{1}\NP{X}\cdots\strat{k}\NP{X}$\;

\Return{$\strat{\ast}\NP{X}$}\;

\end{algorithm}
If the set of active contexts is finite, the procedure terminates and yields the normalization. In general, however, this set is infinite. We restrict the set of normal reductions by filtering active contexts according to their size, as explained in~\ref{SSS:FiltrationNormalReductions}.

\subsubsection{Filtrations of normal reductions}
\label{SSS:FiltrationNormalReductions}
Let $F^\ast X$ be a $k$-stratified $3$-prepolygraph and $\preccurlyeq$ an $\strat{1}{X}$-compatible monomial order.
For $i\leq k$ and $\alpha\in \strat{i+1}X$, we define a filtration of the extension $\dlr{\alpha}{i}$ by the size of active contexts:
\[
L^\ast\dlr{\alpha}{i}\;:\;
L^0\dlr{\alpha}{i}\;\subseteq\; L^1\dlr{\alpha}{i}\;\subseteq\; \cdots \;\subseteq\;
L^j\dlr{\alpha}{i}\;\subseteq\; \cdots,
\]
where, for every $j\ge 0$,
\[
L^j\dlr{\alpha}{i}
\;\coloneq\;
\bigl\{\down{C[\alpha]}{i}\ \bigm|\ C\in \Act(\alpha,L^i\NP{X})\text{ and } \abs{C}\leq j\bigr\}.
\]
We define $L^j\minsub\dlr{\alpha}{i}$ as the set of minimal elements of $L^j\dlr{\alpha}{i}$ with respect to the context relation~$\sqsubseteq$. 
This defines a filtration $L^\ast\minsub\dlr{\alpha}{i}$ of the extension $\minsub\dlr{\alpha}{i}$.
We say that $\minsub\dlr{\alpha}{i}$ is \emph{bounded} if there exists $n\ge 0$ such that
\[
\minsub\dlr{\alpha}{i}=L^n\minsub\dlr{\alpha}{i}.
\]

\begin{proposition}
\label{P:FiltrationOfContexts}
Let $\alpha\in\strat{i+1}X$, and assume that $F^i\NP{X}$ is
left-monomial and that
$
\abs{s\beta}=\abs{t\beta}
$
for every $\beta\in F^i\NP{X}$.
If there exists a natural number $n$ such that
\[
n \ge \max_{\beta\in F^i\NP{X}} \{\abs{s\beta} - \abs{\alpha}\}
\quad \text{and} \quad
L^n\minsub\dlr{\alpha}{i} = L^{n+1}\minsub\dlr{\alpha}{i}\,,
\]
then the extension $\minsub\dlr{\alpha}{i}$ is bounded.
\end{proposition}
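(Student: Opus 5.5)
We show that every element of $\minsub\dlr{\alpha}{i}$ already lies in $L^n\minsub\dlr{\alpha}{i}$, arguing by induction on the size $\abs{C}$ of the active context producing it. The guiding idea is that an active context $C=g\circ_1 u\square v\circ_1 h$ with $g,h\in\Nf(F^i\NP{X})$ and $C[s\alpha]$ reducible can only be reducible through a redex that straddles the hole. Since $F^i\NP{X}$ is left-monomial and its rules preserve size, such a redex has size at most $\max_\beta\abs{s\beta}$. Hence only a bounded portion of $C$ next to the hole is needed to make the context active, namely at most $\max_\beta\abs{s\beta}-\abs{\alpha}\le n$ layers of $g$ and $h$ together.

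Concretely, let $C$ be active with $\abs{C}>n+1$. Because $F^i\NP{X}$ is left-monomial, a monomial is reducible exactly when it contains some $\lm(\beta)$ (Remark~\ref{R:ClassicalLeftMonomialSteps}). The monomials $g$ and $h$ are normal, so every occurrence of some $\lm(\beta)$ in a monomial of $\suppm(C[s\alpha])$ must meet the image of $s\alpha$. Using the string decomposition of $2$-monomials in the precategorical setting, such an occurrence is a contiguous factor containing part of $s\alpha$. It therefore extends at most $\abs{s\beta}-1$ layers into $g\cup h$, and since $s\alpha$ must be fully or partially covered, at most $n$ layers beyond $\alpha$ are involved.

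This allows us to factor $C=D[C']$, where $C'$ is a context of size at most $n+1$ that contains the straddling redex and is itself $F^i\NP{X}$-active on $\alpha$, and $D$ is a context of positive size with normal components. We then want to show that $\down{C[\alpha]}{i}^\preccurlyeq$ is not $\sqsubseteq$-minimal, or else that it coincides with an element produced at size at most $n+1$. For this we use that size-preserving, left-monomial rules act locally. Normalizing $C[s\alpha]$ can be organized so that rewriting first happens inside $C'$, and the wrapping $D$ is only touched by redexes created across the boundary of $C'$.

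The main obstacle is this last point. Normalization inside $C'$ can create new redexes straddling the boundary between $C'$ and $D$, so $\down{C[\alpha]}{i}$ need not equal $D[\down{C'[\alpha]}{i}]$. This is exactly where the hypothesis $L^n\minsub\dlr{\alpha}{i}=L^{n+1}\minsub\dlr{\alpha}{i}$ enters. Growing the context by one layer past size $n$ yields no new minimal normal reductions. We then prove by induction on $m\ge n$ that $L^m\minsub\dlr{\alpha}{i}=L^{m+1}\minsub\dlr{\alpha}{i}$.

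For the induction step, a size-$(m+2)$ active context $C$ restricts, by the locality above, to a size-$(m+1)$ active context $C''$ with $C=D'[C'']$ and $\abs{D'}=1$. The one-layer extension from $C''$ to $C$ interacts only with a window of width at most $n+1$ around the boundary. That window is the configuration controlled by the base equality, so any new reduction arising at size $m+2$ is of the form $D'[\gamma]$ with $\gamma\in L^{m+1}\minsub\dlr{\alpha}{i}$ and is therefore not minimal.

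Taking the union over $m$ gives $\minsub\dlr{\alpha}{i}=L^n\minsub\dlr{\alpha}{i}$, which is the claimed boundedness. The delicate verification I expect is this shift of window in the induction step, which relies on size-preservation: $\abs{s\beta}=\abs{t\beta}$ guarantees that rewriting never changes the layer positions, so the window argument transfers from size $n+1$ to any size.
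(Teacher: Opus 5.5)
Your route is the same as the paper's: you peel one outer layer off an active context of size $m+2$, invoke $L^n\minsub\dlr{\alpha}{i}=L^{n+1}\minsub\dlr{\alpha}{i}$ together with $\abs{s\beta}=\abs{t\beta}$, conclude that the new normal reduction is a context applied to an older one, and induct. The step you flag as the main obstacle, however, is a genuine gap, and it cannot be closed from the stated hypotheses. The base equality only says that each normal reduction coming from a context of size $n+1$ factors through one coming from a smaller context. It gives no control on how far the $F^i\NP{X}$-normalization of $C[s\alpha]$ has spread away from the hole. Size preservation keeps layers in place but does not bound this spread. So the new outer layer can form a redex with layers that a cascade has already rewritten, and the result is then not of the form $D'[\gamma]$.

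In fact the statement itself fails. Take $X_1=\emptyset$, so that all $2$-generators are endomorphisms of the identity $1$-cell and $2$-monomials are words; write $w_1w_2$ for $w_1\circ_1 w_2$. Let $\strat{1}X=\{px\to ee,\ qe\to dd,\ rcd\to yyy,\ by\to yb\}$, let $\alpha:x\to 0$ lie in $\strat{2}X$, and take the deg-lex order with $y<d<e<b<r<c<p<q<x$. Then $\strat{1}X$ is left-monomial, size-preserving, compatible with this order, and has no critical pairs.

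A context $g\,\square\,g'$ is active exactly when $g$ ends with $p$. The normal form $\down{C[s\alpha]}{1}$ is:
$g_0eeg'$ when $g=g_0p$ with $g_0$ not ending with $q$;
$g_1ddeg'$ when $g=g_1qp$ with $g_1$ not ending with $rc$;
$g_3yyyb^mdeg'$ when $g=g_3b^mrcqp$ with $g_3$ not ending with $b$.
Hence $L^2\minsub\dlr{\alpha}{1}=L^3\minsub\dlr{\alpha}{1}=\{ee\to 0,\ dde\to 0\}$ and $n=2=\max_\beta\abs{s\beta}-\abs{\alpha}$ (here $\abs{\alpha}=\abs{s\alpha}=1$). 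Yet every $yyyb^mde\to 0$, produced by the context $b^mrcqp\,\square$ of size $m+4$, is $\sqsubseteq$-minimal, so $\minsub\dlr{\alpha}{1}$ is not bounded.

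Your induction step already breaks at $m=n$. Write $C=rcqp\,\square=D'[cqp\,\square]$ with $D'=r\circ_1\square$. The size-$3$ reduction $cdde\to 0$ is $c\circ_1(dde\to 0)$. But prepending $r$ creates the redex $rcd$ across the rewritten region and yields $yyyde\to 0$, which is not of the form $D'[\gamma]$.

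The paper's own proof makes the same leap. It asserts without argument that further normalizing $f\circ_1 h_1\circ_1\down{D[\alpha]}{i}^{\preccurlyeq}$ still gives a whiskering of an element of $L^n\minsub\dlr{\alpha}{i}$. The example above is exactly the case $f=r$, $h_1=c$, $D=qp\,\square$. So what is missing is not a sharper use of the given hypotheses but an additional hypothesis bounding the reach of normalization. One example would be that, for active contexts of size $n+1$, the normalizations of $C[s\alpha]$ and $C[t\alpha]$ never rewrite the outermost $\max_\beta\abs{s\beta}-1$ layers of the context. This is what your window step silently assumes, and $L^n\minsub\dlr{\alpha}{i}=L^{n+1}\minsub\dlr{\alpha}{i}$ does not provide it.

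Separately, your locality count is off. A redex meeting $s\alpha$ in a single layer can use up to $\abs{s\beta}-1$ layers of $g$ and $h$. This exceeds $\max_\beta\abs{s\beta}-\abs{\alpha}$ as soon as $\abs{s\alpha}\geq 3$, so an active context need not restrict to an active context of size at most $n+1$.
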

\begin{proof}
For every context $C\in \Act(\alpha,F^i \NP{X})$ of size $(n+2)$, $C[\alpha]$ falls two cases:
\[
f\circ_1C'[\alpha] \quad \text{ and } \quad C'[\alpha]\circ_1g,
\]
where $\abs{f}=\abs{g}=1$ and $\abs{C'}=n+1$.
We consider the first case. The other case is treated similarly.

 Since $\abs{s\beta}=\abs{t\beta}$ for every $\beta\in F^i\NP{X}$, the source and target of $\down{C'[\alpha]}{i}^\preccurlyeq$ have the same size as those of $C'[\alpha]$. By the assumption
$
L^n\minsub\dlr{\alpha}{i}=L^{n+1}\minsub\dlr{\alpha}{i},
$
it follows that $f\circ_1\down{C'[\alpha]}{i}^\preccurlyeq$ is of the following two forms:
\[
f\circ_1 h_1\circ_1\down{D[\alpha]}{i}^\preccurlyeq, \quad \text{ and } \quad
f\circ_1\down{D[\alpha]}{i}^\preccurlyeq\circ_1 h_2,
\]
where $\abs{h_1}=\abs{h_2}=1$ and $\abs{D}=n$.
Note that the $2$-cells $f\circ_1 h_1$ and $f\circ_1 s\down{D[\alpha]}{i}^\preccurlyeq$ may be reducible. To compute the normal reduction $\down{C[\alpha]}{i}$, one needs to further reduce their sources and targets to normal forms. Since the monomial order $\preccurlyeq$ is $X$-compatible, this process will terminate. Consequently, $\down{C[\alpha]}{i}^\preccurlyeq$ is of one of the following three forms:
\[
f'\circ_1g'\circ_1\down{D'[\alpha]}{i}^\preccurlyeq, \qquad
\down{D'[\alpha]}{i}^\preccurlyeq\circ_1f'\circ_1g', \quad \text{ and } \quad
f'\circ_1\down{D'[\alpha]}{i}^\preccurlyeq\circ_1 g',
\]
where $\abs{f'}=\abs{g'}=1$, $\abs{D'}=n$, and $\down{D'[\alpha]}{i}^\preccurlyeq\in L^n\minsub\dlr{\alpha}{i}$.
 Hence, by taking minimal rules with respect to $\sqsubseteq$, we obtain
$
L^{n+2}\minsub\dlr{\alpha}{i}=L^n\minsub\dlr{\alpha}{i}.
$
By induction on the size of active contexts, one obtains
\[
L^j\minsub\dlr{\alpha}{i}=L^n\minsub\dlr{\alpha}{i}, \quad \text{for all } j\ge n.
\]
Therefore, we have
$
\minsub\dlr{\alpha}{i}=L^n\minsub\dlr{\alpha}{i}.
$
\end{proof}

\subsubsection{Hom-basis procedure}
\label{SSS:HomBasisProcedure}

We denote by $\mathtt{KB}(Y,\preccurlyeq)$ the $3$-prepolygraph obtained by applying the Knuth-Bendix completion procedure~\cite{KnuthBendix70} to a $3$-prepolygraph $Y$ with respect to $\preccurlyeq$.
For every equational presentation $\Er$ of a linear monoidal category $\Cr$, we compute its hom-basis using the following \emph{hom-basis procedure} $\mathtt{HB}(\Er,\preccurlyeq)$:

\begin{algorithm}[H]
\DontPrintSemicolon
\SetAlgoVlined
\SetKwFor{ForEach}{for each}{do}{}
\SetKwFor{While}{while}{do}{}
\KwIn{
A presentation $\Er$ of a linear monoidal category $\Cr$, a monomial order $\preccurlyeq$ on~$\Cr$
}
\KwOut{A hom-basis of $\Cr$.}

$X_{\leq 2} \leftarrow \Er_{\leq 2}$;\quad
$X_3 \leftarrow \varnothing$;\;
\ForEach{$(u,v)$ in $\Er_3$}{
  $\beta \leftarrow \{u\fl v\}$ ;\\
$X_3 \leftarrow X_3 \sqcup \{\beta^\preccurlyeq\}$;
}

Fix a stratification $\strat{\ast}X$ of length $k$ of $X$\,

$\strat{\ast} \NP{X} \leftarrow \mathtt{SN}(\strat{\ast} X,\preccurlyeq)$\;

\While{$\strat{\ast} \NP{X}\text{ is not confluent}$}{
$I\leftarrow\{j\in\{1,\ldots,k\}\mid \strat{j}\NP{X} \text{ is not confluent}\}$\\
\ForEach{$j\in I$}{
$\strat{j}\NP{X} \leftarrow \mathtt{KB}(\strat{j}\NP{X},\preccurlyeq)$\;
}
$\strat{\ast}\NP{X} \leftarrow \mathtt{SN}(\strat{\ast}\NP{X},\preccurlyeq)$\;
}
\Return{$\Nf(\NP{X})$}\;
\end{algorithm}
If the stratum $\strat{j} \NP{X}$ is confluent for every $1 \leq j \leq k$, then the procedure stops.
As a consequence of Theorem~\ref{T:MainTheorem}, we obtain the following result.
\begin{proposition}
If the procedure $\mathtt{HB}(\Er,\preccurlyeq)$ terminates and its output
$\NP{X}$ is left-monomial, then it computes a hom-basis of the
category $\Cr$.
\end{proposition}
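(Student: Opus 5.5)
The plan is to reduce the statement to Theorem~\ref{T:MainTheorem}, by unpacking what termination of $\mathtt{HB}(\Er,\preccurlyeq)$ guarantees about its output. The procedure exits the while loop exactly when $\strat{\ast}\NP{X}$ is confluent. By the stopping criterion stated just before the proposition, every stratum $\strat{j}\NP{X}$, for $1\leq j\leq k$, is then confluent. The returned object is $\Nf(\NP{X})$, where $\NP{X}$ is the output of the last call to $\mathtt{SN}$.

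First I will check that the output still presents $\Cr$, and that it is a $\preccurlyeq$-normalization of a stratified prepolygraph presenting $\Cr$.
\begin{enumerate}
\item The initial orientation step replaces each relation $(u,v)$ by $\beta^\preccurlyeq$. This is Tietze equivalent to $\Er$, since $\beta^\preccurlyeq$ has the same boundary difference as $\beta$ up to the scalar $\lambda_\beta$. That scalar is either $1$ or a unit, so it generates the same relation.
\item Each Knuth--Bendix pass $\mathtt{KB}(\strat{j}\NP{X},\preccurlyeq)$ only adds oriented consequences of the existing rules (and possibly removes redundant ones), so it preserves the presented category.
\item Each call to $\mathtt{SN}$ preserves the presented category by Lemma~\ref{L:Terminatingnormalization}, provided the order $\preccurlyeq$ is $\strat{1}$-compatible with the current first stratum.
\end{enumerate}
For the third point I will use that every rule is $\preccurlyeq$-oriented: rules produced by the orientation step, by $\mathtt{KB}$, and by $\mathtt{SN}$ all have the form $\gamma^\preccurlyeq$. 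For such a rule the left monomial is the $\preccurlyeq$-leading monomial of $\partial\gamma$, so $\preccurlyeq$ is automatically compatible with every stratum. An induction on the number of iterations of the while loop then shows that the final $\NP{X}$ presents $\Cr$, is terminating, and is the $\preccurlyeq$-normalization of the stratified prepolygraph fed to the last $\mathtt{SN}$ call.

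Next I will apply Theorem~\ref{T:MainTheorem} to that last input stratification $F^\ast Y$, with $\NP{Y}=\NP{X}$. By the hypothesis of the proposition, $\NP{X}$ is left-monomial, so each stratum is left-monomial. Each stratum is confluent by the exit condition, and the monomial order $\preccurlyeq$ is $\strat{1}Y$-compatible by the previous paragraph. Hence $\Nf_m(\NP{X})$ is a hom-basis of $\Cr$. Since $\NP{X}$ is left-monomial, the first part of the proof of Theorem~\ref{T:HomBasisFromConvergence} gives $\Nf(\NP{X})=\bk\Nf_m(\NP{X})$. So the returned $\Nf(\NP{X})$ is precisely the $\bk$-span of this hom-basis, and the procedure computes it.

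The main obstacle I expect is the bookkeeping in the second step. One must make sure that the last $\mathtt{SN}$ call really returns a genuine $\preccurlyeq$-normalization in the sense of \SSS{SSS:Normalization}. The algorithm quantifies over $\Act(\alpha,F^iX)$ rather than $\Act(\alpha,F^i\NP{X})$, and confluence of the strata must be interpreted for the output of that call, not for an intermediate $\mathtt{KB}$-modified system. I would handle this by reading the exit test as being applied to the output of the most recent $\mathtt{SN}$ call, and by noting that with confluent lower strata the normalization strategies are canonical (Remarks after \SSS{SSS:Normalization}). This makes the output independent of the choices made, and makes it agree with the definition of the normalization.
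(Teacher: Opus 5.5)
Your proposal is correct and takes the paper's own route: the paper deduces the proposition in one line from Theorem~\ref{T:MainTheorem}, and your bookkeeping makes that deduction explicit (Tietze equivalence of the orientation, $\mathtt{KB}$ and $\mathtt{SN}$ steps; automatic $\strat{1}$-compatibility of $\preccurlyeq$-oriented rules; and $\Nf(\NP{X})=\bk\Nf_m(\NP{X})$ for left-monomial output). One small slip: the sentence before the proposition only says that confluence of all strata makes the procedure stop, not the converse, so confluence of every stratum at exit must be read off the loop guard itself, which keeps looping while $I\neq\varnothing$.
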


\subsection{Examples of the hom-basis procedure}
In this subsection, we illustrate the hom-basis computation procedure on two examples. The first computes a hom-basis of the associative-coassociative category, which provides the starting point for the $q$-Schur category in the next section. The second applies the procedure to the Hecke category, yielding bases for the Iwahori--Hecke algebras.

\begin{example}
\label{E:AssociativeCoassociativeCategory}
The \emph{associative-coassociative category}, denoted by $\ACCat$, is the linear monoidal category generated by $1$-cells $(r)$, for $r \in \mathbb{N}^+$, together with $2$-cells, called \emph{merges} and \emph{splits}, given by
\[
\twocell{ajbs *1 merge*1(ax *0 bx)}: (a)\circ_0 (b) \fl (a+b)\ , \qquad 
\twocell{(as *0 bs)*1 split *1 ajbx}: (a+b)\fl (a)\circ_0 (b)\ ,
\]
satisfying the following  \emph{associativity} and \emph{coassociativity relations}
\begin{eqn}{equation}
\label{E:Associativityrelations}
\twocell{merge*1((merge*1(ax *0 bx)) *0 cx)} = \twocell{merge*1 (ax *0 (merge*1(bx *0 cx)))}
\quad\text{and}\quad  
\twocell{( as *0 ((bs*0 cs)*1 split))*1 split} = \twocell{(((as*0 bs)*1 split)*0 cs)*1 split},
\end{eqn}
for all $a,b,c>0$.
Let $F^\ast \ACPol$ be the following $2$-stratified $3$-prepolygraph presenting $\ACCat$:
\begin{enumerate}
\item $\ACPol_1$ has the $1$-generators $(r)$, for each  $r\in \mathbb{N}^+$, and $\ACPol_2$ has the following 2-generators, for all $a,b>0$,
\[
\twocell{ajbs *1 merge*1(ax *0 bx)}: (a)\circ_0(b) \fl (a+b)\ , \qquad 
\twocell{(as *0 bs)*1 split *1 ajbx}: (a+b)\fl (a)\circ_0(b)\ ,
\]
\item  $\strat{1} \ACPol=\Int(\ACPol_2)$ as defined in \eqref{E:InterchangeRules} and $\strat{2} \ACPol$ consists of the $3$-generators
\[
\alpha_{a,b,c}: 
\twocell{merge*1((merge*1(ax *0 bx)) *0 cx)}
\fl 
\twocell{merge*1 (ax *0 (merge*1(bx *0 cx)))}, 
\qquad  
\beta_{a,b,c}:
\twocell{( as *0 ((bs*0 cs)*1 split))*1 split}
\fl 
\twocell{(((as*0 bs)*1 split)*0 cs)*1 split},
\]
for all $a,b,c> 0$.
\end{enumerate}
These rules will be written simply as $\alpha$ and $\beta$, omitting subscripts whenever no confusion arises.
 We now compute its normalization with respect to the monomial order $\preccurlyeq_\Sch$ defined in \SSS{SSS:PartialOrderQSchurPolygraph}.

We set $\strat{1}\NP{\ACPol} \coloneq \strat{1}\ACPol$. 
Since $s\alpha\in \Nf(\strat{1}\NP{\ACPol})$, we have $F^0\minsub\dlr{\alpha}{1}=\{\alpha\}$.
For every $\strat{1}\NP{\ACPol}$-active context $C$ of size $1$, we have
$C[s\alpha] \in \Nf(\strat{1}\NP{\ACPol})$, except in the following cases:
\[
C_1[\alpha]:\twocell{(id*0merge)*1(id*0merge*0id)*1(merge*0id*0id*0id)}\fl\twocell{(id*0merge)*1(id*0id*0merge)*1(merge*0id*0id*0id)}
\qquad\leadsto\qquad
\down{C_1[\alpha]}{1}^\preccurlyeq:
\twocell{(merge*0id)*1(id*0id*0merge)*1(id*0id*0merge*0id)}\fl\twocell{(merge*0id)*1(id*0id*0merge)*1(id*0id*0id*0merge)}\,,
\]

\[
C_2[\alpha]:
\twocell{(id*0id*0merge)*1(id*0id*0merge*0id)*1(split*0id*0id*0id)}\fl\twocell{(id*0id*0merge)*1(id*0id*0id*0merge)*1(split*0id*0id*0id)}
\qquad\leadsto\qquad
\down{C_2[\alpha]}{1}^\preccurlyeq:
\twocell{(split*0id)*1(id*0merge)*1(id*0merge*0id)}\fl\twocell{(split*0id)*1(id*0merge)*1(id*0id*0merge)}\,,
\]

\[
C_3[\alpha]:
\twocell{(id*0merge)*1(merge*0id*0id)*1(merge*0id*0id*0id)}\fl\twocell{(id*0merge)*1(merge*0id*0id)*1(id*0merge*0id*0id)}
\qquad\leadsto\qquad
\down{C_3[\alpha]}{1}^\preccurlyeq:
\twocell{(merge*0id)*1(merge*0id*0id)*1(id*0id*0id*0merge)}\fl\twocell{(merge*0id)*1(id*0merge*0id)*1(id*0id*0id*0merge)}\,,
\]

\[
C_4[\alpha]:
\twocell{(id*0split)*1(merge*0id)*1(merge*0id*0id)}\fl\twocell{(id*0split)*1(merge*0id)*1(id*0merge*0id)}
\qquad\leadsto\qquad
\down{C_4[\alpha]}{1}^\preccurlyeq:
\twocell{(merge*0id*0id)*1(merge*0id*0id*0id)*1(id*0id*0id*0split)}\fl\twocell{(merge*0id*0id)*1(id*0merge*0id*0id)*1(id*0id*0id*0split)}\,.
\]
Hence,
$L^1\minsub\dlr{\alpha}{1} = \{\alpha\}
= L^0\minsub\dlr{\alpha}{1}$.
Since $\abs{\alpha}=\abs{\beta}=2$,
we obtain
$\minsub\dlr{\alpha}{1} = \{\alpha\}$ by Proposition~\ref{P:FiltrationOfContexts}.
Similarly,
$\minsub\dlr{\beta}{1} = \{\beta\}$.
Therefore, we have $\strat{2}\NP{\ACPol} = \strat{2}\ACPol$.
It is straightforward to verify that $\strat{2}\NP{\ACPol}$ and $\strat{1}\NP{\ACPol}$ are both confluent.
By Procedure~\ref{SSS:HomBasisProcedure}, we obtain a hom-basis $\Nf_m(\NP{\ACPol})$ of the category $\ACCat$, in which each $2$-cell admits a unique decomposition
$f_1\circ_1 f_2\circ_1\ldots \circ_1 f_n$, where each $f_i$ is a $2$-cell of $\pcat{\ACPol_2}$ of size~$1$, and each composite $f_i\circ_1 f_{i+1}$ is  one of the following forms:
\[
\twocell{(id*0 merge*0 id*0id *0id)*1(id*0id*0id*0 id*0merge*0id)}\,,
\qquad
\twocell{(id*0 split*0 id*0 id*0id *0id)*1(id*0id*0 id*0split*0id)}\,,
\qquad
\twocell{(id*0 merge*0 id*0 id*0id *0id)*1(id*0id*0 id*0 id*0split*0id)}\,,
\qquad
\twocell{(id*0 split*0 id*0id *0id)*1(id*0 id*0 id*0merge*0id)}\,,
\]
\[
\twocell{(id*0split*0id)*1 (id*0merge*0id)}\,,
\qquad
\twocell{(id*0merge*0 id*0id)*1(id*0id*0 split*0id)}\,,
\qquad
\twocell{(id*0id*0 merge*0id)*1( id*0split*0 id*0id)}\,,
\qquad
\twocell{(id*0merge*0id)*1( id*0split*0id)}\,,
\qquad
\twocell{(id*0merge*0id)*1(id*0id*0merge*0id)}\,,
\qquad
\twocell{(id*0split*0id*0id)*1(id*0split*0id)}\,.
\]

Since the $q$-Schur category is obtained from the category~$\ACCat$ by imposing additional relations, in the Section~\ref{S:NormFormHomBasisqSchurCategories}, we will extend the stratified prepolygraph $F^\ast \ACPol$ by adding further strata corresponding to the $q$-Schur relations, and apply Procedure~\ref{SSS:HomBasisProcedure} to compute a hom-basis of the $q$-Schur category.
\end{example}

\begin{example}
\label{E:HeckeAlgebras}
We set
$\bk=\mathbb{Z}[q,q^{-1}]$ and $z=q-q^{-1}$.
For $n\geq 1$, the type~$A_{n-1}$ \emph{Iwahori--Hecke algebra} $H_n(q)$ is the $\bk$-algebra generated by $T_1,\ldots,T_{n-1}$, subject to the relations
\[
T_i^2=1+zT_i,
\qquad
T_iT_{i+1}T_i=T_{i+1}T_iT_{i+1},
\qquad
T_iT_j=T_jT_i
\quad\text{if } |i-j|>1.
\]
Equivalently, the quadratic relation can be written as
$T_i-T_i^{-1}=z$.
The \emph{Hecke category}, denoted by $\HCat$, is the linear monoidal categories, generated by one $1$-generator~$1$ and two $2$-generators
\[
\twocell{braidr}\,,\ \twocell{braidl}:
1\circ_0 1
\longrightarrow
1 \circ_0 1,
\]
corresponding to $T_i$ and $T_i^{-1}$ respectively, and subject to the \emph{skein}, \emph{inverse}, and \emph{braid relations}
\[
\twocell{braidl}
=
\twocell{braidr}-z\,\twocell{id*0id}\,,
\qquad
\twocell{braidl*1braidr}
=
\twocell{id*0id}\,,
\qquad
\twocell{braidr*1braidl}
=
\twocell{id*0id}\,,
\qquad
\twocell{(braidr*0id)*1(id*0braidr)*1(braidr*0id)}
=
\twocell{(id*0braidr)*1(braidr*0id)*1(id*0braidr)}\,.
\]
For every $n\geq 1$, the algebra
$\End_{\HCat}(1^{\circ_0 n})$
is isomorphic to $H_n(q)$, where $T_i$ corresponds to the crossing of the $i$-th and $(i+1)$-st strands.
The distant braid relations follow from the exchange law.

Let $F^\ast\HPol$ be the following $4$-stratified
$3$-prepolygraph presenting the $2$-category~$\HCat$:

\begin{enumerate}

\item $\HPol_{\leq 2}$ has one $0$-generator, one $1$-generator,
and the two crossing $2$-generators $\twocell{braidr}$ and
$\twocell{braidl}$ above.

\item Its strata are defined by
$\strat{1}{\HPol}=\Int(\HPol_2)$,
$\strat{2}{\HPol}=\{\alpha\}$,
$\strat{3}{\HPol}=\{\beta_1,\beta_2\}$,
and $\strat{4}{\HPol}=\{\gamma\}$, where
\[
\alpha:\twocell{braidl}\fl
\twocell{braidr}-z\,\twocell{id*0id}\,,\quad
\beta_1:\twocell{braidl*1braidr}\fl\twocell{id*0id}\,,
\quad
\beta_2:\twocell{braidr*1braidl}\fl\twocell{id*0id}
\quad\text{and}\quad
\gamma:\twocell{(braidr*0id)*1(id*0braidr)*1(braidr*0id)}
\fl
\twocell{(id*0braidr)*1(braidr*0id)*1(id*0braidr)}\,.
\]
\end{enumerate}
We now compute its normalization with respect to a monomial order.
Let $\preccurlyeq$ be a well-founded total order on
$\HPol_1\sqcup\HPol_2$ satisfying
$
1
\prec
\twocell{braidr}
\prec
\twocell{braidl}\,.
$
By \SSS{SSS:LexicographicOrders}, this order $\preccurlyeq$ induces a lexicographic
monomial order $\preccurlyeq_{\HPol}$ on $\pcat{\HPol_2}$.
It is straightforward to verify that
$\preccurlyeq_{\HPol}$ is $\HPol$-compatible.

The first two strata are already normalized, so that
$\strat{1}{\NP{\HPol}}=\strat{1}{\HPol}$ and $\strat{2}{\NP{\HPol}}=\strat{2}{\HPol}$.
For the normalization of the strata $\strat{3}{\HPol}$, we consider the $\strat{2}{\NP{\HPol}}$-active contexts of size~$0$ on $\beta_1$ and $\beta_2$. 
We have the following normalization
\[
\raisebox{0.5cm}{
\xymatrix @C=3em @R=1.5em {
{\twocell{braidl*1braidr}}
   \ar[r]^{\beta_1}
   \ar@{->>}[d]_{\strat{2}{\NP{\HPol}}}
&
{\twocell{id*0id}}
   \ar@{=}[d]
\\
{\twocell{braidr*1braidr}-z\,\twocell{braidr}}
&
{\twocell{id*0id}}
}}
\qquad\text{and}\qquad
\raisebox{0.5cm}{
\xymatrix @C=3em @R=1.5em {
{\twocell{braidr*1braidl}}
   \ar[r]^{\beta_2}
   \ar@{->>}[d]_{\strat{2}{\NP{\HPol}}}
&
{\twocell{id*0id}}
   \ar@{=}[d]
\\
{\twocell{braidr*1braidr}-z\,\twocell{braidr}}
&
{\twocell{id*0id}}
}}\,\raisebox{-7ex}{.}
\]
Thus, both inverse relations $\beta_1$ and $\beta_2$ yield the same
$\strat{2}{\NP{\HPol}}$-normal reduction
$
\twocell{braidr*1braidr}
\fl
\twocell{id*0id}+z\,\twocell{braidr}\,.
$
The $\strat{2}{\NP{\HPol}}$-active contexts of size~$1$ on $\beta_1$ and $\beta_2$ are of the forms
\[
\raisebox{-8.75pt}{\begin{tikzpicture} \begin{scope} [ x = 10pt, y = 10pt, join = round, cap = round, thick, black, solid, -] \draw (0.00,2.00)--(0.00,1.75) (1.00,2.00)--(1.00,1.75) (2.00,2.00)--(2.00,1.75) (3.00,2.00)--(3.00,1.75) (4.00,2.00)--(4.00,1.75) ; \draw (0.00,0.00)--(0.00,-0.25) (1.00,0.00)--(1.00,-0.25) (2.00,0.25)--(2.00,-0.25) (3.00,0.25)--(3.00,-0.25) (4.00,0.25)--(4.00,-0.25) ; \draw [fill = white] (0.00,1.75)--(0.00,1.25)--cycle ; \draw [fill = white] (1.00,1.75)--(1.00,1.25)--cycle ; \draw [fill = white] (2.00,1.75)--(2.00,1.25)--cycle ; \draw [ line width = 2.5pt, white ] (3.20,1.65)--(3.80,1.35) ; \draw (3.00,1.75)--(4.00,1.25) ; \draw [ line width = 2.5pt, white ] (3.80,1.65)--(3.20,1.35) ; \draw (4.00,1.75)--(3.00,1.25) ; \draw (0.00,1.25)--(0.00,1.00) (1.00,1.25)--(1.00,1.00) (2.00,1.25)--(2.00,0.75) (3.00,1.25)--(3.00,0.75) (4.00,1.25)--(4.00,0.75) ; \draw [rounded corners = 1pt, fill = white] (-0.25,0.00) rectangle (1.25,1.00) ; \node at (0.50,0.50) {$\scriptstyle \square$} ; \draw [fill = white] (2.00,0.75)--(2.00,0.25)--cycle ; \draw [fill = white] (3.00,0.75)--(3.00,0.25)--cycle ; \draw [fill = white] (4.00,0.75)--(4.00,0.25)--cycle ; \end{scope} \end{tikzpicture}}
\qquad\text{and}\qquad
\raisebox{-8.75pt}{\begin{tikzpicture} \begin{scope} [ x = 10pt, y = 10pt, join = round, cap = round, thick, black, solid, -] \draw (0.00,2.00)--(0.00,1.50) (1.00,2.00)--(1.00,1.50) (2.00,2.00)--(2.00,1.50) (3.00,2.00)--(3.00,1.75) (4.00,2.00)--(4.00,1.75) ; \draw (0.00,0.00)--(0.00,-0.25) (1.00,0.00)--(1.00,-0.25) (2.00,0.00)--(2.00,-0.25) (3.00,0.00)--(3.00,-0.25) (4.00,0.00)--(4.00,-0.25) ; \draw [fill = white] (0.00,1.50)--(0.00,1.00)--cycle ; \draw [fill = white] (1.00,1.50)--(1.00,1.00)--cycle ; \draw [fill = white] (2.00,1.50)--(2.00,1.00)--cycle ; \draw [rounded corners = 1pt, fill = white] (2.75,0.75) rectangle (4.25,1.75) ; \node at (3.50,1.25) {$\scriptstyle \square$} ; \draw (0.00,1.00)--(0.00,0.50) (1.00,1.00)--(1.00,0.50) (2.00,1.00)--(2.00,0.50) (3.00,0.75)--(3.00,0.50) (4.00,0.75)--(4.00,0.50) ; \draw [ line width = 2.5pt, white ] (0.20,0.40)--(0.80,0.10) ; \draw (0.00,0.50)--(1.00,0.00) ; \draw [ line width = 2.5pt, white ] (0.80,0.40)--(0.20,0.10) ; \draw (1.00,0.50)--(0.00,0.00) ; \draw [fill = white] (2.00,0.50)--(2.00,0.00)--cycle ; \draw [fill = white] (3.00,0.50)--(3.00,0.00)--cycle ; \draw [fill = white] (4.00,0.50)--(4.00,0.00)--cycle ; \end{scope} \end{tikzpicture}}\,,
\]
which produce no further $\strat{2}{\NP{\HPol}}$-normalized reductions. Hence, by
Proposition~\ref{P:FiltrationOfContexts}, we obtain
\[
\strat{3}{\NP{\HPol}}
=
\left\{\;
\beta':
\twocell{braidr*1braidr}
\fl
\twocell{id*0id}+z\,\twocell{braidr}
\;\right\}.
\]
The $\strat{3}{\NP{\HPol}}$-active contexts of size~$1$ on $\beta'$ are of the forms
\[
\raisebox{-8.75pt}{\begin{tikzpicture} \begin{scope} [ x = 10pt, y = 10pt, join = round, cap = round, thick, black, solid, -] \draw (0.00,2.00)--(0.00,1.75) (1.00,2.00)--(1.00,1.75) (2.00,2.00)--(2.00,1.75) ; \draw (0.00,0.00)--(0.00,-0.25) (1.00,0.00)--(1.00,-0.25) (2.00,0.00)--(2.00,-0.25) ; \draw [ line width = 2.5pt, white ] (0.20,1.65)--(0.80,1.35) ; \draw (0.00,1.75)--(1.00,1.25) ; \draw [ line width = 2.5pt, white ] (0.80,1.65)--(0.20,1.35) ; \draw (1.00,1.75)--(0.00,1.25) ; \draw [fill = white] (2.00,1.75)--(2.00,1.25)--cycle ; \draw (0.00,1.25)--(0.00,1.00) (1.00,1.25)--(1.00,1.00) (2.00,1.25)--(2.00,1.00) ; \draw [rounded corners = 1pt, fill = white] (-0.25,0.00) rectangle (2.25,1.00) ; \node at (1.00,0.50) {$\scriptstyle \square$} ; \end{scope} \end{tikzpicture}}
\qquad\text{and}\qquad
\raisebox{-8.75pt}{\begin{tikzpicture} \begin{scope} [ x = 10pt, y = 10pt, join = round, cap = round, thick, black, solid, -] \draw (0.00,2.00)--(0.00,1.75) (1.00,2.00)--(1.00,1.75) (2.00,2.00)--(2.00,1.75) ; \draw (0.00,0.00)--(0.00,-0.25) (1.00,0.00)--(1.00,-0.25) (2.00,0.00)--(2.00,-0.25) ; \draw [rounded corners = 1pt, fill = white] (-0.25,0.75) rectangle (2.25,1.75) ; \node at (1.00,1.25) {$\scriptstyle \square$} ; \draw (0.00,0.75)--(0.00,0.50) (1.00,0.75)--(1.00,0.50) (2.00,0.75)--(2.00,0.50) ; \draw [ line width = 2.5pt, white ] (0.20,0.40)--(0.80,0.10) ; \draw (0.00,0.50)--(1.00,0.00) ; \draw [ line width = 2.5pt, white ] (0.80,0.40)--(0.20,0.10) ; \draw (1.00,0.50)--(0.00,0.00) ; \draw [fill = white] (2.00,0.50)--(2.00,0.00)--cycle ; \end{scope} \end{tikzpicture}}\,,
\]
which produce no additional $\strat{3}{\NP{\HPol}}$-normalized reductions. Hence, we obtain
\[
\strat{4}{\NP{\HPol}}:
\qquad
\twocell{
(braidr*0id*0id)
*1(id*0id*0dot)
*1(id*0id*0kk)
*1(id*0id*0dot)
*1(id*0braidr*0id)
*1(braidr*0id*0id)
}
\fl
\twocell{
(id*0id*0dot)
*1(id*0id*0kk)
*1(id*0id*0dot)
*1(id*0braidr*0id)
*1(braidr*0id*0id)
*1(id*0braidr*0id)
}\raisebox{-0.4cm}{\; ,}
\qquad
\text{where \;} \twocell{dot*1kk*1dot}\coloneq 
\raisebox{-10.00pt}{\begin{tikzpicture}
\begin{scope}[x=10pt,y=10pt,join=round,cap=round,thick,black,solid,-]
\draw (3.00,3.00)--(3.00,2.75) (4.00,3.00)--(4.00,2.75);
\draw[line width=2.5pt,white] (3.20,2.65)--(3.80,2.35);
\draw (3.00,2.75)--(4.00,2.25);
\draw[line width=2.5pt,white] (3.80,2.65)--(3.20,2.35);
\draw (4.00,2.75)--(3.00,2.25);
\draw (3.00,2.25)--(3.00,2.00) (4.00,2.25)--(4.00,2.00);
\draw[densely dashed,line width=1.2pt] (1.15,0.85)--(2.85,1.90);
\node[rotate=31.70,above] at (2.00,1.38) {$\scriptstyle k$};
\draw (0.00,0.75)--(0.00,0.50) (1.00,0.75)--(1.00,0.50);
\draw[line width=2.5pt,white] (0.20,0.40)--(0.80,0.10);
\draw (0.00,0.50)--(1.00,0.00);
\draw[line width=2.5pt,white] (0.80,0.40)--(0.20,0.10);
\draw (1.00,0.50)--(0.00,0.00);
\draw (0.00,0.00)--(0.00,-0.25) (1.00,0.00)--(1.00,-0.25);
\end{scope}
\end{tikzpicture}}
\text{\, and } k\geq 0.
\]
It is straightforward to verify that the extensions
$\strat{i}{\NP{\HPol}}$, for $1\leq i\leq 3$, are confluent, while
the confluence of $\strat{4}{\NP{\HPol}}$ follows from
\cite[5.4.4]{GuiraudMalbos09}. 
Therefore, by
Procedure~\ref{SSS:HomBasisProcedure}, the set
$\Nf_m(\NP{\HPol})$ forms a hom-basis of the Hecke category $\HCat$.
Each $2$-cell in $\Nf_m(\NP{\HPol})$ admits a unique decomposition
\[
\twocell{(id*0dot*0id)*1(id*0kk1*0id)*1(id*0dot*0id)}
\;\circ_1\;
\twocell{(id*0dot*0id)*1(id*0kk2*0id)*1(id*0dot*0id)}
\;\circ_1\;\cdots\;\circ_1\;
\twocell{(id*0dot*0id)*1(id*0kkn*0id)*1(id*0dot*0id)}\;,
\qquad k_i\geq 1.
\]
such that, for each $1\leq i<\ell$, the part connecting
$\twocell{(id*0dot*0id)*1(id*0kki*0id)*1(id*0dot*0id)}$ and
$\twocell{(id*0dot*0id)*1(id*0kkij1*0id)*1(id*0dot*0id)}$
has one of the forms
\[
\twocell{(id*0id*0braidr*0id)*1(id*0braidr*0id*0id)}
\qquad
\twocell{
(id*0braidr*0jj*0id*0id*0id)
*1(id*0id*0id*0id*0braidr*0id)
}\;,
\]
or, when $k_i=1$, the form
\[
\twocell{(id*0braidr*0id*0id)*1(id*0id*0braidr*0id)}\,.
\]
Here, $\twocell{jj}$ denotes an identity $2$-cell,
with $j\geq k_i-2$.
In particular, for every $n\geq 1$, the set
\[
\Nf_m(\NP{\HPol})
\cap
\End_{\HCat}(1^{\circ_0 n})
\]
forms a $\mathbb{Z}[q,q^{-1}]$-basis of the Iwahori--Hecke algebra
$H_n(q)$.

When $z=0$, the quadratic relation
$
T_i^2=1+zT_i
$
becomes $T_i^2=1$, and hence $H_n(q)$ specialises to the group algebra
$\mathbb{Z}[S_n]$ of the symmetric group. Under this specialization,
the hom-basis
$
\Nf_m(\NP{\HPol})\cap\End_{\HCat}(1^{\circ_0 n})
$
of $H_n(q)$ obtained above specialises to a basis indexed by the
permutations $w\in S_n$. For each reduced expression
$
w=s_{i_1}\cdots s_{i_r},
$
the corresponding basis element $T_w$ is represented by the reduced
braid diagram $T_{i_1}\cdots T_{i_r}$.
\end{example}

\section{The normal form basis of the $q$-Schur category}
\label{S:NormFormHomBasisqSchurCategories}
In this section, we construct a new hom-basis of the $q$-Schur category, the \emph{normal form basis}, by applying the stratified rewriting approach to Brundan’s presentation~\cite[Thm.~2]{Brundan2025}.
In Subsection~\ref{SS:PresentationQSchurCategories}, we first derive from this presentation a stratified $3$-prepolygraph, whose termination is established in Subsection~\ref{SS:TerminationQSchurPrepolygraph} using a monomial-order-based criterion.
In Subsection~\ref{SS:NormalizationQSchurPresentation}, we apply the stratified normalization procedure to this presentation and deduce the new hom-basis. 
Throughout this section, we set~$\bk=\mathbb{Z}[q,q^{-1}]$.

\subsection{Presentations of $q$-Schur categories}
\label{SS:PresentationQSchurCategories}

We begin by recalling the presentation of the $q$-Schur category from~\cite[Thm.~2]{Brundan2025}, and reformulate it as a $5$-stratified $3$-prepolygraph.
For an integer $n> 0$, we will denote the \emph{quantum integer} by
\[
[n]_q := \frac{q^n - q^{-n}}{q - q^{-1}}\raisebox{-0.8ex}{.}
\]
In particular, $[0]_q=0$ and $[-n]_q=-[n]_q$.
The corresponding \emph{quantum binomial coefficient} is given by
\[
{n \brack s}_q
\coloneq
\frac{[n]_q [n-1]_q \cdots [n-s+1]_q}{[s]_q [s-1]_q \cdots [1]_q}\raisebox{-1.2ex}{,}
\]
with the convention that
${n \brack 0}_q = 1$
and ${n \brack s}_q = 0$ whenever $s < 0$.

\subsubsection{$q$-Schur category}
The \emph{$q$-Schur category}, denoted by $\qSCat$, is the linear $2$-category generated by 
$\ACCat_i$, for $0\leq i\leq 2$, subject to the relations \eqref{E:Associativityrelations}, together with the following \emph{square-switch relation}:
\[
\twocell{(merge *0 id) *1 (id *0 c *0 id)  *1 (id *0 split)*1(id*0 merge)*1 (id *0 d *0 id) *1(split *0 id)  *1(ax *0 bx)}
=
\sum_{s=\max (0, c-b)}^{\min (c, d)}{a-b+c-d \brack s}_q
\twocell{(id *0 merge)*1 (id *0 djs *0 id)  *1 (split*0 id)*1 (merge*0 id) *1 (id *0 cjs *0 id)  *1 (id *0 split) *1(ax *0 bx)}
\]
for $a, b, c, d \geq 0$ with $d \leq a$ and $c \leq b+d$, where
$\twocell{merge*1(ax*0 0x)}\coloneq\twocell{id*1 ax}$ 
and
$\twocell{(bs*0 0s)*1 split}\coloneq\twocell{id*1 bx}$.

We present the $q$-Schur category $\qSCat$ by the $3$-prepolygraph $\qSPol$, defined by $\qSPol_i=\ACPol_i$ for $0\leq i\leq 2$, and
$
\qSPol_3 \coloneq \ACPol_3  \qSPol_3',
$
where $\qSPol_3'$ consists of the following $3$-generators:
\[
\gamma_{a,b,c,d}:
\twocell{(merge *0 id) *1 (id *0 c *0 id)  *1 (id *0 split)*1(id*0 merge)*1 (id *0 d *0 id) *1(split *0 id)  *1(ax *0 bx)}
\fl
\sum_{s=\max (0, c-b)}^{\min (c, d)}{a-b+c-d \brack s}_q
\twocell{(id *0 merge)*1 (id *0 djs *0 id)  *1 (split*0 id)*1 (merge*0 id) *1 (id *0 cjs *0 id)  *1 (id *0 split) *1(ax *0 bx)}
\]
for all $a, b, c, d \geq 0$ with $d \leq a$ and $c \leq b+d$.

\subsubsection{Stratification of $\qSPol_3$}
We stratify the extension $\qSPol_3'$ according to the values of the parameters $a$, $b$, $c$, $d$ of the rule $\gamma_{a,b,c,d}$. Several special choices of parameters yield degenerate forms of
$\gamma_{a,b,c,d}$:
\begin{enumerate}
\item If $c=0$ or $d=0$, the rule reduces to the identity:
\[
\gamma_{a,b,0,d}:\twocell{(id*0 merge)*1(id*0 d*0 id)*1(split*0 id)*1(ax*0 bx)}\fl\twocell{(id*0 merge)*1(id*0 d*0 id)*1(split*0 id)*1(ax*0 bx)}
\quad\text{ or }\quad
\gamma_{a,b,c,0}:\twocell{( merge*0 id)*1(id*0 c *0 id)*1(id*0 split)*1(ax*0 bx)}\fl\twocell{( merge*0 id)*1(id*0 c *0 id)*1(id*0 split)*1(ax*0 bx)}\,.
\]

\item When $b=0$, the rule takes the form
\begin{eqn}{equation}
\label{E:UpperTriangular}
\gamma_{a,0,c,d}:\twocell{(merge*0 id)*1 (id*0 c *0 djc)*1 (id*0 split)*1 split*1 ax}
\fl
{a+c-d \brack c}_q\, \twocell{(id*0 djcs)*1split*1 ax}.
\end{eqn}
If moreover $c=d$, the rule further simplifies to
\begin{eqn}{equation}
\label{E:Triangular1}
\gamma_{a,0,c,d}:\twocell{merge*1(id*0 d)*1 split*1 ax}
\fl
{a \brack d}_q\, \twocell{id*1id*1ax}.
\end{eqn}

\item When $b+d-c=0$, we obtain
\begin{eqn}{equation}
\label{E:DownTriangular}
\gamma_{a,b,c,d}:\twocell{merge*1(id*0merge)*1(id*0 d*0 id)*1(split*0 id)*1(ax*0 bx)}\fl 
{a \brack d}_q\twocell{merge*1(ax*0 bx)}.
\end{eqn}
If moreover $b=0$, this further reduces to
\[
\gamma_{a,0,c,d}:\twocell{merge*1(id*0 d)*1 split*1 ax}
\fl
{a \brack d}_q\, \twocell{id*1 id*1 ax}.
\]

\item When $a-d=0$, we have
\[
s\gamma_{a,b,c,d}=
\twocell{(cs*0 id)*1split*1 merge*1(ax*0 bx)}
\quad\text{and}\quad
t\gamma_{a,b,c,d}=
\sum_{s=\max (0, c-b)}^{\min (c, a)}
\left[\begin{array}{c}
c-b \\[1mm] s
\end{array}\right]_q 
\twocell{(id *0 merge)*1 (id *0 ajs *0 id)  *1 (split*0 id)*1 (merge*0 id) *1 (id *0 cjs *0 id)  *1 (id *0 split) *1(ax *0 bx)}\,.
\]
If $c \geq b$, then
$
{c-b \brack s}_q = 0
$ for all $s > c-b$.
Since $c-b \leq s \leq \min(c,a)$, it follows that $t\gamma_{a,b,c,d}$ has only one non-zero term with $s = c-b$.
In this case, $\gamma_{a,b,c,d}$ is the identity.
If $c<b$, we rewrite $\gamma_{a,b,c,d}$ in the oriented form
\begin{eqn}{equation}
\label{E:Triangular2}
\gamma_{a,b,c,d}:
\twocell{(id *0 merge)*1 (id *0 a *0 id)  *1 (split*0 id)*1 (merge*0 id) *1 (id *0 c *0 id)  *1 (id *0 split) *1(ax *0 bx)}
\fl
\twocell{(cs*0 id)*1split*1 merge*1(ax*0 bx)}
-
\sum_{s=1}^{\min (c, a)}
{c-b \brack s}_q
\twocell{(id *0 merge)*1 (id *0 ajs *0 id)  *1 (split*0 id)*1 (merge*0 id) *1 (id *0 cjs *0 id)  *1 (id *0 split) *1(ax *0 bx)}\,.
\end{eqn}
The orientation is induced by the monomial order defined in~\SSS{SSS:PartialOrderQSchurPolygraph}, which ensures termination of $\qSPol$.
\end{enumerate}
Therefore, we construct a $5$-stratification $F^\ast \qSPol$ of the prepolygraph $\qSPol$ as follows.
We set $\strat{1} \qSPol = \strat{1} \ACPol$ and $\strat{2} \qSPol = \strat{2} \ACPol$ from Example~\ref{E:AssociativeCoassociativeCategory}.
The extension $\strat{3} \qSPol$ consists of the following rules:
\[ 
\gamma_{a,b,c}^\triangle: \twocell{(as*0 bs)*1(merge*0 id)*1 (id*0 c *0 id)*1 (id*0 split)*1 split}\fl {a \brack c}_q\,\twocell{(as*0 bs)*1split}, \qquad \gamma_{a,b,c}^\triangledown: \twocell{merge*1(id*0merge)*1(id*0 c*0 id)*1(split*0 id)*1(ax*0 bx)}\fl {a \brack c}_q\,\twocell{merge*1(ax*0 bx)}, \qquad \gamma_{a,c}^\diamond: \twocell{merge*1(id*0 c)*1 split*1 ax}\fl {a \brack c}_q\,\twocell{id*1id*1ax},
\]
for all $a,b,c > 0$, which correspond respectively to the degenerate rules in \eqref{E:UpperTriangular}, \eqref{E:DownTriangular}, and
\eqref{E:Triangular1}.
The extension $\strat{4} \qSPol$ consists of the following non-degenerate rules:
\[
\gamma_{a,b,c,d}:\twocell{(merge *0 id) *1 (id *0 c *0 id)  *1 (id *0 split)*1(id*0 merge)*1 (id *0 d *0 id) *1(split *0 id)  *1(ax *0 bx)}
\fl
\sum_{s=\max (0, c-b)}^{\min (c, d)}{a-b+c-d \brack s}_q
\twocell{(id *0 merge)*1 (id *0 djs *0 id)  *1 (split*0 id)*1 (merge*0 id) *1 (id *0 cjs *0 id)  *1 (id *0 split) *1(ax *0 bx)}\;,
\]
for all $a,b,c,d > 0$, with $a>d$ and $b+d>c$.
Finally, the extension $\strat{5} \qSPol$ consists of the following degenerate rules:
\[
\eta_{a,b,c}:\twocell{(id *0 merge)*1 (id *0 a *0 id)  *1 (split*0 id)*1 (merge*0 id) *1 (id *0 c *0 id)  *1 (id *0 split) *1(ax *0 bx)}
\fl
\twocell{(cs*0 id)*1split*1 merge*1(ax*0 bx)}
-\sum_{s=1}^{\min (c, a)}
{c-b \brack s}_q
\twocell{(id *0 merge)*1 (id *0 ajs *0 id)  *1 (split*0 id)*1 (merge*0 id) *1 (id *0 cjs *0 id)  *1 (id *0 split) *1(ax *0 bx)}\;,
\]
for all $a,b,c > 0$, with $b>c$, which correspond to the degenerate rules in \eqref{E:Triangular2}.

Hence, the $5$-stratified $3$-prepolygraph $F^\ast \qSPol$ presents the $q$-Schur category $\qSCat$. Its rules will be written simply by omitting subscripts whenever no confusion arises.

\subsection{Termination by stratification}
\label{SS:TerminationQSchurPrepolygraph}

\subsubsection{A total preorder on $\pcat{\qSPol_2}$}
\label{SSS:PartialOrderQSchurPolygraph}
We define a total preorder $\succcurlyeq_{\q}$ on $\pcat{\qSPol_2}$ as follows. 
\begin{enumerate}
\item\label{I:1OrderOfqS} 
For any two parallel $2$-cells $f,g$ in $\pcat{\qSPol_2}$ of size~$1$, written as
\[
f = a_n \cdots a_1 \circ_0 \varphi \circ_0 b_1 \cdots b_m
\quad \text{and} \quad
g = a'_{n'} \cdots a'_1 \circ_0\varphi'\circ_0 b'_1 \cdots b'_{m'},
\]
where 
$a_i,a'_i,b_i,b'_i \in \qSPol_1$ and $\varphi,\varphi' \in \qSPol_2$.
Since $sf=sg$, we have
\[
\sum_{i=1}^n a_i
+
\sum_{i=1}^m b_i
+
\deg(\varphi)
=
\sum_{i=1}^{n'} a'_i
+
\sum_{i=1}^{m'} b'_i
+
\deg(\varphi'),
\]
and denote this common number by $r$, where $\deg(\varphi)\coloneq c$ if $\varphi=\twocell{cs*1merge}$ or $\varphi=\twocell{split*1cx}$\,.
We set $j\coloneq\min(n+m+1,n'+m'+1)$,
and define $f \succ_{\q} g$ if
\[
(a_n,\dots,a_1,\varphi,b_1,\dots,b_m)_j
\succ_{[\q,\ldots,\q]}
(a'_{n'},\dots,a'_1,\varphi',b'_1,\dots,b'_{m'})_j,
\]
where $(-)_j$ denotes the truncation to the first $j$ components from the left and
the order $\succcurlyeq_{\q}$ on $\qSPol_1\sqcup\qSPol_2$ is defined by
\begin{eqn}{equation}
\label{E:WellFounded1Element}
\cdots\succ_{\q}
\twocell{2s*1merge}\succ_{\q}
\twocell{1s*1merge}\succ_{\q}
\twocell{split*1 1x}\succ_{\q}
\twocell{id*1 1x}\succ_{\q}
\twocell{split*1 2x}\succ_{\q}
\twocell{id*1 2x}\succ_{\q}
\cdots\succ_{\q}
\twocell{split*1 rx}\succ_{\q}
\twocell{id*1 rx}.
\end{eqn}

\item\label{I:nOrderOfqS} More generally, for any two parallel $2$-cells $f, g \in \pcat{\qSPol_2}$, write their unique $\circ_1$-decompositions as
\[
f = C_1[\varphi_1] \circ_1 C_2[\varphi_2] \circ_1 \cdots \circ_1 C_n[\varphi_n]
\quad\text{and}\quad
g = D_1[\psi_1] \circ_1 D_2[\psi_2] \circ_1 \cdots \circ_1 D_m[\psi_m].
\]
We define $f \succ_{\q} g$ if $n>m$, or $n=m$ and 
\begin{eqn}{equation}
\label{E:AnyLength}
(C_1[\varphi_1], \ldots, C_n[\varphi_n]) \succ_{[\q,\ldots,\q]} (D_1[\psi_1],\ldots, D_m[\psi_m]).
\end{eqn}
\end{enumerate}

\begin{lemma}
\label{L:qSchurMonomialOrder}
The total preorder $\preccurlyeq_{\q}$ is monotone and well-founded.
\end{lemma}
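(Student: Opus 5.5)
We prove the two properties separately, working hom-set by hom-set, and reduce everything to properties of the base order \eqref{E:WellFounded1Element} on $\qSPol_1\sqcup\qSPol_2$ and of the truncated lexicographic extension used in \SSS{SSS:LexicographicOrders}.

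\emph{Well-foundedness.} We proceed in three steps.

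First, we check that \eqref{E:WellFounded1Element} is a well-order on the elements that can actually occur in a fixed hom-set. For parallel $2$-cells of size~$1$ with source $u$, every label is bounded by the total weight $r=\sum u$, so only merges $\twocell{cs*1merge}$ with $c\le r$ and splits or identities on strands $\le r$ appear. The chain \eqref{E:WellFounded1Element} restricted to this finite set is finite, hence well-founded.

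Second, for size-$1$ cells we compare truncated tuples of length at most $r+1$ over a finite alphabet. There are only finitely many such cells in $\pcat{\qSPol_2}(u,v)$: the compositions of $r$ bounded the number of strands. So item~\ref{I:1OrderOfqS} is a preorder on a finite set, hence well-founded.

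Third, for general cells, item~\ref{I:nOrderOfqS} compares first the size $n$ and then lexicographically the $n$-tuple of size-$1$ layers. The intermediate $1$-cells all have weight $r$, so each layer ranges over a finite set. By \cite[Lem.~1.15]{LIU25}, the lexicographic product of well-founded total preorders is well-founded. The size comparison itself is well-founded because $n\in\Nb$. This yields well-foundedness. We also check transitivity and totality of the preorder along the way, which is routine from the lexicographic definitions.

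\emph{Monotonicity.} It suffices to treat the generating contexts, namely whiskering $w\square w'$ and $\circ_1$-composition $h\circ_1\square$ and $\square\circ_1 k$, since every context is a composite of these.

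For $\circ_1$-composition, the $\circ_1$-decomposition of $h\circ_1 f$ is the concatenation of the decompositions of $h$ and of $f$. Sizes add, and the lexicographic comparison of \eqref{E:AnyLength} on the common prefix is equal. The comparison is therefore decided exactly as for $f$ versus $g$, and the same holds for composition on the other side.

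For whiskering, each layer $C_i[\varphi_i]$ becomes $wC_i[\varphi_i]w'$, and we must show that $f\succ_\q g$ of size $1$ implies $wfw'\succ_\q wgw'$. Right whiskering appends the same letters at the right end. Left whiskering prepends the same letters $w$. In both cases the truncation index $j$ shifts uniformly, so the first differing component is preserved.

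\emph{Main obstacle.} The truncation $(-)_j$ is the delicate part: if $f$ and $g$ agree on the first $j$ components, they are declared equivalent. We must check that appending $w'$ cannot break this tie inconsistently. Since $f$ and $g$ are parallel with the same total weight, the tuples with equal truncated prefix remain tied after whiskering. With $w'$ appended to both, the new index is $j+|w'|$, but the components beyond the old $j$ lie in the untruncated tail of the shorter one only. I would argue that equality on the first $j$ components together with equal weight forces the tails to be compared consistently, or else accept that $\preccurlyeq_\q$ is only a preorder and verify that the monotonicity condition, which concerns strict inequalities and equivalences, is preserved. This case analysis on prefix ties is where I expect the real work.
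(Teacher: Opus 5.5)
Your plan follows the paper's proof. The paper also reduces well-foundedness to size-$1$ cells through the size-then-lexicographic comparison \eqref{E:AnyLength} and \cite[Lem.~1.15]{LIU25}. It then works inside a fixed total weight $i$ (its classes $Z_i$), where \eqref{E:WellFounded1Element} is well-founded with least element the strand $(i)$. Your finiteness of each hom-set is the same bounded-weight observation. Restricting in this way is necessary, since globally \eqref{E:WellFounded1Element} contains the descending chain of strands $(1)\succ_{\q}(2)\succ_{\q}(3)\succ_{\q}\cdots$. For monotonicity the paper only says it is straightforward, as for the lexicographic order on words, so your treatment of whiskering and $\circ_1$-composition goes further than the paper.

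The step you leave open closes with a weight count, and you should state it rather than hedge.

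\emph{Strict comparisons.} These are preserved. The first non-equivalent position $p$ lies in the window, $p\le j$. Left whiskering by $w$ moves it to $p+\abs{w}\le j+\abs{w}$, and right whiskering by $w'$ keeps $p\le j+\abs{w'}$.

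\emph{Ties of unequal length.} The only danger is a tie between tuples of different lengths. Appending $w'$ could break such a tie. It would also spoil transitivity: two long tuples could each be tied to a common short one yet be strictly ordered between themselves.

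\emph{Such ties do not exist.} Every letter of $\qSPol_1\sqcup\qSPol_2$ has positive weight: the label $k$ of a strand $(k)$, and $\deg\varphi$ for a generator. Moreover, $\preccurlyeq_{\q}$-equivalent letters have equal weight. Take two size-$1$ cells of the same total weight $r$ that are componentwise equivalent on the first $j$ components, $j$ being the length of the shorter tuple. Then the remaining components of the longer tuple have total weight $0$, so the lengths are equal.

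Hence ties are exactly pairs of equal-length, componentwise equivalent tuples. An example is two splits of the same strand into different parts, which is harmless since only a preorder is claimed. Such ties are preserved by whiskering, and transitivity and totality follow.

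The argument uses only equal total weight, not equal source, and that is what is available in \eqref{E:AnyLength}. Once earlier layers are merely equivalent, the $i$-th layers of two parallel cells need not share a source, but both have weight $r$.
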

\begin{proof}
As in the case of the lexicographic order on free words, it is straightforward that the order $\preccurlyeq_{\q}$ on $\pcat{\qSPol_2}$ is monotone.
By the definition of $\preccurlyeq_{\q}$ on parallel $2$-cells of arbitrary size given in \eqref{E:AnyLength}, it remains to prove that $\preccurlyeq_{\q}$ is well-founded on parallel $2$-cells of size $1$, following \cite[Lem.~1.15]{LIU25}.

From \SSS{SSS:PartialOrderQSchurPolygraph}-\textbf{i)},
the set of $2$-cells of size~$1$ is decomposed into subsets $(Z_i)_{i \geq 1}$,
where each $Z_i$ consists of $2$-cells of the form
\[
f = a_n \cdots a_1 \circ_1\varphi\circ_1 b_1 \cdots b_m
\quad \text{with} \quad
i = \sum_{i=1}^n a_i
+
\sum_{i=1}^m b_i
+
s\varphi.
\]
For each $Z_i$, \eqref{E:WellFounded1Element} shows that the order  $\preccurlyeq_{\q}$ is well-founded on $\qSPol_1\sqcup\qSPol_2$ with minimal element $\twocell{id*1 ix}$.  
It follows that $\preccurlyeq_{\q}$ is well-founded on $Z_i$.
Therefore, the order $\preccurlyeq_{\q}$ is well-founded on the set of parallel $2$-cells of size~$1$.
\end{proof}

We define a well-founded total order $\preccurlyeq$ on the set of generating $2$-cells of $\qSPol$ by setting
\[
\twocell{merge*1(ax*0bx)}
\succ
\twocell{merge*1(cx*0dx)}
\succ
\twocell{(as*0bs)*1split}
\succ
\twocell{(cs*0ds)*1split}
\succ
\twocell{id*1 mx}
\succ
\twocell{id*1 nx}\,,
\]
if $(a,b)>(c,d)$ with respect to the lexicographic order and $m>n$.
By \SSS{SSS:LexicographicOrders}, this order induces a lexicographic order $\preccurlyeq_{\lex}$ on $\pcat{\qSPol_2}$.
By Lemma~\ref{L:qSchurMonomialOrder}, the order $\preccurlyeq_{\q,\lex}$ is a monomial order on $\pcat{\qSPol_2}$, and we denote it by $\preccurlyeq_\Sch \coloneq \preccurlyeq_{\q,\lex}$.

\begin{lemma}
\label{L:CompatibleMonomial Order}
The order $\preccurlyeq_{\mathrm{Sch}}$ is a $\qSPol$-compatible monomial order on $\pcat{\qSPol_2}$.
Hence, the prepolygraph $\qSPol$ is terminating.
\end{lemma}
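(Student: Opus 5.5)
The plan is to check $\qSPol$-compatibility stratum by stratum and then conclude termination. Recall that compatibility means $v\prec_{\Sch}\lm(\alpha)$ for every rule $\alpha$ of $\qSPol_3$ and every $v\in\suppm(t\alpha)$. Once this holds, termination follows from the remark in \SSS{SSS:MnomialOrders}: an $X$-compatible monomial order implies termination. Equivalently, by Proposition~\ref{P:WeightedMapsvsMonomialOrders} the identity gives a termination function, and Theorem~\ref{T:MainTheoremOnTermination} applies. Monotonicity and well-foundedness of $\preccurlyeq_{\Sch}=\preccurlyeq_{\q,\lex}$ come from Lemma~\ref{L:qSchurMonomialOrder} and \SSS{SSS:LexicographicOrders}. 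So only the rule-by-rule comparison remains.

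\textbf{Size-decreasing strata.} For $\strat{3}\qSPol$ the comparison is immediate. For $\gamma^\triangle$, $\gamma^\triangledown$ and $\gamma^\diamond$, the target is a single monomial of strictly smaller size: $4\to1$, $4\to1$ and $2\to0$ generating $2$-cells. The first criterion of \SSS{SSS:PartialOrderQSchurPolygraph}-\textbf{ii)} compares the number $n$ of $\circ_1$-factors, so these rules decrease.

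\textbf{Size-preserving strata.} The remaining rules keep the number of generating $2$-cells fixed: $\strat{1}\qSPol$ (interchange), $\strat{2}\qSPol$ (associativity and coassociativity), $\strat{4}\qSPol$ (the $\gamma_{a,b,c,d}$, size $4$ on both sides), and the term of $\eta$ on the right whose shape matches the source. For these I compare the $\circ_1$-decompositions factorwise under $\succ_{[\q,\ldots,\q]}$, read from top to bottom as in \eqref{E:AnyLength}.

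For an interchange rule $\alpha_{\varphi,u,\psi}$, the first factor of the source is $\varphi u s_1(\psi)$ and that of the target is $s_1(\varphi)u\psi$. Using the truncated sequences of \SSS{SSS:PartialOrderQSchurPolygraph}-\textbf{i)}, the first position where they differ compares a generating $2$-cell with a generating $1$-cell. In \eqref{E:WellFounded1Element} the $1$-cells $(r)$ lie below all merges and splits, and the leftmost generator is decisive. So the source should be strictly bigger: the $2$-cell sitting further left, namely $\varphi$, is applied first.

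For $\alpha_{a,b,c}$ and $\beta_{a,b,c}$, the first factors are respectively $\varphi\circ_0(c)$ versus $(a)\circ_0\varphi'$, and similarly for splits. Again the leftmost comparison is generator against $1$-cell, giving the required strict decrease. If the preorder $\q$ ties, the lex component breaks the tie.

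For $\gamma_{a,b,c,d}$, every summand on the right begins with a split on the left strand, or a merge placed on the right. The source instead begins with a merge $(a-d)\circ_0(d)\to(a)$ at the leftmost position. Since merges dominate splits and $1$-cells in \eqref{E:WellFounded1Element}, each summand is strictly smaller, whatever the labels $s$ are.

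For $\eta_{a,b,c}$, the main term on the right has size $2$ and is therefore smaller by size. The other summands have the same shape as the source, with labels shifted by $s\geq1$. Here I expect the first factors to tie under $\q$ as well, and the tie to be broken by $\preccurlyeq_{\lex}$. The lex order compares merge labels lexicographically, and the relevant parameters decrease, $(a,b)\mapsto$ smaller. This is what the orientation in \eqref{E:Triangular2} was chosen to ensure.

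\textbf{Expected obstacle.} The main obstacle is the bookkeeping for $\strat{4}\qSPol$ and $\strat{5}\qSPol$. Here one must decide exactly which generator occupies the leftmost position of the first (topmost) factor, and check that the truncation $(-)_j$ does not produce a tie in the wrong direction. I would handle it by writing out explicitly the first factor of source and targets in each case, and falling back on the second factor, or on $\preccurlyeq_{\lex}$, only when the first factors coincide.
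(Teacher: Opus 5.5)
Your overall route is the paper's: a rule-by-rule comparison of each source with the monomials of its target under $\preccurlyeq_{\q}$ (size first, then factorwise starting from the first-applied factor), followed by the standard implication from compatibility to termination. However, the comparison step, which is the whole content of the lemma, rests on a false description of \eqref{E:WellFounded1Element}. As a result, one family of rules is handled incorrectly.

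The chain \eqref{E:WellFounded1Element} does not put the $1$-cells below all splits. Splits and identities are interleaved by degree, $\mathrm{split}_r \succ_{\q} (r) \succ_{\q} \mathrm{split}_{r+1}$. In particular $(a)\succ_{\q}$ every split of degree $a+b$ with $b\geq 1$.

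Coassociativity depends on exactly this, and your ``similarly for splits'' is wrong. Since $\circ_1$ is written in diagrammatic order, $C_1[\varphi_1]$ is the bottom layer of the pictures. So the first-applied factors of $s\beta_{a,b,c}$ and $t\beta_{a,b,c}$ are the bare splits $(a+b+c)\to(a)(b+c)$ and $(a+b+c)\to(a+b)(c)$. These have the same degree and tie under $\q$. Your lex fallback at this factor would even favour the target, since $(a+b,c)>(a,b+c)$.

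The decision comes at the second factor: $(a)\circ_0\mathrm{split}_{b,c}$ against $\mathrm{split}_{a,b}\circ_0(c)$. This pits the $1$-cell $(a)$ on the source side against a split of degree $a+b$ on the target side. With your principle that generators beat $1$-cells, the target would be larger and $\beta$ would be misoriented. The correct reason is $(a)\succ_{\q}\mathrm{split}_{a+b}$, which is what the paper uses.

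Further corrections:
\begin{itemize}
\item For $\gamma_{a,b,c,d}$, the first-applied generator of the source is the split $(a)\to(a-d)(d)$, not a merge. The relevant comparison is $\mathrm{split}_a\succ_{\q}(a)$, which holds, so your conclusion stands.
\item For $\eta_{a,b,c}$, $\preccurlyeq_{\q,\lex}$ consults $\preccurlyeq_{\lex}$ only when $\q$ ties on \emph{all} factors. Here $\q$ already decides at the second factor, because the merge of degree $a+c$ beats the merge of degree $a+c-s$. You must check that comparison rather than appeal to lex.
\item For the interchange rules your conclusion is right, but the justification is different from the one you give. A merge dominates everything. A split of degree $r$ dominates $(r)$, which is the leftmost strand of its own source and sits at position $1$ of the target's first factor.
\item The sizes in $\strat{3}\qSPol$ are $3\to1$, $3\to1$ and $2\to0$, which is harmless.
\end{itemize}
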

\begin{proof}
We compare the sources and targets of all rules in $\monex{\qSPol_3}$ with respect to $\preccurlyeq_{\q}$.
\begin{enumerate}
\item For the extension $\strat{1} \qSPol$, since $ \twocell{ajbs*1merge *1 (ax *0 bx)}\succ_{\q}\twocell{id*1 ax}$, it follows that
\[
\twocell{(dot*0 merge) *1 (merge*0 dot*0 id)*1(ax*0bx*0emptyx*0emptyx*0emptyx)}
\succ_{\q}\twocell{( merge*0 dot) *1 ( id *0 dot*0 merge)*1(ax*0bx*0emptyx*0emptyx*0emptyx)}
\quad\text{and}\quad
\twocell{( dot*0 split)*1( merge*0 dot)*1(ax*0bx*0emptyx*0emptyx)}
\succ_{\q}
\twocell{(merge*0 dot*0 id)*1(id*0 dot *0 split)*1(ax*0bx*0emptyx*0emptyx)}.
\]
Similarly, since  $\twocell{split*1 ax}\succ_{\q}\twocell{id*1 ax} $, we have 
\[
\twocell{(id*0 dot*0 split) *1 (split*0 dot)*1(ax*0emptyx*0emptyx)}
\succ_{\q}
\twocell{(split*0 dot*0 id) *1 (dot*0 split)*1(ax*0emptyx*0emptyx)}
\quad\text{and}\quad
\twocell{(id*0 dot*0 merge)*1(split*0 dot *0 id)*1(ax*0emptyx*0emptyx*0emptyx)}
\succ_{\q}
\twocell{(split*0 dot)*1( dot *0 merge)*1(ax*0emptyx*0emptyx*0emptyx)}.
\]
\item For the extension $\strat{2} \qSPol$, since $ \twocell{merge *1 (ax *0 bx)}\succ_{\q}\twocell{id*1 ax}$, $\twocell{(as *0 bjcs)*1 split*1(ajbjcx)}=\twocell{(ajbs *0 cs)*1 split*1(ajbjcx)}$ and $\twocell{id*1 ax}\succ_{\q} \twocell{(as *0 bs)*1 split*1(ajbx)}$,
we obtain 
\[
\twocell{merge*1((merge*1(ax *0 bx)) *0 cx)}\succ_{\q}\twocell{merge*1 (ax *0 (merge*1(bx *0 cx)))}
\quad\text{and}\quad
\twocell{( as *0 ((bs*0 cs)*1 split))*1 split} \succ_{\q} \twocell{(((as*0 bs)*1 split)*0 cs)*1 split}.
\]
\item For the extension $\strat{3} \qSPol$, comparing the sizes of $2$-cells yields
\[
\twocell{(as*0 bs)*1(merge*0 id)*1 (id*0 c *0 id)*1 (id*0 split)*1 split}
\succ_{\q} \twocell{(as*0 bs)*1split}\,,
\quad
\twocell{merge*1(id*0merge)*1(id*0 c*0 id)*1(split*0 id)*1(ax*0 bx)}
\succ_{\q}
\twocell{merge*1(ax*0 bx)}\,,
\quad\text{and}\quad
\twocell{merge*1(id*0 c)*1 split*1 ax}
\succ_{\q} \twocell{id*1ax}\,.
\]

\item For the extension $\strat{4} \qSPol$, since $\twocell{split*1 ax}\succ_{\q} \twocell{id*1 ax}$ for every $s\geq0$, we have 
\[
\twocell{(merge *0 id) *1 (id *0 c *0 id)  *1 (id *0 split)*1(id*0 merge)*1 (id *0 d *0 id) *1(split *0 id)  *1(ax *0 bx)}
\succ_{\q}
\twocell{(id *0 merge)*1 (id *0 djs *0 id)  *1 (split*0 id)*1 (merge*0 id) *1 (id *0 cjs *0 id)  *1 (id *0 split) *1(ax *0 bx)}\;.
\]
\item  For the extension $\strat{5} \qSPol$, since 
$\twocell{merge*1(ax*0 cx)}\succ_{\q}\twocell{merge*1(ax*0 cjsx)} $
for every $s>0$, and by comparing the sizes, we obtain
\[
\twocell{(id *0 merge)*1 (id *0 a *0 id)  *1 (split*0 id)*1 (merge*0 id) *1 (id *0 c *0 id)  *1 (id *0 split) *1(ax *0 bx)}
\succ_{\q}
\twocell{(id *0 merge)*1 (id *0 ajs *0 id)  *1 (split*0 id)*1 (merge*0 id) *1 (id *0 cjs *0 id)  *1 (id *0 split) *1(ax *0 bx)}\,
\quad\text{and}\quad
\twocell{(id *0 merge)*1 (id *0 a *0 id)  *1 (split*0 id)*1 (merge*0 id) *1 (id *0 c *0 id)  *1 (id *0 split) *1(ax *0 bx)}
\succ_{\q}
\twocell{(cs*0 id)*1split*1 merge*1(ax*0 bx)}\,.
\]
\end{enumerate}
Therefore, the order $\preccurlyeq_\Sch = \preccurlyeq_{\q,\lex}$ is also compatible.
\end{proof}

\subsection{Stratified normalization and normal form basis}
\label{SS:NormalizationQSchurPresentation}

In this subsection, we denote by $\twocell{nr*0id}$ the $n$ vertical strands, and by $\twocell{leqn*0id}$ any $i$ vertical strands $\twocell{ir*0id}$ with $1\leq i \leq n$.
The notation $\twocell{merge*1(1r*0id)}$ (resp.\;$\twocell{(1r*0id)*1split}$\;) is defined to be the identity. For $n\geq 2$, we set
\[
\twocell{merge*1(2r*0id)}\coloneq \twocell{merge}
\qquad
(\text{resp. }\; \twocell{(2r*0id)*1split}\coloneq \twocell{split}\;),
\]
and define inductively
\[
\twocell{merge*1(nj1r*0id)}
\coloneq
\twocell{merge*1(id*0merge)*1(id*0nr*0id)}
\qquad
(\text{resp. }\;
\twocell{(nj1r*0id)*1split}
\coloneq
\twocell{(nr*0id*0id)*1(split*0id)*1split}\;).
\]
We also introduce the following notation
\[
\twocell{ f *0 g *0 h }
\;\coloneq\;
\twocell{( f*0 dot *0 dot)*1(dot *0 g *0 dot)*1( dot *0 dot *0 h)}\,.
\]
A $2$-cell $f$ in $\pcat{\qSPol_2}$ is \emph{minimal} if $\down{f}{}$  cannot be written as
\; $\twocell{ f1 *0 dot*0 fi }$\,, for every $i \geq 2$. 

\setlength{\parskip}{1em}
We now compute the $\preccurlyeq_{\mathrm{Sch}}$-normalization $F^\ast \NP{\qSPol}$ of the $5$-stratified prepolygraph $F^\ast \qSPol$.
We set $\strat{1}\NP{\qSPol} \coloneq \strat{1}\qSPol$.
Thanks to Example~\ref{E:AssociativeCoassociativeCategory}, we have $\strat{2}\NP{\qSPol} \coloneq \strat{2}\qSPol$ and $\Nf(\strat{2}\NP{\qSPol})=\Nf_m(\NP{\ACPol})$.

\subsubsection{$F^2\NP{\qSPol}$-normal reductions}
\label{SSS:StratumS2}
For each $\gamma^\diamond\in \strat{3}\qSPol$, there exist two contexts
\[
C_1[\gamma^\diamond]:\twocell{merge*1(merge*0id)*1(split*0id)}\fl\twocell{merge}
\quad\text{and}\quad
C_2[\gamma^\diamond]:\twocell{(id*0merge)*1(id*0split)*1split}\fl\twocell{split}
\]
such that 
$
\gamma^\triangle=\down{C_1[\gamma^\diamond]}{2}^\preccurlyeq
$ and
$
\gamma^\triangledown=\down{C_2[\gamma^\diamond]}{2}^\preccurlyeq.
$
Consequently, we obtain
\[
\dlr{\gamma^\triangle}{2} \subseteq \dlr{\gamma^\diamond}{2}
\quad\text{and}\quad
\dlr{\gamma^\triangledown}{2} \subseteq \dlr{\gamma^\diamond}{2}.
\]
It suffices to compute $\dlr{\gamma^\diamond}{2}$ in order to obtain $\strat{3}\NP{\qSPol}$.

Since $s\gamma^\diamond\in \Nf(F^2\NP{\qSPol})$, we have the stratum $L^0\minsub\dlr{\gamma^\diamond}{2}=\{\gamma^\diamond\}$.
A straightforward verification shows that only $C_1$ and $C_2$ yield new rules to $\minsub\dlr{\gamma^\diamond}{2}$ among contexts of size $1$.  Hence
\[
L^1\minsub\dlr{\gamma^\diamond}{2}=\{\gamma^\diamond,\gamma^\triangle,\gamma^\triangledown\}.
\]
Moreover, among contexts of size $2$, none adds new rules to $\minsub\dlr{\gamma^\diamond}{2}$ except the following one:
\[
C_3[\gamma^\diamond_{a,c}]:
\twocell{(id*0merge)*1(id*0merge*0id)*1(id*0a-c*0c*0id)*1(id*0split*0id)*1(split*0id)}  \fl 
{a \brack c}_q\,\twocell{(id*0merge)*1(id*0a*0id)*1(split*0id)}
\qquad\leadsto\qquad
\down{C_3[\gamma^\diamond_{a,c}]}{2}^\preccurlyeq:
\twocell{(id*0merge3)*1(id*0id*0id*0id)  *1(id*0a-c*0c*0id)*1(id*0id*0id*0id)*1(split3*0id)}
\fl
{a \brack c}_q\,
\twocell{(id*0merge)*1(id*0id*0id) *1(id*0a*0id)*1(id*0id*0id)*1(split*0id)}\;.
\]
Therefore, we obtain
\[
L^2\minsub\dlr{\gamma^\diamond}{2}=\{\gamma^\diamond,\gamma^\triangle,\gamma^\triangledown,\down{C_3[\gamma^\diamond_{a,c}]}{2}\}.
\]
Finally, one checks that
$L^2\minsub\dlr{\gamma^\diamond}{2}=L^3\minsub\dlr{\gamma^\diamond}{2}$.
By Proposition~\ref{P:FiltrationOfContexts}, it follows that
\[
\strat{3}\NP{\qSPol}=\{\gamma^\diamond,\gamma^\triangle,\gamma^\triangledown,\down{C_3[\gamma^\diamond_{a,c}]}{2}^\preccurlyeq\}.
\]
By induction, there are two families of $3$-cells in $\plcat{(\strat{3}\NP{\qSPol})}$:
\[
\twocell{(merge4*0 id*0 id)*1(id*0id*0dot*0id*0id)*1(dot*0 a1*0am *0 dot)*1(id*0id*0dot*0id*0id)*1(id*0 id*0 split4)}
\fl
\prod_{i=2}^{m} 
\left[\begin{array}{c}
a_1+\cdots + a_i \\
a_i
\end{array}\right]_q\,
\twocell{(merge3*0 id*0 id)*1(dot*0 b *0 dot)*1(id*0 id*0 split3)}
\quad\text{or}\quad
\twocell{(id*0 id*0 merge4)*1(id*0id*0dot*0id*0id)*1(dot*0 a1*0am *0 dot)*1(id*0id*0dot*0id*0id)*1( split4*0 id*0 id)}
\fl
\prod_{i=2}^{m} 
\left[\begin{array}{c}
a_1+\cdots + a_i \\
a_i
\end{array}\right]_q\,
\twocell{( id*0 id*0 merge3)*1(dot*0 b *0 dot)*1( split3 *0 id*0 id)}\;,
\]
where $b=a_1+\ldots + a_m$,
This shows that any $\twocell{merge*1(mr*0id)}$ and $\twocell{(nr*0id)*1split}$ share at most one common strand. Hence $\strat{3}\NP{\qSPol}$ is confluent, since every source of critical branchings in $\strat{3}\NP{\qSPol}$ admits a unique normal form \cite[Thm.~4.3.3]{Alleaume18}.
In fact, the minimal $2$-cell in $\Nf(F^3\NP{\qSPol})$ can be described as follows:
\begin{eqn}{equation}
\label{E:S2-NormalForm}
\twocell{(id*0id*0merge*0merge*0 merge)*1
(id*0nr*0mr*0nr*0mr*0nr*0nr*0id)*1
(id*0split*0split*0split*0id)*1
(id*0merge*0merge *0merge*0id)*1
(nr*0mr*0mr*0mr*0mr*0mr*0id*0id)*1
(split*0split*0split*0id*0id)*1 (merge*0merge*0merge*0id*0id)*1
(nr*0mr*0mr*0mr*0mr*0mr*0id*0id)*1 (id*0split*0split*0split*0id)*1
(id*0merge*0merge*0merge*0id)*1(id*0nr*0mr*0nr*0mr*0nr*0id*0id)*1
(id*0id*0split*0split*0id*0id)}\;\raisebox{-8.5ex}{,}
\end{eqn}
where $2 \le m \le 3$ and $n \ge 2$, with each occurrence of $m$ and $n$ taking independent values.

\subsubsection{$F^3\NP{\qSPol}$-normal reductions}
\label{SSS:R3StratifiedCompletionStep}
For simplicity, we write
\[
\sum_s\coloneq\sum_{s=\max(0,c-b)}^{\min(c,d)} 
\quad \text{and}\quad
\Psivec{s}{a}{b}{c}{d}\coloneq
{
a-b+c-d 
\brack
s
}_q.
\]
For $\gamma \in \strat{3} \qSPol$, among contexts of size $1$, namely $C_1$ and $C_2$, and of size $2$, namely $C_3$,
\[
C_1[s\gamma]:\twocell{merge*1(merge*0id)*1(id*0split)*1(id*0merge)*1(split*0id)}
\qquad
C_2[s\gamma]:\twocell{(merge*0id)*1(id*0split)*1(id*0merge)*1(split*0id)*1split}
\qquad
C_3[s\gamma]:\twocell{(merge*0id)*1(id*0merge*0id)*1(id*0id*0split)*1(id*0id*0merge)*1(id*0split*0id)*1(split*0id)}\;,
\]
we have $\down{C_1[\partial\gamma]}{3}=0$ and $\down{C_2[\partial\gamma]}{3}=0$, and the rule $\down{C_3[\gamma]}{3}^\preccurlyeq$ is trivial (see Appendix~\ref{SS:ProofsOfThreeContexts}).
Therefore, none of these contexts yields a new rule in $\minsub\dlr{\gamma}{3}$.

Note that the extension $\minsub\dlr{\gamma}{3}$ is not bounded.
It suffices to apply active contexts on $\gamma$ along the two directions indicated by the arrows in the following diagram:
\[
\gamma:
\begin{tikzpicture}[baseline={(base)},x=10pt,y=10pt,join=round,cap=round,thick,black,solid]
  \coordinate (base) at (1.0,1.5);
  \useasboundingbox (-0.35,-0.55) rectangle (2.35,3.35);
  \draw[->,>=stealth,blue!70!black,very thick] (2.00,3.25)--(2.00,2.90);
  \draw[->,>=stealth,blue!70!black,very thick] (0.50,-0.35)--(0.50,0.00);
  \draw (0.50,3.00)--(0.50,2.75);
  \draw (2.00,0.00)--(2.00,-0.25);

  \draw[fill=green] (0.50,2.75)--(1.00,2.25)--(0.00,2.25)--cycle;
  \draw[fill=lightgray] (2.00,2.75)--(2.00,2.25)--cycle;

  \draw (0.00,2.25)--(0.00,2.00)
        (1.00,2.25)--(1.00,2.00)
        (2.00,2.25)--(2.00,2.00);

  \draw[fill=lightgray] (0.00,2.00)--(0.00,1.50)--cycle;
  \draw[fill=orange]   (1.00,2.00)--(2.00,2.00)--(1.50,1.50)--cycle;

  \draw (0.00,1.50)--(0.00,1.25)
        (1.50,1.50)--(1.50,1.25);

  \draw[fill=lightgray] (0.00,1.25)--(0.00,0.75)--cycle;
  \draw[fill=green]     (1.50,1.25)--(2.00,0.75)--(1.00,0.75)--cycle;

  \draw (0.00,0.75)--(0.00,0.50)
        (1.00,0.75)--(1.00,0.50)
        (2.00,0.75)--(2.00,0.50);

  \draw[fill=orange]   (0.00,0.50)--(1.00,0.50)--(0.50,0.00)--cycle;
  \draw[fill=lightgray](2.00,0.50)--(2.00,0.00)--cycle;
\end{tikzpicture}
\;\longrightarrow\;
\sum
\begin{tikzpicture}[baseline={(base)},x=10pt,y=10pt,join=round,cap=round,thick,black,solid]
  \coordinate (base) at (1.0,1.5);
  \useasboundingbox (-0.35,-0.55) rectangle (2.35,3.35);
  \draw[->,>=stealth,blue!70!black,very thick] (1.50,3.25)--(1.50,2.90);
  \draw[->,>=stealth,blue!70!black,very thick] (0.00,-0.35)--(0.00,0.00);
  \draw (0.00,3.00)--(0.00,2.75);
  \draw (1.50,0.00)--(1.50,-0.25);

  \draw[fill=lightgray] (0.00,2.75)--(0.00,2.25)--cycle;
  \draw[fill=green]     (1.50,2.75)--(2.00,2.25)--(1.00,2.25)--cycle;

  \draw (0.00,2.25)--(0.00,2.00)
        (1.00,2.25)--(1.00,2.00)
        (2.00,2.25)--(2.00,2.00);

  \draw[fill=orange]   (0.00,2.00)--(1.00,2.00)--(0.50,1.50)--cycle;
  \draw[fill=lightgray](2.00,2.00)--(2.00,1.50)--cycle;

  \draw (0.50,1.50)--(0.50,1.25)
        (2.00,1.50)--(2.00,1.25);

  \draw[fill=green]     (0.50,1.25)--(1.00,0.75)--(0.00,0.75)--cycle;
  \draw[fill=lightgray] (2.00,1.25)--(2.00,0.75)--cycle;

  \draw (0.00,0.75)--(0.00,0.50)
        (1.00,0.75)--(1.00,0.50)
        (2.00,0.75)--(2.00,0.50);

  \draw[fill=lightgray] (0.00,0.50)--(0.00,0.00)--cycle;
  \draw[fill=orange]    (1.00,0.50)--(2.00,0.50)--(1.50,0.00)--cycle;
\end{tikzpicture}
\]
and to compute the corresponding $F^3\NP{\qSPol}$-normal reductions.
Consequently,
 $\strat{4}\NP{\qSPol}=\bigsqcup\minsub\dlr{\gamma_{a,b,c,d}}{3}$ consists of 
\[
\begin{tikzpicture}[baseline={(base)}]
\begin{scope}[x=10pt,y=10pt,join=round,cap=round,thick,black,solid]

\coordinate (base) at (2.00,4.50);
\useasboundingbox (-0.60,-0.20) rectangle (4.60,11.10);

\draw (0.00,8.75)--(0.00,8.50) (2.00,8.75)--(2.00,8.25) (4.00,8.75)--(4.00,8.25) ;
\node at (0.00,8.00) {$\scriptstyle e$} ;
\draw [fill = green] (2.00,8.25)--(3.00,7.75)--(2.00,7.75)--(1.00,7.75)--cycle ;
\draw [fill = lightgray] (4.00,8.25)--(4.00,7.75)--cycle ;
\draw (0.00,7.50)--(0.00,7.25) (1.00,7.75)--(1.00,7.25) (2.00,7.75)--(2.00,7.25) (3.00,7.75)--(3.00,7.00) (4.00,7.75)--(4.00,7.00) ;
\draw (0.00,7.25)--(0.00,6.75) ;
\node at (0.50,7.00) {$\scriptstyle \raisebox{0pt}{\fontsize{6pt}{6pt}\selectfont$\leq\!2$}$} ;
\draw [fill = lightgray] (1.00,7.25)--(1.00,6.75)--cycle ;
\draw (2.00,7.25)--(2.00,6.75) ;
\node at (2.50,7.00) {$\scriptstyle \raisebox{0pt}{\fontsize{6pt}{6pt}\selectfont$\leq\!2$}$} ;
\draw [dash pattern = on 0.25pt off 2pt] (3.25,7.00)--(3.75,7.00) ;
\draw (0.00,6.75)--(0.00,6.25) (1.00,6.75)--(1.00,6.25) (2.00,6.75)--(2.00,6.50) (3.00,7.00)--(3.00,6.50) (4.00,7.00)--(4.00,6.50) ;
\draw [fill = orange] (0.00,6.25)--(1.00,6.25)--(0.50,5.75)--cycle ;
\node at (2.00,6.00) {$\scriptstyle c$} ;
\draw [rounded corners = 1pt, fill = lightgray] (2.75,5.50) rectangle (4.25,6.50) ;
\node at (3.50,6.00) {$\scriptstyle f$} ;
\draw (0.50,5.75)--(0.50,5.25) (2.00,5.50)--(2.00,5.25) (3.00,5.50)--(3.00,5.00) (4.00,5.50)--(4.00,5.00) ;
\draw [fill = lightgray] (0.50,5.25)--(0.50,4.75)--cycle ;
\draw [fill = lightgray] (2.00,5.25)--(2.00,4.75)--cycle ;
\draw [dash pattern = on 0.25pt off 2pt] (3.25,5.00)--(3.75,5.00) ;
\draw (0.50,4.75)--(0.50,4.50) (2.00,4.75)--(2.00,4.50) (3.00,5.00)--(3.00,4.50) (4.00,5.00)--(4.00,4.50) ;
\draw [fill = lightgray] (0.50,4.50)--(0.50,4.00)--cycle ;
\draw [fill = orange] (2.00,4.50)--(3.00,4.50)--(2.50,4.00)--cycle ;
\draw [fill = lightgray] (4.00,4.50)--(4.00,4.00)--cycle ;
\draw (0.50,4.00)--(0.50,3.75) (2.50,4.00)--(2.50,3.75) (4.00,4.00)--(4.00,3.75) ;
\draw [fill = lightgray] (0.50,3.75)--(0.50,3.25)--cycle ;
\draw [fill = green] (2.50,3.75)--(3.00,3.25)--(2.00,3.25)--cycle ;
\draw [fill = lightgray] (4.00,3.75)--(4.00,3.25)--cycle ;
\draw (0.50,3.25)--(0.50,2.75) (2.00,3.25)--(2.00,3.00) (3.00,3.25)--(3.00,2.75) (4.00,3.25)--(4.00,2.75) ;
\draw [fill = lightgray] (0.50,2.75)--(0.50,2.25)--cycle ;
\node at (2.00,2.50) {$\scriptstyle d$} ;
\draw [fill = lightgray] (3.00,2.75)--(3.00,2.25)--cycle ;
\draw [fill = lightgray] (4.00,2.75)--(4.00,2.25)--cycle ;
\draw (0.50,2.25)--(0.50,1.75) (2.00,2.00)--(2.00,1.75) (3.00,2.25)--(3.00,1.75) (4.00,2.25)--(4.00,1.75) ;
\draw [fill = orange] (0.50,1.75)--(2.00,1.75)--(1.25,1.25)--cycle ;
\draw [fill = lightgray] (3.00,1.75)--(3.00,1.25)--cycle ;
\draw [fill = lightgray] (4.00,1.75)--(4.00,1.25)--cycle ;
\draw (1.25,1.25)--(1.25,1.00) (3.00,1.25)--(3.00,1.00) (4.00,1.25)--(4.00,1.00) ;
\node at (1.25,0.50) {$\scriptstyle a+e$} ;
\node at (3.00,0.50) {$\scriptstyle b$} ;

\draw[blue,very thick,dashed] (-0.35,9.2) rectangle (4.5,3.85);

\end{scope}
\end{tikzpicture}
\quad
\longrightarrow
\quad
\sum_s\Psivec{s}{a}{b}{c}{d}\;
\begin{tikzpicture}[baseline={(base)}]
\begin{scope}[x=10pt,y=10pt,join=round,cap=round,thick,black,solid]

\coordinate (base) at (2.00,4.50);
\useasboundingbox (-0.60,-0.20) rectangle (4.60,11.10);

\draw (0.00,10.75)--(0.00,10.50) (1.75,10.75)--(1.75,10.50) (4.00,10.75)--(4.00,10.50) ;
\draw [fill = lightgray] (0.00,10.50)--(0.00,10.00)--cycle ;
\draw [fill = green] (1.75,10.50)--(2.50,10.00)--(1.00,10.00)--cycle ;
\draw [fill = lightgray] (4.00,10.50)--(4.00,10.00)--cycle ;
\draw (0.00,10.00)--(0.00,9.75) (1.00,10.00)--(1.00,9.75) (2.50,10.00)--(2.50,9.50) (4.00,10.00)--(4.00,9.50) ;
\draw [fill = lightgray] (0.00,9.75)--(0.00,9.25)--cycle ;
\draw (1.00,9.75)--(1.00,9.25) ;
\node at (1.50,9.50) {$\scriptstyle \raisebox{0pt}{\fontsize{6pt}{6pt}\selectfont\;\;$\leq\!2$}$} ;
\draw [dash pattern = on 0.25pt off 2pt] (2.88,9.50)--(3.63,9.50) ;
\draw (0.00,9.25)--(0.00,8.75) (1.00,9.25)--(1.00,8.75) (2.50,9.50)--(2.50,9.00) (4.00,9.50)--(4.00,9.00) ;
\draw [fill = lightgray] (0.00,8.75)--(0.00,8.25)--cycle ;
\draw [fill = lightgray] (1.00,8.75)--(1.00,8.25)--cycle ;
\draw [rounded corners = 1pt, fill = lightgray] (2.25,8.00) rectangle (4.25,9.00) ;
\node at (3.25,8.50) {$\scriptstyle f$} ;
\draw (0.00,8.25)--(0.00,7.75) (1.00,8.25)--(1.00,7.75) (2.50,8.00)--(2.50,7.50) (4.00,8.00)--(4.00,7.50) ;
\draw [fill = lightgray] (0.00,7.75)--(0.00,7.25)--cycle ;
\draw [fill = lightgray] (1.00,7.75)--(1.00,7.25)--cycle ;
\draw [dash pattern = on 0.25pt off 2pt] (2.88,7.50)--(3.63,7.50) ;
\draw (0.00,7.25)--(0.00,7.00) (1.00,7.25)--(1.00,7.00) (2.50,7.50)--(2.50,7.00) (4.00,7.50)--(4.00,7.00) ;
\draw [fill = lightgray] (0.00,7.00)--(0.00,6.50)--cycle ;
\draw [fill = lightgray] (1.00,7.00)--(1.00,6.50)--cycle ;
\draw [fill = green] (2.50,7.00)--(3.00,6.50)--(2.00,6.50)--cycle ;
\draw [fill = lightgray] (4.00,7.00)--(4.00,6.50)--cycle ;
\draw (0.00,6.50)--(0.00,6.00) (1.00,6.50)--(1.00,6.00) (2.00,6.50)--(2.00,6.25) (3.00,6.50)--(3.00,6.00) (4.00,6.50)--(4.00,6.00) ;
\draw [fill = lightgray] (0.00,6.00)--(0.00,5.50)--cycle ;
\draw [fill = lightgray] (1.00,6.00)--(1.00,5.50)--cycle ;
\node at (2.00,5.75) {$\scriptstyle d-s$} ;
\draw [fill = lightgray] (3.00,6.00)--(3.00,5.50)--cycle ;
\draw [fill = lightgray] (4.00,6.00)--(4.00,5.50)--cycle ;
\draw (0.00,5.50)--(0.00,5.00) (1.00,5.50)--(1.00,5.00) (2.00,5.25)--(2.00,5.00) (3.00,5.50)--(3.00,5.00) (4.00,5.50)--(4.00,5.00) ;
\draw [fill = lightgray] (0.00,5.00)--(0.00,4.50)--cycle ;
\draw [fill = orange] (1.00,5.00)--(2.00,5.00)--(1.50,4.50)--cycle ;
\draw [fill = lightgray] (3.00,5.00)--(3.00,4.50)--cycle ;
\draw [fill = lightgray] (4.00,5.00)--(4.00,4.50)--cycle ;
\draw (0.00,4.50)--(0.00,4.25) (1.50,4.50)--(1.50,4.00) (3.00,4.50)--(3.00,4.00) (4.00,4.50)--(4.00,4.00) ;
\node at (0.00,3.75) {$\scriptstyle e$} ;
\draw [fill = green] (1.50,4.00)--(2.00,3.50)--(1.00,3.50)--cycle ;
\draw [fill = lightgray] (3.00,4.00)--(3.00,3.50)--cycle ;
\draw [fill = lightgray] (4.00,4.00)--(4.00,3.50)--cycle ;
\draw (0.00,3.25)--(0.00,2.75) (1.00,3.50)--(1.00,2.75) (2.00,3.50)--(2.00,3.00) (3.00,3.50)--(3.00,2.75) (4.00,3.50)--(4.00,2.75) ;
\draw (0.00,2.75)--(0.00,2.25) ;
\node at (0.50,2.50) {$\scriptstyle \raisebox{0pt}{\fontsize{6pt}{6pt}\selectfont$\leq\!2$}$} ;
\draw [fill = lightgray] (1.00,2.75)--(1.00,2.25)--cycle ;
\node at (2.00,2.50) {$\scriptstyle c-s$} ;
\draw [fill = lightgray] (3.00,2.75)--(3.00,2.25)--cycle ;
\draw [fill = lightgray] (4.00,2.75)--(4.00,2.25)--cycle ;
\draw (0.00,2.25)--(0.00,1.75) (1.00,2.25)--(1.00,1.75) (2.00,2.00)--(2.00,1.75) (3.00,2.25)--(3.00,1.75) (4.00,2.25)--(4.00,1.75) ;
\draw [fill = orange] (0.00,1.75)--(1.00,1.75)--(0.50,1.25)--cycle ;
\draw [fill = orange] (2.00,1.75)--(3.00,1.75)--(2.50,1.25)--cycle ;
\draw [fill = lightgray] (4.00,1.75)--(4.00,1.25)--cycle ;
\draw (0.50,1.25)--(0.50,1.00) (2.50,1.25)--(2.50,1.00) (4.00,1.25)--(4.00,1.00) ;
\node at (0.50,0.50) {$\scriptstyle a+e$} ;
\node at (2.50,0.50) {$\scriptstyle b$} ;

\end{scope}
\end{tikzpicture}\;\raisebox{-8ex}{,}
\]
where $f\in \pcat{\qSPol_2}$ such that the $2$-cells of the form of the box of the source of the above $3$-cell belong to $\Nf(F^3\NP{\qSPol})$.
We also write the above rule as $\gamma_{a,b,c,d}$ when no confusion arises.

\begin{lemma}
\label{L:ConfluentOfS3}
The extension $\strat{4}\NP{\qSPol}$ is confluent.
\end{lemma}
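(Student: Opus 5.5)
We prove confluence of $\strat{4}\NP{\qSPol}$ with the critical branching lemma. By Lemma~\ref{L:Terminatingnormalization} (the order $\preccurlyeq_{\mathrm{Sch}}$ is compatible by Lemma~\ref{L:CompatibleMonomial Order}), $\strat{4}\NP{\qSPol}$ is terminating. By Lemma~\ref{L:CBL} it therefore suffices to show that every critical branching of $\strat{4}\NP{\qSPol}$ is confluent. By the lemma on generic branchings, we may further restrict to \emph{generic} critical branchings. These are branchings that do not arise, after $F^3\NP{\qSPol}$-normalization, from a smaller branching through an $F^3\NP{\qSPol}$-stable context. The infinite families of rules indexed by the boxed normal $2$-cell $f$ and by the strands labelled $\leqslant 2$ are produced exactly by such stable contexts. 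So the generic overlaps should reduce to overlaps between the basic square-switch shapes $s\gamma_{a,b,c,d}$, with the remaining parameters kept symbolic.

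First, we enumerate overlaps between left monomials. Since all rules are left-monomial and the setting is precategorical, overlaps are overlaps of $\circ_1$-decompositions into size-$1$ factors, viewed as string overlaps. The source of $\gamma_{a,b,c,d}$ is a vertical string of merge/split layers of the form ``merge, split, merge, split'' on two adjacent columns. Two such sources can share either a merge--split pair (one column's split feeding the next rule's merge) or a single layer, and the sharing can happen in the upward direction or the downward direction. These are exactly the two arrow directions indicated in the diagram for $\gamma$. Inclusion overlaps cannot occur because the rules are $\sqsubseteq$-minimal. This gives a finite list of generic shapes, parametrized by the labels $(a,b,c,d)$ and $(a',b',c',d')$ subject to the matching constraints on the shared strands.

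Next, for each shape we rewrite both branches to normal form. Applying $\gamma$ first on one side produces a sum over $s$ of diagrams in which the shared layer has been shifted. The remaining occurrence of the other square is then either directly reducible by $\gamma$ with shifted labels, or it first needs $F^3\NP{\qSPol}$-normalization. That normalization uses only associativity and the bubble rules $\gamma^\diamond,\gamma^\triangle,\gamma^\triangledown$ together with $\down{C_3[\gamma^\diamond]}{2}$, and these are available because Lemma~\ref{L:StratumLemma} lets us work modulo the lower strata. Both sides end up as linear combinations of the same ``doubly switched'' normal diagrams, indexed by a pair $(s,t)$.

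Confluence then reduces to an identity between the coefficients. On one side we get $\sum_{s,t}\Psi_s\Psi_t'$ with shifted arguments; on the other side, after reindexing and merging diagrams via $\gamma^\diamond$, we get products with quantum binomials of the form ${a \brack d}_q$. This step is the \emph{main obstacle}. We expect the required identities to be $q$-Vandermonde convolutions ${m+n\brack k}_q=\sum_j q^{\pm(\ldots)}{m\brack j}_q{n\brack k-j}_q$, in the balanced symmetric form that is valid for quantum integers $[n]_q$. Our plan is to match terms diagram by diagram, reduce each match to one such convolution, and handle the boundary cases $s=\max(0,c-b)$ and $s=\min(c,d)$, where a switched square degenerates into a $\strat{3}$ or $\strat{5}$ shape, separately using the degenerate rules. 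As a consistency check, the identity should also follow from the fact that the square-switch relation holds in $\qSCat$, which has Brundan's standard basis.
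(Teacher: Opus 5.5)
Your overall strategy is the paper's: termination via Lemma~\ref{L:Terminatingnormalization}, the critical branching Lemma~\ref{L:CBL}, reduction to generic branchings, explicit rewriting of each overlap family, then $q$-binomial identities. However, your enumeration of generic critical branchings is incomplete, and the reason you give for discarding the missing case is false. The relation $\sqsubseteq$ compares whole $3$-cells: $\beta_1\sqsubseteq\beta_2$ means $\beta_2=C[\beta_1]$, on targets as well as sources. Minimality in $\minsub\dlr{\gamma}{3}$ therefore does not prevent the left monomial of one rule from containing the left monomial of a \emph{different} rule, and this does happen here. The box $f$ in the normalized rules of $\strat{4}\NP{\qSPol}$ is only required to be $F^3\NP{\qSPol}$-normal, so it may contain the source of another $\strat{4}\NP{\qSPol}$-rewriting step $\gamma'\colon f\fl\sum_i\lambda_i f_i$. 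The resulting inclusion branching $(\gamma,\gamma')$ is the third family in the paper's proof, which the paper explicitly lists among the generic branchings. Your claim that the box families are entirely absorbed by stable contexts does not remove it, because $\gamma'$ rewrites inside the part of the source you regard as context. This family closes by a commuting square: $\gamma$ leaves the interior of $f$ untouched, so you apply $\gamma'$ in every summand of $t\gamma$, and $\gamma$ with box $f_i$ to every summand of $t\gamma'$. As written, though, your case analysis misses it.

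Two further corrections. First, to prove that the stratum $\strat{4}\NP{\qSPol}$ is confluent, the closing paths must be $\strat{4}\NP{\qSPol}$-rewriting paths. Lemma~\ref{L:StratumLemma} goes the other way: it deduces confluence of $F^j\NP{X}$ from confluence of the individual strata. So it does not license inserting $F^3\NP{\qSPol}$-normalizations or $\gamma^\diamond$-merges into the closing paths. Rules of $\strat{5}\NP{\qSPol}$, which is built on top of $F^4\NP{\qSPol}$, cannot be used at all. None of this is needed, because the normalized rules already have $F^3$-normal targets. In Appendix~\ref{ASSS:CriticalBranchingsOfS3}, each square--square family closes with $\gamma$-steps only: two on one side (indices $s,t$) and three on the other (indices $n,m,l$), reindexed by $t=n+m$, $s=m+l$. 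Second, the two sides are therefore not both ``doubly switched''. The coefficient comparison relates $\Psi_t\Psi_s$ to three-factor products $\Psi_l\Psi_m\Psi_n$, not to a single $q$-Vandermonde convolution as you anticipate. The paper checks these identities symbolically with SageMath rather than by a hand derivation.
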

\begin{proof}
We list below the three families of generic critical branchings:
\[
\left(\twocell{(emptys*0 gs)*1(merge3*0id)*1(id*0c*0split) *1(id*0id*0merge)*1(id*0split*0id)*1 (id*0merge*0id)*1(id*0 d*0id*0id)*1(split*0id*0id)*1(ax*0bx*0fx)} \,
\gamma
\begin{bmatrix}
b+d\\f\\
b+d-c+f-g\\ b+d-c
\end{bmatrix},
\gamma
\begin{bmatrix}
a\\
b\\
c\\
d
\end{bmatrix}
\right)\,\raisebox{-8ex}{,}
\quad
\left(\twocell{(merge*0id*0id)*1(id*0oneg*0id*0id)*1(id*0split*0id) *1(id*0merge*0id)*1(id*0id*0c*0id)*1(id*0onef*0split)*1(id*0id*0merge)*1 (id*0id*0d*0id)*1(split3*0id)*1(ax*0bx)}\,,
\gamma
\begin{bmatrix}
f+d \\b\\
c\\d
\end{bmatrix},
\gamma
\begin{bmatrix}
a-d\\c\\
g\\f
\end{bmatrix}\right)\,\raisebox{-8.5ex}{,}
\quad
\left(\twocell{(merge3*0id)*1(id*0c*0 f)*1(id*0split*0id) *1(id*0merge*0id)*1(id*0d*0id*0id)*1(split*0id*0id)*1(ax*0bx*0emptyx)}\,,
\gamma
\begin{bmatrix}
a\\b\\
c\\d
\end{bmatrix},\gamma'\right)\,\raisebox{-6.5ex}{,}
\]
where $\gamma':\twocell{f}\fl\sum_i \lambda_i\twocell{fi}$ is a $F^4 \NP{\qSPol}$-rewriting step.
Here we prove confluence for the last family, while that of the other two is proved in Appendix~\ref{ASSS:CriticalBranchingsOfS3}.
\[
\xymatrix @R=1em {
\twocell{(merge3*0id)*1(id*0c*0 f)*1(id*0split*0id) *1(id*0merge*0id)*1(id*0d*0id*0id)*1(split*0id*0id)*1(ax*0bx*0emptyx)}
  \ar[r]^{\hspace{-0.4cm}}
  \ar[d]_{\hspace{0.5cm}}
&
\sum_i \lambda_i\,
\twocell{(merge3*0id)*1(id*0c*0 fi)*1(id*0split*0id) *1(id*0merge*0id)*1(id*0d*0id*0id)*1(split*0id*0id)*1(ax*0bx*0emptyx)}
  \ar[d]_{\hspace{0.5cm}}
\\
\sum_s\Psivec{s}{a}{b}{c}{d}
\twocell{(merge*0id)*1(id*0f)*1(id *0 merge*0id)*1 (id *0 djs *0 id*0id) *1 (split*0 id*0id)*1 (merge*0 id*0id) *1 (id *0 cjs *0 id*0id) *1 (id *0 split*0id) *1(ax *0 bx*0emptyx)}
  \ar[r]^{\hspace{-0.4cm}}
&
\sum_{s,i} \lambda_i\Psivec{s}{a}{b}{c}{d}
\twocell{(merge*0id)*1(id*0fi)*1(id *0 merge*0id)*1 (id *0 djs *0 id*0id) *1 (split*0 id*0id)*1 (merge*0 id*0id) *1 (id *0 cjs *0 id*0id) *1 (id *0 split*0id) *1(ax *0 bx*0emptyx)}\,\raisebox{-6.5ex}{.}
}
\]
\end{proof}

By Theorem~\ref{T:MainTheorem}, the extension $F^4\NP{\qSPol}$ is confluent.
The shapes of the minimal $2$-cells in $\Nf(\strat{4}\NP{\qSPol})$ are described in pictorial language in \SSS{SSS:NewPictureAlgorithm}.
We now compute the final extension $\strat{5}\NP{\qSPol}$.
Without loss of generality, we restrict to active contexts $C$ such that $C[s\eta]$ is not in $\Nf(\strat{4}\NP{\qSPol})$ in order to compute the corresponding $F^4\NP{\qSPol}$-normal reductions.
There are three cases.

\subsubsection{Active context of type I}
For $\eta_{a,b,c}\in \strat{5}\qSPol$, we consider the $F^4\NP{\qSPol}$-active context $D_1=\twocell{(id*0merge)*1(square*0d)*1(id*0split)}$. Its application to $\eta_{a,b,c}$ gives
\[
D_1[\eta_{a,b,c}]:
\twocell{(id *0 merge)*1(id *0 merge*0 id)*1 (id *0 a *0 id*0 id)  *1 (split*0 id*0 id)*1 (merge*0 id*0 d) *1 (id *0 c *0 id*0 id)  *1 (id *0 split*0 id)*1 (id *0 split) *1(ax *0 bjdx)}
\fl
\twocell{(id *0 merge)*1 (c *0 id *0 id)  *1 (split*0 id)*1 (merge*0 id) *1 (id *0 b *0 id)  *1 (id *0 split) *1(ax *0 bjdx)}
-\sum_{s=1}^{\min (c, a)}{c-b\brack s}_q
\twocell{(id *0 merge)*1(id *0 merge*0 id)*1 (id *0 ajs *0 id*0 id)  *1 (split*0 id*0 id)*1 (merge*0 id*0 d) *1 (id *0 cjs *0 id*0 id)  *1 (id *0 split*0 id)*1 (id *0 split) *1(ax *0 bjdx)}\,\raisebox{-6.5ex}{.}
\]
Under the order $\preccurlyeq_{\mathrm{Sch}}$, its $F^4\NP{\qSPol}$-normal reduction is
\[
\down{D_1[\eta_{a,b,c}]}{4}^\preccurlyeq:
\twocell{(id *0 merge)*1 (c *0 id *0 id)  *1 (split*0 id)*1 (merge*0 id) *1 (id *0 b *0 id)  *1 (id *0 split) *1(ax *0 bjdx)}
\fl
\sum_{s=1}^{\min (c, a)}{c-b\brack s}_q
{b-c+s+d\brack d}_q
\twocell{(id *0 merge)*1 (c *0 ajs *0 id)  *1 (split*0 id)*1 (merge*0 id) *1 (id *0 cjs *0 id)  *1 (id *0 split) *1(ax *0 bjdx)}
+
{
b-c+d \brack
d
}_q 
\twocell{(id *0 merge)*1 (c *0 a *0 id)  *1 (split*0 id)*1 (merge*0 id) *1 (id *0 c *0 id)  *1 (id *0 split) *1(ax *0 bjdx)}\,,
\]
since $\twocell{merge*1(ax*0 bx)} \succ_{\q} \twocell{merge*1(ax*0 cjsx)}$ and $\twocell{merge*1(ax*0 bx)} \succ_{\q} \twocell{merge*1(ax*0 cx)}$ for $b > c$.
The target of $\down{D_1[\eta_{a,b,c}]}{4}^\preccurlyeq$ is not in $\Nf(\strat{5}\NP{\qSPol})$, since we can further apply $\eta_{a,b+d,c}$ to the second term on the right-hand side above, which yields a new rule in $\strat{5}\NP{\qSPol}$, denoted by $\zeta_{a,b,c,d}$:
\[
\twocell{(id *0 merge)*1 (c *0 id *0 id)  *1 (split*0 id)*1 (merge*0 id) *1 (id *0 b *0 id)  *1 (id *0 split) *1(ax *0 bjdx)}
\fl
\left[\begin{array}{c}
b-c+d \\
d
\end{array}\right]_q 
\twocell{(cs*0 id)*1split*1 merge*1(ax*0 bjdx)}
+\sum_{s=1}^{\min (c, a)}
\left(
\left[\begin{array}{c}
c-b \\
s
\end{array}\right]_q 
\left[\begin{array}{c}
b-c+s+d \\
d
\end{array}\right]_q 
-
\left[\begin{array}{c}
b-c+d \\
d
\end{array}\right]_q 
\left[\begin{array}{c}
c-b-d \\
s
\end{array}\right]_q 
\right)
\twocell{(id *0 merge)*1 (c *0 ajs *0 id)  *1 (split*0 id)*1 (merge*0 id) *1 (id *0 cjs *0 id)  *1 (id *0 split) *1(ax *0 bjdx)}\,.
\]
In fact, when $b=c$, we have $\zeta_{a,b,c,d}=\eta_{a,b+d,c}$, which shows that the rule $\eta$ can be replaced by $\zeta$ in $\strat{5}\NP{\qSPol}$.
For simplicity, for $b \geq c$, we write
\[
\zeta_{a,b,c,d}:
\twocell{(id *0 merge)*1 (c *0 id *0 id)  *1 (split*0 id)*1 (merge*0 id) *1 (id *0 b *0 id)  *1 (id *0 split) *1(ax *0 bjdx)}
\fl
\sum_s
\Phivec{s}{c}{b}{d}
\twocell{(id *0 merge)*1 (c *0 ajs *0 id)  *1 (split*0 id)*1 (merge*0 id) *1 (id *0 cjs *0 id)  *1 (id *0 split) *1(ax *0 bjdx)}\,,
\]
where
\[
\sum_s 
\coloneq
\sum_{\substack{s=1 \\ s=c-b-d}}^{\min(c,a)}
\quad\text{and}\quad
\Phivec{s}{c}{b}{d}
\coloneq
\left(
\left[\begin{array}{c} c-b \\ s \end{array}\right]_q
\left[\begin{array}{c} b-c+s+d \\ d \end{array}\right]_q
-
\left[\begin{array}{c} b-c+d \\ d \end{array}\right]_q
\left[\begin{array}{c} c-b-d \\ s \end{array}\right]_q
\right).
\]

\subsubsection{Active context of type II}
\label{SSS:AdmissibleContextD2}
We consider the $F^4\NP{\qSPol}$-active context $D_2$ of the form
$	
\raisebox{-36.25pt}{\begin{tikzpicture} \begin{scope} [ x = 10pt, y = 10pt, join = round, cap = round, thick, black, solid, -] \draw (0.50,8.00)--(0.50,7.75) (2.00,8.00)--(2.00,7.75) (3.00,8.00)--(3.00,7.75) ; \draw  ; \draw [fill = green] (0.50,7.75)--(1.00,7.25)--(0.00,7.25)--cycle ; \draw [fill = lightgray] (2.00,7.75)--(2.00,7.25)--cycle ; \draw [fill = lightgray] (3.00,7.75)--(3.00,7.25)--cycle ; \draw (0.00,7.25)--(0.00,6.75) (1.00,7.25)--(1.00,7.00) (2.00,7.25)--(2.00,7.00) (3.00,7.25)--(3.00,6.75) ; \draw [fill = lightgray] (0.00,6.75)--(0.00,6.25)--cycle ; \node at (1.00,6.50) {$\scriptstyle c$} ; \node at (2.00,6.50) {$\scriptstyle m$} ; \draw [fill = lightgray] (3.00,6.75)--(3.00,6.25)--cycle ; \draw (0.00,6.25)--(0.00,5.75) (1.00,6.00)--(1.00,5.75) (2.00,6.00)--(2.00,5.75) (3.00,6.25)--(3.00,5.75) ; \draw [fill = lightgray] (0.00,5.75)--(0.00,5.25)--cycle ; \draw [fill = orange] (1.00,5.75)--(2.00,5.75)--(1.50,5.25)--cycle ; \draw [fill = lightgray] (3.00,5.75)--(3.00,5.25)--cycle ; \draw (0.00,5.25)--(0.00,4.75) (1.50,5.25)--(1.50,5.00) (3.00,5.25)--(3.00,5.00) ; \draw [fill = lightgray] (0.00,4.75)--(0.00,4.25)--cycle ; \draw [rounded corners = 1pt, fill = white] (1.25,4.00) rectangle (3.25,5.00) ; \node at (2.25,4.50) {$\scriptstyle \square$} ; \draw (0.00,4.25)--(0.00,3.75) (1.50,4.00)--(1.50,3.75) (3.00,4.00)--(3.00,3.75) ; \draw [fill = lightgray] (0.00,3.75)--(0.00,3.25)--cycle ; \draw [fill = green] (1.50,3.75)--(2.00,3.25)--(1.00,3.25)--cycle ; \draw [fill = lightgray] (3.00,3.75)--(3.00,3.25)--cycle ; \draw (0.00,3.25)--(0.00,2.75) (1.00,3.25)--(1.00,3.00) (2.00,3.25)--(2.00,3.00) (3.00,3.25)--(3.00,2.75) ; \draw [fill = lightgray] (0.00,2.75)--(0.00,2.25)--cycle ; \node at (1.00,2.50) {$\scriptstyle d$} ; \node at (2.00,2.50) {$\scriptstyle n$} ; \draw [fill = lightgray] (3.00,2.75)--(3.00,2.25)--cycle ; \draw (0.00,2.25)--(0.00,1.75) (1.00,2.00)--(1.00,1.75) (2.00,2.00)--(2.00,1.75) (3.00,2.25)--(3.00,1.75) ; \draw [fill = orange] (0.00,1.75)--(1.00,1.75)--(0.50,1.25)--cycle ; \draw [fill = lightgray] (2.00,1.75)--(2.00,1.25)--cycle ; \draw [fill = lightgray] (3.00,1.75)--(3.00,1.25)--cycle ; \draw (0.50,1.25)--(0.50,1.00) (2.00,1.25)--(2.00,1.00) (3.00,1.25)--(3.00,1.00) ; \node at (0.50,0.50) {$\scriptstyle a$} ; \node at (2.00,0.50) {$\scriptstyle $} ; \node at (3.00,0.50) {$\scriptstyle b$} ; \end{scope} \end{tikzpicture}}\,.
$
We then compute 
$
\down{sD_2[\zeta_{d+n,y,c+m,b-y}]}{4}^\preccurlyeq
$
and
$
\down{tD_2[\zeta_{d+n,y,c+m,b-y}]}{4}^\preccurlyeq,
$
which are respectively given by
\[
\sum_s
\Psivec{s}{a}{n+y}{c}{d}\twocell{(id*0id*0merge)*1(id*0m*0x*0id)*1(id*0split*0id) *1(id*0merge*0id)*1(id*0djs*0id*0id)*1(split*0id*0id) *1(merge*0id*0id)*1(a*0cjs*0id*0id)*1(id*0split*0id) *1(id*0merge*0id)*1(id*0n*0y*0id)*1(id*0id*0split)*1(id*0id*0bx)}
\qquad\text{and}\qquad
\sum_{s,t}
\Phivec{t}{c+m}{y}{b-y}\Psivec{s}{a}{n+c+m-t}{c}{d} 
\twocell{(id*0m*0merge)*1(id*0id*0djnjt*0id)*1(id*0split*0id) *1(id*0merge*0id)*1(id*0djs*0id*0id)*1(split*0id*0id) *1(merge*0id*0id)*1(a*0cjs*0id*0id)*1(id*0split*0id) *1(id*0merge*0id)*1(id*0id*0cjmjt*0id)*1(id*0id*0split)*1(id*0n*0bx)}\,\raisebox{-8.5ex}{.}
\]
where $c+m\leq y$ and $x=y+d+n-c-m$.
Under the order $\preccurlyeq_{\mathrm{Sch}}$, the corresponding $F^4\NP{\qSPol}$-normal reduction is given by
\[
\down{D_2[\zeta]}{4}:
\twocell{(id*0id*0merge)*1(id*0m*0x*0id)*1(id*0split*0id) *1(id*0merge*0id)*1(id*0d*0id*0id)*1(split*0id*0id) *1(merge*0id*0id)*1(a*0c*0id*0id)*1(id*0split*0id) *1(id*0merge*0id)*1(id*0n*0y*0id)*1(id*0id*0split)*1(id*0id*0bx)}
\fl
-\sum_{s=1}
\Psivec{s}{a}{n+y}{c}{d}\twocell{(id*0id*0merge)*1(id*0m*0x*0id)*1(id*0split*0id) *1(id*0merge*0id)*1(id*0djs*0id*0id)*1(split*0id*0id) *1(merge*0id*0id)*1(a*0cjs*0id*0id)*1(id*0split*0id) *1(id*0merge*0id)*1(id*0n*0y*0id)*1(id*0id*0split)*1(id*0id*0bx)}
+
\sum_{s,t}
\Phivec{t}{c+m}{y}{b-y}\Psivec{s}{a}{n+c+m-t}{c}{d} 
\twocell{(id*0m*0merge)*1(id*0id*0djnjt*0id)*1(id*0split*0id) *1(id*0merge*0id)*1(id*0djs*0id*0id)*1(split*0id*0id) *1(merge*0id*0id)*1(a*0cjs*0id*0id)*1(id*0split*0id) *1(id*0merge*0id)*1(id*0id*0cjmjt*0id)*1(id*0id*0split)*1(id*0n*0bx)}\,\raisebox{-8.5ex}{.}
\]
The target of $\down{D_2[\zeta]}{4}^\preccurlyeq$ is not in $\Nf(\strat{5}\NP{\qSPol})$, since we can further apply $\down{D_2[\zeta]}{4}^\preccurlyeq$ to the first term on the right-hand side above, which yields a new rule in $\strat{5}\NP{\qSPol}$, denoted by $\theta_1$: 
\[
\twocell{(id*0id*0merge)*1(id*0m*0x*0id)*1(id*0split*0id) *1(id*0merge*0id)*1(id*0d*0id*0id)*1(split*0id*0id) *1(merge*0id*0id)*1(a*0c*0id*0id)*1(id*0split*0id) *1(id*0merge*0id)*1(id*0n*0y*0id)*1(id*0id*0split)*1(id*0id*0bx)}
\fl
\sum_t
\Phivec{t}{c+m}{y}{b-y}
\twocell{(id*0m*0merge)*1(id*0id*0djnjt*0id)*1(id*0split*0id) *1(id*0merge*0id)*1(id*0d*0id*0id)*1(split*0id*0id) *1(merge*0id*0id)*1(a*0c*0id*0id)*1(id*0split*0id) *1(id*0merge*0id)*1(id*0id*0cjmjt*0id)*1(id*0id*0split)*1(id*0n*0bx)}\,\raisebox{-8.5ex}{,}
\]
where $a>d$ and $c+m\leq y$.

In fact, by repeating the above construction, we can apply the context $D_2$ to the rule $\theta_{k-1}$ inductively, thereby obtaining further rules in $\strat{5}\NP{\qSPol}$, which we denote by $\theta_{k}$:
\[
\begin{tikzpicture}[baseline=15ex] \begin{scope} [ x = 10pt, y = 10pt, join = round, cap = round, thick, black, solid, -] \draw (0.00,14.50)--(0.00,14.25) (1.00,14.50)--(1.00,14.25) (2.00,14.50)--(2.00,14.25) (3.50,14.50)--(3.50,14.25) ; \draw (0.00,0.00)--(0.00,-0.50) (1.00,0.00)--(1.00,-0.50) (2.00,0.00)--(2.00,-0.50) ; \draw [fill = lightgray] (0.00,14.25)--(0.00,13.75)--cycle ; \draw [fill = lightgray] (1.00,14.25)--(1.00,13.75)--cycle ; \draw [fill = lightgray] (2.00,14.25)--(2.00,13.75)--cycle ; \draw [fill = green] (3.50,14.25)--(4.00,13.75)--(3.00,13.75)--cycle ; \draw (0.00,13.75)--(0.00,13.25) (1.00,13.75)--(1.00,13.25) (2.00,13.75)--(2.00,13.50) (3.00,13.75)--(3.00,13.50) (4.00,13.75)--(4.00,13.25) ; \draw [fill = lightgray] (0.00,13.25)--(0.00,12.75)--cycle ; \draw [fill = lightgray] (1.00,13.25)--(1.00,12.75)--cycle ; \node at (2.00,13.00) {$\scriptstyle m$} ; \node at (3.00,13.00) {$\scriptstyle x$} ; \draw [fill = lightgray] (4.00,13.25)--(4.00,12.75)--cycle ; \draw (0.00,12.75)--(0.00,12.25) (1.00,12.75)--(1.00,12.25) (2.00,12.50)--(2.00,12.25) (3.00,12.50)--(3.00,12.25) (4.00,12.75)--(4.00,12.25) ; \draw [fill = lightgray] (0.00,12.25)--(0.00,11.75)--cycle ; \draw [fill = lightgray] (1.00,12.25)--(1.00,11.75)--cycle ; \draw [fill = orange] (2.00,12.25)--(3.00,12.25)--(2.50,11.75)--cycle ; \draw [fill = lightgray] (4.00,12.25)--(4.00,11.75)--cycle ; \draw (0.00,11.75)--(0.00,11.50) (1.00,11.75)--(1.00,11.50) (2.50,11.75)--(2.50,11.50) (4.00,11.75)--(4.00,11.50) ; \draw [fill = lightgray] (0.00,11.50)--(0.00,11.00)--cycle ; \draw [fill = lightgray] (1.00,11.50)--(1.00,11.00)--cycle ; \draw [fill = green] (2.50,11.50)--(3.00,11.00)--(2.00,11.00)--cycle ; \draw [fill = lightgray] (4.00,11.50)--(4.00,11.00)--cycle ; \draw (0.00,11.00)--(0.00,10.50) (1.00,11.00)--(1.00,10.50) (2.00,11.00)--(2.00,10.75) (3.00,11.00)--(3.00,10.50) (4.00,11.00)--(4.00,10.50) ; \draw [fill = lightgray] (0.00,10.50)--(0.00,10.00)--cycle ; \draw [fill = lightgray] (1.00,10.50)--(1.00,10.00)--cycle ; \node at (2.00,10.25) {$\scriptstyle d$} ; \draw [fill = lightgray] (3.00,10.50)--(3.00,10.00)--cycle ; \draw [fill = lightgray] (4.00,10.50)--(4.00,10.00)--cycle ; \draw (0.00,10.00)--(0.00,9.25) (1.00,10.00)--(1.00,9.50) (2.00,9.75)--(2.00,9.50) (3.00,10.00)--(3.00,9.25) (4.00,10.00)--(4.00,9.25) ; \draw [fill = lightgray] (0.00,9.25)--(0.00,8.75)--cycle ; \draw [rounded corners = 1pt, fill = lightgray] (0.75,8.50) rectangle (2.25,9.50) ; \node at (1.50,9.00) {$\scriptstyle f$} ; \draw [fill = lightgray] (3.00,9.25)--(3.00,8.75)--cycle ; \draw [fill = lightgray] (4.00,9.25)--(4.00,8.75)--cycle ; \draw (0.00,8.75)--(0.00,8.25) (1.00,8.50)--(1.00,8.25) (2.00,8.50)--(2.00,8.25) (3.00,8.75)--(3.00,8.25) (4.00,8.75)--(4.00,8.25) ; \draw [fill = orange] (0.00,8.25)--(1.00,8.25)--(0.50,7.75)--cycle ; \draw [fill = lightgray] (2.00,8.25)--(2.00,7.75)--cycle ; \draw [fill = lightgray] (3.00,8.25)--(3.00,7.75)--cycle ; \draw [fill = lightgray] (4.00,8.25)--(4.00,7.75)--cycle ; \draw (0.50,7.75)--(0.50,7.50) (2.00,7.75)--(2.00,7.50) (3.00,7.75)--(3.00,7.50) (4.00,7.75)--(4.00,7.50) ; \draw [fill = green] (0.50,7.50)--(1.00,7.00)--(0.00,7.00)--cycle ; \draw [fill = lightgray] (2.00,7.50)--(2.00,7.00)--cycle ; \draw [fill = lightgray] (3.00,7.50)--(3.00,7.00)--cycle ; \draw [fill = lightgray] (4.00,7.50)--(4.00,7.00)--cycle ; \draw (0.00,7.00)--(0.00,6.50) (1.00,7.00)--(1.00,6.75) (2.00,7.00)--(2.00,6.75) (3.00,7.00)--(3.00,6.50) (4.00,7.00)--(4.00,6.50) ; \draw [fill = lightgray] (0.00,6.50)--(0.00,6.00)--cycle ; \draw [rounded corners = 1pt, fill = lightgray] (0.75,5.75) rectangle (2.25,6.75) ; \node at (1.50,6.25) {$\scriptstyle g$} ; \draw [fill = lightgray] (3.00,6.50)--(3.00,6.00)--cycle ; \draw [fill = lightgray] (4.00,6.50)--(4.00,6.00)--cycle ; \draw (0.00,6.00)--(0.00,5.25) (1.00,5.75)--(1.00,5.25) (2.00,5.75)--(2.00,5.50) (3.00,6.00)--(3.00,5.25) (4.00,6.00)--(4.00,5.25) ; \draw [fill = lightgray] (0.00,5.25)--(0.00,4.75)--cycle ; \draw [fill = lightgray] (1.00,5.25)--(1.00,4.75)--cycle ; \node at (2.00,5.00) {$\scriptstyle c$} ; \draw [fill = lightgray] (3.00,5.25)--(3.00,4.75)--cycle ; \draw [fill = lightgray] (4.00,5.25)--(4.00,4.75)--cycle ; \draw (0.00,4.75)--(0.00,4.25) (1.00,4.75)--(1.00,4.25) (2.00,4.50)--(2.00,4.25) (3.00,4.75)--(3.00,4.25) (4.00,4.75)--(4.00,4.25) ; \draw [fill = lightgray] (0.00,4.25)--(0.00,3.75)--cycle ; \draw [fill = lightgray] (1.00,4.25)--(1.00,3.75)--cycle ; \draw [fill = orange] (2.00,4.25)--(3.00,4.25)--(2.50,3.75)--cycle ; \draw [fill = lightgray] (4.00,4.25)--(4.00,3.75)--cycle ; \draw (0.00,3.75)--(0.00,3.50) (1.00,3.75)--(1.00,3.50) (2.50,3.75)--(2.50,3.50) (4.00,3.75)--(4.00,3.50) ; \draw [fill = lightgray] (0.00,3.50)--(0.00,3.00)--cycle ; \draw [fill = lightgray] (1.00,3.50)--(1.00,3.00)--cycle ; \draw [fill = green] (2.50,3.50)--(3.00,3.00)--(2.00,3.00)--cycle ; \draw [fill = lightgray] (4.00,3.50)--(4.00,3.00)--cycle ; \draw (0.00,3.00)--(0.00,2.50) (1.00,3.00)--(1.00,2.50) (2.00,3.00)--(2.00,2.75) (3.00,3.00)--(3.00,2.75) (4.00,3.00)--(4.00,2.50) ; \draw [fill = lightgray] (0.00,2.50)--(0.00,2.00)--cycle ; \draw [fill = lightgray] (1.00,2.50)--(1.00,2.00)--cycle ; \node at (2.00,2.25) {$\scriptstyle n$} ; \node at (3.00,2.25) {$\scriptstyle y$} ; \draw [fill = lightgray] (4.00,2.50)--(4.00,2.00)--cycle ; \draw (0.00,2.00)--(0.00,1.50) (1.00,2.00)--(1.00,1.50) (2.00,1.75)--(2.00,1.50) (3.00,1.75)--(3.00,1.50) (4.00,2.00)--(4.00,1.50) ; \draw [fill = lightgray] (0.00,1.50)--(0.00,1.00)--cycle ; \draw [fill = lightgray] (1.00,1.50)--(1.00,1.00)--cycle ; \draw [fill = lightgray] (2.00,1.50)--(2.00,1.00)--cycle ; \draw [fill = orange] (3.00,1.50)--(4.00,1.50)--(3.50,1.00)--cycle ; \draw (0.00,1.00)--(0.00,0.50) (1.00,1.00)--(1.00,0.50) (2.00,1.00)--(2.00,0.50) (3.50,1.00)--(3.50,0.75) ; \draw [fill = lightgray] (0.00,0.50)--(0.00,0.00)--cycle ; \draw [fill = lightgray] (1.00,0.50)--(1.00,0.00)--cycle ; \draw [fill = lightgray] (2.00,0.50)--(2.00,0.00)--cycle ; \node at (3.50,0.25) {$\scriptstyle b$} ; \end{scope}
\draw[blue, very thick,dashed]
(-0.18,1.3) rectangle (1.2,4.1);
 \end{tikzpicture}
\,\fll\,
\begin{tikzpicture}[baseline=16.5ex] \begin{scope} [ x = 10pt, y = 10pt, join = round, cap = round, thick, black, solid, -] \draw (0.00,15.50)--(0.00,15.00) (1.00,15.50)--(1.00,15.00) (2.00,15.50)--(2.00,15.25) (3.50,15.50)--(3.50,15.00) ; \draw (0.00,0.00)--(0.00,-0.50) (1.00,0.00)--(1.00,-0.50) (2.00,0.00)--(2.00,-0.50) ; \draw [fill = lightgray] (0.00,15.00)--(0.00,14.50)--cycle ; \draw [fill = lightgray] (1.00,15.00)--(1.00,14.50)--cycle ; \node at (2.00,14.75) {$\scriptstyle m$} ; \draw [fill = green] (3.50,15.00)--(4.00,14.50)--(3.00,14.50)--cycle ; \draw (0.00,14.50)--(0.00,13.75) (1.00,14.50)--(1.00,13.75) (2.00,14.25)--(2.00,13.75) (3.00,14.50)--(3.00,14.00) (4.00,14.50)--(4.00,13.75) ; \draw [fill = lightgray] (0.00,13.75)--(0.00,13.25)--cycle ; \draw [fill = lightgray] (1.00,13.75)--(1.00,13.25)--cycle ; \draw [fill = lightgray] (2.00,13.75)--(2.00,13.25)--cycle ; \node at (3.00,13.50) {$\scriptstyle \raisebox{0pt}{\fontsize{5pt}{5pt}\selectfont$d\!+\!n\!\!-\!\!t$}$} ; \draw [fill = lightgray] (4.00,13.75)--(4.00,13.25)--cycle ; \draw (0.00,13.25)--(0.00,12.75) (1.00,13.25)--(1.00,12.75) (2.00,13.25)--(2.00,12.75) (3.00,13.00)--(3.00,12.75) (4.00,13.25)--(4.00,12.75) ; \draw [fill = lightgray] (0.00,12.75)--(0.00,12.25)--cycle ; \draw [fill = lightgray] (1.00,12.75)--(1.00,12.25)--cycle ; \draw [fill = orange] (2.00,12.75)--(3.00,12.75)--(2.50,12.25)--cycle ; \draw [fill = lightgray] (4.00,12.75)--(4.00,12.25)--cycle ; \draw (0.00,12.25)--(0.00,12.00) (1.00,12.25)--(1.00,12.00) (2.50,12.25)--(2.50,12.00) (4.00,12.25)--(4.00,12.00) ; \draw [fill = lightgray] (0.00,12.00)--(0.00,11.50)--cycle ; \draw [fill = lightgray] (1.00,12.00)--(1.00,11.50)--cycle ; \draw [fill = green] (2.50,12.00)--(3.00,11.50)--(2.00,11.50)--cycle ; \draw [fill = lightgray] (4.00,12.00)--(4.00,11.50)--cycle ; \draw (0.00,11.50)--(0.00,11.00) (1.00,11.50)--(1.00,11.00) (2.00,11.50)--(2.00,11.25) (3.00,11.50)--(3.00,11.00) (4.00,11.50)--(4.00,11.00) ; \draw [fill = lightgray] (0.00,11.00)--(0.00,10.50)--cycle ; \draw [fill = lightgray] (1.00,11.00)--(1.00,10.50)--cycle ; \node at (2.00,10.75) {$\scriptstyle d$} ; \draw [fill = lightgray] (3.00,11.00)--(3.00,10.50)--cycle ; \draw [fill = lightgray] (4.00,11.00)--(4.00,10.50)--cycle ; \draw (0.00,10.50)--(0.00,9.75) (1.00,10.50)--(1.00,10.00) (2.00,10.25)--(2.00,10.00) (3.00,10.50)--(3.00,9.75) (4.00,10.50)--(4.00,9.75) ; \draw [fill = lightgray] (0.00,9.75)--(0.00,9.25)--cycle ; \draw [rounded corners = 1pt, fill = lightgray] (0.75,9.00) rectangle (2.25,10.00) ; \node at (1.50,9.50) {$\scriptstyle f$} ; \draw [fill = lightgray] (3.00,9.75)--(3.00,9.25)--cycle ; \draw [fill = lightgray] (4.00,9.75)--(4.00,9.25)--cycle ; \draw (0.00,9.25)--(0.00,8.75) (1.00,9.00)--(1.00,8.75) (2.00,9.00)--(2.00,8.75) (3.00,9.25)--(3.00,8.75) (4.00,9.25)--(4.00,8.75) ; \draw [fill = orange] (0.00,8.75)--(1.00,8.75)--(0.50,8.25)--cycle ; \draw [fill = lightgray] (2.00,8.75)--(2.00,8.25)--cycle ; \draw [fill = lightgray] (3.00,8.75)--(3.00,8.25)--cycle ; \draw [fill = lightgray] (4.00,8.75)--(4.00,8.25)--cycle ; \draw (0.50,8.25)--(0.50,8.00) (2.00,8.25)--(2.00,8.00) (3.00,8.25)--(3.00,8.00) (4.00,8.25)--(4.00,8.00) ; \draw [fill = green] (0.50,8.00)--(1.00,7.50)--(0.00,7.50)--cycle ; \draw [fill = lightgray] (2.00,8.00)--(2.00,7.50)--cycle ; \draw [fill = lightgray] (3.00,8.00)--(3.00,7.50)--cycle ; \draw [fill = lightgray] (4.00,8.00)--(4.00,7.50)--cycle ; \draw (0.00,7.50)--(0.00,7.00) (1.00,7.50)--(1.00,7.25) (2.00,7.50)--(2.00,7.25) (3.00,7.50)--(3.00,7.00) (4.00,7.50)--(4.00,7.00) ; \draw [fill = lightgray] (0.00,7.00)--(0.00,6.50)--cycle ; \draw [rounded corners = 1pt, fill = lightgray] (0.75,6.25) rectangle (2.25,7.25) ; \node at (1.50,6.75) {$\scriptstyle g$} ; \draw [fill = lightgray] (3.00,7.00)--(3.00,6.50)--cycle ; \draw [fill = lightgray] (4.00,7.00)--(4.00,6.50)--cycle ; \draw (0.00,6.50)--(0.00,5.75) (1.00,6.25)--(1.00,5.75) (2.00,6.25)--(2.00,6.00) (3.00,6.50)--(3.00,5.75) (4.00,6.50)--(4.00,5.75) ; \draw [fill = lightgray] (0.00,5.75)--(0.00,5.25)--cycle ; \draw [fill = lightgray] (1.00,5.75)--(1.00,5.25)--cycle ; \node at (2.00,5.50) {$\scriptstyle c$} ; \draw [fill = lightgray] (3.00,5.75)--(3.00,5.25)--cycle ; \draw [fill = lightgray] (4.00,5.75)--(4.00,5.25)--cycle ; \draw (0.00,5.25)--(0.00,4.75) (1.00,5.25)--(1.00,4.75) (2.00,5.00)--(2.00,4.75) (3.00,5.25)--(3.00,4.75) (4.00,5.25)--(4.00,4.75) ; \draw [fill = lightgray] (0.00,4.75)--(0.00,4.25)--cycle ; \draw [fill = lightgray] (1.00,4.75)--(1.00,4.25)--cycle ; \draw [fill = orange] (2.00,4.75)--(3.00,4.75)--(2.50,4.25)--cycle ; \draw [fill = lightgray] (4.00,4.75)--(4.00,4.25)--cycle ; \draw (0.00,4.25)--(0.00,4.00) (1.00,4.25)--(1.00,4.00) (2.50,4.25)--(2.50,4.00) (4.00,4.25)--(4.00,4.00) ; \draw [fill = lightgray] (0.00,4.00)--(0.00,3.50)--cycle ; \draw [fill = lightgray] (1.00,4.00)--(1.00,3.50)--cycle ; \draw [fill = green] (2.50,4.00)--(3.00,3.50)--(2.00,3.50)--cycle ; \draw [fill = lightgray] (4.00,4.00)--(4.00,3.50)--cycle ; \draw (0.00,3.50)--(0.00,3.00) (1.00,3.50)--(1.00,3.00) (2.00,3.50)--(2.00,3.00) (3.00,3.50)--(3.00,3.25) (4.00,3.50)--(4.00,3.00) ; \draw [fill = lightgray] (0.00,3.00)--(0.00,2.50)--cycle ; \draw [fill = lightgray] (1.00,3.00)--(1.00,2.50)--cycle ; \draw [fill = lightgray] (2.00,3.00)--(2.00,2.50)--cycle ; \node at (3.00,2.75) {$\scriptstyle \raisebox{0pt}{\fontsize{5pt}{5pt}\selectfont$c\!+\!m\!\!-\!\!t$}$} ; \draw [fill = lightgray] (4.00,3.00)--(4.00,2.50)--cycle ; \draw (0.00,2.50)--(0.00,1.75) (1.00,2.50)--(1.00,1.75) (2.00,2.50)--(2.00,2.00) (3.00,2.25)--(3.00,1.75) (4.00,2.50)--(4.00,1.75) ; \draw [fill = lightgray] (0.00,1.75)--(0.00,1.25)--cycle ; \draw [fill = lightgray] (1.00,1.75)--(1.00,1.25)--cycle ; \node at (2.00,1.50) {$\scriptstyle n$} ; \draw [fill = orange] (3.00,1.75)--(4.00,1.75)--(3.50,1.25)--cycle ; \draw (0.00,1.25)--(0.00,0.50) (1.00,1.25)--(1.00,0.50) (2.00,1.00)--(2.00,0.50) (3.50,1.25)--(3.50,0.75) ; \draw [fill = lightgray] (0.00,0.50)--(0.00,0.00)--cycle ; \draw [fill = lightgray] (1.00,0.50)--(1.00,0.00)--cycle ; \draw [fill = lightgray] (2.00,0.50)--(2.00,0.00)--cycle ; \node at (3.50,0.25) {$\scriptstyle b$} ; \end{scope}

\draw[blue, very thick, dashed]
(-0.18,1.4) rectangle (1.2,4.25);

\end{tikzpicture}\,\raisebox{-10.5ex}{,}
\]
where $c+m\leq y$,
\[
\twocell{f}\coloneq
\begin{tikzpicture}[baseline=5ex]
\begin{scope}[
  x = 10pt, y = 10pt,
  join = round, cap = round,
  thick, black, solid, -
]

\draw (0.50,1.50)--(0.50,1.25);

\draw [fill = green]
(0.50,1.25)--(1.00,0.75)--(0.00,0.75)--cycle;

\draw (0.00,0.75)--(0.00,0.50)
      (1.00,0.75)--(1.00,0.50);

\draw[fill=orange]
(0.00,1.75)--(1.00,1.75)--(0.50,1.25)--cycle;

\draw (0.00,1.75)--(0.00,2.00);
\draw (1.00,1.75)--(1.00,2.00);

\begin{scope}[shift={(3.40,3.20)}]

  \draw (0.50,1.50)--(0.50,1.25);

  \draw [fill = green]
  (0.50,1.25)--(1.00,0.75)--(0.00,0.75)--cycle;

  \draw (0.00,0.75)--(0.00,0.50)
        (1.00,0.75)--(1.00,0.50);

  \draw[fill=orange]
  (0.00,1.75)--(1.00,1.75)--(0.50,1.25)--cycle;

  \draw (0.00,1.75)--(0.00,2.00);
  \draw (1.00,1.75)--(1.00,2.00);

\end{scope}

\draw[dashed, line width=1.2pt]
(1.00,2.00) -- (3.40,3.70)
node[midway, above, rotate=35] {$\scriptstyle k-1$};

\end{scope}
\end{tikzpicture}
\quad\text{and}\qquad
\twocell{g}\coloneq
\begin{tikzpicture}[baseline=-1.9ex]
\begin{scope}[
  x = 10pt, y = 10pt,
  join = round, cap = round,
  thick, black, solid, -
]

\draw (0.50,1.50)--(0.50,1.25);

\draw [fill = green]
(0.50,1.25)--(1.00,0.75)--(0.00,0.75)--cycle;

\draw (0.00,0.75)--(0.00,0.50)
      (1.00,0.75)--(1.00,0.50);

\draw[fill=orange]
(0.00,1.75)--(1.00,1.75)--(0.50,1.25)--cycle;

\draw (0.00,1.75)--(0.00,2.00);
\draw (1.00,1.75)--(1.00,2.00);

\begin{scope}[shift={(3.40,-3.20)}]

  \draw (0.50,1.50)--(0.50,1.25);

  \draw [fill = green]
  (0.50,1.25)--(1.00,0.75)--(0.00,0.75)--cycle;

  \draw (0.00,0.75)--(0.00,0.50)
        (1.00,0.75)--(1.00,0.50);

  \draw[fill=orange]
  (0.00,1.75)--(1.00,1.75)--(0.50,1.25)--cycle;

  \draw (0.00,1.75)--(0.00,2.00);
  \draw (1.00,1.75)--(1.00,2.00);

\end{scope}

\draw[dashed, line width=1.2pt]
(1.00,0.55) -- (3.40,-1.10)
node[midway, sloped, above] {$\scriptstyle k-1$};

\end{scope}
\end{tikzpicture}\,\,,
\]
and the $2$-cells in the above boxes cannot be reduced by $\theta_i$ for $i<k$.

\subsubsection{Active context of type III}
We consider the $F^4\NP{\qSPol}$-active context $D_3$ of the form
\[
\raisebox{-28.75pt}{\begin{tikzpicture} \begin{scope} [ x = 10pt, y = 10pt, join = round, cap = round, thick, black, solid, -] \draw (0.00,6.00)--(0.00,5.75) (1.50,6.00)--(1.50,5.75) (3.00,6.00)--(3.00,5.75) ; \draw (0.00,0.00)--(0.00,-0.25) (1.50,0.00)--(1.50,-0.25) (3.00,0.00)--(3.00,-0.25) ; \draw [fill = lightgray] (0.00,5.75)--(0.00,5.25)--cycle ; \draw [fill = green] (1.50,5.75)--(2.00,5.25)--(1.00,5.25)--cycle ; \draw [fill = lightgray] (3.00,5.75)--(3.00,5.25)--cycle ; \draw (0.00,5.25)--(0.00,4.75) (1.00,5.25)--(1.00,4.75) (2.00,5.25)--(2.00,5.00) (3.00,5.25)--(3.00,4.75) ; \draw [fill = lightgray] (0.00,4.75)--(0.00,4.25)--cycle ; \draw [fill = lightgray] (1.00,4.75)--(1.00,4.25)--cycle ; \node at (2.00,4.50) {$\scriptstyle x$} ; \draw [fill = lightgray] (3.00,4.75)--(3.00,4.25)--cycle ; \draw (0.00,4.25)--(0.00,3.75) (1.00,4.25)--(1.00,3.75) (2.00,4.00)--(2.00,3.50) (3.00,4.25)--(3.00,3.50) ; \draw [rounded corners = 1pt, fill = white] (-0.25,2.75) rectangle (1.25,3.75) ; \node at (0.50,3.25) {$\scriptstyle \square$} ; \draw [fill = orange] (2.00,3.50)--(3.00,3.50)--(2.50,3.00)--cycle ; \draw (0.00,2.75)--(0.00,2.50) (1.00,2.75)--(1.00,2.50) (2.50,3.00)--(2.50,2.50) ; \draw [fill = lightgray] (0.00,2.50)--(0.00,2.00)--cycle ; \draw [fill = lightgray] (1.00,2.50)--(1.00,2.00)--cycle ; \draw [fill = green] (2.50,2.50)--(3.00,2.00)--(2.00,2.00)--cycle ; \draw (0.00,2.00)--(0.00,1.50) (1.00,2.00)--(1.00,1.50) (2.00,2.00)--(2.00,1.75) (3.00,2.00)--(3.00,1.75) ; \draw [fill = lightgray] (0.00,1.50)--(0.00,1.00)--cycle ; \draw [fill = lightgray] (1.00,1.50)--(1.00,1.00)--cycle ; \node at (2.00,1.25) {$\scriptstyle y$} ; \node at (3.00,1.25) {$\scriptstyle e$} ; \draw (0.00,1.00)--(0.00,0.50) (1.00,1.00)--(1.00,0.50) (2.00,0.75)--(2.00,0.50) (3.00,0.75)--(3.00,0.50) ; \draw [fill = lightgray] (0.00,0.50)--(0.00,0.00)--cycle ; \draw [fill = orange] (1.00,0.50)--(2.00,0.50)--(1.50,0.00)--cycle ; \draw [fill = lightgray] (3.00,0.50)--(3.00,0.00)--cycle ; \end{scope} \end{tikzpicture}}\,\raisebox{-5.5ex}{.}
\]
We then compute 
$
\down{sD_3[\zeta_{a,b,c,d}]}{4}^\preccurlyeq
$
and
$
\down{tD_3[\zeta_{a,b,c,d}]}{4}^\preccurlyeq,
$
which are respectively given by
\[
\sum_t
\Psivec{t}{d+y}{e}{x}{y}
\twocell{(id*0merge*0merge)*1(c*0id*0id*0xjt*0id)
*1(split*0split*0id)*1(merge*0merge*0id)
*1(a*0b*0id*0yjt*0id)*1(id*0split*0split)
*1(emptyx*0bjdjyx*0ex)}
\qquad\text{and}\qquad
\sum_{s,t}
\Psivec{t}{b+d+y-c+s}{e}{x}{y}
\Phivec{s}{c}{b}{d}
\twocell{(id*0merge*0merge)*1(c*0id*0id*0xjt*0id)
*1(split*0split*0id)*1(merge*0merge*0id)
*1(a*0xcjs*0id*0yjt*0id)*1(id*0split*0split)
*1(emptyx*0bjdjyx*0ex)}\,\raisebox{-4.5ex}{,}
\]
where $c\leq b$.
Under the order $\preccurlyeq_{\mathrm{Sch}}$, the corresponding $F^4\NP{\qSPol}$-normal reduction is given by
\[
\down{D_3[\zeta]}{4}^\preccurlyeq:
\twocell{(id*0merge*0merge)*1(c*0id*0id*0x*0id)
*1(split*0split*0id)*1(merge*0merge*0id)
*1(a*0b*0id*0y*0id)*1(id*0split*0split)
*1(emptyx*0bjdjyx*0ex)}
\;\fl\;
-\sum_{t=1}
\Psivec{t}{d+y}{e}{x}{y}
\twocell{(id*0merge*0merge)*1(c*0id*0id*0xjt*0id)
*1(split*0split*0id)*1(merge*0merge*0id)
*1(a*0b*0id*0yjt*0id)*1(id*0split*0split)
*1(emptyx*0bjdjyx*0ex)}
\,+\,
\sum_{s,t}
\Psivec{t}{b+d+y-c+s}{e}{x}{y}
\Phivec{s}{c}{b}{d}
\twocell{(id*0merge*0merge)*1(c*0id*0id*0xjt*0id)
*1(split*0split*0id)*1(merge*0merge*0id)
*1(a*0xcjs*0id*0yjt*0id)*1(id*0split*0split)
*1(emptyx*0bjdjyx*0ex)}\,\raisebox{-5ex}{.}
\]
The target of $\down{D_3[\zeta]}{4}^\preccurlyeq$ is not in $\Nf(\strat{5}\NP{\qSPol})$, since we can further apply $\down{D_3[\zeta]}{4}^\preccurlyeq$ to the first term on the right-hand side above, which yields a new rule in $\strat{5}\NP{\qSPol}$, denoted by $\vartheta_1$: 
\[
\twocell{(id*0merge*0merge)*1(c*0id*0id*0x*0id)
*1(split*0split*0id)*1(merge*0merge*0id)
*1(a*0b*0id*0y*0id)*1(id*0split*0split)
*1(emptyx*0bjdjyx*0ex)}
\;\fl\;
\sum_s
\Phivec{s}{c}{b}{d}
\twocell{(id*0merge*0merge)*1(c*0id*0id*0x*0id)
*1(split*0split*0id)*1(merge*0merge*0id)
*1(a*0xcjs*0id*0y*0id)*1(id*0split*0split)
*1(emptyx*0bjdjyx*0ex)}\,\raisebox{-5ex}{,}
\]
where $c\leq b$ and $d+y>x$.

By repeating the above construction, we can apply the context $D_3$ to the rule $\vartheta_{k-1}$ inductively, thereby obtaining further rules in $\strat{5}\NP{\qSPol}$, which we denote by $\vartheta_{k}$:
\[
\begin{tikzpicture}[baseline=5ex]
\begin{scope}[x=10pt,y=10pt,join=round,cap=round,thick,black,solid,-]
\draw (0.00,5.50)--(0.00,5.25)
      (1.50,5.50)--(1.50,5.25)
      (3.50,5.50)--(3.50,5.25)
      (8.50,5.50)--(8.50,5.25);
\draw (0.00,0.00)--(0.00,-0.25)
      (1.50,0.00)--(1.50,-0.25)
      (3.50,0.00)--(3.50,-0.25)
      (8.50,0.00)--(8.50,-0.25);
\draw[fill=lightgray] (0.00,5.25)--(0.00,4.75)--cycle;
\draw[fill=green]     (1.50,5.25)--(2.00,4.75)--(1.00,4.75)--cycle;
\draw[fill=green]     (3.50,5.25)--(4.00,4.75)--(3.00,4.75)--cycle;
\draw[fill=green]     (8.50,5.25)--(9.00,4.75)--(8.00,4.75)--cycle;
\draw (0.00,4.75)--(0.00,4.50)
      (1.00,4.75)--(1.00,4.25)
      (2.00,4.75)--(2.00,4.25)
      (3.00,4.75)--(3.00,4.25)
      (4.00,4.75)--(4.00,4.25)
      (8.00,4.75)--(8.00,4.25)
      (9.00,4.75)--(9.00,4.25);

\node at (0.00,4.00) {$\scriptstyle c$};
\draw[fill=lightgray] (1.00,4.25)--(1.00,3.75)--cycle;
\draw[fill=lightgray] (2.00,4.25)--(2.00,3.75)--cycle;
\draw[fill=lightgray] (3.00,4.25)--(3.00,3.75)--cycle;
\draw[fill=lightgray] (4.00,4.25)--(4.00,3.75)--cycle;
\draw[fill=lightgray] (8.00,4.25)--(8.00,3.75)--cycle;
\draw[fill=lightgray] (9.00,4.25)--(9.00,3.75)--cycle;
\draw (0.00,3.50)--(0.00,3.25)
      (1.00,3.75)--(1.00,3.25)
      (2.00,3.75)--(2.00,3.25)
      (3.00,3.75)--(3.00,3.25)
      (7.00,3.75)--(7.00,3.25)
      (8.00,3.75)--(8.00,3.25)
      (9.00,3.75)--(9.00,3.25);
\draw[fill=orange] (0.00,3.25)--(1.00,3.25)--(0.50,2.75)--cycle;
\draw[fill=orange] (2.00,3.25)--(3.00,3.25)--(2.50,2.75)--cycle;
\draw[fill=orange] (7.00,3.25)--(8.00,3.25)--(7.50,2.75)--cycle;
\draw[fill=lightgray] (9.00,3.25)--(9.00,2.75)--cycle;
\draw (0.50,2.75)--(0.50,2.50)
      (2.50,2.75)--(2.50,2.50)
      (7.50,2.75)--(7.50,2.50)
      (9.00,2.75)--(9.00,2.50);
\draw[fill=green] (0.50,2.50)--(1.00,2.00)--(0.00,2.00)--cycle;
\draw[fill=green] (2.50,2.50)--(3.00,2.00)--(2.00,2.00)--cycle;
\draw[fill=green] (7.50,2.50)--(8.00,2.00)--(7.00,2.00)--cycle;
\draw[fill=lightgray] (9.00,2.50)--(9.00,2.00)--cycle;
\draw (0.00,2.00)--(0.00,1.75)
      (1.00,2.00)--(1.00,1.75)
      (2.00,2.00)--(2.00,1.50)
      (3.00,2.00)--(3.00,1.50)
      (7.00,2.00)--(7.00,1.50)
      (8.00,2.00)--(8.00,1.50)
      (9.00,2.00)--(9.00,1.50);

\node at (0.00,1.25) {$\scriptstyle a$};
\node at (1.00,1.25) {$\scriptstyle b$};
\draw[fill=lightgray] (2.00,1.50)--(2.00,1.00)--cycle;
\draw[fill=lightgray] (3.00,1.50)--(3.00,1.00)--cycle;
\draw[fill=lightgray] (4.00,1.50)--(4.00,1.00)--cycle;
\draw[fill=lightgray] (8.00,1.50)--(8.00,1.00)--cycle;
\draw[fill=lightgray] (9.00,1.50)--(9.00,1.00)--cycle;
\draw (0.00,0.75)--(0.00,0.50)
      (1.00,0.75)--(1.00,0.50)
      (2.00,1.00)--(2.00,0.50)
      (3.00,1.00)--(3.00,0.50)
      (4.00,1.00)--(4.00,0.50)
      (8.00,1.00)--(8.00,0.50)
      (9.00,1.00)--(9.00,0.50);
\draw[fill=lightgray] (0.00,0.50)--(0.00,0.00)--cycle;
\draw[fill=orange]    (1.00,0.50)--(2.00,0.50)--(1.50,0.00)--cycle;
\draw[fill=orange]    (3.00,0.50)--(4.00,0.50)--(3.50,0.00)--cycle;
\draw[fill=orange]    (8.00,0.50)--(9.00,0.50)--(8.50,0.00)--cycle;

\node at (5.55,3.3) {$\cdots$};
\node at (5.55,1.7) {$\cdots$};
\draw[blue, very thick, dashed] (2.2,-0.5) rectangle (9.3,5.7);

\node at (5.55,2.6) {$\scriptstyle k$};
\end{scope}
\end{tikzpicture}
\fl
\,\sum_s\,
\Phivec{s}{c}{b}{d}
\begin{tikzpicture}[baseline=5ex]
\begin{scope}[x=10pt,y=10pt,join=round,cap=round,thick,black,solid,-]
\draw (0.00,5.50)--(0.00,5.25)
      (1.50,5.50)--(1.50,5.25)
      (3.50,5.50)--(3.50,5.25)
      (8.50,5.50)--(8.50,5.25);
\draw (0.00,0.00)--(0.00,-0.25)
      (1.50,0.00)--(1.50,-0.25)
      (3.50,0.00)--(3.50,-0.25)
      (8.50,0.00)--(8.50,-0.25);
\draw[fill=lightgray] (0.00,5.25)--(0.00,4.75)--cycle;
\draw[fill=green]     (1.50,5.25)--(2.00,4.75)--(1.00,4.75)--cycle;
\draw[fill=green]     (3.50,5.25)--(4.00,4.75)--(3.00,4.75)--cycle;
\draw[fill=green]     (8.50,5.25)--(9.00,4.75)--(8.00,4.75)--cycle;
\draw (0.00,4.75)--(0.00,4.50)
      (1.00,4.75)--(1.00,4.25)
      (2.00,4.75)--(2.00,4.25)
      (3.00,4.75)--(3.00,4.25)
      (4.00,4.75)--(4.00,4.25)
      (8.00,4.75)--(8.00,4.25)
      (9.00,4.75)--(9.00,4.25);

\node at (0.00,4.00) {$\scriptstyle c$};
\draw[fill=lightgray] (1.00,4.25)--(1.00,3.75)--cycle;
\draw[fill=lightgray] (2.00,4.25)--(2.00,3.75)--cycle;
\draw[fill=lightgray] (3.00,4.25)--(3.00,3.75)--cycle;
\draw[fill=lightgray] (4.00,4.25)--(4.00,3.75)--cycle;
\draw[fill=lightgray] (8.00,4.25)--(8.00,3.75)--cycle;
\draw[fill=lightgray] (9.00,4.25)--(9.00,3.75)--cycle;
\draw (0.00,3.50)--(0.00,3.25)
      (1.00,3.75)--(1.00,3.25)
      (2.00,3.75)--(2.00,3.25)
      (3.00,3.75)--(3.00,3.25)
      (7.00,3.75)--(7.00,3.25)
      (8.00,3.75)--(8.00,3.25)
      (9.00,3.75)--(9.00,3.25);
\draw[fill=orange] (0.00,3.25)--(1.00,3.25)--(0.50,2.75)--cycle;
\draw[fill=orange] (2.00,3.25)--(3.00,3.25)--(2.50,2.75)--cycle;
\draw[fill=orange] (7.00,3.25)--(8.00,3.25)--(7.50,2.75)--cycle;
\draw[fill=lightgray] (9.00,3.25)--(9.00,2.75)--cycle;
\draw (0.50,2.75)--(0.50,2.50)
      (2.50,2.75)--(2.50,2.50)
      (7.50,2.75)--(7.50,2.50)
      (9.00,2.75)--(9.00,2.50);
\draw[fill=green] (0.50,2.50)--(1.00,2.00)--(0.00,2.00)--cycle;
\draw[fill=green] (2.50,2.50)--(3.00,2.00)--(2.00,2.00)--cycle;
\draw[fill=green] (7.50,2.50)--(8.00,2.00)--(7.00,2.00)--cycle;
\draw[fill=lightgray] (9.00,2.50)--(9.00,2.00)--cycle;
\draw (0.00,2.00)--(0.00,1.75)
      (1.00,2.00)--(1.00,1.75)
      (2.00,2.00)--(2.00,1.50)
      (3.00,2.00)--(3.00,1.50)
      (7.00,2.00)--(7.00,1.50)
      (8.00,2.00)--(8.00,1.50)
      (9.00,2.00)--(9.00,1.50);

\node at (0.00,1.25) {$\scriptstyle a$};
\node at (1.00,1.25) {$\scriptstyle c-s$};
\draw[fill=lightgray] (2.00,1.50)--(2.00,1.00)--cycle;
\draw[fill=lightgray] (3.00,1.50)--(3.00,1.00)--cycle;
\draw[fill=lightgray] (4.00,1.50)--(4.00,1.00)--cycle;
\draw[fill=lightgray] (8.00,1.50)--(8.00,1.00)--cycle;
\draw[fill=lightgray] (9.00,1.50)--(9.00,1.00)--cycle;
\draw (0.00,0.75)--(0.00,0.50)
      (1.00,0.75)--(1.00,0.50)
      (2.00,1.00)--(2.00,0.50)
      (3.00,1.00)--(3.00,0.50)
      (4.00,1.00)--(4.00,0.50)
      (8.00,1.00)--(8.00,0.50)
      (9.00,1.00)--(9.00,0.50);
\draw[fill=lightgray] (0.00,0.50)--(0.00,0.00)--cycle;
\draw[fill=orange]    (1.00,0.50)--(2.00,0.50)--(1.50,0.00)--cycle;
\draw[fill=orange]    (3.00,0.50)--(4.00,0.50)--(3.50,0.00)--cycle;
\draw[fill=orange]    (8.00,0.50)--(9.00,0.50)--(8.50,0.00)--cycle;

\node at (5.55,3.3) {$\cdots$};
\node at (5.55,1.7) {$\cdots$};
\draw[blue, very thick, dashed] (2.2,-0.5) rectangle (9.3,5.7);

\node at (5.55,2.6) {$\scriptstyle k$};
\end{scope}
\end{tikzpicture}\;\raisebox{-5ex}{,}
\]
where $c\leq b$ and the $2$-cells in the above boxes cannot be reduced by $\vartheta_i$ for $i<k$.

As a result, $\strat{5}\NP{\qSPol}$ consists of the rules $\zeta_{a,b,c,d}$, $\theta_k$, and $\vartheta_k$, together with their closure under $\strat{4}\NP{\qSPol}$-stable contexts.
Note that the rules in $\strat{4}\NP{\qSPol}$ and $\strat{5}\NP{\qSPol}$ change the shapes and the labels of $2$-cells in $\pcat{\qSPol}$, respectively. Hence, we call the rules in $\strat{4}\NP{\qSPol}$ the \emph{shaping rules} and those in $\strat{5}\NP{\qSPol}$ the \emph{labeling rules}.
In Section~\ref{S:PictureCalculus}, we give a pictorial description of the $2$-cells in $\Nf(\strat{5}\NP{\qSPol})$.

\begin{lemma}
\label{L:S4Confluent}
The extension $\strat{5}\NP{\qSPol}$ is confluent.
\end{lemma}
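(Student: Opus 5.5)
Since $\strat{5}\NP{\qSPol}$ is contained in the terminating extension $F^5\NP{\qSPol}$ (Lemma~\ref{L:Terminatingnormalization}), it is terminating. By Lemma~\ref{L:CBL}, it therefore suffices to show that every critical branching of $\strat{5}\NP{\qSPol}$ is confluent. By the criterion on generic branchings stated at the end of \SSS{SSS:Normalization}, this reduces further to the \emph{generic} critical branchings. Indeed, $\strat{5}\NP{\qSPol}$ is, by construction, the closure of $\zeta_{a,b,c,d}$, $\theta_k$ and $\vartheta_k$ under $\strat{4}\NP{\qSPol}$-stable contexts. Every critical branching is therefore induced, through some stable context $C$, from a branching between two of these three families, or of one of them with itself. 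The first step is to enumerate these generic overlaps, using the reduction of overlaps to string overlaps in the $\circ_1$-decomposition.

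The key observation to exploit is that all rules of $\strat{5}\NP{\qSPol}$ are \emph{labeling rules}. They preserve the underlying shape of the $2$-cell: the number and position of merges and splits are unchanged. They only modify the labels of the internal strands in a single square-switch cell, replacing $b$ by $c-s$ together with the coefficients $\Phi_s$. I would index each monomial in a $\strat{4}\NP{\qSPol}$-normal form by its shape, a chain of square cells as pictured for $\theta_k$ and $\vartheta_k$, together with a vector of labels. Two rules act on overlapping squares only in the following situations:
\begin{enumerate}
\item $\zeta$ with $\zeta$ on a common edge, horizontally adjacent squares;
\item $\theta_k$ with $\theta_{k'}$, or $\theta_k$ with $\zeta$, along a vertical chain of type~II;
\item $\vartheta_k$ with $\vartheta_{k'}$, or $\vartheta_k$ with $\zeta$, along a horizontal chain of type~III;
\item the mixed case $\theta$ with $\vartheta$ at a corner square.
\end{enumerate}
Non-overlapping applications are Peiffer branchings. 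These are confluent because the rules are left-monomial and act on disjoint labels.

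For each overlapping family, I would close the branching by applying to both targets the remaining labeling rules on the untouched squares, and then compare the coefficients of each resulting normal-form monomial. This is the approach already used in Lemma~\ref{L:ConfluentOfS3}. Both sides reduce to sums over pairs $(s,t)$ of products of $\Phi$'s and $\Psi$'s. Confluence then amounts to a family of identities among quantum binomial coefficients. These identities should follow from the $q$-Vandermonde identity, together with the fact that both sides are the $F^4\NP{\qSPol}$-normal forms of the same $2$-cell obtained from $\eta$ via the contexts $D_1$, $D_2$, $D_3$. I would try to avoid recomputing them directly by the following argument. Each rule of $\strat{5}\NP{\qSPol}$ was obtained as a normal reduction $\down{D[\eta]}{4}^\preccurlyeq$, so both targets of a branching are congruent modulo $F^5\NP{\qSPol}$. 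Moreover, the terms they contain are all labeled variants of a single shape. It is therefore enough to check that the labeled normal forms of that shape are linearly independent modulo the relations. I would do this by comparing with the known rank of the hom-spaces of $\qSCat$, namely the standard basis of \cite{Brundan2025}.

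The main obstacle will be the mixed $\theta_k$/$\vartheta_{k'}$ corner overlaps and the induction on $k$. Here the rules of higher index are themselves defined only on boxes irreducible by lower-index rules, so the induction must be carried out carefully. I would first handle $k=k'=1$ explicitly. I would then show by induction on $k+k'$ that a branching involving $\theta_k$ is reduced, through one application of $D_2$ or $D_3$, to a branching involving $\theta_{k-1}$, so that the confluence diagram transports as in the generic-branching argument. The remaining detailed coefficient verifications would be deferred to the appendix, as for Lemma~\ref{L:ConfluentOfS3}.
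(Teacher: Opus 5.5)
Your outline matches the paper's proof in its main structure. Because $\strat{5}\NP{\qSPol}$ consists of labeling rules, the two reducts of a critical branching are combinations of labeled variants of one shape. Confluence then reduces to equating coefficients of identically labeled diagrams, i.e.\ to scalar identities between products of $\Phi$'s and $\Psi$'s. The paper does not enumerate overlap families or induct on $k$. It simply states that these identities hold and are checked symbolically with SageMath, as in Appendix~\ref{ASSS:CriticalBranchingsOfS3}. Your remark that they ``should follow from $q$-Vandermonde'' is at the same unproved level.

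The shortcut you propose for avoiding these identities does not work.
\begin{itemize}
\item That the two reducts are congruent modulo $F^5\NP{\qSPol}$ is true of every branching and carries no information. Everything rests on the linear independence in $\qSCat$ of the normal-form monomials that occur.
\item That independence is precisely the hom-basis property of Theorem~\ref{T:MainTheoremqSchur}. The paper deduces it \emph{from} this lemma via Theorem~\ref{T:MainTheorem}, so using it here is circular.
\item Brundan's rank breaks the circle only if you prove independently that the number of $\NP{\qSPol}$-normal monomials from $\mathbf a$ to $\mathbf b$ is exactly $|\mathrm{Mat}(\mathbf a,\mathbf b)|$. Only then would spanning (from termination), together with the fact that a surjection $\bk^N\to\bk^N$ is an isomorphism, give a basis and hence confluence.
\item No such count is available; the paper only exhibits it for $((2,5);(3,4))$. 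Obtaining it requires a complete combinatorial description of $\Nf(F^4\NP{\qSPol})\setminus[\strat{5}\NP{\qSPol}]$. That includes the unbounded families $\theta_k$, $\vartheta_k$ and their closure under stable contexts. This is at least as hard as the lemma, and it would yield Theorem~\ref{T:MainTheoremqSchur} directly, so the shortcut moves the difficulty rather than removing it.
\item Restricting to labeled variants of a single shape does not help. The global rank says nothing about a particular subfamily of monomials unless the whole normal-form set is already known to have the right size.
\end{itemize}
The explicit coefficient identities therefore cannot be bypassed this way. They have to be proved, either by a genuine $q$-binomial argument or by the symbolic verification the paper relies on.
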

\begin{proof}
Let $(\sigma,\tau)$ be a $\strat{5}\NP{\qSPol}$-critical branching.
Since $\strat{5}\NP{\qSPol}$ consists only of labeling rules, the normal forms
$\down{t\sigma}{5}$ and $\down{t\tau}{5}$ have the same underlying diagrammatic shapes.
It remains to compare their labels and scalar coefficients.
For each $2$-monomial appearing in either normal form, the labels outside
the rewritten region remain unchanged, while those inside are determined
by the original labels and the summation indices.
For instance, a summand indexed by $s$ in the target of
$\zeta_{a,b,c,d}$ carries the labels $a-s$ and $c-s$.
The rules $\theta_k$ and $\vartheta_k$ extend this local relabeling
through the iterated contexts described above.
We therefore collect terms with the same fully labeled diagram and
compare their coefficients, assigning coefficient zero to a diagram
absent from either expression.
Substituting the explicit rule coefficients reduces these comparisons
to scalar identities that can be checked symbolically using
SageMath~\cite{Sagemath2021}.
Hence $\down{t\sigma}{5}=\down{t\tau}{5}$, so every critical branching
is confluent.
\end{proof}

Consequently, each $\strat{i}\NP{\qSPol}$ is left-monomial and confluent for $1\leq i\leq 5$. By Theorem~\ref{T:MainTheorem}, we obtain the main result of this section.
\begin{theorem}
\label{T:MainTheoremqSchur}
The set of normal forms $\Nf(\NP{\qSPol})$ is a hom-basis of the $q$-Schur category.
\end{theorem}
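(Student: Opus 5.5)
The plan is to apply Theorem~\ref{T:MainTheorem} to the $5$-stratified $3$-prepolygraph $F^\ast\qSPol$ with the monomial order $\preccurlyeq_{\mathrm{Sch}}$. There are three hypotheses to check, and I would take them in the following order.

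\textbf{The presentation and the order.} First, $F^\ast\qSPol$ presents $\qSCat$. This holds by construction: the strata $\strat{3}\qSPol$, $\strat{4}\qSPol$, $\strat{5}\qSPol$ are the degenerate and non-degenerate cases of the square-switch relations. The identity cases ($c=0$, $d=0$, or $a=d$ with $c\geq b$) are trivial and can be discarded, and in the remaining degenerate cases the rules are reoriented. Second, $\preccurlyeq_{\mathrm{Sch}}$ is a monomial order compatible with all of $\qSPol$, hence in particular with $\strat{1}\qSPol$. This follows from Lemma~\ref{L:qSchurMonomialOrder} and Lemma~\ref{L:CompatibleMonomial Order}. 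With this in hand, Lemma~\ref{L:Terminatingnormalization} gives that the normalization $F^\ast\NP{\qSPol}$ computed in \SSS{SSS:StratumS2}--\SSS{SSS:AdmissibleContextD2} is terminating and Tietze equivalent to $F^\ast\qSPol$, so it still presents $\qSCat$.

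\textbf{Left-monomiality.} Next, every stratum $\strat{i}\NP{\qSPol}$ must be left-monomial. For $i\leq 2$ the rules are interchange and (co)associativity rules, which are monomial. For $i\geq 3$, each normal reduction $\down{C[\alpha]}{i}^\preccurlyeq$ has a leading coefficient that is either a unit of $\bk=\Zb[q,q^{-1}]$, after which one divides by it, or equal to $1$. I would check this on the explicit rules. The sources of $\gamma^\diamond,\gamma^\triangle,\gamma^\triangledown$ and of $\down{C_3[\gamma^\diamond]}{2}^\preccurlyeq$ have coefficient $1$; here the quantum binomial coefficients sit on the target side, because the bigger diagram is the $\preccurlyeq$-leading one. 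For $\gamma_{a,b,c,d}$, $\zeta$, $\theta_k$ and $\vartheta_k$, the leading monomial is the displayed source with coefficient $1$.

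\textbf{Confluence of each stratum.} This is the remaining hypothesis, and I would collect it from the earlier results. $\strat{1}\NP{\qSPol}$ and $\strat{2}\NP{\qSPol}$ are confluent by Example~\ref{E:AssociativeCoassociativeCategory}. $\strat{3}\NP{\qSPol}$ is confluent by the argument in \SSS{SSS:StratumS2}, namely uniqueness of normal forms via the shared-strand property and \cite[Thm.~4.3.3]{Alleaume18}. $\strat{4}\NP{\qSPol}$ is confluent by Lemma~\ref{L:ConfluentOfS3}, and $\strat{5}\NP{\qSPol}$ by Lemma~\ref{L:S4Confluent}. Theorem~\ref{T:MainTheorem} then yields that $\Nf_m(\NP{\qSPol})$ is a hom-basis. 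Since the output is left-monomial, the proof of Theorem~\ref{T:HomBasisFromConvergence} gives $\Nf(\NP{\qSPol})=\bk\Nf_m(\NP{\qSPol})$, which is the sense of the statement.

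\textbf{Main obstacle.} I expect the difficult step to be the completeness of $\strat{5}\NP{\qSPol}$. The argument must show that the active contexts of types I, II and III exhaust all $F^4\NP{\qSPol}$-active contexts on $\eta$ that produce new $\sqsubseteq$-minimal rules, up to $\strat{4}\NP{\qSPol}$-stable contexts. The extension is unbounded, so Proposition~\ref{P:FiltrationOfContexts} cannot be applied directly. My first attempt would be an induction on the number of crossing merge/split pairs adjacent to the rewritten region. I would show that any active context factors as an iterate of $D_2$ or $D_3$ composed with a stable context; a stable context only reproduces an existing rule through the generic-branching construction. This reduces the check to the families $\zeta,\theta_k,\vartheta_k$. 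Confluence of these families then rests on the coefficient identities checked symbolically in Lemma~\ref{L:S4Confluent}.
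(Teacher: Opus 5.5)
Your proposal is correct and is essentially the paper's proof. The paper concludes by applying Theorem~\ref{T:MainTheorem} to $F^\ast\qSPol$ with $\preccurlyeq_{\mathrm{Sch}}$, using left-monomiality of the computed strata and the confluence results you cite: Example~\ref{E:AssociativeCoassociativeCategory}, \SSS{SSS:StratumS2}, and Lemmas~\ref{L:ConfluentOfS3} and~\ref{L:S4Confluent}. The exhaustiveness of the families describing $\strat{4}\NP{\qSPol}$ and $\strat{5}\NP{\qSPol}$, which you rightly single out as the delicate point, is only asserted in the paper ("there are three cases", "it suffices to apply active contexts along the two directions"), so your sketched induction on active contexts goes beyond what the paper itself proves.
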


The hom-basis $\Nf(\NP{\qSPol})$ is called the \emph{normal-form basis}, and in Procedure~\ref{SSS:NormalFormHomBasisProcedure}, we give an algorithm for computing it in each hom-space using picture calculus.

\section{A picture calculus for the normal form basis}
\label{S:PictureCalculus}

Given any two 1-cells $(\mathbf a,\mathbf b)$ of the $q$-Schur category $\qSCat$, as pointed out by Brundan \cite{Brundan2025}, the set indexing the hom-basis of $\qSCat(\mathbf a,\mathbf b)$ admits several descriptions, from double coset diagrams to matrices of nonnegative integers of  whose row and column sums are the entries of $\mathbf a$ and of $\mathbf b$. Such a set of matrices will be denoted by Mat$(\mathbf a,\mathbf b)$. For example, take $\mathbf a = (2,5)$ and $\mathbf b = (3,4)$ then there are 3 matrices in Mat$((2,5),(3,4))$:
\[
A = \begin{bmatrix}
0 & 3\\
2 & 2
\end{bmatrix},
\quad
B = \begin{bmatrix}
1 & 2\\
1 & 3
\end{bmatrix},
\quad
C = \begin{bmatrix}
2 & 1\\
0 & 4
\end{bmatrix}.
\]
These sets are also in bijection with the set of double cosets $S_{\mathbf a}\setminus S_N\ /\ S_{\mathbf b}$, where $N$ is the sum of entries in $\mathbf a$ and $\mathbf b$ and $S_{\mathbf a}$ is the subgroup of the symmetric group $S_n$ given by blocks: $S_{\mathbf a} = S_{a_1}\times \cdots\times S_{a_n}$, for $\mathbf a = (a_1,...,a_n)$.

In Subsection~\ref{SS:PictureRewriting}, we introduce diagrams that we call \emph{pictures} and that slightly generalise double coset diagrams. Pictures allow to represent 2-cells in the $q$-Schur category, in a somehow easier way. We perform a rewriting procedure on the set of pictures. 
Then, in Subsection~\ref{SS:ComparsionHomBase}, we compare the normal form basis with three other bases of the $q$-Schur category. 

\subsection{Linear picture rewriting systems}
\label{SS:PictureRewriting}
For the normalization $F^\ast \NP{\qSPol}$ introduced in Section~\ref{S:NormFormHomBasisqSchurCategories}, its set of normal forms satisfies
\[
\Nf(\NP{\qSPol})
= \Nf(F^4\NP{\qSPol}) \cap \Nf(\strat{5}\NP{\qSPol})
= \Nf(F^4\NP{\qSPol}) \cap \bigl(\pcat{\qSPol_2}\setminus [\strat{5}\NP{\qSPol}]\bigr)
= \Nf(F^4\NP{\qSPol})\setminus [\strat{5}\NP{\qSPol}],
\]
where
$
[\strat{5}\NP{\qSPol}] \coloneq \{\, C[s\alpha] \mid
C \in \Cont(\pcat{\qSPol_2}) \text{ and } \alpha \in \strat{5}\NP{\qSPol} \,\}.
$
Therefore, computing the normal form basis of the $q$-Schur category
reduces to two tasks:
\begin{enumerate}
  \item computing all diagrams in $\Nf(F^4\NP{\qSPol})$ with fixed source and target;
  \item excluding diagrams that contain a subdiagram of the form $s\alpha$ for some $\alpha\in \strat{5}\NP{\qSPol}$.
\end{enumerate}
We now describe the normal form basis using pictures, giving a pictorial procedure for the normal forms.

\subsubsection{Linear picture rewriting}
We represent the $2$-cells in $\pcat{\qSPol_2}$ by using the following pictures respectively
\[
 \twocell{(a1s*0dot*0ans)*1split4*1ax}
\;\leadsto\;
\aup
\qquad\text{and}\qquad
 \twocell{as*1merge4*1(a1x*0dot*0anx)}
\;\leadsto\;
\adown\,.
\]
From these two forms, every $2$-cell of $\Nf(F^3 \NP{\qSPol})$ in \SSS{E:S2-NormalForm} admits a unique picture representation. For instance,
\[
\twocell{(id*0merge)*1(three*0two*0id)*1(split*0two)*1 (merge*0id)*1(two*0three*0id)*1(id*0split)}
\;\leadsto\;
\EXi
\qquad\text{and}\qquad
\twocell{(merge*0merge3)*1(id*0three*0two*0id*0id)*1(id*0split*0two*0id) *1(id*0merge*0id*0one)*1(one*0id*0three*0id*0id)*1(id*0two*0split*0id)*1(id*0id*0merge*0id) *1(id*0id*0two*0three*0id)*1(split*0id*0split)}
\;\leadsto\;
\EXii\,.
\]
The pictures corresponding to $\Nf(F^4 \NP{\qSPol})$ are called \emph{$F_4$-pictures}. They are those in $\Nf(F^3 \NP{\qSPol})$ that do not contain subpictures of the following form:
\[
\Fthree\;.
\]
We denote by
$P(\mathbf{a};\mathbf{b})$ the set of all $F^4$-pictures
with source $\mathbf{a}$ and target $\mathbf{b}$, and by $P^\ell(\mathbf{a};\mathbf{b})$ the set obtained by
linearizing the pictures of $P(\mathbf{a};\mathbf{b})$. Given a set $R$ of oriented binary relations on
$P^\ell(\mathbf{a};\mathbf{b})$, we call a pair $(P^\ell(\mathbf{a};\mathbf{b}), R)$ a \emph{linear picture rewriting system}.
\subsubsection{$F^4$-picture procedure}
\label{SSS:NewPictureAlgorithm}
We now present a procedure to compute all $F^4$-pictures with source $\mathbf{a}=(a_1,\ldots,a_n)$ and target $\mathbf{b}=(b^1,\ldots,b^m)$.
We define two families of \emph{oriented segments}
\[
(c^i : b^i \to a_n)_{1 \leq i \leq m}
\qquad\text{and}\qquad
(c_j : a_j \to b^m)_{1 \leq j \leq n},
\]
that decompose into two families oriented subsegments
\[
c^i = (c^{ik})_{1 \leq k \leq n},
\qquad\text{and}\qquad
c_j = (c_{jk})_{1 \leq k \leq m},
\]
where $c^{i1},c^{i2},\ldots,c^{in}$ (resp. $c_{j1},c_{j2},\ldots,c_{jm}$) denote the oriented subsegments resulting by the crossing with the $n$ (resp. $m$) oriented segments from $a_k$ to $b^m$ for $1\leq k \leq n$ (resp. $b^k$ to $a_n$ for $1\leq k \leq m$), as drawn on the following pictures
\[
\atob
\qquad\raisebox{3.2em}{$\text{and}$}\qquad
\btoa\,\raisebox{1ex}{.}
\]
For instance, when $n=m=3$, we have
\begin{eqn}{equation}
\label{E:Example-n,m=3}
\nmthree
\end{eqn}
We regard all labels $c^i$ and $c_j$ as natural numbers, and erase the corresponding segment if the label is~$0$.
In order for them to define a pictures, the labels are required to satisfy the following relations:
\[
\left\{
\begin{aligned}
& c^m = c_n, \qquad
c^{i1} = b^i \ (1 \le i \le m-1), \qquad
c_{j1} = a_j \ (1 \le j \le n-1), \\
& c_n + \sum_{i=1}^{n-1} c_{im} = b^m, \qquad
c^m + \sum_{i=1}^{m-1} c^{in} = a_n, \\
& c^{ij} + c_{ji} = c^{i,j+1} + c_{j,i+1},
\quad (1 \le i \le m-1,\ 1 \le j \le n-1).
\end{aligned}
\right.
\]

\begin{lemma}
\label{L:PicturesCorrespondence}
There is a one-to-one correspondence between the set of pictures defined in \SSS{SSS:NewPictureAlgorithm} and $P(\mathbf{a};\mathbf{b})$.
\end{lemma}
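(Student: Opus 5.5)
The plan is to build explicit maps in both directions and check that they are mutually inverse. The map goes from labelled segment configurations, that is families $(c^{ik})$ and $(c_{jk})$ of natural numbers satisfying the displayed system of relations, to $F^4$-pictures in $P(\mathbf a;\mathbf b)$. Its inverse reads the labels off an $F^4$-picture.

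\emph{From labels to pictures.} Given admissible labels, I read the configuration of segments as a composite of merges and splits. Each source strand $a_j$ with $j<n$ is split along the segment $c_j$ into pieces $c_{j1},\dots,c_{jm}$. Each target strand $b^i$ with $i<m$ is obtained by merging the pieces $c^{i1},\dots,c^{in}$. Each crossing of $c_j$ with $c^i$ is read as a local square, that is, a split followed by a merge, of the shape appearing in the rules of $\strat{4}\qSPol$. The local relation $c^{ij}+c_{ji}=c^{i,j+1}+c_{j,i+1}$ is exactly thickness conservation at such a crossing. The boundary relations $c^{i1}=b^i$, $c_{j1}=a_j$, $c^m=c_n$ and the two sum relations guarantee that the source is $\mathbf a$ and the target is $\mathbf b$. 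Segments labelled $0$ are erased, using the conventions that a merge or split with a $0$-strand is the identity. This produces a well-defined $2$-cell $\Phi(c)$ in $\pcat{\qSPol_2}$.

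Next I check that $\Phi(c)$ is an $F^4$-picture. There are two things to verify. First, $\Phi(c)$ lies in $\Nf(F^3\NP{\qSPol})$: the merges and splits are arranged in the staircase pattern of \eqref{E:S2-NormalForm}, and any merge meets any split along at most one strand, so no rule of $\strat{3}\NP{\qSPol}$ or of $F^2$ applies. Second, $\Phi(c)$ contains no subpicture of the form forbidden by $\strat{4}\NP{\qSPol}$. This holds because in the grid every segment $c_j$ runs monotonically towards $b^m$ and every $c^i$ towards $a_n$, whereas the forbidden pattern would require a segment that turns back.

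\emph{From pictures to labels.} Let $P\in P(\mathbf a;\mathbf b)$. I argue by induction on $n+m$, peeling off the leftmost source strand $a_1$ or the leftmost target strand $b^1$. Since $P$ avoids the forbidden subpicture and is $F^3$-normal, the thin strands issued from $a_1$ must travel rightwards and cross each $b^i$-segment exactly once, in order. This determines $c_{11},\dots,c_{1m}$. Removing them leaves an $F^4$-picture with source $(a_2,\dots,a_n)$ and target $\mathbf b$ minus those contributions, and induction supplies the remaining labels. The uniqueness of the $\circ_1$-decomposition of $2$-monomials in the precategory guarantees that these labels are well defined. The relations are then read directly from thickness conservation, and by construction the two maps are inverse to each other.

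I expect the main obstacle to be the structural step of the inverse map. It has to be shown that every $F^4$-picture really has the global grid shape, with each strand from $a_j$ meeting the $b^i$ in order and no two segments crossing twice. My first attempt would be a case analysis on the first size-$1$ factor touching $a_1$, using the shape of the minimal normal forms \eqref{E:S2-NormalForm} together with the exclusion of the pattern in $\strat{4}\NP{\qSPol}$. The aim is to show that any deviation from the grid creates either an $F^3$-redex or the forbidden subpicture.
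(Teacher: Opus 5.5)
Your overall plan matches the paper's proof. That proof consists of two claims: the labelled grid pictures are $F^4$-pictures, and every $F^4$-picture is a grid picture, because any extra segment joining some $b^i$ to some $a_j$ with $i<m$, $j<n$ creates the forbidden subpicture. Your ``main obstacle'' is exactly this second claim.

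The genuine gap is that you have misread the labelled pictures, and both of your explicit maps rest on that misreading. Since $c_{j1}=a_j$ and $c^{i1}=b^i$, the numbers $c_{j1},\dots,c_{jm}$ are the thicknesses of the successive subsegments of a single path from $a_j$ to $b^m$. They are not pieces into which $a_j$ is split. For $j<n$ nothing splits at $a_j$, and for $i<m$ nothing merges at $b^i$. The only split at a source point is at $a_n$, into $c^{1n},\dots,c^{m-1,n},c^m$, and the only merge at a target point is at $b^m$.

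Read literally, your construction is already inconsistent for $\mathbf a=(2,5)$, $\mathbf b=(3,4)$. Splitting $a_1=2$ into $c_{11}=2$ and $c_{12}$, and merging $c^{11}=3$ and $c^{12}$ into $b^1=3$, forces $c_{12}=c^{12}=0$. This contradicts $c_{12}+c_2=4$ and $c^{12}+c_2=5$.

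Likewise, a crossing of $c^i$ and $c_j$ is a single X-vertex: a merge of $c_{ji}$ and $c^{i,j+1}$ followed by a split into $c^{ij}$ and $c_{j,i+1}$. It is not a square of the shape of the rules of $\strat{4}\qSPol$. The source of $\gamma$ is precisely the forbidden pattern, and the grid's only vertical strand is $c^m=c_n$. Conservation at this vertex reads $c^{ij}+c_{j,i+1}=c_{ji}+c^{i,j+1}$, as in the worked relation $c^{11}+c_{12}=c_{11}+c^{12}$ for $((2,5);(3,4))$. So the general display you quote is not literally thickness conservation.

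The inverse map has the same defect. Peeling off the strands of $a_1$ and passing to the target ``$\mathbf b$ minus those contributions'' is the induction for double coset diagrams, whose number is $\lvert\mathrm{Mat}(\mathbf a,\mathbf b)\rvert$. For $((2,5);(3,4))$ it yields three objects, whereas $P((2,5);(3,4))$ has five. In a grid picture exactly one strand, of thickness $a_1$, leaves $a_1$. What must be shown is precisely that no second strand can leave $a_j$ ($j<n$) or enter $b^i$ ($i<m$) without creating the forbidden pattern; speaking of ``thin strands issued from $a_1$'' presupposes the opposite.

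If you want an induction, cut along the path $c_1$. The part below it is a grid picture from $(a_2,\dots,a_n)$ to $(c^{12},\dots,c^{m-1,2},\,b^m-c_{1m})$, with these intermediate thicknesses as free parameters. It is followed by a staircase of X-vertices along $c_1$. With the encoding corrected, your forward check and your structural step become the paper's two claims. The case analysis you describe still has to be carried out to establish the second one.
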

\begin{proof}
Set $\mathbf{a}=(a_1,\ldots,a_n)$ and $\mathbf{b}=(b^1,\ldots,b^m)$.
It is straightforward that all pictures defined in \SSS{SSS:NewPictureAlgorithm} are $F^4$-pictures.
Moreover, any additional connecting segment between $b^i$ and $a_j$ with $i<m$ and $j<n$ yields a subpicture of the form 
\[
\Fthree
\]
 and hence does not correspond to any picture in $P(\mathbf{a};\mathbf{b})$. 
For example, if we connect the points $a_1$ and $b_2$ in \eqref{E:Example-n,m=3}, the resulting picture contains the subpicture highlighted in red below.
\[
\counterexample\,.
\]
Therefore, the result follows.
\end{proof}

\subsubsection{Collapsing rules}
\label{SSS:CollapsingRules}
We depict the source and target of rules in $\strat{5} \NP{\qSPol}$ as $F^4$-pictures, and define the associated rewriting rules on $P^\ell$:
\[
\Gamma_1:
\sGammaone
\fl
\sum_s
\Phivec{s}{c}{b}{d}
\tGammaone
\qquad
\text{where } c\leq b,
\]
\[
\Gamma^k:
\sGammak
\quad
\fl
\sum_t
\Phivec{t}{c+m}{y}{b-y}
\quad
\tGammak
\qquad
\text{where } c+m\leq y,
\]
\[
\Gamma_k:
\sGammakprime
\quad
\fl
\sum_s
\Phivec{s}{c}{b}{d}
\quad
\tGammakprime
\qquad
\text{where } c\leq b.
\]
where $k$ is the number of iterations.
We define the set
\[
\mathbf{Col}=\{C_1[\Gamma_1],C^k[\Gamma^k],C_k[\Gamma_k]\}
\]
 for all contexts $C_1, C^k, C_k$ such that $C_1[s\Gamma_1], C^k[s\Gamma^k], C_k[s\Gamma_k]$ are $F^4$-pictures.
Since $\Gamma_1$, $\Gamma^k$, and $\Gamma_k$ correspond to labeling rules in $\strat{5} \NP{\qSPol}$, the pictures $C_1[t\Gamma_1]$, $C^k[t\Gamma^k]$, and $C_k[t\Gamma_k]$ are also $F^4$-pictures. Therefore, every rule in $\mathbf{Col}$ is a rewriting rule in $P^\ell$.

We call the rules in $\mathbf{Col}$ the \emph{collapsing rules}. It follows from Lemma~\ref{L:S4Confluent} that the linear picture rewriting system $(P^\ell(\mathbf{a};\mathbf{b}), \mathbf{Col})$ is convergent.

\subsubsection{Normal form basis procedure}
\label{SSS:NormalFormHomBasisProcedure}
The following procedure computes the normal form basis of the $q$-Schur category in terms of pictures.

\begin{algorithm}[H]
\DontPrintSemicolon
\KwIn{Two tuples $\mathbf{a}=(a_1,\ldots,a_n)$ and $\mathbf{b}=(b^1,\ldots,b^m)$ with $a_1+\cdots+a_n=b^1+\cdots+b^m$.}
\KwOut{The normal form basis pictures with source $\mathbf{a}$ and target $\mathbf{b}$.}

$T \leftarrow \varnothing$\;

\ForEach{$p\in P(\mathbf{a};\mathbf{b})$}{
  \eIf{$p\in \Nf(\mathbf{Col})$}{
    $T \leftarrow T \cup \{p\}$\;
  }{
    $T \leftarrow T \cup \operatorname{supp}\bigl(\down{p}{\mathbf{Col}}\bigr)$\;
  }
}

\Return{$T$}\;

\end{algorithm}
When the tuples $\mathbf{a}$ and $\mathbf{b}$ are fixed, the set $P(\mathbf{a};\mathbf{b})$ is finite by the $F^4$-picture procedure in \SSS{SSS:NewPictureAlgorithm}. Hence, Procedure~\eqref{SSS:NormalFormHomBasisProcedure} always terminates. 
By Theorem~\ref{T:MainTheoremqSchur}, we obtain the following result.
\begin{proposition} 
For any tuples $\mathbf{a}$ and $\mathbf{b}$, Procedure~\eqref{SSS:NormalFormHomBasisProcedure} computes a linear basis of the hom-space $\qSCat_2(\mathbf{a},\mathbf{b})$.
\end{proposition}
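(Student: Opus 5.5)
The plan is to combine Theorem~\ref{T:MainTheoremqSchur} with the description of normal forms as pictures, and then argue that the procedure outputs exactly the set of normal monomials with source $\mathbf a$ and target $\mathbf b$.

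First, by the decomposition
\[
\Nf(\NP{\qSPol})=\Nf(F^4\NP{\qSPol})\setminus[\strat{5}\NP{\qSPol}]
\]
established at the start of Subsection~\ref{SS:PictureRewriting}, a normal-form monomial from $\mathbf a$ to $\mathbf b$ is an element of $\Nf_m(F^4\NP{\qSPol})$ that avoids every source of a rule of $\strat{5}\NP{\qSPol}$. Lemma~\ref{L:PicturesCorrespondence} identifies $\Nf_m(F^4\NP{\qSPol})\cap\pcat{\qSPol_2}(\mathbf a,\mathbf b)$ with the finite set $P(\mathbf a;\mathbf b)$. The collapsing rules $\mathbf{Col}$ are exactly the rules of $\strat{5}\NP{\qSPol}$ in stable contexts, transported to pictures. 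Hence the monomials of $\Nf(\mathbf{Col})\cap P(\mathbf a;\mathbf b)$ are exactly the $\NP{\qSPol}$-normal monomials with the given source and target.

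Second, the procedure loops over the finite set $P(\mathbf a;\mathbf b)$, so it terminates. It adds each irreducible picture $p$. For a reducible $p$, it adds the support of $\down{p}{\mathbf{Col}}$. This normal form exists and is unique because $(P^\ell(\mathbf a;\mathbf b),\mathbf{Col})$ is convergent by Lemma~\ref{L:S4Confluent} together with the termination given by Lemma~\ref{L:Terminatingnormalization}. Its support consists of $\mathbf{Col}$-irreducible pictures, and since the labeling rules preserve the $F^4$-shape, these pictures still lie in $P(\mathbf a;\mathbf b)$. Therefore $T\subseteq\Nf_m(\NP{\qSPol})(\mathbf a,\mathbf b)$. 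Conversely, every normal picture is itself an element of $P(\mathbf a;\mathbf b)$ lying in $\Nf(\mathbf{Col})$, so it is added when the loop reaches it. Hence $T$ equals the set of normal monomials from $\mathbf a$ to $\mathbf b$.

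Finally, Theorem~\ref{T:MainTheoremqSchur}, via Theorem~\ref{T:HomBasisFromConvergence}, shows that these normal monomials project to a $\bk$-basis of $\qSCat_2(\mathbf a,\mathbf b)$. This applies because every stratum is left-monomial, so $\Nf=\bk\Nf_m$.

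The main obstacle is the identification of $\mathbf{Col}$-reducibility on pictures with $\strat{5}\NP{\qSPol}$-reducibility on monomials. This means checking that every occurrence of a source of $\zeta$, $\theta_k$ or $\vartheta_k$ inside an $F^4$-normal monomial appears as a picture context $C_1$, $C^k$ or $C_k$, and conversely. I would prove it by translating the stable contexts of the three active-context types through the bijection of Lemma~\ref{L:PicturesCorrespondence}, tracking segment labels.
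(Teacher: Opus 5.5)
Your proposal is correct and takes essentially the paper's route: finiteness of $P(\mathbf a;\mathbf b)$ gives termination of the procedure, and Theorem~\ref{T:MainTheoremqSchur} supplies the basis, once the $\mathbf{Col}$-irreducible $F^4$-pictures are identified with the $\Nf_m(\NP{\qSPol})$-monomials from $\mathbf a$ to $\mathbf b$ (the paper takes this identification as holding by construction of $\mathbf{Col}$). Your extra observation, which the paper leaves implicit, is that the else-branch adds nothing new, so $T$ is exactly the set of $\mathbf{Col}$-irreducible pictures in $P(\mathbf a;\mathbf b)$.
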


\subsection{Comparison of hom-bases for the $q$-Schur category}
\label{SS:ComparsionHomBase}

In this subsection, we compute four types of linear bases in hom-space $\qSCat\bigl((2,5);(3,4)\bigr)$ and determine the transition matrices between them. Following $F^4$-picture procedure, we list below the $F_4$-pictures in $P\bigl((2,5);(3,4)\bigr)$ together with their labels
\[
\twofivetothreefour\,,
\]
where $c^{11}=3$, $c_{11}=2$, $c^{11}+c_{12}=c_{11}+c^{12}$, $c_{12}+c_2=4$, and $c^{12}+c_2=5$.
By computation, we obtain the following solutions:
\[
\twofivetothreefouri
\qquad 
\twofivetothreefourii
\qquad
\twofivetothreefouriii
\qquad
\twofivetothreefouriv
\qquad
\twofivetothreefourv\,.
\]
Among them, the following pictures belong to $\Nf(\mathbf{Col})$
\[
\NFBi
\qquad
\NFBii
\qquad
\NFBiii\,,
\]
which is the normal form basis in $\qSCat((2,5);(3,4))$. 

These pictures correspond to diagrams in $\Nf(F^3 \NP{\qSPol})$. They satisfy the \emph{collapsing relations} induced by the collapsing rules of \SSS{SSS:CollapsingRules}, together with the following two relations:
\[
\sFthree
=
\sum_s\Psivec{s}{a}{b}{c}{d}\;
\tFthree
\qquad\text{and}\qquad
\begin{array}{c}
\sFtwo
\\[0.01em]
\tFtwo
\end{array}
=
\prod_{i=2}^{m}
\left[\begin{array}{c}
a_1+\cdots + a_i \\
a_i
\end{array}\right]_q\,
\ttFtwo
.
\]
These two relations correspond to the rules $\gamma$ and $\gamma^\diamond$ in $\strat{4}\qSPol$ and $\strat{3}\qSPol$, respectively.
Therefore, we can use collapsing relations to compute the transition matrices between the normal form basis and other bases of the $q$-Schur category.

\subsubsection{Standard basis}

In the very first definition of the $q$-Schur category $\qSCat$ given by Brundan {\cite[Theorem 5.1]{Brundan2025}}, the hom-spaces are equipped with a \emph{standard basis}, in the spirit of Iwahori-Hecke algebras. More precisely, given $\mathbf a$ and $\mathbf b$ two 1-cells, $\qSCat(\mathbf a,\mathbf b)$ is the free $\mathbb Z[q,q^{-1}]$-module with basis $(\xi_A)$, where $A$ runs over the set Mat$(\mathbf a,\mathbf b)$.

In the example $(\mathbf a,\mathbf b) = \bigl((2,5);(3,4)\bigr)$, for the standard basis, we get the three pictures
\[
\xi_A =
\raisebox{-0.8cm}{ \SBi} ,
\qquad
\xi_B =\raisebox{-0.8cm}{\SBii} ,
\qquad
\xi_C =\raisebox{-0.8cm}{\SBiii} .
\]
The transition matrix is given by:
\[
\left(
\SBi[baseline=-0.1em]
,
\SBii[baseline=-0.1em]
,
\SBiii[baseline=-0.1em]\right)
=
\left(
\NFBi[baseline=-0.1em]
,
\NFBii[baseline=-0.1em]
,
\NFBiii[baseline=-0.1em]
\right)
\setlength{\arraycolsep}{8pt}
\begin{bmatrix}
\Phi_1u & 1 & 0 \\
\Phi_{-2}u & 0 & 0 \\
\Phi_2u & \left[\begin{array}{c} 4 \\ 3 \end{array}\right]_q \Psi_1v & 1
\end{bmatrix},
\]
where $u = [3\;3\;2]^\top$ and $v=[2\;2\;2\;1]^\top$.

\subsubsection{Canonical basis}

Given two 1-cells $(\mathbf a,\mathbf b)$ in the $q$-Schur category, there is only one element of maximal length $d_A^+$ in the double coset associated to $A$ in $S_{\mathbf a}\setminus S_N/S_{\mathbf b}$ (here the length $\ell$ is in terms of the Coxeter generators of the symmetric group).

The \emph{canonical basis} of the hom-space $\qSCat(\mathbf a,\mathbf b)$ is given as a linear combination in the standard basis as such
\[
\Theta_A = \sum_{B\in \hbox{Mat} (\mathbf a,\mathbf b)} P_{A,B} (q)\xi_B
\] with $P_{A,B}$ is the polynomial defined by 
\[
P_{A,B} = q^{\ell(d_A^+) - \ell(d_B^+)} P_{d_A^+,d_B^+} (q^{-2}),
\] where $P_{d_A^+,d_B^+}$ is the Kazhdan-Lusztig polynomial associated to the two permutations $d_A^+$ and $d_B^+$.

In the case $((2,5),(3,4))$, the computations of the 3 maximal length elements in the 3 double cosets associated to $A$, $B$ and $C$ give 
\[
\begin{array}{rcl}
d_A^+ & = &s_1s_2s_1s_3s_2s_1s_4s_3s_2s_1s_5s_4s_3s_2s_1s_6s_5s_4s_3s_2s_1,\\
d_B^+ & = & s_1s_2s_1s_4s_3s_6s_5s_4s_3s_5s_6s_5s_2s_3s_1s_4s_3s_5,\\
d_C^+ & = & s_1s_2s_1s_4s_5s_4s_6s_3s_4s_5s_4s_3s_6.
\end{array}
\] of respective lengths $\ell(d_A^+) = 21$ (the maximal one), $\ell(d_B^+) = 18$ and $\ell(d_C^+) = 13$. Then we get the Kazhdan-Lusztig polynomials,
\[
P_{A,B} = q^3(1), \quad P_{A,C} = q^8 (1 ), \quad P_{B,C} = q^5(1+q^{-2}),
\] using the computer algebraic system SageMath~\cite{Sagemath2021} for the computation of the last one. Therefore, the canonical basis reads as follows
\[
\begin{array}{rcl}
\Theta_A & = & \xi_A + q^3\xi_B + q^8\xi_C,\\
\Theta_B & = & \xi_B + (q^5 + q^3)\xi_C,\\
\Theta_C & = & \xi_C
\end{array}
\] and the transition matrix with the standard basis is upper triangular. 

\subsubsection{Codeterminant basis}
In \cite{Brundan2025}, Brundan also introduces a third basis, called the \emph{codeterminant basis}. Its definition uses yet another description of the double cosets in $S_{\mathbf a}\setminus S_N/S_{\mathbf b}$ that we do not recall here. But in the case $((2,5),(3,4))$, the computations of pictures associated to the codeterminant basis give 
\[
\CoBi
\qquad
\CoBii
\qquad
\CoBiii .
\]

And the transition matrix is given by
\[
\left(
\CoBii[baseline=-0.1em]
,
\CoBi[baseline=-0.1em]
,
\CoBiii[baseline=-0.1em]
\right)
=
\left(
\NFBi[baseline=-0.1em]
,
\NFBii[baseline=-0.1em]
,
\NFBiii[baseline=-0.1em]
\right)
\setlength{\arraycolsep}{8pt}
\begin{bmatrix}
\Phi_1 u & 0 & \Phi_{1}v \\
\Phi_{-2} u & 1 & \Phi_{-2}v \\
\Phi_{2} u & 0 &\Phi_{2}v
\end{bmatrix},
\]
where $u = [3\;4\;1]^\top$ and $v=[3\;3\;2]^\top$.

\medskip

\noindent {\bf Acknowledgements.}
This work was supported by the China Scholarship Council (CSC) under Grant No.~202406140026.

\noindent {\bf Use of computer systems.}
We used the computer algebra system SageMath~\cite{Sagemath2021} to compute \linebreak $q$-binomial coefficients arising in the confluence proofs. Artificial intelligence tools were used for proofreading, literature searches, and assistance with the preparation of formal code and diagrams. They were not used to produce or establish any mathematical results. The authors adhere to the \emph{Leiden Declaration on Artificial Intelligence and Mathematics}. Diagrams were typeset using Catex~\cite{catex}.

\begin{small}
\renewcommand{\refname}{\Large\textsc{References}}
\bibliographystyle{plain}
\bibliography{biblioCURRENT}
\end{small}

\vfill

\begin{footnotesize}

\bigskip
\auteur{Stéphane Gaussent}{stephane.gaussent@univ-st-etienne.fr}
{Université Jean Monnet\\
CNRS, Institut Camille Jordan, UMR5208\\
F-42023 Saint-Etienne, France}

\bigskip
\auteur{Zuan Liu}{zliu@math.univ-lyon1.fr}
{Université Claude Bernard Lyon 1\\
CNRS, Institut Camille Jordan, UMR5208\\
F-69622 Villeurbanne, France}

\bigskip
\auteur{Philippe Malbos}{malbos@math.univ-lyon1.fr}
{Université Claude Bernard Lyon 1\\
CNRS, Institut Camille Jordan, UMR5208\\
F-69622 Villeurbanne, France}
\end{footnotesize}

\vspace{1.5cm}

\begin{small}---\;\;\today\;\;-\;\;\hhmm\;\;---\end{small}

\clearpage

\appendix

\section{Appendices}

\subsection{Normalization of the contexts $C_1$, $C_2$, and $C_3$}
\label{SS:ProofsOfThreeContexts}
We provide a detailed verification of $\down{C_1[\partial\gamma]}{3}=0$ and $\down{C_2[\partial\gamma]}{3}=0$, and show that the rule $\down{C_3[\gamma]}{3}^\preccurlyeq$ is trivial in \SSS{SSS:R3StratifiedCompletionStep}.
\begin{enumerate}
\item We have 
\[
C_1[\gamma_{a,b,c,d}]:
\twocell{merge*1(merge *0 id) *1 (id *0 c *0 id)  *1 (id *0 split)*1(id*0 merge)*1 (id *0 d *0 id) *1(split *0 id)  *1(ax *0 bx)}
\fl
\sum_s\Psivec{s}{a}{b}{c}{d}
\twocell{merge*1(id *0 merge)*1 (id *0 djs *0 id)  *1 (split*0 id)*1 (merge*0 id) *1 (id *0 cjs *0 id)  *1 (id *0 split) *1(ax *0 bx)}\,\raisebox{-5ex}{,}
\]
where $\down{C_1[s\gamma_{a,b,c,d}]}{3}=
\left[\begin{array}{c}
b+d \\
b+d-c
\end{array}\right]_q
\left[\begin{array}{c}
a \\
d
\end{array}\right]_q  \twocell{merge*1(ax*0bx)}$ and
\[
\down{C_1[t\gamma_{a,b,c,d}]}{3}=
\sum_s\Psivec{s}{a}{b}{c}{d}
\left[\begin{array}{c}
a+c-s \\
d-s
\end{array}\right]_q
\left[\begin{array}{c}
b \\
b-c+s
\end{array}\right]_q \twocell{merge*1(ax*0bx)}\,.
\]
A direct computation using the computer algebraic system SageMath~\cite{Sagemath2021} yields
\[
\left[\begin{array}{c}
b+d \\
b+d-c
\end{array}\right]_q
\left[\begin{array}{c}
a \\
d
\end{array}\right]_q =\sum_s\Psivec{s}{a}{b}{c}{d}
\left[\begin{array}{c}
a+c-s \\
d-s
\end{array}\right]_q
\left[\begin{array}{c}
b \\
b-c+s
\end{array}\right]_q.
\]
Consequently, one has $\down{C_1[\partial\gamma_{a,b,c,d}]}{3} = 0$. 
 Alternatively, if they were not equal, then in the $q$-Schur category we would have the $1$-generators $\twocell{merge*1(ax*0bx)} = 0$ for all $a,b > 0$, which is a clear contradiction.
Similarly, one proves that
$\down{C_2[\partial\gamma_{a,b,c,d}]}{3} = 0$.

\item Applying the context
$C_3= \raisebox{-12.50pt}{\begin{tikzpicture} \begin{scope} [ x = 10pt, y = 10pt, join = round, cap = round, thick, black, solid, -] \draw (0.50,2.75)--(0.50,2.50) (2.00,2.75)--(2.00,2.50) ; \draw (0.50,0.00)--(0.50,-0.25) (2.00,0.00)--(2.00,-0.25) ; \draw [fill = green] (0.50,2.50)--(1.00,2.00)--(0.00,2.00)--cycle ; \draw [fill = lightgray] (2.00,2.50)--(2.00,2.00)--cycle ; \draw (0.00,2.00)--(0.00,1.75) (1.00,2.00)--(1.00,1.75) (2.00,2.00)--(2.00,1.75) ; \node at (0.00,1.25) {$\scriptstyle e$} ; \draw [rounded corners = 1pt, fill = white] (0.75,0.75) rectangle (2.25,1.75) ; \node at (1.50,1.25) {$\scriptstyle \square$} ; \draw (0.00,0.75)--(0.00,0.50) (1.00,0.75)--(1.00,0.50) (2.00,0.75)--(2.00,0.50) ; \draw [fill = orange] (0.00,0.50)--(1.00,0.50)--(0.50,0.00)--cycle ; \draw [fill = lightgray] (2.00,0.50)--(2.00,0.00)--cycle ; \end{scope} \end{tikzpicture}}$
to $\gamma_{a,b,c,d}$, we obtain
\[
\twocell{(merge *0 id) *1 (id *0 c *0 id)  *1 (id *0 split)*1(id*0 merge)*1 (id *0 d *0 id) *1(split *0 id)*1(ajex*0bx)}
\xrightarrow{\scriptstyle \down{C_3[\gamma_{a,b,c,d}]\,}{3}^\preccurlyeq}
\sum_s\Psivec{s}{a}{b}{c}{d}
\twocell{(merge *0 merge)*1 (id *0id *0 djs *0 id)  *1 (e *0split*0 id)*1 (id *0merge*0 id) *1 (id *0a *0 cjs *0 id)  *1 (split *0 split)*1(emptyx*0bx) }
\fl
\sum_{s,t}\Psivec{s}{a}{b}{c}{d}\Psivec{t}{a+e}{c-s}{a+c-d}{a}
\twocell{(id*0merge)*1(id*0merge*0id) *1(id*0ajt*0id*0id)*1(split*0id*0bjcjs) *1(merge*0id*0id)*1(id*0ajcjdjt*0id*0id) *1(id*0split*0id)*1(id*0cjs*0id)*1(aje*0split)}
\]
\[
\fl
\sum_{s, t}\Psivec{s}{a}{b}{c}{d}\Psivec{t}{a+e}{c-s}{a+c-d}{a}
\left[\begin{array}{c}
b+d-a-c+t \\
b-c+s
\end{array}\right]_q
\twocell{(id *0 merge)*1 (id *0 ajt *0 id)  *1 (split*0 id)*1 (merge*0 id) *1 (id *0 ajcjdjt *0 id)  *1 (id *0 split) *1(ajex *0 bx)}\,\raisebox{-5ex}{.}
\]
Using the computer algebraic system SageMath~\cite{Sagemath2021}, one can check
\[
\Psivec{s}{a}{b}{c}{d}\Psivec{t}{a+e}{c-s}{a+c-d}{a}
\left[\begin{array}{c}
b+d-a-c+t \\
b-c+s
\end{array}\right]_q
=
\Psivec{r}{a+e}{b}{c}{d}
\]
for all $r=d+t-a$.
Since
\[
\gamma_{a+e,b,c,d}:
\twocell{(merge *0 id) *1 (id *0 c *0 id)  *1 (id *0 split)
*1(id*0 merge)*1 (id *0 d *0 id) *1(split *0 id)*1(ajex*0bx)}
\fl
\sum_r
\Psivec{r}{a+e}{b}{c}{d}
\twocell{(id *0 merge)*1 (id *0 djr *0 id)  *1 (split*0 id)
*1 (merge*0 id) *1 (id *0 cjr *0 id)
*1 (id *0 split) *1(ajex *0 bx)}
\]
belongs to $\strat{4} \qSPol$, it follows that $\down{C_3[\gamma_{a,b,c,d}]}{3}^\preccurlyeq$
is trivial.
\end{enumerate}

\subsection{Confluence of the extension $\strat{4}\NP{\qSPol}$}
\label{ASSS:CriticalBranchingsOfS3}
In this subsection,  we prove the confluence diagrams for two families of critical branchings listed in Lemma~\ref{L:ConfluentOfS3}.
\begin{enumerate}
\item 
The confluent diagram of critical branching
$\left(\gamma
\begin{bmatrix}
b+d\\f\\
b+d-c+f-g\\ b+d-c
\end{bmatrix},
\gamma
\begin{bmatrix}
a\\
b\\
c\\
d
\end{bmatrix}
\right)
$\,:
\[
\xymatrix @R=1em {
\twocell{(emptys*0 gs)*1(merge3*0id)*1(id*0c*0split) *1(id*0id*0merge)*1(id*0split*0id)*1 (id*0merge*0id)*1(id*0 d*0id*0id)*1(split*0id*0id)*1(ax*0bx*0fx)}
  \ar[r]^{\hspace{-0.4cm}}
  \ar[d]_{\hspace{0.5cm}}
&
\sum_s\Psi_s
\twocell{(emptys*0 gs)*1(merge*0merge)*1(id*0split*0id) *1(id*0merge3*0gbdcs)*1(id*0d*0id*0id*0id)*1(split*0id*0split)*1(ax*0bx*0fx)}
  \ar[r]^{\hspace{-0.4cm}}
&
\sum_{s,t}\Psi_t\Psi_s
\twocell{(emptys*0 gs)*1(id*0merge3)*1(id*0djt*0id*0id)*1(split*0id*0id) *1(merge*0id*0gbdcs)*1(id*0split*0id)*1 (id*0merge*0id)*1(id*0id*0split)*1(ax*0bx*0fx)}
  \ar@{=}[d]
\\
\sum_n\Psi_n
\twocell{(emptys*0 gs)*1(merge*0id)*1(id*0split)*1 (id*0merge3)*1(id*0djn*0id*0id)*1(split*0id*0id)*1(merge*0id*0id) *1(id*0cjn*0id*0id)*1(id*0split*0id)*1(ax*0bx*0fx)}
  \ar[r]^{\hspace{-0.4cm}}
&
\sum_{n,m}\Psi_m\Psi_n
\twocell{(emptys*0 gs)*1(id*0merge)*1(id*0djnjm*0id)*1(split*0id)*1 (merge3*0id)*1(id*0cjn*0split)*1(id*0id*0merge) *1(id*0split*0id)*1(ax*0bx*0fx)}
  \ar[r]^{\hspace{-0.4cm}}
&
\sum_{n,m,l}\Psi_l\Psi_m\Psi_n
\twocell{(emptys*0 gs)*1(id*0merge3)*1(id*0djnjm*0id*0id)*1(split*0id*0id) *1(merge*0id*0id)*1(id*0split*0gbdcml)*1 (id*0merge*0id)*1(id*0id*0split)*1(ax*0bx*0fx)}
}
\]
where
\[
\Psi_s\coloneq\Psivec{s}{b+d}{f}{b+d-c+f-g}{b+d-c},
\quad
\Psi_t\coloneq\Psivec{t}{a}{f-g+2b+d-c-s}{f-g+b+d}{d},
\]
\[
\Psi_n\coloneq\Psivec{n}{a}{b}{c}{d},
\quad
\Psi_m\coloneq\Psivec{m}{a+c-n}{b-c+n+f}{b-c+f+d-g}{d-n},
\quad
\Psi_l\coloneq\protect\Psivec{l}{b}{f}{f+b+d-c-g-m}{b-c+n}.
\]
Set $t = n + m$ and $s = m + l$.
With this choice, the last two diagrams above have the same shape.
Using the computer algebraic system SageMath~\cite{Sagemath2021}, one can verify that
$
\Psi_t \Psi_s = \Psi_l \Psi_m \Psi_n, 
$
for all $t = n + m$ and $s = m + l$.
Therefore, this family of critical branchings is confluent.

\item 
The confluent diagram of critical branching
$
\left(
\gamma
\begin{bmatrix}
f+d \\b\\
c\\d
\end{bmatrix},
\gamma
\begin{bmatrix}
a-d\\c\\
g\\f
\end{bmatrix}\right)
$\,:
\[
\xymatrix @R=1em {
\twocell{(merge*0id*0id)*1(id*0oneg*0id*0id)*1(id*0split*0id) *1(id*0merge*0id)*1(id*0id*0c*0id)*1(id*0onef*0split)*1(id*0id*0merge)*1 (id*0id*0d*0id)*1(split3*0id)*1(ax*0bx)}
  \ar[r]^{\hspace{-0.4cm}}
  \ar[d]_{\hspace{0.5cm}}
&
\sum_s\Psi_s
\twocell{(merge*0id*0merge)*1(id*0oneg*0id*0djs*0id)*1(id*0split3*0id) *1(id*0merge*0id)*1(id*0onef*0cjs*0id)*1(split*0split) *1(ax*0bx)}
  \ar[r]^{\hspace{-0.4cm}}
&
\sum_{s,t}\Psi_t\Psi_s
\twocell{(id*0id*0merge)*1(id*0id*0djs*0id)*1(id*0split*0id)*1 (id*0merge*0id)*1(id*0fjt*0id*0bcs)*1(split*0id*0id)*1 (merge*0id*0id)*1(id*0gjt*0id*0id)*1(id*0split3)*1(ax*0bx)}
  \ar@{=}[d]
\\
\sum_n\Psi_n
\twocell{(id*0merge*0id)*1(id*0fjn*0id*0id)*1(split*0id*0bdc) *1(merge*0id*0id)*1(id*0gjn*0id*0id)*1(id*0split3)*1(id*0merge) *1(id*0d*0id)*1(split*0id)*1(ax*0bx)}
  \ar[r]^{\hspace{-0.4cm}}
&
\sum_{n,m}\Psi_m\Psi_n
\twocell{(id*0merge*0id)*1(id*0fjn*0id*0bdc)*1(id*0id*0split) *1(id*0id*0merge)*1(id*0id*0djm*0id)*1(split3*0id)*1(merge*0id) *1(id*0gjnjm*0id)*1(id*0split)*1(ax*0bx)}
  \ar[r]^{\hspace{-0.4cm}}
&
\sum_{n,m,l}\Psi_l\Psi_m\Psi_n
\twocell{(id*0id*0merge)*1(id*0id*0djmjl*0id)*1(id*0split*0id)*1 (id*0merge*0id)*1(id*0fjnjm*0id*0bcml)*1(split*0id*0id)*1 (merge*0id*0id)*1(id*0gjnjm*0id*0id)*1(id*0split3)*1(ax*0bx)}
}
\]
where
\[
\Psi_s\coloneq
\Psivec{s}{a}{b}{c}{d},
\quad
\Psi_t\coloneq
\Psivec{t}{a}{c-s}{g}{f},
\]
\[
\Psi_n\coloneq
\Psivec{n}{a-d}{c}{g}{f},
\quad
\Psi_m\coloneq
\Psivec{m}{a}{b}{g-n}{d},
\quad
\Psi_l\coloneq
\Psivec{l}{a+g-n-m}{b-g+n+m}{c+n-g}{d-m}.
\]
Set $t = n + m$ and $s = m + l$.
With this choice, the last two diagrams above have the same shape.
Using the computer algebraic system SageMath~\cite{Sagemath2021}, we prove that
$
\Psi_t \Psi_s = \Psi_l \Psi_m \Psi_n, 
$
for all $t = n + m$ and $s = m + l$.
Therefore, this family of critical branchings is confluent.
\end{enumerate}
\end{document}